\newtheorem{theorem}{Theorem}[section]
\newtheorem{theoremX}{Theorem}
\newtheorem*{theorem*}{Theorem}
\newtheorem{corollary}[theorem]{Corollary}
\newtheorem{corollaryX}[theoremX]{Corollary}
\newtheorem*{corollary*}{Corollary}
\newtheorem{proposition}[theorem]{Proposition}
\newtheorem*{proposition*}{Proposition}
\newtheorem{lemma}[theorem]{Lemma}
\newtheorem{conjecture}[theorem]{Conjecture}
\newtheorem*{conjecture*}{Conjecture}
\newtheorem{assumption}[theorem]{Assumption}
\theoremstyle{definition}
\newtheorem{definition}[theorem]{Definition}
\newtheorem*{definition*}{Definition}
\newtheorem*{variant*}{Variant}
\newtheorem{example}[theorem]{Example}
\newtheorem*{example*}{Example}
\newtheorem{remark}[theorem]{Remark}
\newtheorem*{remark*}{Remark}
\newtheorem*{notation*}{Notation}
\newtheorem{construction}[theorem]{Construction}
\newtheorem*{question*}{Question}
\newtheorem*{acknowledgements}{Acknowledgements}
\let\oldtocsection=\tocsection
\let\oldtocsubsection=\tocsubsection
\let\oldtocsubsubsection=\tocsubsubsection
\renewcommand{\tocsection}[2]{\hspace{0em}\oldtocsection{#1}{#2}}
\renewcommand{\tocsubsection}[2]{\hspace{1em}\oldtocsubsection{#1}{#2}}
\renewcommand{\tocsubsubsection}[2]{\hspace{2em}\oldtocsubsubsection{#1}{#2}}
\def\CZ{\mathrm{CZ}}
\def\univ{\mathrm{univ}}
\def\Pic{\mathrm{Pic}}
\def\fC{\mathfrak{C}}
\def\BM{\mathrm{BM}}
\def\un{\mathrm{un}}
\def\ev{\mathrm{ev}}
\def\Ker{\mathrm{Ker}}
\def\Perf{\mathrm{Perf}}
\def\sp{\mathrm{sp}}
\def\lag{\mathrm{lag}}
\def\vir{\mathrm{vir}}
\def\rank{\mathrm{rank}}
\def\ori{\mathrm{or}}
\def\symp{\mathrm{symp}}
\def\pr{\mathrm{pr}}
\def\id{\mathrm{id}}
\def\GG{\mathbb{G}}
\def\cart{\ar@{}[rd]|{\Box}}
\def\Spec{\mathrm{Spec}}
\def\Tot{\mathrm{Tot}}
\def\Crit{\mathrm{Crit}}
\def\dual{^{\vee}}
\def\AA{\mathbb{A}}
\def\Z{\mathbb{Z}}
\def\Q{\mathbb{Q}}
\def\C{\mathbb{C}}
\def\O{\mathcal{O}} 
\def\I{\mathcal{I}} 
\def\LL{\mathbb{L}}
\def\TT{\mathbb{T}}
\def\T{\mathrm{T}}
\def\dSt{\mathrm{dSt}}
\def\QCAlg{\mathrm{QCAlg}}
\def\QCoh{\mathrm{QCoh}}
\def\Map{\mathrm{Map}}
\def\uMap{\underline{\Map}}
\def\fib{\mathrm{fib}}
\def\cof{\mathrm{cof}}
\def\fil{\mathrm{fil}}
\def\DR{\mathrm{DR}}
\def\Gr{\mathrm{Gr}}
\def\Fil{\mathrm{Fil}}
\def\Sym{\mathrm{Sym}}
\def\textin{\quad\textup{in}\quad}
\def\and{\quad\textup{and}\quad}
\def\sA{\mathscr{A}}
\def\lc{\mathrm{lc}}
\def\cl{\mathrm{cl}}
\def\D{\mathrm{D}}
\def\Zero{\mathrm{Z}}
\def\Q{\mathbb{Q}}
\def\ori{\mathrm{or}}
\def\commutes{\ar@{}[rd]|-{\circlearrowleft}}
\def\pt{\mathrm{pt}}
\def\Perv{\mathrm{Perv}}
\def\tE{\widetilde{E}}
\def\tsigma{\widetilde{\sigma}}
\def\tM{\widetilde{M}}
\def\cS{\mathcal{S}}
\def\cD{\mathcal{D}}
\def\beq{\begin{equation} }
\def\eeq{\end{equation} }
\def\lra{\longrightarrow }
\def\bbL{\mathbb{L} }
\def\cE{\mathcal{E} }
\def\fC{\mathfrak{C} }
\def\virt{^{\mathrm{vir}} }
\def\bM{\mathbf{M} }
\def\bbG{\mathbb{G} }
\def\bbA{\mathbb{A} }
\def\CC{\mathbb{C} }
\def\ZZ{\mathbb{Z} }
\def\bsig{\boldsymbol\sigma }
\def\obsig{\overline{\bsig} }
\def\btheta{{\boldsymbol\theta}}
\def\bff{\mathbf{f}}
\def\cC{\mathcal{C}}
\def\PP{\mathbb{P}}
\def\mapright#1{\,\smash{\mathop{\lra}\limits^{#1}}\,}
\def\M{\mathfrak{M}}
\def\tw{\mathrm{tw}}
\def\log{\mathrm{log}}
\def\Res{\mathrm{Res}}
\def\cL{\mathcal{L}}
\def\AKSZ{\mathrm{AKSZ}}
\def\tbsig{\widetilde{\bsig}}
\begin{document}

\title{Cosection localization via shifted symplectic geometry}

\author{Young-Hoon Kiem}
\address{School of Mathematics, Korea Institute for Advanced Study, 85 Hoegiro, Dongdaemun-gu, Seoul 02455, Korea}
\email{kiem@kias.re.kr}

\author{Hyeonjun Park}
\address{June E Huh Center for Mathematical Challenges, Korea Institute for Advanced Study, 85 Hoegiro, Dongdaemun-gu, Seoul 02455, Republic of Korea}
\email{hyeonjunpark@kias.re.kr}

\date{April 28, 2025}

\maketitle
\begin{abstract} 
The purpose of this paper is to shed a new light on classical constructions in enumerative geometry from the view point of derived algebraic geometry. 
We first prove that the cosection localized virtual cycle of a quasi-smooth derived Deligne-Mumford stack with a $(-1)$-shifted closed $1$-form is equal to the virtual Lagrangian cycle of the degeneracy locus which is $(-2)$-shifted symplectic. 
We next establish a shifted analogue of the Lagrange multipliers method 
which gives us the quantum Lefschetz theorems as immediate consequences of the equality of virtual cycles. 
Lastly we study derived algebraic geometry enhancements of gauged linear sigma models 
which lead us to the relative virtual cycles in a general and natural form.  
\end{abstract}

\tableofcontents
\addtocontents{toc}{\protect\setcounter{tocdepth}{1}}

\section*{Introduction}
Throughout history, enumerative geometry has been enriched by influences from various related areas such as topology, representation theory and mathematical physics. 
Recently, derived algebraic geometry opened new avenues of research in enumerative geometry such as the 4-dimensional Donaldson-Thomas (DT for short) theory and the categorified DT theories.  Classical constructions in enumerative geometry often arise naturally from the underlying derived structures which lead us to generalizations 
and to connections of seemingly unrelated structures. 
 
The purpose of this paper is to develop derived algebraic geometry enhancements of 
\begin{enumerate}
\item cosection localization of virtual cycles,
\item Lagrange multipliers method and
\item gauged linear sigma models
\end{enumerate}
which enable us to prove the following pleasant results: 
\begin{enumerate}
\item the equality of the localized virtual cycle by a cosection in \cite{KL} and  the virtual Lagrangian cycle of the degeneracy locus of the cosection by the Oh-Thomas construction in \cite{OT},
\item a generalization of the quantum Lefschetz theorem without curves in \cite{OTq} by the shifted Lagrange multipliers method and
\item a streamlined construction of the virtual cycles of gauged linear sigma models  which may be thought of as a common generalization of those in \cite{FJR, CZ}. 
\end{enumerate}
By (1), the cosection localized virtual cycle can now be thought of as the virtual cycle of the degeneracy locus which is (-2)-shifted symplectic. 
The higher genus quantum Lefschetz theorem for Gromov-Witten invariants in \cite{CL} is an immediate consequence of (2) where the additional field, called the $p$-field, turns out to be just the Lagrange multiplier. 
All these results are established for the relative setting of a quasi-smooth morphism $g:M\to B$ of derived stacks and the classical constructions of (1)-(3) are fully relativized by our results. 

A detailed introduction comes below.


\subsection{Virtual fundamental class} Let $Q$ be a smooth projective variety over the complex number field $\C$. Many problems in algebraic geometry can be reduced to finding suitable objects like curves or sheaves on $Q$, satisfying certain desired properties. Enumerative geometry is about counting those desired objects and has played a fundamental role in algebraic geometry and beyond. 

To solve an enumerative geometry problem, one usually constructs a moduli space $M$ of all objects of given topological type and then finds the intersection of cycles defined by the desired properties. Unfortunately the moduli spaces are often highly singular and it may not be possible to formulate and work out an intersection theory. 

In early 1990s, Kontsevich proposed that one might use a deeper structure of a moduli space to construct a homology class $[M]\virt$, called the \emph{virtual fundamental class} or \emph{virtual cycle}, which behaves nicely like the fundamental class of a smooth variety. 
Subsequently, Li-Tian \cite{LT} and Behrend-Fantechi \cite{BF} showed that if $M$ is equipped with an additional structure, called a \emph{tangent-obstruction complex} or a \emph{perfect obstruction theory}, we have a virtual cycle $[M]\virt$ of $M$. 
Then virtual enumerative invariants like Gromov-Witten (counting curves) or Donaldson-Thomas (counting sheaves) were defined as integrals of cohomology classes against $[M]\virt$. 

These virtual invariants satisfy nice properties like deformation invariance but actual computations are known to be extremely difficult, mainly because there are only a few techniques to handle the virtual cycles. 

The construction of virtual cycles was generalized to the virtual pullback 
$$[M/B]\virt:A_*(B)\lra A_{*+r}(M)$$
of a morphism $g:M\to B$ with a relative perfect obstruction theory of rank $r$ (cf. \cite{Man}).

\subsection{Cosection localization}
The cosection localization in \cite{KL} is a technique for the virtual cycle $[M]\virt$ which 
turned out to be quite useful. 
A perfect obstruction theory on a Deligne-Mumford stack $M$ is a morphism
\beq\label{i0} \phi_M:E_M\lra \bbL_M\eeq
in the derived category of quasi-coherent sheaves on $M$ from a perfect complex of amplitude $[-1,0]$ to the cotangent complex of $M$, such that $h^{-1}(\phi_M)$ is surjective and $h^0(\phi_M)$ is an isomorphism. 
Then the tangent sheaf of $M$ is isomorphic to $h^0(E_M\dual)$ and $h^1(E_M\dual)$ is called the \emph{obstruction sheaf} $Ob_M$ whose vanishing implies the smoothness of $M$. 
The cosection localization in \cite{KL} tells us that a cosection 
\beq\label{i1} \sigma_M:Ob_M\lra \O_M\eeq
of the obstruction sheaf localizes the virtual cycle $[M]\virt$ to the \emph{degeneracy locus} $M(\sigma)$ of $\sigma$, i.e. the locus where $\sigma$ is not surjective. 
Indeed, we have a Chow homology class
\beq\label{i2} [M,\sigma]\virt\in A_r(M(\sigma)), \quad r=\rank\,E_M\eeq 
whose pushforward to $M$ equals $[M]\virt=[M,0]\virt$. 

For example, when $M$ is the moduli space of stable maps to a smooth projective surface $S$ with an effective canonical curve $D$, there is a cosection whose degeneracy locus is the moduli space of stable maps to $D$ and thus the computation of the Gromov-Witten invariants of $S$ is reduced to 
$D$ (cf. \cite{KL4, KL5}). 

The cosection localized virtual pullback $[M/B,\sigma]\virt$ for a morphism $g:M\to B$ with a relative perfect obstruction theory can be constructed when the intrinsic normal cone has support in the kernel of the cosection (Assumption \ref{20}) or when the cosection is \emph{locked} (Definition \ref{25}). See \cite{CKL} or \S\ref{S1.4} and \S\ref{S1.5}.

\subsection{Shifted symplectic structure on the degeneracy locus of a closed cosection}
Many moduli spaces in enumerative geometry such as moduli spaces of stable maps or stable sheaves can be enhanced to derived stacks $M$ (cf. \cite{Toen}) where the perfect obstruction theory \eqref{i0} is in fact the natural morphism
\beq\label{i3} E_{M_\cl}=\bbL_M|_{M_\cl}\lra \bbL_{M_\cl}\eeq
induced by the classical truncation $M_\cl\to M$ when $M$ is \emph{quasi-smooth}, i.e. the cotangent complex $\bbL_M$ has tor-amplitude $[-1,0]$. 
Then a cosection of the obstruction sheaf can be thought of as a map
\[ \sigma:\TT_M|_{M_\cl}[1]\lra \O_{M_\cl}\]
where $\TT_M=\bbL_M\dual$ denotes the tangent complex of $M$. 
For all the known examples, the cosection lifts to a map
\[ \sigma: \TT_M[1] \lra \O_M\]
which is nothing but a $(-1)$-shifted 1-form on $M$.

In most cases, the $(-1)$-shifted 1-form $\sigma$ whose classical truncation is the cosection of the obstruction sheaf can be further enhanced to a \emph{closed} form $\bsig$ in the sense of \cite{PTVV} (see Remark \ref{Rem:2.8}). Then the degeneracy locus $M(\sigma)$ of $\sigma$ is the Lagrangian intersection
\[\xymatrix{
M(\sigma)\ar[r]\ar[d] & M\ar[d]^{\bsig}\\
M\ar[r]^0 & \T^*_M[-1],
}\]
where $\T^*_M[-1]=\mathrm{Spec}\,\mathrm{Sym}(\TT_M[1])$ is the $(-1)$-shifted cotangent bundle of $M$ and the sections are Lagrangian by \cite[Thm.~2.22]{Cal}. 
By \cite[Thm.~2.9]{PTVV}, $M(\sigma)$ admits a \emph{$(-2)$-shifted symplectic structure} $\btheta$. 

If a derived Deligne-Mumford stack is equipped with a $(-2)$-shifted symplectic structure, its classical truncation admits a 3-term symmetric obstruction theory and the intrinsic normal cone is isotropic by \cite{BBJ, BG} (cf.~\cite[Prop.~4.3]{OT}). 
By \cite{OT, Park1, Park2}, we then have the \emph{virtual Lagrangian cycle} (Theorem \ref{60})
which should be thought of as the virtual fundamental class of a Lagrangian (Corollary \ref{Cor:1}). 
In particular, the degeneracy locus $M(\sigma)$ of a cosection $\sigma$ with the canonical $(-2)$-shifted symplectic form $\btheta$ admits the virtual Lagrangian cycle 
\beq\label{i4}
[M(\sigma), \btheta]^\lag\in A_r(M(\sigma)),\quad r=\frac12 \rank \, \bbL_{M(\sigma)}.\eeq

All these constructions are worked out in the relative setting of a quasi-smooth morphism $g:M\to B$ from a derived Deligne-Mumford stack to a derived Artin stack, for a $(-1)$-shifted \emph{locked} 1-form $\bsig$ (Definition \ref{41}). 
The lockedness to a regular function $w:B\to \AA^1$ means that $M$ is locally the zero locus of a section $s$ of a vector bundle $F$ over a smooth stack $V$ over $B$ and the cosection lifts to a map $\sigma_F:F\to \AA^1$ such that $\sigma_F\circ  s=w|_V$ (cf. Definition \ref{25} or Proposition \ref{52}). 
For a $w$-locked $\bsig$, over the zero locus $B(w)=w^{-1}(0)$, the normal cone of $M$ is contained in the kernel of the cosection and thus the cone reduction (Assumption \ref{20}) holds over $B(w)$.  
When $B$ is a point, a locked 1-form is just a closed 1-form and vice versa (Remark \ref{48}).

\subsection{Comparison of virtual cycles}
Let $M$ be a quasi-smooth derived Deligne-Mumford stack equipped with a $(-1)$-shifted closed 1-form $\bsig$.
Then we have two virtual cycles \eqref{i2} and \eqref{i4} of the same dimension 
$$r=\rank\, E_{M_{\cl}}=\rank \, \bbL_M = \frac 12\rank \, \bbL_{M(\sigma)},$$
and it is natural to ask the following.

\medskip
\noindent \textbf{Question}. How are the virtual cycles \eqref{i2} and \eqref{i4} related?

\medskip

The first main result of this paper provides an answer to this question.

\begin{theoremX}[Theorem \ref{Thm:main}] \label{i6}
Let $M$ be a quasi-smooth derived Deligne-Mumford stack
and $\bsig$ be a $(-1)$-shifted closed $1$-form on $M$.
Denote by $M(\sigma)$ the degeneracy locus.
Then we have the equality 
\beq\label{i14} [M,\sigma]^\vir =  [M(\sigma), \btheta]^\lag \in A_r(M(\sigma))\eeq 
of the cosection localized virtual cycle \eqref{i2} and the virtual Lagrangian cycle \eqref{i4}. 
\end{theoremX}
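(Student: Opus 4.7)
The plan is to reduce the global equality to a local computation by invoking the lockedness of $\bsig$, express both sides as refined Gysin-type operations on a common cone, and then patch via naturality.

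By Proposition \ref{52}, I may work locally on a chart presenting $M$ as the derived zero locus $Z(s)$ of a section $s:V\to F$ of a vector bundle over a smooth scheme $V$, with $\bsig$ represented by a bundle map $\sigma_F:F\to \O_V$ satisfying $\sigma_F\circ s=0$. This identity forces $s$ to factor through the kernel subbundle $K=\ker(\sigma_F)$, hence the intrinsic normal cone $C_{M/V}$ sits inside $K|_M\subset F|_M$, and $M(\sigma)$ is cut out in $M$ as the zero locus of $\sigma_F|_M\in H^0(M,F\dual|_M)$. The crucial observation is that $\sigma_F\circ s=0$ is precisely the isotropy of the combined section $(s,\sigma_F):V\to F\oplus F\dual$ with respect to the canonical hyperbolic quadratic form, so that the derived zero locus $Z(s,\sigma_F)$ inherits a natural $(-2)$-shifted symplectic structure. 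I would show that this identifies with the Lagrangian intersection $M\times_{\T^*_M[-1]}M$ together with the symplectic form $\btheta$ arising from \cite{PTVV}.

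On the cosection localized side, the Kiem--Li class is locally given by the cosection localized Gysin pullback of $[C_{M/V}]$ along the zero section of $F$ refined by $\sigma_F$, landing in $A_*(M(\sigma))$. On the shifted symplectic side, the virtual Lagrangian cycle $[M(\sigma),\btheta]^\lag$ of Theorem \ref{60} is the square-root Euler class of the hyperbolic bundle $F\oplus F\dual$ applied to the isotropic normal cone, which via the Lagrangian embedding $F\hookrightarrow F\oplus F\dual$ coincides with $C_{M/V}$. The local comparison thus reduces to an identity between the Kiem--Li cosection localized Gysin map for $(F,\sigma_F)$ and the square-root Euler class of $F\oplus F\dual$ restricted to its Lagrangian subbundle $F$; both refine the ordinary Euler class of $F$ with a localization supported along the vanishing locus of $\sigma_F$, namely $M(\sigma)$, so the two refinements should agree.

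The main obstacle will be carrying out this chart-by-chart comparison in a sufficiently natural way to glue. I would need to verify that the local $Z(s,\sigma_F)$ models and their $(-2)$-shifted symplectic forms assemble canonically into $(M(\sigma),\btheta)$, and that the two refined Gysin operations are functorial under change of presentation so that the local equalities descend to the global equality in $A_r(M(\sigma))$. Given the functoriality of both virtual cycles established in \S\ref{S1.4}--\S\ref{S1.5} and \cite{Park2}, I expect the bulk of the technical work to be the identification of the local $(-2)$-shifted symplectic model with $(M(\sigma),\btheta)$ and the fine comparison of the cosection localized Gysin map with the square-root Euler class on cones in the Lagrangian subbundle.
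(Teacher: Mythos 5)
Your local model is essentially correct, and the technical heart you identify --- comparing the Kiem--Li cosection localized Gysin map for $(F,\sigma_F)$ with the Oh--Thomas square-root Euler class of the hyperbolic complex $F\oplus F\dual$ applied to the isotropic cone --- is indeed the content of the paper's Lemma \ref{Lem:3} (and was essentially carried out in \cite[Lem.~5.5]{KP2}). The identification of the derived zero locus of $(s,\sigma_F):V\to F\oplus F\dual$ with the $(-2)$-shifted symplectic $M(\sigma)$ is Proposition \ref{Prop:TwCot} combined with the symplectic zero locus formalism. So you have the right local picture.

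However, two genuine gaps remain. First, your final step --- ``both refine the ordinary Euler class of $F$ with a localization supported along $M(\sigma)$, so the two refinements should agree'' --- is precisely the assertion that needs a proof, and it is not automatic: refined Gysin operations supported on the same locus and pushing to the same thing need not coincide. The paper establishes this via the blowup of $M$ along $M(\sigma)_\cl$ and the defining formulas of the localized Gysin map (Construction \ref{Const:1} and Lemma \ref{Lem:3}); you would need a comparable computation. Second, and more seriously, the gluing step is not addressed and would fail as stated. A cosection localized virtual cycle and a virtual Lagrangian cycle are single elements of $A_r(M(\sigma))$; agreement of their restrictions to open charts $M_\imath$ does not imply global agreement, since Chow groups have no sheaf-like local-to-global principle. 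Moreover, the paper's remark ``Classical versus Derived'' explains a closely related obstruction: a chart-by-chart comparison via classical deformation spaces requires isotropy of $\fC_{M(\sigma)\times\AA^1/\bM^\circ_{M/V}}$ with respect to an obstruction theory that is not canonical, since the isotropic condition for virtual Lagrangian cycles depends on the actual map $\phi_M:E_M\to\LL_M$, not merely its $K$-theory class. The paper circumvents both issues by never leaving the global setting: it performs a sequence of global reductions --- deformation to the normal cone via the derived deformation space $\D_{M/B}$ (Lemma \ref{Lem:1}), Chow lemma to reduce to quasi-projective schemes, resolution of the perfect complex by vector bundles (Lemma \ref{Lem:2}), and finally the blowup (Lemma \ref{Lem:3}) --- and at each stage verifies that the virtual Lagrangian cycle satisfies the same formula that characterizes the localized Gysin/virtual cycle, using only bivariance (Proposition \ref{Prop:bivariance}) and functoriality (Proposition \ref{Prop:functoriality}) of virtual Lagrangian cycles. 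If you want to make your plan work, you would have to replace ``patch via naturality'' with exactly this kind of Kimura-sequence / bivariance argument, at which point you would have reproduced the paper's global reduction rather than a genuine chart-by-chart proof.
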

In fact, Theorem \ref{i6} is proved in the relative setting of a quasi-smooth morphism 
$g:M\to B$ and we have the equality 
\beq\label{i7}
[M/B,\bsig]^\vir =  [M(\sigma)/B, \btheta]^\lag:A_*(B(w))\lra A_{*+r}(M(\sigma))\eeq
of the virtual pullbacks for a  $w$-locked $\bsig$ where $w:B\to \AA^1$ is a function and $B(w)=w^{-1}(0)$. See \S\ref{Sec:3.1} for details. 

Theorem \ref{i6} shows that the cosection localized virtual cycle is an {\em intrinsic} object of the degeneracy locus, which is useful for the computations. For instance, see Corollary \ref{i16}.

By Theorem \ref{i6}, various functorial properties proved for virtual Lagrangian cycles automatically hold for cosection localized virtual cycles. In particular,  the virtual pullback formula and torus localization formula in \cite{CKL} follow directly from those in \cite{Park1} when the cosection is enhanced to a $(-1)$-shifted locked 1-form. 

\subsection{Shifted Lagrange multipliers method and quantum Lefschetz}

In calculus, if  a manifold of interest arises as the zero locus $X=s^{-1}(0)$ of a section $s$ (transveral to the zero section) of a vector bundle $E$ on a manifold $M$ and we want to find the critical points of the restriction $f|_X$ of a function $f$ on $M$, we consider the dual bundle $F=E\dual$ (whose fiber coordinates are called Lagrange multipliers) and the function $\varphi=s\dual:F\to \AA^1$ given by pairing with $s$. Then we find that the critical locus of the function $f|_X$ on $X$ is isomorphic to that of $f|_F+\varphi$ on $F$. Needless to say, this Lagrange multipliers method is one of the most useful techniques in mathematics. 

Our second main result establishes a $(-1)$-shifted version of the Lagrange multipliers method which leads us to a far reaching generalization (Theorem \ref{Cor:Lefschetz}) of the quantum Lefschetz formula (Example \ref{g20}). 

Let $M$ be a derived Artin stack and $E$ be a perfect complex with a section $s$. Note that we do not assume transversality with the zero section. Let $X=M\times_{0,E,s}M$ denote the zero locus of $s$ and $f:M\to \bbA^1[-1]$ be a $(-1)$-shifted function.
Let $$F=E\dual[-1]=\mathrm{Spec}\,\mathrm{Sym}(E[1])$$ be the \emph{bundle of Lagrange multipliers} and let $$\varphi=s\dual[-1]:F\lra \AA^1[-1]$$ denote the pairing with $s$.  
\begin{theoremX}[Theorem \ref{Prop:LagMult}]\label{i8}
With the above notation, we have the canonical equivalence
\beq\label{i10} \Crit_X(f|_X)\simeq \Crit_F(f|_F+\varphi)\eeq
of the critical loci of $f|_X$ and $f|_F+\varphi$.
More generally, if $\bsig$ is a $(-1)$-shifted closed 1-form on $M$, we have a canonical equivalence
\beq\label{i9} X(\bsig|_X)\simeq F(\bsig|_F+d\varphi)\eeq
of the degeneracy loci as $(-2)$-shifted symplectic stacks,
where $\bsig|_X$ and $\bsig|_F$ denote the pullbacks of $\bsig$ to $X$ and $F$ respectively.
\end{theoremX}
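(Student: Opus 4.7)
The plan is to first reduce the critical locus statement \eqref{i10} to the closed $1$-form statement \eqref{i9}. By definition, the critical locus of a $(-1)$-shifted function $g$ on a derived stack $Y$ is the degeneracy locus $Y(dg)$ of the exact closed $1$-form $dg$, so \eqref{i10} becomes $X(d(f|_X))\simeq F(df|_F+d\varphi)$, which is \eqref{i9} applied to the exact closed $1$-form $\bsig=df$. Henceforth I focus on the more general assertion \eqref{i9}.

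The heart of the argument is to unwind both degeneracy loci as derived Lagrangian intersections and identify them by a direct computation using cotangent sequences. For $X=M\times_{0,E,s}M$, the defining Cartesian square gives rise to the cofiber sequence
\[
E\dual|_X \xrightarrow{\,ds\dual\,} \bbL_M|_X \lra \bbL_X,
\]
so the derived intersection $X\times_{\T^*X[-1]}X$ between the zero section and $\bsig|_X$ identifies with the derived stack of pairs $(x,\lambda)$ with $x\in X$, $\lambda\in E\dual|_X[-1]$, and $\bsig+ds\dual(\lambda)=0$ in $\bbL_M|_X[-1]$; that is, $\lambda$ furnishes a derived witness to the vanishing of the pullback $\bsig|_X$ in the cofiber $\bbL_X[-1]$. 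For $F=\Spec_M\Sym(E[1])$, the relative cotangent sequence
\[
\pi^*\bbL_M \lra \bbL_F \lra \pi^*E[1]
\]
decomposes $\bbL_F[-1]$ into a horizontal part in $\pi^*\bbL_M[-1]$ and a vertical part in $\pi^*E$. Correspondingly, the closed $1$-form $\bsig|_F+d\varphi$ has vertical component $s$ (from $\partial_\lambda\varphi=s$) and horizontal component $\bsig+ds\dual(\lambda)$ (from $\partial_m\varphi=\langle\lambda,ds\rangle$). The vertical equation forces $s(m)=0$, confining the derived zero locus to $F\times_M X$, and the horizontal equation then recovers precisely the equation defining $X(\bsig|_X)$ above. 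This produces the canonical equivalence of the underlying derived stacks.

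The main obstacle is promoting this equivalence to one of $(-2)$-shifted symplectic stacks. Both symplectic structures arise from the Lagrangian intersection theorem \cite[Thm.~2.9]{PTVV} applied to the zero section and the closed $1$-form in the $(-1)$-shifted symplectic stacks $\T^*X[-1]$ and $\T^*F[-1]$, respectively. I would verify the compatibility by tracking the closed $2$-form data through the cotangent sequences above, reducing the check to matching the canonical duality pairing on $F\times_M E\dual[-1]$ contributed by $d\varphi$ with the $ds\dual$-pairing on $\bbL_M|_X[-1]$. As a sanity check, in the special case $\bsig=0$, both sides must specialize to the $(-2)$-shifted cotangent bundle $\T^*X[-2]$ with its canonical symplectic form.
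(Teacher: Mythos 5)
Your reduction of \eqref{i10} to \eqref{i9} is exactly the paper's first observation, and the conceptual picture you paint — unwinding both degeneracy loci as derived zero loci of sections of shifted cotangent complexes, using the cofiber sequence $E\dual|_X \to \bbL_M|_X \to \bbL_X$ for $X$ and the splitting of $\bbL_F$ into horizontal and vertical pieces for $F$, and matching the defining equations — is the correct one, and does agree in spirit with what the paper's key input (Lemma \ref{Lem:SympZero}, quoted from \cite{Park2}) achieves. Your linear-algebra computation of $d\varphi$ (vertical component $s$, horizontal component $\langle\lambda, ds\rangle$) is also correct.

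However, there is a genuine gap: you identify the underlying derived stacks only informally, via ``derived witness'' language, and you do not carry out the verification that the $(-2)$-shifted symplectic forms (which both arise from the Lagrangian intersection theorem of PTVV) are matched under this equivalence — you only say ``I would verify the compatibility by tracking the closed $2$-form data.'' This is precisely the non-trivial part. The paper avoids ever having to track the closed $2$-form data by reformulating both sides as a single \emph{symplectic zero locus} $\Zero^\symp_{M/B}(E\oplus E\dual, (s,t))$: the quadratic form on $E\oplus E\dual$ given by the evaluation pairing is manifestly symmetric in the two factors, so Lemma \ref{Lem:SympZero} identifies this symplectic zero locus with $\Zero(s)(t_*)$, and by symmetry also with $\Zero(t)(s_*)$. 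Taking $t = 0$ (with nullhomotopy supplied by the locked structure of $\bsig$) gives $\Zero(t) = F$, $\Zero(s) = X$, $t_* = \bsig|_X$ and $s_* = \bsig|_F + d\varphi$, and the symplectic equivalence falls out with no additional verification. Your approach would essentially require re-deriving that lemma from scratch, and until the closed $2$-form comparison is actually done (including the homotopy coherence issues in matching the two families of nullhomotopies), the proof is incomplete.

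A smaller point: the paper's theorem works with \emph{locked} $1$-forms on a relative morphism $g: M \to B$, which (by Remark \ref{Rem:n1}) package the extra data of a function $w$ on $B$ and a nullhomotopy of $w|_M$; your argument is phrased for closed $1$-forms in the absolute case. For $B$ a point these coincide (Remark \ref{48}), and the quoted statement is the absolute one, so this is not a defect of your argument per se — but if you wanted to recover the full Theorem \ref{Prop:LagMult} you would need to track how the underlying function $w$ transports. You also do not address the orientation discrepancy $(-1)^{\rank E}$, though that is not asserted in the quoted statement.
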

Note that \eqref{i10} follows from \eqref{i9} by letting $\bsig=df$. In particular, when $f\simeq 0$, we have
$$\T_X^*[-2]\simeq X\times_{0,\T^*_X[-1],0}X\simeq\Crit_X(0)\simeq \Crit_F(\varphi).$$

Combining Theorem \ref{i8} with Theorem \ref{i6}, we have the following. 
\begin{corollaryX}[Theorem \ref{Cor:Lefschetz}]\label{i16} With the above notation, 
if $M$ is a quasi-smooth Deligne-Mumford stack, $E$ is of tor-amplitude $[0,1]$ and $X$ is also quasi-smooth, then we have the equality
\beq\label{i11} [X,\bsig|_X]\virt=(-1)^\ell\cdot [F,\bsig|_F+d\varphi]\virt\in A_r(X(\sigma))\eeq
of cosection localized virtual cycles where $\ell=\rank(E)$ and $r=\rank\, \TT_X$. In particular, letting $\bsig\simeq 0$, we have the equality 
\beq\label{i12} [X]\virt =(-1)^\ell \cdot [F,d\varphi]\virt\in A_r(X).\eeq   
\end{corollaryX}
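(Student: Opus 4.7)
The plan is to chain Theorems \ref{i6} and \ref{i8} so that both sides of \eqref{i11} become virtual Lagrangian cycles of a common $(-2)$-shifted symplectic derived DM stack, and then to extract the sign $(-1)^{\ell}$ from a comparison of orientations between the two Lagrangian presentations.

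Before applying Theorem \ref{i6} on the $F$-side I would check its hypotheses. Because $E$ has tor-amplitude $[0,1]$, the relative cotangent complex $\bbL_{F/M} \simeq E[1]|_{F}$ has tor-amplitude $[-1,0]$, so $F \to M$ is quasi-smooth and representable affine; combined with the quasi-smoothness of $M$, this makes $F$ a quasi-smooth derived DM stack. The pullback $\bsig|_{F}$ is a $(-1)$-shifted closed 1-form, and $d\varphi$ is the exact $(-1)$-shifted closed 1-form associated to the $(-1)$-shifted function $\varphi$, so $\bsig|_{F}+d\varphi$ is a legitimate input. Applying Theorem \ref{i6} to both sides yields
\[
[X,\bsig|_{X}]\virt = [X(\bsig|_{X}),\btheta_{X}]^{\lag}, \qquad [F,\bsig|_{F}+d\varphi]\virt = [F(\bsig|_{F}+d\varphi),\btheta_{F}]^{\lag},
\]
and Theorem \ref{i8} supplies a canonical equivalence $\Phi: X(\bsig|_{X}) \xrightarrow{\sim} F(\bsig|_{F}+d\varphi)$ of $(-2)$-shifted symplectic stacks identifying $\btheta_{X}$ with $\btheta_{F}$. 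Under $\Phi$ the two virtual Lagrangian cycles live in the same Chow group, so the question reduces to comparing the orientation data used in the virtual Lagrangian cycle construction.

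The main obstacle is to show that this orientation discrepancy equals $(-1)^{\ell}$. The two canonical orientations arise from the Lagrangian presentations $X \rightrightarrows \T^{*}_{X}[-1]$ (zero section and $\bsig|_{X}$) and $F \rightrightarrows \T^{*}_{F}[-1]$ (zero section and $\bsig|_{F}+d\varphi$); although $\Phi$ preserves the $(-2)$-shifted symplectic form, the trivializations of the determinant of the induced symmetric 3-term obstruction theory that they produce differ by the determinant of $E$, contributing the scalar $(-1)^{\rank E}=(-1)^{\ell}$. I expect the cleanest route is a local check on an \'etale chart where $M$ is smooth and $E$ is a trivial vector bundle of rank $\ell$: there the comparison reduces to one between $\det(E)$ and $\det(E\dual[-1])$ in the induced orientation, computable via a Koszul resolution on $F$. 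Specializing to $\bsig \simeq 0$ then recovers the identity \eqref{i12} and reproduces the Chang-Li $p$-field sign convention.
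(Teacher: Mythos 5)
Your overall plan—pass both sides through Theorem \ref{i6} to virtual Lagrangian cycles, identify the two $(-2)$-shifted symplectic stacks via Theorem \ref{i8}, then compare orientations—is exactly the paper's proof, which is the one-line chain in \eqref{i15}. Your verification that $F \to M$ is quasi-smooth (hence $F$ is a quasi-smooth derived DM stack, and $\bsig|_F + d\varphi$ is a legitimate $(-1)$-shifted locked $1$-form) is also correct.

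The issue is that you treat the orientation discrepancy as open work to be done, propose a local {\'e}tale-chart/Koszul argument, and then do not carry it out ("I expect the cleanest route..."). But the orientation comparison is already a stated clause of Theorem \ref{Prop:LagMult} in the body of the paper: "Moreover, the canonical orientations on the left and right sides of \eqref{81} differ by a multiplication factor $(-1)^{\rank(E)}$." You should simply cite that; no re-derivation is required. If you did want to re-derive it, note that the paper's actual argument is not a local chart computation: it base-changes to $M \simeq B$, identifies both twisted cotangent bundles with the symplectic zero locus $W = \Zero^\symp_{M/M}(E\oplus E\dual,(s,0))$, and observes that the two canonical orientations on $\det(E\oplus E\dual)|_W \simeq \det(E)|_W\otimes\det(E\dual)|_W$ are exactly the pairings $p_E$ and $p_{E\dual}$ of \cite[Eq.~(56)]{OT}, which differ by $(-1)^{\rank E}$ by the Koszul sign rule. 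Your phrasing that the orientations "differ by the determinant of $E$, contributing $(-1)^{\rank E}$" is not a proof: $\det E$ is a line bundle, not a scalar, and the sign comes from the order of the tensor factors in the determinant pairing, not from $\det E$ itself. With the orientation clause of Theorem \ref{Prop:LagMult} in hand, your argument closes and matches the paper.
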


The equality \eqref{i11} is an immediate consequence of \eqref{i9} and \eqref{i14} (cf. \eqref{i15} with $B=\mathrm{pt}$). 

All the above constructions and theorems are worked out for the relative setting $g:M\to B$ below.  Note that the equality \eqref{i12} is the \emph{quantum Lefschetz without curves} proved in \cite{OTq} by an independent method. 

\subsection{Gauged linear sigma models}
The theory of gauged linear sigma models (GLSM for short) in \cite{FJR} aims at constructing 
\emph{cohomological field theories} for geometric invariant theory (GIT) quotients $V/\!/G$ of a vector space $V$ acted on linearly by a reductive group $G$. It further comes with a function $w:V/G\to \AA^1$, called the \emph{superpotential} whose critical locus $\Crit(w)$ plays a major role. 

The theory of GLSM in fact uses a larger group $\widehat{G}$ which acts on $V$ and contains $G$ as the kernel of a surjective homomorphism $\chi:\widehat{G}\to \GG_m$ such that $w$ is of weight 1 with respect to the residual action of $\GG_m$.   
The moduli stack $\M_{g,n}^{\omega^{\log}}(V/G) $ for a GLSM parameterizes 
\beq\label{108} (C,P\to V, P/G\cong \mathring{\omega}_C^\log)\eeq
where $C\in \M^\tw_{g,n}$ is a twisted prestable curve of genus $g$ with $n$ marked points, $P$ is a principal $\widehat{G}$-bundle over $C$, $P\to V$ is a $\widehat{G}$-equivariant map and $\mathring{\omega}_C^\log$ denotes the log dualizing line bundle (Example \ref{Ex:Dualizing}) minus the zero section. 

More generally, we can define such a moduli stack $\M^{\omega^{\log}}_{g,n}(U)$ for any Artin stack $U$ with a $\GG_m$-action (Definition \ref{Def:TwMaps}).
Given a $\GG_m$-equivariant function $w:U \to \AA^1$ of weight $1$, the critical locus $\Crit(w)$ has an induced $\GG_m$-action and we can also consider the moduli stack $\M^{\omega^{\log}}_{g,n}(\Crit(w))$.
We then have the evaluation maps (Definition \ref{Def:ev})
\beq\label{100}\M_{g,n}^{\omega^\log}(U)\lra I(U)^{\times n}\times \M_{g,n}^\tw,\eeq
\beq\label{101}  \M_{g,n}^{\omega^\log}(\Crit(w))\lra I(\Crit(w))^{\times n}\times \M_{g,n}^\tw\eeq 
where $I(U)$ and $I(\Crit(w))$ denote the (rigidified cyclotomic) inertia stacks (Definition \ref{Def:Inertia}) of $U$ and $\Crit(w)$ respectively.
Denote by $Iw : I(U) \to \AA^1$ the induced function.

In this paper, we construct a $(-1)$-shifted $\underline{w}$-locked 1-form $\bsig_w$ for the evaluation map \eqref{100} (Lemma \ref{Lem:GLMSCosection}) 
so that we have the cosection localized virtual cycle (Construction \ref{Const:3})
\beq\label{103}
[\M_{g,n}^{\omega^\log}(U)^\alpha,\sigma_w]\virt: A_*(\underline{w}^{-1}(0))\lra 
 A_*(\M_{g,n}^{\omega^\log}(U)^\alpha(\sigma_w))
\eeq
where $\underline{w}:I(U)^{\times n}\times \M_{g,n}^\tw\to \AA^1$ denotes the sum $\sum_{i=1}^n\pr_i^*(Iw)$, $\M_{g,n}^{\omega^\log}(U)^\alpha$ is any open Deligne-Mumford substack of $\M_{g,n}^{\omega^\log}(U)$ and $\M_{g,n}^{\omega^\log}(U)^\alpha(\sigma_w)$ is the degeneracy locus of the cosection in $\M_{g,n}^{\omega^\log}(U)^\alpha$.

We furthermore construct a $(-1)$-shifted exact Lagrangian structure on \eqref{101} over $\M^\tw_{g,n}$ (Proposition \ref{Prop:AKSZ}) which induces a $(-2)$-shifted $\underline{w}$-locked symplectic structure on the composition 
\beq\label{102}  \M_{g,n}^{\omega^\log}(\Crit(w))\lra I(\Crit(w))^{\times n}\times \M_{g,n}^\tw\lra I(U)^{\times n}\times \M_{g,n}^\tw.\eeq  
By Theorem \ref{60}, we then have the virtual Lagrangian cycle 
\beq\label{104}
[\M_{g,n}^{\omega^\log}(\Crit(w))^\alpha]^\lag: A_*(\underline{w}^{-1}(0))\lra 
 A_*(\M_{g,n}^{\omega^\log}(\Crit(w))^\alpha)
\eeq
where $\M_{g,n}^{\omega^\log}(\Crit(w))^\alpha=\M_{g,n}^{\omega^\log}(U)^\alpha\cap \M_{g,n}^{\omega^\log}(\Crit(w))$.

On top of these, our third main result enables us to compare \eqref{103} with \eqref{104}. 
\begin{theoremX}[Theorem \ref{Prop:GLSMmain}] \label{105}
Let $U$ be a smooth Artin stack with a $\GG_m$-action and a $\GG_m$-equivariant function $w:U \to \AA^1$ of weight $1$.
Then we have a canonical equivalence
\beq\label{106} 
\M_{g,n}^{\omega^\log}(U)(\sigma_w)\simeq  \M_{g,n}^{\omega^\log}(\Crit(w))
\eeq
as $\underline{w}$-locked $(-2)$-shifted symplectic fibrations over $I(U)^{\times n}\times \M_{g,n}^\tw$. 
\end{theoremX}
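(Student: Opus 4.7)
The plan is to realize both sides as fiber-product constructions and to invoke a ``functoriality of AKSZ'' principle: formation of the mapping stack commutes with formation of the derived critical locus. The underlying geometric observation is that $\Crit(w)$ is \emph{itself} the degeneracy locus of a closed $1$-form, namely the zero locus of $dw$ on the smooth stack $U$; and that the cosection $\bsig_w$ on the mapping stack is, by its very construction (Lemma \ref{Lem:GLMSCosection} and Proposition \ref{Prop:AKSZ}), the AKSZ integral transform of $dw$ along the universal twisted curve with log dualizing orientation. So the two sides of \eqref{106} should be the same degeneracy locus, simply formed on opposite sides of the mapping-stack functor.

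Concretely I would carry out three steps. First, identify
$$\Crit(w) \simeq U(dw) = U\times_{dw,\T^*U,0}U$$
as $(-1)$-shifted symplectic stacks; this is the standard PTVV Lagrangian intersection, using that $U$ is smooth and $dw$ is the canonical closed $1$-form whose zero locus is the classical critical locus. Second, apply $\M_{g,n}^{\omega^{\log}}(-)$ to this homotopy fiber product. Since mapping stacks commute with homotopy fiber products and the AKSZ transfer is functorial in Lagrangian correspondences, the two Lagrangian sections $0,dw$ of $\T^*U$ pass to the two Lagrangian sections $0,\bsig_w$ of the $(-1)$-shifted cotangent bundle of $\M_{g,n}^{\omega^{\log}}(U)$ relative to $I(U)^{\times n}\times\M_{g,n}^\tw$, yielding
$$\M_{g,n}^{\omega^{\log}}(\Crit(w)) \simeq \M_{g,n}^{\omega^{\log}}(U)\times_{0,\T^*[-1],\bsig_w}\M_{g,n}^{\omega^{\log}}(U) = \M_{g,n}^{\omega^{\log}}(U)(\sigma_w)$$
as derived stacks over $I(U)^{\times n}\times\M_{g,n}^\tw$. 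Third, match the symplectic and locked structures: on the right, the $(-2)$-shifted symplectic form arises by AKSZ from the $(-1)$-shifted symplectic form on $\Crit(w)$; on the left, from the Lagrangian intersection of the zero section with the graph of $\bsig_w$ inside the $(-1)$-shifted cotangent bundle of $\M_{g,n}^{\omega^{\log}}(U)$. By the same AKSZ functoriality these two descriptions produce canonically the same closed $2$-form. The $\underline{w}$-lockedness is transported from the fact that $w$ is a primitive of $dw$, via the residue behaviour at the marked points already built into $\bsig_w$ and $\underline{w}$.

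The main obstacle will be making this AKSZ functoriality sufficiently precise in the log-twisted, stacky setting used here. Specifically one must verify that $\M_{g,n}^{\omega^{\log}}(-)$ preserves the fiber product $U\times_{\T^*U}U$ through the twist by the log dualizing line bundle, and that the $(-2)$-shifted closed $2$-forms produced on either side agree, not merely up to homotopy, but canonically and with matching $\underline{w}$-locking data. Tracking the weight-$1$ $\GG_m$-action on $w$, the log dualizing orientation on the universal curve, and the evaluation at the marked points throughout the argument is delicate, but it is essentially a recasting of the construction already carried out in Proposition \ref{Prop:AKSZ}. The proof therefore reduces to unwinding that construction and checking its compatibility with the Lagrangian intersection that defines $\Crit(w)$.
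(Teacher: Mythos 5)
Your proposal is correct and is essentially the approach the paper takes: the paper's proof of Theorem \ref{Prop:GLSMmain} applies Proposition \ref{Prop:Weil} (Weil restrictions preserve twisted cotangent bundles and Lagrangian structures) to the presentations of $\M_{g,n}^{\omega^{\log}}(-)$ and $I(-)$ as Weil restrictions from Example \ref{Ex:TwMaps/Inertia}. The ``main obstacle'' you flag — making the AKSZ/Serre-duality identification of $\M_{g,n}^{\omega^{\log}}(\T^*U)$ with the relative shifted cotangent bundle precise, including the $\omega^{\log}$-twist, the boundary divisor, and the matching of Lagrangian sections $0\mapsto 0$, $dw\mapsto\bsig_w$ — is exactly the content of Proposition \ref{Prop:Weil} and the twisted AKSZ machinery of Appendix \ref{Sec:Weil}, which is where the real work lies.
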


Now Theorem \ref{i6} in the form of \eqref{i7} together with Theorem \ref{105} immediately implies the following.
\begin{corollaryX}[Corollary \ref{Cor:2}]\label{107} With the above notation, we have the equality
\[ [\M_{g,n}^{\omega^\log}(U)^\alpha,\sigma_w]\virt = [\M_{g,n}^{\omega^\log}(\Crit(w))^\alpha]^\lag\] 
of the two virtual cycles \eqref{103} and \eqref{104}. 
\end{corollaryX}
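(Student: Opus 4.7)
The plan is to combine Theorem \ref{105} with the relative equality \eqref{i7} of Theorem \ref{i6}. First, I would apply \eqref{i7} to the quasi-smooth morphism $g:\M_{g,n}^{\omega^\log}(U)^\alpha\to I(U)^{\times n}\times \M_{g,n}^\tw$ given by the evaluation map \eqref{100}, equipped with the $(-1)$-shifted $\underline{w}$-locked $1$-form $\bsig_w$ produced in Lemma \ref{Lem:GLMSCosection}. Writing $B=I(U)^{\times n}\times \M_{g,n}^\tw$, this yields the equality
\[
[\M_{g,n}^{\omega^\log}(U)^\alpha/B,\bsig_w]\virt = [\M_{g,n}^{\omega^\log}(U)^\alpha(\sigma_w)/B,\btheta]^{\lag}
\]
of virtual pullbacks $A_*(\underline{w}^{-1}(0))\to A_*(\M_{g,n}^{\omega^\log}(U)^\alpha(\sigma_w))$, where $\btheta$ is the canonical $(-2)$-shifted symplectic form on the degeneracy locus produced from $\bsig_w$ via the Lagrangian intersection description in \S\,1.3. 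By the construction of \eqref{103}, the left-hand side is precisely $[\M_{g,n}^{\omega^\log}(U)^\alpha,\sigma_w]\virt$.

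Next, I would invoke the canonical equivalence \eqref{106} of Theorem \ref{105}, which identifies $\M_{g,n}^{\omega^\log}(U)^\alpha(\sigma_w)$ with $\M_{g,n}^{\omega^\log}(\Crit(w))^\alpha$ as $\underline{w}$-locked $(-2)$-shifted symplectic fibrations over $B$. Because the virtual Lagrangian cycle of Theorem \ref{60} depends only on the $(-2)$-shifted symplectic structure (together with the lockedness data determining the source of the virtual pullback), this equivalence transports $[\M_{g,n}^{\omega^\log}(U)^\alpha(\sigma_w)/B,\btheta]^{\lag}$ to $[\M_{g,n}^{\omega^\log}(\Crit(w))^\alpha]^{\lag}$, which by the construction of \eqref{104} is the right-hand side of the desired identity. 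Concatenating these two equalities finishes the proof.

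The only potential obstacle is bookkeeping: one must verify that the two pieces of data on the two sides of \eqref{106} indeed match, namely that the $(-2)$-shifted symplectic form $\btheta$ on $\M_{g,n}^{\omega^\log}(U)^\alpha(\sigma_w)$ arising from the Lagrangian intersection of sections of the $(-1)$-shifted cotangent bundle of $\M_{g,n}^{\omega^\log}(U)^\alpha$ agrees with the one on $\M_{g,n}^{\omega^\log}(\Crit(w))^\alpha$ obtained by AKSZ-transfer from the $(-1)$-shifted exact Lagrangian structure on \eqref{101} established in Proposition \ref{Prop:AKSZ}, and likewise that the $\underline{w}$-lockedness on both sides is identified. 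Both compatibilities are exactly what Theorem \ref{105} asserts as an equivalence of $\underline{w}$-locked $(-2)$-shifted symplectic fibrations over $B$, so no further work is required beyond a careful citation.
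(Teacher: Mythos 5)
Your proposal is correct and follows exactly the argument the paper intends: apply the relative comparison \eqref{i7} (Theorem~\ref{Thm:main}) to the quasi-smooth evaluation map over $B=I(U)^{\times n}\times\M_{g,n}^{\tw}$ with the locked $1$-form of Lemma~\ref{Lem:GLMSCosection}, then transport the virtual Lagrangian cycle across the equivalence of $\underline{w}$-locked $(-2)$-shifted symplectic fibrations given by Theorem~\ref{Prop:GLSMmain}. The paper's proof is precisely this two-step combination, stated in the paragraph just preceding the corollary, so no further comparison is needed.
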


In \cite[Definition 6.1.2]{FJR}, Fan, Jarvis and Ruan assume the factorization  
\beq\label{109} \xymatrix{
&\omega_C\ar[d]\\
P\times_{\widehat{G}}V\ar@{.>}[ur]\ar[r]\ar[d] & \omega^\log_C\ar[d]\\
P\times_{\widehat{G}}V|_\Sigma \ar[r] & \omega_\Sigma
}\eeq
using the notation of \eqref{108} where $\Sigma$ denotes the union of marked points. 
Then they construct an absolute cosection and obtain the cosection localized virtual cycle \cite[Definition 6.1.6]{FJR} which in turn gives the correlators \cite[Definition 6.2.1]{FJR}.
In \eqref{109}, the relative tangent bundle of the left vertical arrow maps to $\omega_C$ which is in fact the relative cosection $\sigma_w$ above.   
With the factorization in \eqref{109}, the relative cosection $\sigma_w$ descends to the absolute cosection in \cite{FJR} and hence the virtual cycle in \cite{FJR} is a special case of ours. 
If we assume the Joyce conjecture (Conjecture \ref{Conj:Joyce}), 
the shifted Lagrangian structure on \eqref{101} gives rise to a variant of \eqref{104} (Corollary \ref{111}) which may lead us to cohomological field theories for all GLSMs with minimal assumptions. 

In \cite{CZ}, Cao and Zhao also deal with derived algebraic geometry enhancement of GLSM and construct a map similar to (17) with additional assumptions. See Remark 7.14 for more details and comparison with ours. 
They also ask if it is related to the virtual cycle in \cite{FJR}. 
Corollary \ref{107} is our answer to this question.

\subsection{The layout}
In \S\ref{S1}, we recall the cosection localization and its relative version in the classical setting. 
In \S\ref{Sec:1}, we construct the cosection localized virtual cycle from derived algebraic geometry point of view. For the relative setting, the cone reduction is handled by the notion of locked forms and the derived deformation space. In \S\ref{Sec:2}, we recall the construction of virtual Lagrangian cycles and review their properties. 
In \S\ref{Sec:3}, we state and prove our first main result, Theorem \ref{i6}. 
In \S\ref{S5}, we formulate the shifted Lagrange multpliers method and prove Theorem \ref{i8} and Corollary \ref{i16}. 
In \S\ref{S6}, we apply Corollary \ref{i16} to Gromov-Witten theory for quantum Lefschetz. 
In \S\ref{S4}, we prove Theorem \ref{105} and Corollary \ref{107} for gauged linear sigma models. In the Appendix, we prove a technical result for Weil restrictions that we need for a proof of Theorem \ref{105}.

\subsection*{Notation and conventions}
\begin{itemize}
\item We use the language of $\infty$-categories \cite{LurHTT,LurHA}.
\item All derived Artin stacks are assumed to be quasi-separated, $1$-Artin, of finite type over $\C$, and have affine stabilizers. DM stands for Deligne-Mumford. 
\item All derived DM stacks are assumed to be separated of finite type over $\C$.
\item The Chow group $A_*(M)$ of a derived Artin stack $M$ is defined to be Kresch's Chow group of its classical truncation $M_\cl$ with rational coefficients \cite{Kre}.
\item For a function $w$ on a stack $B$, its zero locus is denoted by 
$$B(w)=\Zero_B(w)=\Zero(w)=w^{-1}(0).$$
Likewise, the zero locus of a differential form $\sigma$ is denoted by $M(\sigma)=\Zero_M(\sigma)=\Zero(\sigma).$
\item The pullback of a function $f$ or a differential form $\sigma$ on a stack $Y$ by a morphism $X\to Y$ is denoted by $f|_X$ or $\sigma|_X$.
\end{itemize}

\begin{acknowledgements}
We thank Jeongseok Oh, Richard Thomas, and Renata Picciotto for sharing their drafts on related works.
The second named author would like to thank 
Dhyan Aranha, Younghan Bae, Yalong Cao, Adeel Khan, Tasuki Kinjo, Martijn Kool, Alexei Latyntsev, 
Chiu-Chu Melissa Liu, Charanya Ravi, Yukinobu Toda and Gufang Zhao for various discussions on revisiting cosection localization via derived algebraic geometry. 
\end{acknowledgements}

\bigskip
\section{Cosection localization revisited} \label{S1}

In this section, we review the construction of {\em cosection localized virtual cycles} in \cite{KL}.
For its relative version, we introduce the notion of a \emph{locked} cosection (Definition \ref{25}) and define the cosection localized virtual pullback (Construction \ref{28}).

\subsection{Virtual fundamental class}\label{S1.1}
Let $M$ be a DM stack with a \emph{perfect obstruction theory}
\beq\label{1} \phi_M:E_M\lra \bbL_M\quad\text{in }\ \  D^b\mathrm{Coh}(M)\eeq
where $E_M$ is a perfect complex of amplitude $[-1,0]$ and $\bbL_M$ is the cotangent complex of $M$. By the definition in \cite{BF},
$h^{-1}(\phi_M)$ is surjective and $h^0(\phi_M)$ is an isomorphism.  
By a standard argument (cf. \cite[p.1037]{KL}, \cite[\S6.1]{Kis}),  
there is an open cover $\{M_\imath\}$ of $M$ and a diagram
\beq\label{70} \xymatrix{
& F_\imath\ar[d]\\
M_\imath=s_\imath^{-1}(0)\ar@{^(->}[r]  &V_\imath\ar@/_/[u]_{s_\imath}
}\eeq 
where $F_\imath$ is a vector bundle over a smooth variety $V_\imath$ such that 
$\tau^{\ge -1}\phi_M|_{M_\imath}$ is isomorphic to the standard perfect obstruction theory
\beq\label{69}\xymatrix{
E_M|_{M_\imath}\ar[d]_{\tau^{\ge -1}\phi_M} & [F_\imath^\vee|_{M_\imath}\ar[r]^{ds_\imath} \ar[d]^{s_\imath} & \Omega_{V_\imath}|_{M_\imath}]\ar@{=}[d]\\
\tau^{\ge -1}\bbL_{M_\imath} &[\I_\imath/\I_\imath^2 \ar[r]^d& \Omega_{V_\imath}|_{M_\imath}]
}\eeq
where $\I_\imath$ denotes the ideal of $M_\imath\subset V_\imath$.  On this chart $M_\imath$,
the virtual fundamental class is 
$$[M_\imath]^\vir=s_\imath^![V_\imath] =0^!_{F_\imath|_{M_\imath}}[C_{M_\imath/V_\imath}]\ \ \in \ \ A_*(M_\imath)$$
where 
\beq\label{85} C_{M_\imath/V_\imath}\subset F_\imath|_{M_\imath}\eeq denotes the normal cone of $M_\imath$ in $V_\imath$. 
The virtual fundamental class of $M$ is a class $[M]^\vir\in A_*(M)$ whose restriction to $M_\imath$ is $[M_\imath]^\vir$ for all $\imath$. 

By \cite{BF}, $h^1/h^0(\phi_M^\vee)$ induces an inclusion
\beq\label{2} \mathfrak{C}_M\hookrightarrow h^1/h^0(E_M^\vee)=:\cE_M\eeq
of the intrinsic normal cone $\fC_M$ of $M$ into the vector bundle stack $\cE_M$.
In the open chart \eqref{70}, \eqref{2} is the quotient 
$$\mathfrak{C}_{M_\imath}=C_{M_\imath/V_\imath}/\T_{V_\imath}|_{M_\imath} \hookrightarrow F_\imath|_{M_\imath}/\T_{V_\imath}|_{M_\imath}=\cE_M|_{M_\imath}$$
of the embedding \eqref{85} by the action of $\T_{V_\imath}|_{M_\imath}$.

The \emph{virtual fundamental class} or the \emph{virtual cycle} of $M$ is now defined by 
\beq\label{3} [M]\virt=0^!_{\cE_M}[\fC_M]\ \  \in\  A_r(M), \quad r=\mathrm{rank}(E_M)\eeq 
by \cite{LT, BF} where $0^!_{\cE_M}$ is the Gysin map of the vector bundle stack $\cE_M$ in \cite{Kre}.

\subsection{Virtual pullback}\label{S1.2}
The construction of a virtual fundamental class is relativized by the virtual pullback as follows (cf. \cite{Man}). Let $g:M\to B$ be a morphism of a DM stack $M$ to an Artin stack $B$, equipped with a perfect obstruction theory 
\beq\label{4} \phi_{M/B}:E_{M/B}\lra \bbL_{M/B}\eeq
where $E_{M/B}$ is perfect of amplitude $[-1,0]$. By \cite{BF}, we have a natural embedding
\beq\label{5} \nu:\fC_{M/B}\hookrightarrow h^1/h^0(E^\vee_{M/B})=:\cE_{M/B}\eeq
of the intrinsic normal cone $\fC_{M/B}$ of the map $g$ into the vector bundle stack $\cE_{M/B}$. 
For the vector bundle stack $\cE_{M/B}$, we have the Gysin map (cf. \cite{Kre}) 
\beq\label{6} 
0^!_{\cE_{M/B}}:A_*(\cE_{M/B})\lra A_{*+r}(M).\eeq

On the other hand, the deformation $\bM^\circ_{M/B}$ to the normal cone, constructed in \cite{Ful, Kre} fits into the Cartesian diagram
\beq\label{7}\xymatrix{
\fC_{M/B} \ar@{^(->}[r]^i\ar[d] & \bM^\circ_{M/B} \ar[d] & B\times \bbG_m\ar@{_(->}[l]_j \ar[d] ^q\\
0\ar@{^(->}[r] & \bbA^1 &\bbG_m\ar@{_(->}[l]
}\eeq
which gives us the specialization map 
\beq\label{8} \sp: A_*(B)\lra A_*(\fC_{M/B})\xrightarrow{\nu_*} A_*(\cE_{M/B})\eeq
as the composition 
\beq\label{9}\xymatrix{
A_*(B)\ar[r]^-{q^*} & A_{*+1}(B\times \bbG_m)\ar[dr] & A_{*+1}(\bM^\circ_{M/B})\ar@{->>}[l]_-{j^*}\ar[d]^{i^!}\\ 
&&A_*(\fC_{M/B})\ar[r]^{\nu_*} & A_*(\cE_{M/B})
}\eeq
by using $i^!i_*=0$ and the embedding \eqref{5}. 
Note that the first map in \eqref{8} is classically called the specialization map but its composition with $\nu_*$ is better suited for comparison with the derived approach in \S\ref{ss:1.3}. 

Upon composing \eqref{6} and \eqref{8}, we obtain the \emph{virtual pullback} or the (relative) \emph{virtual cycle}
\beq\label{10} [M/B]\virt=g^!:A_*(B)\lra A_{*+r}(M),\quad r=\mathrm{rank}(E_{M/B})\eeq
in \cite{Man}, which reduces to \eqref{3} when $B$ is a point.

\subsection{Cosection localized virtual fundamental class}\label{S1.3}
Given a perfect obstruction theory \eqref{1} on a DM stack $M$, its \emph{obstruction sheaf} is defined as $Ob_M=h^1(E_M^\vee)$ whose vanishing implies the smoothness of $M$. Let
\beq\label{11} \sigma_M:Ob_M\lra \O_M\eeq
be a \emph{cosection} of the obstruction sheaf. Equivalently, a cosection of $Ob_M$ is a homomorphism
\beq\label{11a} \sigma_M:E_M^\vee[1]\lra \O_M\quad\text{in }\ \  D^b\mathrm{Coh}(M)\eeq
which induces a morphism 
\beq\label{12} h^1/h^0(\sigma):\cE_M\lra \bbA^1.\eeq
Let $\cE_M(\sigma)$ denote the kernel of \eqref{12}. 
In the local chart \eqref{70},  the restriction of \eqref{11} to $M_\imath$ extends to a homomorphism $\sigma_\imath:F_\imath\to \O_{V_\imath}$ so that we have 
a diagram
\beq\label{72} \xymatrix{
& F_\imath\ar[d]\ar[rd]^{\sigma_\imath}\\
M_\imath=s_\imath^{-1}(0)\ar@{^(->}[r]  &V_\imath\ar@/_/[u]_{s_\imath} \ar[r]^{\sigma_\imath\circ s_\imath}&\AA^1
}\eeq
and 
$$Ob_M|_{M_\imath}\cong \mathrm{coker}(ds_\imath:\T_{V_\imath}|_{M_\imath}\to F_\imath|_{M_\imath}), \ \ 
\cE_M(\sigma)|_{M_\imath}=[F_\imath|_{M_\imath}(\sigma)/\T_{V_\imath}|_{M_\imath}]$$
where $F_\imath|_{M_\imath}(\sigma)=\mathrm{ker}(F_\imath|_{M_\imath}\twoheadrightarrow Ob_M|_{M_\imath}\xrightarrow{\sigma} \O_{M_\imath}).$ 


For the cosection localization, it is important to know the relative position of $\mathfrak{C}_M$ and $
\cE_M(\sigma)$ in $\cE_M$.  In nice circumstances, the former is contained in the latter. 
\begin{definition}\label{71}
A cosection $\sigma_M:Ob_M\to \O_M$ is called \emph{closed} if there is an open cover $\{M_\imath\}$ of $M$ with local charts \eqref{72} satisfying $\sigma_\imath \circ s_\imath =0$ for all $\imath$.
\end{definition} 
By the graph construction of the normal cone $C_{M_\imath/V_\imath}$ in \cite{Ful}, if $\sigma_M$ is closed,  since $$\sigma_\imath \circ ts_\imath=t(\sigma_\imath\circ s_\imath)=0\ \ \ \text{as} \ \ t\to \infty,$$ we find that 
$$C_{M_\imath/V_\imath}\subset F_\imath(\sigma_\imath)=\mathrm{ker}(F_\imath\xrightarrow{\sigma_\imath} \O_{M_\imath}).$$
We thus have the inclusion 
$$\cE_M(\sigma)|_{M_\imath}=F_\imath(\sigma_\imath)|_{M_\imath}/\T_{V_\imath}|_{M_\imath} \supset  C_{M_\imath/V_\imath}/\T_{V_\imath}|_{M_\imath} =\mathfrak{C}_{M}|_{M_\imath}$$ for all $\imath$ and obtain the following.  
\begin{lemma}\label{73}
 If $\sigma_M:Ob_M\to \O_M$ is a closed cosection,   
$\mathfrak{C}_M\subset \cE_M(\sigma).$ 
\end{lemma}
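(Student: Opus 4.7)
The plan is to prove this locally on an open cover and then glue. By definition of a closed cosection, we can choose an open cover $\{M_\imath\}$ of $M$ with local charts as in \eqref{70} and extensions $\sigma_\imath:F_\imath\to \O_{V_\imath}$ such that $\sigma_\imath\circ s_\imath=0$ on $V_\imath$. Since the formation of the intrinsic normal cone, the vector bundle stack $\cE_M$, and the kernel substack $\cE_M(\sigma)$ are all local in nature, it suffices to verify $\mathfrak{C}_M|_{M_\imath}\subset \cE_M(\sigma)|_{M_\imath}$ on each chart.

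On a chart, $\mathfrak{C}_M|_{M_\imath}$ is the quotient $C_{M_\imath/V_\imath}/\T_{V_\imath}|_{M_\imath}$ and $\cE_M(\sigma)|_{M_\imath}$ is $F_\imath|_{M_\imath}(\sigma_\imath)/\T_{V_\imath}|_{M_\imath}$, so the statement reduces to the inclusion
\[
C_{M_\imath/V_\imath}\ \subset\ F_\imath|_{M_\imath}(\sigma_\imath)\ =\ \mathrm{ker}(\sigma_\imath:F_\imath|_{M_\imath}\to \O_{M_\imath})
\]
inside $F_\imath|_{M_\imath}$. I would establish this using Fulton's graph construction of the normal cone: $C_{M_\imath/V_\imath}\subset F_\imath|_{M_\imath}$ is obtained as the flat limit, as $t\to\infty$, of the graphs of the sections $t\cdot s_\imath$. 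Since $\sigma_\imath$ is $\O_{V_\imath}$-linear, we have $\sigma_\imath\circ(ts_\imath)=t(\sigma_\imath\circ s_\imath)=0$ identically in $t$; hence each graph lies in the subvariety $F_\imath(\sigma_\imath)\subset F_\imath$. Taking the flat limit preserves this containment, giving the desired inclusion.

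Passing to the quotient by $\T_{V_\imath}|_{M_\imath}$, we obtain $\mathfrak{C}_M|_{M_\imath}\subset \cE_M(\sigma)|_{M_\imath}$. These local inclusions are compatible on overlaps because they are all induced by restricting the canonical embedding $\mathfrak{C}_M\hookrightarrow \cE_M$ from \eqref{2}, whose image on each chart is computed as above; thus they glue to the global inclusion $\mathfrak{C}_M\subset \cE_M(\sigma)$.

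The only subtle point is the limit argument, which is standard: one should invoke the flatness of the degeneration in the graph construction (so that the scheme-theoretic limit is well-defined) and the closedness of $F_\imath(\sigma_\imath)\subset F_\imath$ (so that containment of the generic fiber forces containment of the special fiber). No other step poses any real obstacle.
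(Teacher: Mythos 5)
Your proof is correct and follows essentially the same route as the paper: restrict to the local charts guaranteed by the definition of a closed cosection, use Fulton's graph construction to show $C_{M_\imath/V_\imath}\subset F_\imath(\sigma_\imath)$ since the graphs of $ts_\imath$ map to zero under $\sigma_\imath$, and pass to the quotient by $\T_{V_\imath}|_{M_\imath}$. The paper leaves the gluing and the flat-limit justification implicit, which you make explicit, but the underlying argument is identical.
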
 
We will see in Remark \ref{Rem:2.8} that in most known examples, the cosections are closed by  Corollary \ref{89}. 

\medskip

Without the closedness, we still have the inclusion of the \emph{reduced support} of the intrinsic normal cone $\mathfrak{C}_M$ into the kernel $\cE_M(\sigma)$ of $\sigma_M$ by the following \emph{cone reduction}. 
\begin{proposition} \label{14a}\cite[Proposition 4.3]{KL} 
Let $M$ be a DM stack equipped with a pefect obstruction theory \eqref{1} and a cosection \eqref{11}. Then the reduced support of $\fC_M$ is contained in $\cE_M(\sigma)$ so that we have a class 
\beq\label{15} [\fC_M]\in A_0(\cE_M(\sigma)).\eeq 
\end{proposition}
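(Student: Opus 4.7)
My plan is to reduce to local charts as in \eqref{70}--\eqref{72}, where $M_\imath = s_\imath^{-1}(0)$ for a section $s_\imath$ of a vector bundle $F_\imath$ on a smooth $V_\imath$, the cosection extends to $\sigma_\imath \colon F_\imath \to \O_{V_\imath}$, and the intrinsic objects restrict as $\fC_M|_{M_\imath} = C_{M_\imath/V_\imath}/T_{V_\imath}|_{M_\imath}$ and $\cE_M(\sigma)|_{M_\imath} = F_\imath|_{M_\imath}(\sigma_\imath)/T_{V_\imath}|_{M_\imath}$. Since set-theoretic containment in a closed substack is a local condition, it is enough to prove the chart-wise inclusion $(C_{M_\imath/V_\imath})_\red \subset F_\imath|_{M_\imath}(\sigma_\imath)$ and descend through the quotient, noting that the translation action of $T_{V_\imath}|_{M_\imath}$ preserves $F_\imath|_{M_\imath}(\sigma_\imath)$ precisely because $\sigma_\imath \circ ds_\imath|_{M_\imath} = 0$.

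The key computation is that the composite $\sigma_\imath \circ s_\imath \colon V_\imath \to \AA^1$ has vanishing $1$-jet along $M_\imath$: by the Leibniz rule and $s_\imath|_{M_\imath} = 0$,
\[
d(\sigma_\imath \circ s_\imath)\big|_{M_\imath} \;=\; \sigma_\imath|_{M_\imath} \circ ds_\imath|_{M_\imath} \colon T_{V_\imath}|_{M_\imath} \lra \O_{M_\imath},
\]
and this vanishes because the cosection $\sigma_M$ is defined on $Ob_M = \mathrm{coker}(ds_\imath|_{M_\imath})$. Writing $J = \sqrt{\I_\imath}$ for the ideal of $(M_\imath)_\red$, one has $\sigma_\imath \circ s_\imath \in \I_\imath$ with $d(\sigma_\imath \circ s_\imath) \in \I_\imath \cdot \Omega_{V_\imath}$, which using smoothness of $V_\imath$ forces $\sigma_\imath \circ s_\imath \in J^2$.

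To deduce the inclusion on reduced supports, I would use the graph / deformation-to-the-normal-cone realization of $C_{M_\imath/V_\imath}$: any closed point of $(C_{M_\imath/V_\imath})_\red$ is obtained as the leading coefficient $v(0)$ in an expansion $\phi^* s_\imath = t^n v(t)$ along a smooth arc $\phi \colon \mathrm{Spec}\, k[\![t]\!] \to V_\imath$ through $(M_\imath)_\red$, with $v(0) \neq 0$ in $F_\imath|_{\phi(0)}$. Pulling back $\sigma_\imath \circ s_\imath \in J^2$ along $\phi$ yields
\[
t^n \sigma_\imath(v(t)) \;=\; \phi^*(\sigma_\imath \circ s_\imath) \;\in\; (\phi^* J)^2,
\]
and by choosing $\phi$ transverse to $(M_\imath)_\red$ the order estimate forces $\sigma_\imath(v(0)) = 0$. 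Gluing the local inclusions gives $(\fC_M)_\red \subset \cE_M(\sigma)$, so that $[\fC_M]$ naturally defines a class \eqref{15} in $A_0(\cE_M(\sigma))$. I expect the main obstacle to be justifying that every point of $(C_{M_\imath/V_\imath})_\red$ is realizable by an arc with the right transversality, or equivalently translating the algebraic condition $\sigma_\imath \circ s_\imath \in J^2$ into nilpotence of the corresponding degree-$1$ function on $C_{M_\imath/V_\imath} = \mathrm{Spec}\bigoplus_n \I_\imath^n/\I_\imath^{n+1}$; this is especially delicate when $M_\imath$ is non-reduced and the conormal map $\I_\imath/\I_\imath^2 \to \Omega_{V_\imath}|_{M_\imath}$ has a nontrivial kernel.
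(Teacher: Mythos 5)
The paper does not prove Proposition~\ref{14a}; it cites \cite[Prop.~4.3]{KL}, so there is no in-paper proof to compare against. Your local-chart setup and the Leibniz computation $d(\sigma_\imath\circ s_\imath)|_{M_\imath}=\sigma_\imath|_{M_\imath}\circ ds_\imath|_{M_\imath}=0$, hence $d(\sigma_\imath\circ s_\imath)\in\I_\imath\,\Omega_{V_\imath}$, are correct and are indeed the key input. However, the next step — ``which using smoothness of $V_\imath$ forces $\sigma_\imath\circ s_\imath\in J^2$'' with $J=\sqrt{\I_\imath}$ — is false, and this is a genuine gap. On a smooth variety the set $\{f : f\in J,\ df\in J\Omega_V\}$ is (by Zariski--Nagata) the second \emph{symbolic} power $J^{(2)}$, which strictly contains $J^2$ in general. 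Concretely, take $V_\imath=\AA^3$, $F_\imath=\O^3$, $s_\imath=(xy,yz,xz)$, $\sigma_\imath=\tfrac{1}{3}(z,x,y)$, so $\I_\imath=J=(xy,yz,xz)$ and $\sigma_\imath\circ s_\imath=xyz$: then $d(xyz)=yz\,dx+xz\,dy+xy\,dz\in\I_\imath\Omega_V$, but $J^2$ is generated in degree $4$, so $xyz\notin J^2$ (the proposition itself still holds here, so the route, not the statement, is what fails). A second, independent problem is the order estimate: even granting $\sigma_\imath\circ s_\imath\in J^2$ and $\phi$ transverse to $(M_\imath)_{\red}$ so that $\phi^*J=(t)$, you only get $t^n\sigma_\imath(v(t))\in(t^2)$, which is vacuous once $n\geq 2$; and when $\I_\imath\neq J$ (e.g.\ $\I_\imath=(x^2)$ on $\AA^1$) every arc realizing a nonzero cone point necessarily has $n\geq 2$, so $n=1$ cannot be arranged.

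A correct argument (essentially that of \cite{KL}) works with $\I_\imath$ itself and differentiates along the arc instead of squaring an ideal. With $\phi^*s_\imath=t^n v(t)$ and $v(0)\neq 0$ one has $\phi^*\I_\imath=(t^n)$; then $d(\sigma_\imath\circ s_\imath)\in\I_\imath\Omega_{V_\imath}$ gives $\tfrac{d}{dt}\phi^*(\sigma_\imath\circ s_\imath)\in(t^n)$, and integrating (using that $\sigma_\imath\circ s_\imath$ vanishes at $\phi(0)\in M_\imath$) yields $\phi^*(\sigma_\imath\circ s_\imath)\in(t^{n+1})$. Comparing with $\phi^*(\sigma_\imath\circ s_\imath)=t^n\,\sigma_\imath(\phi(t))(v(t))$ then forces $\sigma_\imath(\phi(0))(v(0))=0$, which is the cone reduction at the cone point $v(0)$. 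The remaining issue you correctly flag — reducing to arcs through a generic point of each irreducible component of $(C_{M_\imath/V_\imath})_{\red}$, using that $\ker\sigma_\imath$ is closed — is then the only thing left to justify, and it is standard; but the $J^2$ step and the resulting transversality-based order estimate should be replaced entirely.
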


\medskip 

Let $M(\sigma)$ denote the \emph{degeneracy locus} of $\sigma_M:Ob_M\to \O_M$, i.e. the closed substack of $M$ defined by the image of $\sigma_M$. In \cite[Proposition 3.3]{KL}, the Gysin map 
$0^!_{\cE_M}$ in \cite{Kre} is localized to a map 
\beq\label{13} 0^!_{\cE_M,\sigma}:A_*(\cE_M(\sigma))\lra A_{*+r}(M(\sigma)), \quad r=\rank \, E_M\eeq
which fits into the commutative diagram
\beq\label{14}\xymatrix{
A_*(\cE_M(\sigma))\ar[r]\ar[d]_{0^!_{\cE_M,\sigma}} & A_*(\cE_M)\ar[d]^{0^!_{\cE_M}}\\
A_{*+r}(M(\sigma)) \ar[r]^{i_*} & A_{*+r}(M)}\eeq
whose horizontal arrows are pushforwards by the inclusion maps. 
We will review the construction of $ 0^!_{\cE_M,\sigma}$ in Construction \ref{Const:2b} below. 


The \emph{cosection localized virtual fundamental class} is now defined as 
\beq\label{16} [M,\sigma]\virt=[M]\virt_\sigma =0^!_{\cE_M,\sigma}[\fC_M]\in A_r(M(\sigma))\eeq
by using \eqref{13} and \eqref{15}. 
By \eqref{14}, it is clear that  $i_*[M,\sigma]^\vir=[M]\virt.$

\subsection{Cosection localized virtual pullback}\label{S1.4} 
In this subsection, we relativize \eqref{16} to construct the cosection localized virtual pullback. Let 
$g:M\to B$ be a morphism of a DM stack to an Artin stack equipped with a perfect obstruction theory \eqref{4} and a cosection 
\beq\label{17}  \sigma_{M/B}:Ob_{M/B}\lra \O_M\eeq
where $Ob_{M/B}=h^1(E^\vee_{M/B}).$ 

As in \S\ref{S1.2}, we have the natural embedding \eqref{5} and as in \S\ref{S1.3}, we let 
$\cE_{M/B}(\sigma)$ denote the kernel of $\sigma_{M/B}$ while $M(\sigma)$ denotes the degeneracy locus of $\sigma_{M/B}$. 
By \cite[Proposition 3.3]{KL} again, the Gysin map 
$0^!_{\cE_{M/B}}$ in \cite{Kre} is localized to a map 
\beq\label{18} 0^!_{\cE_{M/B},\sigma}:A_*(\cE_{M/B}(\sigma))\lra A_{*+r}(M(\sigma))\eeq
which fits into the commutative diagram
\beq\label{19}\xymatrix{
A_*(\cE_{M/B}(\sigma))\ar[r]\ar[d]_{0^!_{\cE_{M/B},\sigma}} & A_*(\cE_{M/B})\ar[d]^{0^!_{\cE_{M/B}}}\\
A_{*+r}(M(\sigma)) \ar[r]^{i_*} & A_{*+r}(M)}\eeq
whose horizontal arrows are pushforwards by the inclusion maps. 

Unlike the absolute case in Proposition \ref{14a}, the cone reduction does not hold in general for the relative case (cf. Example \ref{23}). So we make the following. 
\begin{assumption}[Cone reduction]\label{20}  The reduced support of  $\fC_{M/B}$ is contained in $\cE_{M/B}(\sigma)$ so that we have the pushforward map 
\beq\label{21} \nu_*:A_*(\fC_{M/B})\lra A_*(\cE_{M/B}(\sigma))\eeq
and the specialization map
\beq\label{54} \sp:A_*(B)\lra A_*(\fC_{M/B})\xrightarrow{\nu_*} A_*(\cE_{M/B}(\sigma))\eeq 
which comes from \eqref{9} with $\cE_{M/B}$ replaced by $\cE_{M/B}(\sigma).$
\end{assumption}
\begin{construction}\label{21a} \cite[Definition 2.9]{CKL}
Under Assumption \ref{20}, the \emph{cosection localized virtual pullback} is defined as the composition of \eqref{54} and \eqref{18}:  
\beq\label{22} [M/B,\sigma]^\vir=g^!_\sigma: A_*(B)\xrightarrow{\sp}  A_*(\cE_{M/B}(\sigma))\xrightarrow{0^!_{\cE_{M/B},\sigma}} A_{*+r}(M(\sigma)).
\eeq 
\end{construction}
By definition, when $B$ is a point, $[M/B,\sigma]^\vir=[M,\sigma]^\vir$. 

\medskip

A particularly nice case where Assumption \ref{20} holds is the following. 
\begin{lemma} \cite[Lemma 2.8]{CKL}\label{v3} 
Suppose furthermore that $M$ is equipped with a perfect obstruction theory \eqref{1} that fits into a commutative diagram
\[\xymatrix{
& E_M\ar[d]_{\phi_M} \ar[r]^\eta & E_{M/B}\ar[d]^{\phi_{M/B}}\\
\bbL_B|_M\ar[r] & \bbL_M\ar[r] & \bbL_{M/B}
}\]
whose bottom row is the natural exact triangle of cotangent complexes for the morphism $g:M\to B$. 
If the cosection $\sigma_{M/B}:Ob_{M/B}\to \O_M$ lifts to a cosection 
$\sigma_M:Ob_M\to \O_M$ via the map $$h^1(\eta^\vee):Ob_{M/B}\lra Ob_M,$$ Assumption \ref{20} holds. 
\end{lemma}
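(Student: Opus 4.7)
The strategy is to reduce Assumption \ref{20} for the relative data to the absolute cone reduction (Proposition \ref{14a}) applied to $M$ with the perfect obstruction theory $\phi_M$ and the lifted cosection $\sigma_M$. The commutative diagram of obstruction theories in the hypothesis is precisely what one needs to transfer the absolute statement to the relative one.

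The first step is to make the compatibility of the two obstruction theories explicit at the level of vector bundle stacks and intrinsic normal cones. Dualizing $\eta\colon E_M\to E_{M/B}$ and applying $h^1/h^0$ yields a morphism $\pi\colon\cE_{M/B}\to\cE_M$ of vector bundle stacks on $M$, and the exact triangle $\bbL_B|_M\to\bbL_M\to\bbL_{M/B}$ induces a canonical morphism $\fC_{M/B}\to\fC_M$ of intrinsic normal cones. I would verify that these fit into a commutative square
\[
\xymatrix{
\fC_{M/B}\ar@{^(->}[r]\ar[d] & \cE_{M/B}\ar[d]^{\pi}\\
\fC_M\ar@{^(->}[r] & \cE_M.
}
\]
In a local chart \eqref{70} in which $M=s^{-1}(0)\subset V$ with $V$ smooth over $B$ and, after possibly shrinking, smooth over $\C$, this is just the square of quotient maps $C_{M/V}/\T_{V/B}|_M\to C_{M/V}/\T_V|_M$ and $F|_M/\T_{V/B}|_M\to F|_M/\T_V|_M$; in particular both vertical arrows are surjective.

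The second step is to translate the lift condition $\sigma_{M/B}=\sigma_M\circ h^1(\eta^\vee)$ into a compatibility of cosections on the bundle stacks. The claim is that the function $\cE_{M/B}\to\AA^1$ induced by $\sigma_{M/B}$ factors as $\cE_{M/B}\xrightarrow{\pi}\cE_M\to\AA^1$, with the second arrow induced by $\sigma_M$. Equivalently,
\[
\cE_{M/B}(\sigma_{M/B})\;=\;\pi^{-1}\bigl(\cE_M(\sigma_M)\bigr).
\]
Locally this comes down to the elementary observation that the lift forces the two cosections to extend to the same map $\widetilde\sigma\colon F|_M\to\O_M$.

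Putting these pieces together finishes the proof. By Proposition \ref{14a} applied to the absolute data $(\phi_M,\sigma_M)$, the reduced support of $\fC_M$ is contained in $\cE_M(\sigma_M)$. The commutative square then implies that the reduced support of $\fC_{M/B}$ maps into $|\fC_M|\subset\cE_M(\sigma_M)$, and hence must lie in $\pi^{-1}(\cE_M(\sigma_M))=\cE_{M/B}(\sigma_{M/B})$, which is precisely Assumption \ref{20}. The main technical point is the functorial construction of the map $\fC_{M/B}\to\fC_M$ and the commutativity of the square above in the Artin-stack setting for $B$; this follows from the exact triangle of cotangent complexes for $M\to B\to\mathrm{Spec}\,\C$ and is matched with the bundle stacks $\cE_M$, $\cE_{M/B}$ via the commutative diagram of obstruction theories furnished by the hypothesis.
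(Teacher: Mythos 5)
Your reduction to the absolute cone reduction (Proposition~\ref{14a}) is exactly what the paper intends: the one-line remark after the lemma, ``the cone reduction for $M\to B$ follows from that for $M$,'' packages precisely the argument you outline, and the translation of the lift hypothesis into the equality $\cE_{M/B}(\sigma_{M/B})=\pi^{-1}\bigl(\cE_M(\sigma_M)\bigr)$ is correct (it follows by applying $h^1/h^0$ to the factorization $\sigma_{M/B}\simeq\sigma_M\circ\eta^\vee[1]$ of derived maps).

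The weak point is the local verification of the square relating $\fC_{M/B}\hookrightarrow\cE_{M/B}$ to $\fC_M\hookrightarrow\cE_M$. You take a chart $M=s^{-1}(0)\subset V$ with $V$ smooth over $B$ and claim that ``after possibly shrinking'' $V$ is also smooth over $\C$. If $B$ is a singular Artin stack this is impossible: $V$ smooth over $B$ and smooth over $\C$ would force $B$ itself to be smooth over $\C$. The cited \cite[Lem.~2.8]{CKL} assumes a smooth base, so the common-chart argument is fine there, but the present paper allows an arbitrary (derived) Artin stack $B$. To make the verification chart-free, note that dualizing the hypothesis diagram and applying $h^1/h^0$ already yields a commuting square from $\mathfrak{N}_{M/B}\to\mathfrak{N}_M$ (induced by $\T_{M/B}\to\T_M$) over $\pi:\cE_{M/B}\to\cE_M$; the intrinsic normal cones sit as closed substacks $\fC_{M/B}\subset\mathfrak{N}_{M/B}$ and $\fC_M\subset\mathfrak{N}_M$, and the fact that $\mathfrak{N}_{M/B}\to\mathfrak{N}_M$ carries $\fC_{M/B}$ into $\fC_M$ is the standard functoriality of intrinsic normal cones for the composite $M\to B\to\Spec\,\C$, which can be established by a double local-embedding argument (embedding $M$ simultaneously into a scheme smooth over $B$ and a scheme smooth over $\C$, then into their product) or via the induced morphism of deformation spaces $\bM^\circ_{M/B}\to\bM^\circ_{M}$. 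With that substitution the rest of your argument goes through unchanged.
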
 
In this case, the cone reduction for $M\to B$ follows from that for $M$.  

\medskip

In general, there is an obstruction to lifting $\sigma_{M/B}$ to $\sigma_M$ as we can see in the example below which arises naturally in the Fan-Jarvis-Ruan-Witten theory (cf. \cite[(4.3)]{KLq}).
\begin{example}\label{23} 
Let $B$ be a smooth variety and $q:V\to B$ be a smooth morphism. Let $F\to V$ be a vector bundle and $s$ be a section of $F$ whose zero locus is denoted by $M$. Suppose we have a regular function $w:B\to \bbA^1$ and a cosection $\sigma:F\to \O_V$ that satisfy 
$$w\circ q=\sigma\circ s:V\lra \bbA^1.$$
In summary, we have a diagram 
\beq\label{24}\xymatrix{
& F\ar[dr]^\sigma\ar[d]\\
M=s^{-1}(0)\ar@{^(->}[r] \ar[dr]_g &V\ar@/_/[u]_s\ar[d]^q\ar[r]^{\sigma\circ s}_{w\circ q} & \bbA^1\\
& B.\ar[ur]_w
}\eeq
Since $B$, $V$ and $q$ are smooth, we have perfect obstruction theories
$$E_M=[F^\vee|_M\xrightarrow{ds} \Omega_V|_M], \quad E_{M/B}=[F^\vee|_M\xrightarrow{ds} \Omega_{V/B}|_{M}]$$
which fit into the morphism of exact triangles 
$$\xymatrix{
\Omega_B|_M\ar[r]\ar@{=}[d] & E_M \ar[r]\ar[d]_{\phi_M} & E_{M/B}\ar[d]^{\phi_{M/B}}\\
\Omega_B|_M\ar[r] & \bbL_M\ar[r] & \bbL_{M/B}.
}$$  
From the long exact sequence of the dual of the top row, we have a commutative diagram 
$$\xymatrix{
\T_B|_M\ar[r]^\delta\ar[dr]_{\sigma_{M/B}\circ \delta} & Ob_{M/B}\ar[d]^{\sigma_{M/B}}\ar[r] & Ob_M\ar@{.>}[dl]^{\sigma_M}\ar[r] & 0\\
&\O_M }$$
where $\sigma_{M/B}$ is induced from $\sigma$. 
As the horizontal row is exact, the cosection $\sigma_{M/B}$ of $Ob_{M/B}$ lifts to a cosection  $\sigma_M$ of $Ob_M$ if and only if $$\sigma_{M/B}\circ \delta=0\in H^0(M,\Omega_B|_M).$$ It is straightforward to see that  
$\sigma_{M/B}\circ \delta$ is in fact the pullback of the 1-form $dw$ which is nonzero for homogeneous polynomials like $\sum_{i=1}^5z_i^5$ on $B=\CC^5$. 
Moreover, the support of the normal cone $C_{M/V}$ of $M$ in $V$ is not contained in the kernel $F(\sigma)$ of $\sigma$. 
 \end{example}

\subsection{Locked cosections and localized virtual pullback}\label{S1.5}  
We will see by Remark \ref{Rem:2.8} and Corollary \ref{89} that in most known examples where we have a cosection $\sigma_{M/B}$, the morphism $g:M\to B$ locally fits into a diagram like \eqref{24} as follows. 
\begin{definition}\label{25}
Let $g:M\to B$ be a morphism of a DM stack $M$ to an Artin stack $B$. Let $\phi_{M/B}:E_{M/B}\to \bbL_{M/B}$ be a perfect obstruction theory. Let $w:B\to \bbA^1$ be a regular function. A cosection $\sigma_{M/B}:Ob_{M/B}=h^1(E^\vee_{M/B})\to \O_M$ is called $w$-\emph{locked} if there is an open cover $\{M_\imath\}$ of $M$ and a commutative diagram
\beq\label{26} \xymatrix{
& F_\imath\ar[dr]^{\sigma_\imath}\ar[d]\\
M_\imath=s_\imath^{-1}(0)\ar@{^(->}[r] \ar[dr]_{g|_{M_\imath}} &V_\imath\ar@/_/[u]_{s_\imath}\ar[d]^{q_\imath}\ar[r]^{\sigma_\imath\circ s_\imath}_{w\circ q_\imath} & \bbA^1\\
& B\ar[ur]_w
}\eeq
such that \begin{enumerate}
\item[(i)] $q_\imath$ is smooth and the perfect obstruction theory $\phi_{M/B}$ restricted to $M_\imath$ is isomorphic to the standard one with  $$E_{M/B}|_{M_\imath}\cong [F_\imath^\vee|_{M_\imath}\xrightarrow{ds_\imath} \Omega_{V_\imath/B}|_{M_\imath}];$$
\item[(ii)] $\sigma_\imath\circ s_\imath=w\circ q_\imath$;
\item[(iii)] $\sigma_{M/B}|_{M_\imath}\circ (F_\imath|_{M_\imath}\twoheadrightarrow Ob_{M/B}|_{M_\imath})=\sigma_\imath|_{M_\imath}.$
\end{enumerate}
A cosection is called \emph{exact} if it is $w$-locked for $w=0:B\to \AA^1$. 
\end{definition}
\begin{remark}
(1) When $B$ is a point, $w=0$ unless $M= \emptyset$ and hence the lockedness (Definition \ref{25}) is the same as the closedness (Definition \ref{71}) as well as the exactness. 

(2) By (ii) above, $g:M\to B$ factors through $B(w)=w^{-1}(0)\subset B$ if $\sigma_{M/B}$ is $w$-locked. 
\end{remark}

\medskip

By pulling \eqref{26} to $B(w)=w^{-1}(0)$, we have a commutative diagram 
\beq\label{27} \xymatrix{
& F_\imath|_{V_\imath(w)}\ar[drr]^{\sigma_\imath}\ar[d]\\
M_\imath=s_\imath^{-1}(0)\ar@{^(->}[r] \ar[dr]_{g|_{M_\imath}} &V_\imath(w)\ar@/_/[u]_{s_\imath}\ar[d]^{q_\imath}\ar[rr]^{\sigma_\imath\circ s_\imath=0} && \bbA^1\\
& B(w)
}\eeq
where $V_\imath(w)=V_\imath\times_BB(w).$ 
As in Lemma \ref{73}, 
we then have the inclusion 
\beq\label{30}\fC_{M/B(w)}\subset \cE_{M/B}(\sigma)\eeq 
of the intrinsic normal cone into the kernel of $\sigma_{M/B}$ in $\cE_{M/B}$. 
Using this, we can now define the cosection localized virtual pullback as follows.
\begin{construction}\label{28}
Let $\sigma_{M/B}$ be a $w$-locked cosection under the hypothesis of Definition \ref{25}. 
Let $\bsig=(\sigma_{M/B},w)$. 
The \emph{localized virtual pullback} is the composition
\beq\label{29} [M/B,\bsig]^\vir=g^!_{\bsig}:A_*(B(w))\xrightarrow{\sp} A_*(\cE_{M/B}(\sigma))\xrightarrow{0^!_{\cE_{M/B},\sigma}} A_{*+r}(M(\sigma))\eeq 
of \eqref{18} and the specialization map \eqref{54} for $M\to B(w)$ using \eqref{30}.  
\end{construction} 
Note that $g^!_{\bsig}=[M/B,\bsig]^\vir$ is not defined over $A_*(B)$ but over $A_*(B(w))$. When $\sigma_{M/B}$ is exact, \eqref{29} is the same as \eqref{22}. 
\begin{remark}\label{31}
In \cite{KLk}, the K-theoretic cosection localization was worked out and in \cite{KP1}, it was further lifted to algebraic cobordism and more. It is straightforward to check that Construction \ref{28} works for these refined homology theories. Moreover, in \cite{Kis, KS}, the cosection localization was extended to the setting of semi-perfect obstruction theory and almost perfect obstruction theory. Construction \ref{28} works for these extended settings as well. 
\end{remark}

\bigskip
\section{Cosection localization, derived} \label{Sec:1}

In this section, we carefully work out the construction of {\em cosection localized virtual cycles} in \S\ref{S1} from the view point of derived algebraic geometry. See \cite{Toen} for a quick introduction to derived algebraic geometry and \cite{Park3} for virtual cycles from the derived perspective. 

In the derived setting, we consider a finitely presented morphism $g:M\to B$ from a derived DM stack $M$ to a derived Artin stack $B$. 
The classical truncation $M_{\cl}$ of $M$ is a classical DM stack and the inclusion map $M_{\cl}\hookrightarrow M$ induces an adjunction homomorphism 
\beq\label{40}\bbL_{M/B}|_{M_{\cl}}\lra \bbL_{M_{\cl}/B_{\cl}}\eeq
of cotangent complexes.
When $g$ is quasi-smooth,
$\bbL_{M/B}|_{M_{\cl}}$ is perfect of amplitude $[-1,0]$ and \eqref{40} is a perfect obstruction theory for the classical truncation of $g$ which gives us the virtual pullback \eqref{10} in \S\ref{S1} (or the virtual fundamental class \eqref{3} when $B$ is a point). 

A cosection of the obstruction sheaf $Ob_{M_{\cl}/B_{\cl}}=h^1(\bbL_{M/B}|_{M_{cl}}^\vee)$ in this case is a map 
\beq\label{78} \sigma_{M/B}:\bbL_{M/B}^\vee|_{M_{\cl}}[1] \lra \O_{M_{\cl}}.\eeq 
For all the moduli spaces where cosections were discovered, $\sigma_{M/B}$ lifts to a homomorphism  
\beq\label{79} \sigma:\bbL_{M/B}^\vee[1]=\TT_{M/B}[1]\lra \O_M\eeq 
which is nothing but a \emph{$(-1)$-shifted 1-form} for $g:M\to B$. 
Conversely, the restriction of a $(-1)$-shifted 1-form \eqref{79} to the classical truncation $M_\cl$ is a cosection \eqref{78}. 

In \S\ref{S1}, we found that the cosection localized virtual pullback requires the cone reduction (Assumption \ref{20}) and Construction \ref{21a} works for a $w$-locked cosection (Definition \ref{25})  where $w$ is a function on $B$.  If we enhance a cosection $\sigma_{M/B}$ to a $(-1)$-shifted 1-form $\sigma$ in the previous paragraph, the notion of a $w$-locked cosection is enhanced to a \emph{$w$-locked $(-1)$-shifted 1-form} $\bsig$ (Definition \ref{41}) which was already introduced in \cite{Park2} to resolve the deformation invariance issue of 4-dimensional Donaldson-Thomas invariants. In Corollary \ref{89}, we show that a $w$-locked $(-1)$-shifted 1-form always gives us a $w$-locked cosection on the classical truncation. 

In \S\ref{ss:1.1} and \S\ref{ss:1.2}, we review the construction of (cosection) localized Gysin map \eqref{13} or \eqref{18} for a perfect complex on a DM stack. In \S\ref{ss:1.3}, we use the deformation space to define the specialization map and the localized virtual pullback for a locked $(-1)$-shifted 1-form. 
This step-by-step construction will be compared with that for the virtual Lagrangian cycle in \S\ref{Sec:2} and it will lead us to the comparison theorem (Theorem \ref{Thm:main}) in \S\ref{Sec:3}. 



\subsection{Localized Gysin maps for vector bundles}\label{ss:1.1}

Let $E$ be a vector bundle of rank $r$ on a classical scheme $M$.
A cosection $\sigma : E \to \O_M$ gives rise to a {\em cosection localized Gysin map}  
\beq\label{44} 0_{E,\sigma}^! : A_*(E(\sigma)) \to A_{*-r}(M(\sigma)),\eeq
in \cite[Cor.~2.9]{KL} where 
\begin{itemize}
\item (kernel cone) $E(\sigma) \subseteq E$ is the zero locus of the linear function $\sigma : E \to \AA^1$, and
\item (degeneracy locus) $M(\sigma)\subseteq M$ is the zero locus of the section $\sigma\dual : M \to E\dual$.
\end{itemize}

The localized Gysin map $0_{E,\sigma}^!$ can be constructed via the {\em blowup} along the degeneracy locus.
Indeed, let $\tM$ be the (classical) blowup of $M$ along $M(\sigma)$ and $D$ be the exceptional divisor.
Then $\sigma|_{\tM} : E|_{\tM} \to \O_{\tM}$ factors through $\O_{\tM}(-D)$,
and its kernel $K:=\Ker(E|_{\tM}\twoheadrightarrow \O_{\tM}(-D))$ is a vector bundle over $\tM$.
%
We can form the {\em generalized blowup squares}
\begin{equation}\label{Eq:2}
\xymatrix{
D \ar@{^{(}->}[r]^{\widetilde{\iota}} \ar[d]^{p_{\sigma}} & \tM \ar[d]^{p} \\
M(\sigma) \ar@{^{(}->}[r]^-{\iota} & M,
}\qquad\xymatrix{
K|_{D} \ar@{^{(}->}[r] \ar[d] & K \ar[d]^{a_{\sigma}} \\
E|_{M(\sigma)} \ar@{^{(}->}[r]^-{b_{\sigma}} & E(\sigma),
}\end{equation}
that is, $\iota, b_\sigma$ are closed embeddings and $p,a_\sigma$ are proper morphisms whose restrictions over the complements of $\iota, b_\sigma$ are isomorphisms.
By \cite[Ex.~1.8.1]{Ful},
we have a right exact sequence
\begin{equation}\label{Eq:1}
\xymatrix@C+2pc{
A_*(K|_{D}) \ar[r] & A_*(K) \oplus A_*(E|_{M(\sigma)}) \ar[r]^-{(a_{\sigma,*},b_{\sigma,*})} & A_*(E(\sigma)) \ar[r]& 0.
}\end{equation}

\begin{construction}
\label{Const:1}
Let $\sigma:E \to \AA^1$ be a cosection of a vector bundle $E$ over a scheme $M$.
We define the {\em localized Gysin map} \eqref{44} 
as the unique map 
satisfying
\[0_{E,\sigma}^! \circ a_{\sigma,*} = - p_{\sigma,*} \circ \widetilde{\iota}^! \circ 0_K^!, \and 0_{E,\sigma}^! \circ b_{\sigma,*} = 0_{E|_{M(\sigma)}}^! .\]
\end{construction}

It is straightforward to check that $0_{E,\sigma}^!$ is well defined via \eqref{Eq:1}.
See \cite{KLk, KLq, KP1} for variations of this construction in K-theory, topology and algebraic cobordism. 


\subsection{Localized Gysin maps for perfect complexes}\label{ss:1.2}

Let $E$ be a perfect complex on a (classical) DM stack $M$.
The {\em total space} of $E$ is the derived stack given by the functor
\[\Tot_M(E) : (T \to M) \mapsto \Map_{\QCoh(T)}(\O_T,E|_T).\]
When $E$ is of tor-amplitude $\geq -1$, $\Tot_M(E)$ is a derived Artin stack of finite presentation \cite[Lem.~3.9]{ToVa}.
By abuse of notation, we use the same symbol $E$ to denote the total space.

Consider a {\em cosection} of a perfect complex $E$, that is, a $\GG_m$-equivariant function
\[\sigma : E \lra \AA^1,\]
with respect to the canonical $\GG_m$-actions (of weight $1$) on $E$ and $\AA^1$ given by 
\[\GG_m \times E \lra E : (t,v) \mapsto t\cdot v,\and \GG_m \times \AA^1 \lra \AA^1 : (t,w) \mapsto t\cdot w.\]
The {\em kernel cone} $E(\sigma)$ and the {\em degeneracy locus} $M(\sigma)$ 
 are the (derived) zero loci
\begin{equation}\label{Eq:6}
\xymatrix{
E(\sigma) \ar[r] \ar[d] \cart & \pt \ar[d]^0 \\
E \ar[r]^-{\sigma} & \AA^1 ,}
\qquad
\xymatrix{
M(\sigma) \ar[r] \ar[d] \cart & M \ar[d]^0 \\
M \ar[r]^{\sigma\dual} & E\dual,
}\end{equation}
where the cosection $\sigma :E \to \AA^1$ is equivalent to a section $\sigma\dual :M \to E\dual$ by \cite[Thm.~2.5]{Mon}.

\begin{remark} \label{43}
For a perfect complex $E$ on a derived stack $M$, we have $$\Map_{\dSt}(E,\AA^1) \not \simeq \Map_{\QCoh(M)}(\O_M,\Sym \,E\dual)$$ in general. However, by \cite[Prop.~2.11]{Mon}, we always have $$\Map_{\dSt}^{\GG_m}(E,\AA^1(-p))  \simeq \Map_{\QCoh(M)}(\O_M,\Sym^pE\dual)$$ where $\AA^1(-p)$ is the weight $p$ representation. This can be extended to (bounded above) quasi-coherent complexes by \cite[Lem.~1.2.2]{Park2}.\end{remark}

Construction \ref{Const:1} is generalized in two steps as follows. 

\subsubsection*{Case 1}

If $M$ is a quasi-projective scheme,
we can extend the localized Gysin map \eqref{44} (Construction \ref{Const:1})
to perfect complexes via {\em resolutions}.
Indeed, when $E$ is of tor-amplitude $[-1,0]$, we can find
\begin{equation}\label{Eq:3}
\text{a linear map $c:\tE \to E$ from a vector bundle $\tE$ such that $\fib(c)$ is a vector bundle.}
\end{equation}
The cosection $\sigma : E \to \AA^1 $ lifts to a cosection $\tsigma:=\sigma \circ c : \tE \to \AA^1$,
and we can form fiber squares between the kernel cones/degeneracy loci,
\begin{equation}\label{Eq:8}
\xymatrix{
\tE(\tsigma) \ar[r] \ar[d]_{c_{\sigma}} \cart & \tE \ar[d]^c\\ 
E(\sigma) \ar[r]^-{} & E,
}\qquad\xymatrix{
M(\sigma) \ar[r] \ar[d]_{} \cart & M \ar[d]^{\mathrm{cl.eq}} \\
M(\tsigma) \ar[r] & \fib(c)\dual[-1]
}\end{equation}
where the last vertical arrow is an isomorphism in classical truncation. 
Since $c_{\sigma}:\tE(\tsigma) \to E(\sigma)$ is an affine bundle
and $M(\sigma)_\cl \simeq M(\tsigma)_\cl$, by \cite[Cor.~2.5.7]{Kre}, we have natural isomorphisms 
\[  c_{\sigma}^*:A_*(E(\sigma)) \xrightarrow{\simeq} A_{*+\widetilde{r}-r}(\tE(\tsigma)) \and  A_*(M(\sigma)) {\cong} A_*(M(\tsigma)),\]
where $r:=\rank(E)$ and $\widetilde{r}:=\rank(\tE)$. 
Using $0_{\tE,\tsigma}^!$ in Construction \ref{Const:1}, we define the localized Gysin map for a perfect complex on a quasi-projective scheme as follows.

\begin{construction} 
\label{Const:2}
Let $\sigma:E \to \AA^1$ be a cosection on a perfect complex $E$ of amplitude $[-1,0]$ over a quasi-projective scheme $M$.
We define the {\em localized Gysin map} as
\[0_{E,\sigma}^! = 0_{\tE,\tsigma}^! \circ c_{\sigma}^* : A_*(E(\sigma)) \lra A_{*-r}(M(\sigma)),\]
for a resolution $c:\tE \to E$ in \eqref{Eq:3}. 
\end{construction}

It is straightforward to check that $0_{E,\sigma}^!$ is independent of the choice of the resolution \eqref{Eq:3} 
(see e.g.~\cite[Prop.~5.6]{KP1}).

\subsubsection*{Case 2} 
If $M$ is a separated DM stack, we can further extend the localized Gysin map (Construction \ref{Const:2}) via the {\em Chow lemma}.
Indeed, we can find
\begin{equation}\label{Eq:Chow}
\text{ a projective surjective map $d:M_1 \twoheadrightarrow M$ from a quasi-projective scheme $M_1$}\end{equation}
by \cite[Thm.~16.6.1]{LM}.
Denote by $(E_1,\sigma_1)$ and $(E_2,\sigma_2)$ the pullbacks of $(E,\sigma)$ to $M_1$ and $M_2:=M_1\times_M M_1$ respectively.
We can form the fiber squares
\[\xymatrix{
E_2(\sigma_2) \ar[r]^{\pr_1} \ar[d]^{\pr_2} \cart & E_1(\sigma_1) \ar[d]^{e_\sigma}\\
E_1(\sigma_1) \ar[r]^-{e_\sigma} & E(\sigma),
}\qquad\xymatrix{
M_2(\sigma_2) \ar[r] \ar[d]_{} \cart & M_1(\sigma_1) \ar[d]^-{d_\sigma} \\
M_1(\sigma_1) \ar[r]^-{d_\sigma} & M(\sigma)
}\]
where $E_2(\sigma_2) \simeq E_1(\sigma_1)\times_{E(\sigma)}E_1(\sigma_1) $.
Then we have an exact sequence
\begin{equation}\label{Eq:Kimura}
\xymatrix@C+2pc{
A_*(E_2(\sigma_2)) \ar[r]^-{\pr_{1,*} - \pr_{2,*}} & A_*(E_1(\sigma_1)) \ar[r]^-{e_{\sigma,*}} & A_*(E(\sigma)) \ar[r]& 0,
}\end{equation}
by \cite[Thm.~A.1.1]{Park0} or \cite{BP}.
Using $0_{E_1,\sigma_1}^!$ in Construction \ref{Const:2}, we define the localized Gysin map for a perfect complex on a DM stack as follows.

\begin{construction} 
\label{Const:2b}
Let $\sigma : E \to \AA^1$ be a cosection of a perfect complex $E$ of amplitude $[-1,0]$ over a DM stack $M$.
We define the {\em localized Gysin map}
\beq\label{46} 0_{E,\sigma}^!:A_*(E(\sigma))\lra A_{*-r}(M(\sigma))\eeq 
as the unique map satisfying:
\[0_{E,\sigma}^! \circ e_{\sigma,*} = d_{\sigma,*} \circ 0_{E_1,\sigma_1}^!,\]
for a projective cover $M_1 \to M$ in \eqref{Eq:Chow}.
\end{construction}

It is straightforward to check that $0_{E,\sigma}^!$ is well defined via \eqref{Eq:Kimura} and is independent of the choice of the projective cover \eqref{Eq:Chow}.
These are direct consequences of the bivariance of the localized Gysin maps in Construction \ref{Const:2} (see e.g. \cite[Prop.~3.3.4]{Park0}).

Construction \ref{Const:2b} extends to derived DM stacks since Chow groups are by definition classical.

\subsection{Localized virtual cycles}\label{ss:1.3}
Having defined the localized Gysin map \eqref{46}, we next construct a derived version of the specialization map \eqref{54} and then the localized virtual pullback by composing the specialization map and the localized Gysin map. As we saw in \S\ref{S1.4}, we need the cone reduction (Assumption \ref{20}) which is automatic for locked cosections. 

Let $g:M \to B$ be a finitely presented morphism from a derived DM stack $M$ to a derived Artin stack $B$.
The {\em deformation space} constructed in \cite{HKR} is the mapping stack
\[\D_{M/B}:=\uMap_{B\times\AA^1}(B\times 0,M \times \AA^1),\]
which is a derived Artin stack by \cite[Thm.~5.1.1]{HP}. 
We have a fiber diagram
\begin{equation}\label{Eq:12}
\xymatrix{
\T_{M/B}[1] \ar@{^{(}->}[r]^-{i} \ar[d] \cart & \D_{M/B} \ar[d] \cart & B\times \GG_m \ar@{_{(}->}[l]_-{j} \ar@{=}[d] \\
B\times 0 \ar@{^{(}->}[r]|-{|} & B\times \AA^1 & B\times \GG_m \ar@{_{(}->}[l]|-{\circ}
}\end{equation}
where $\T_{M/B}[1]=\Tot_M(\TT_{M/B}[1]) \simeq \uMap_B(\AA^1_B[-1],M)$ is the {\em $1$-shifted tangent bundle}.
Moreover, the deformation space has a $\GG_m$-action such that the diagram \eqref{Eq:12} is equivariant.
We note that the restrictions of the $\GG_m$-action to  $\T_{M/B}[1]$ and $B\times\GG_m$ have {\em different} weights;
\begin{align*}
\text{(special fiber) }\quad &\GG_m \times \T_{M/B}[1] \lra \T_{M/B}[1] : (t,v) \mapsto t^{-1}\cdot v ,\\
\text{(generic fiber) }\quad &\GG_m \times (B\times \GG_m) \lra B\times \GG_m : (t,b,w) \mapsto (b,t \cdot w).
\end{align*}

Observe that {\em differential forms} on $g:M \to B$
are equivalent to functions on $\T_{M/B}[1]$ by 
\[\sA^{p}(M/B,-p)= \Map_{\QCoh(M)}(\O_M,\wedge^p\LL_{M/B}[-p]) \simeq \Map^{\GG_m}_{\dSt}(\T_{M/B}[1],\AA^1(p)),\]
in \cite[Prop.~2.11]{Mon},
where $\AA^1(p)$ denotes the affine line with the $\GG_m$-action of weight $(-p)$.
In particular, a $(-1)$-shifted 1-form is a $\GG_m$-equivariant function $\T_{M/B}[1]\to \AA^1(1)$ whose restriction to $M_\cl$ is a cosection of the obstruction sheaf $h^1(\TT_{M/B}|_{M_\cl})$.  
The counterpart of the notion of a locked cosection (Definition \ref{25}) in derived geometry is the following. 
\begin{definition}[{Locked forms, \cite[\S1]{Park2}}] \label{41} 
The space of {\em $(-p)$-shifted locked $p$-forms} is 
\[\sA^{p,\lc}(M/B,-p) := \Map^{\GG_m}(\D_{M/B},\AA^1(p)). 
\]
\end{definition}
When $B$ is a point, the space $\sA^{p,\lc}(M/B,-p)$ coincides with the space $\sA^{p,\cl}(M/B,-p)$ of closed $(-p)$-shifted $p$-forms defined in \cite{PTVV,CPTVV} by \cite[Prop.~6.1.1]{Park2}.

The {\em underlying $p$-form} (resp. {\em underlying function}) of a locked form is the restriction to the special fiber (resp. generic fiber) via \eqref{Eq:12},
\beq\label{45}
\xymatrix@+2pc{
\sA^p(M/B,-p)& \sA^{p,\lc}(M/B,-p) \ar[l]_-{\overline{(-)}:=i^*} \ar[r]^-{[-]:=j^*}& \sA^0(B,0).
}\eeq
More precisely, the underlying function of $\bsig \in \sA^{p,\lc}(M/B,-p)$ is given by the formula
$$j^*(\bsig) \simeq [\bsig] \boxtimes t^{-p} : B \times \GG_m \to \GG_m,$$ 
where $t\in \Gamma(\GG_m,\O_{\GG_m})$ is the coordinate function. 
We say $\bsig$ is \emph{$w$-locked} if $w\simeq[\bsig]:B \to \AA^1$.
In particular, $\bsig$ is called \emph{exact} if $0\simeq [\bsig]:B\to \AA^1$, i.e. $\bsig$ is $0$-locked. 

For any locked $1$-form $\bsig\in \sA^{1,\lc}(M/B,-1)$, the induced cosection on the classical truncation $M_\cl$ is $w$-locked in the sense of Definition \ref{25} with $w=[\bsig]:B \to \AA^1$, by Corollary \ref{89}.

\begin{example} 
Let $g:M \hookrightarrow B$ be a closed embedding of smooth affine schemes. 
Then $\sA^p(M/B,-p) \simeq \I_{M/B}^p$ where $\I_{M/B} \subseteq \O_B$ is the ideal sheaf defining $M$. 
Thus $(-p)$-shifted locked $p$-forms are functions on $B$ that vanish on the $p$-th order infinitesimal thickening of $M$ in $B$.
In particular, $(-1)$-shifted locked $1$-forms are functions on $B$ that vanish on $M$.
\end{example}

\begin{remark}[Alternative description of $(-1)$-shifted locked $1$-forms]\label{Rem:n1}
Let $g:M \to B$ be a finitely presented morphism of derived Artin stacks.
Then we have a canonical equivalence
\[\sA^{1,\lc}(M/B,-1) \xrightarrow{\simeq} \fib\left(\sA^0(B,0) \xrightarrow{g^*} \sA^0(M,0), 0\right).\]
Hence a $(-1)$-shifted locked $1$-form on $g:M \to B$ is equivalent to a pair $(w,h)$ of
\begin{itemize}
\item a function $w: B\to \AA^1$ and
\item a null-homotopy $h:0\xrightarrow{} w \circ g$ between functions on $M$.
\end{itemize}
\end{remark}


\begin{remark}\label{Rem:2.8}
Most of the cosections that arise in enumerative geometry come from $(-1)$-shifted locked $1$-forms.
See Theorem \ref{Prop:GLSMmain} for the cosections in the theory of gauged linear sigma models in \cite{FJR} and 
Proposition \ref{g10} for the cosections on the $p$-field moduli spaces in \cite{CL}.
Moreover, since closed forms are locked when the $B=\pt$ by \cite[Prop.~6.1.1]{Park2}, we have more examples of $(-1)$-shifted locked $1$-forms as follows.
See \cite[Ex.~7.7]{AKLPR} for the cosections in the theory of Seiberg-Witten invariants in \cite{CK}.
See \cite[Ex.~7.8]{AKLPR} for the cosections in the Gromov-Witten theory of surfaces with holomophic $2$-forms in \cite{KL,KT}.
See \cite[Prop.~A.13]{BKP} for the cosections in the theory of Donaldson-Thomas invariants of Calabi-Yau $4$-folds in \cite{KP2,BKP}.
\end{remark}

For a locked form, we now construct the {\em localized specialization map}.


\begin{lemma}[Localized specialization maps]\label{Lem:LocSp}
Let $\bsig$ be a $(-p)$-shifted locked $p$-form on $g:M \to B$. 
Denote by $w=[\bsig]$ and $\sigma=\obsig$.
Then we have the  localized specialization map  
\beq\label{51}\sp_{M/B}^{\bsig}: A_*(B(w))\lra A_*(\T_{M/B}[1](\sigma)) \eeq 
as the unique map that fits into the commutative triangle
\[\xymatrix{
 & A_{*+1}(\D_{M/B}(\bsig)) \ar[ld]_{0^!} \ar@{->>}[rd]^-{1^!} & \\
A_*(\T_{M/B}[1](\sigma)) &&  A_*(B(w)) \ar@{.>}[ll]_{\sp_{M/B}^{\bsig}} ,
}\]
where $\D_{M/B}(\bsig)=\Zero_{\D_{M/B}}(\bsig)$, $\T_{M/B}[1](\sigma)=\Zero_{\T_{M/B}[1]}(\sigma)$ and $B(w)=\Zero_B(w)$ are the zero loci
and $0^!,1^!$ are the Gysin maps of the fibers of $\D_{M/B}(\bsig) \to \AA^1$ over $0,1 \in\AA^1$, respectively. 
\end{lemma}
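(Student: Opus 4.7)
My plan is to identify the two distinguished fibres of the structure map $\pi\colon X \to \AA^1$ with $X := \D_{M/B}(\bsig)$, show by excision that $1^!$ is surjective with kernel equal to the central-fibre pushforward, and then verify by self-intersection that $0^!$ annihilates this kernel, which forces $\sp_{M/B}^{\bsig}$ to exist uniquely.

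First I would identify the fibres of $\pi$, induced by $\D_{M/B}\to B\times\AA^1\to\AA^1$. The fibre over $0$ is $\T_{M/B}[1](\sigma)$ by the Cartesian square \eqref{Eq:12} together with $\bsig|_{\T_{M/B}[1]}\simeq\obsig=\sigma$ from \eqref{45}. Over $\GG_m$, the trivialization $\D_{M/B}|_{\GG_m}\simeq B\times\GG_m$ combined with $\bsig|_{B\times\GG_m}\simeq w\boxtimes t^{-p}$ makes the zero locus equal to $B(w)\times\GG_m$, since $t^{-p}$ is a unit; in particular the fibre over $1\in \GG_m$ is $B(w)$. Let $i\colon \T_{M/B}[1](\sigma)\hookrightarrow X$ and $j\colon B(w)\times\GG_m\hookrightarrow X$ denote the resulting closed and open embeddings.

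Next I would prove that $1^!$ is surjective. By flat base change, $1^!$ factors as $1^!_{\GG_m}\circ j^*$, where $1^!_{\GG_m}\colon A_{*+1}(B(w)\times\GG_m)\to A_*(B(w))$ is the Gysin pullback to the fibre over $1\in\GG_m$. The latter is an isomorphism with inverse the flat pullback $q^*\colon A_*(B(w))\to A_{*+1}(B(w)\times\GG_m)$, a standard consequence of homotopy invariance on the $\AA^1$-bundle $B(w)\times\AA^1\to B(w)$ combined with the excision sequence for $\AA^1=\{0\}\sqcup\GG_m$. Kresch's excision sequence
\[ A_{*+1}(\T_{M/B}[1](\sigma)) \xrightarrow{i_*} A_{*+1}(X) \xrightarrow{j^*} A_{*+1}(B(w)\times\GG_m) \lra 0 \]
then shows $j^*$, hence $1^!$, is surjective, and that $\ker(1^!)=\ker(j^*)=\Image(i_*)$.

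For existence of $\sp_{M/B}^{\bsig}$ it remains to prove $0^!\circ i_* = 0$. Since $i$ is the derived base change of the Cartier divisor $0\colon \pt\hookrightarrow\AA^1$ along $\pi$, the normal bundle $N_i$ is canonically trivialised by $\pi^*t$, where $t$ denotes the coordinate on $\AA^1$. The self-intersection formula then yields $0^!\circ i_* = c_1(N_i)\cap(-) = 0$. Combined with the surjectivity of $1^!$, this produces a unique map $\sp_{M/B}^{\bsig}\colon A_*(B(w))\to A_*(\T_{M/B}[1](\sigma))$ satisfying $\sp_{M/B}^{\bsig}\circ 1^! = 0^!$. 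The main subtlety I anticipate is checking that excision, homotopy invariance, and the self-intersection formula continue to hold for Kresch's Chow groups of the classical truncations of the derived Artin stacks in play; these are standard given the bivariant setup already used in \S\ref{ss:1.2}, but worth recording explicitly.
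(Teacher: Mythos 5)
Your proof is correct and follows essentially the same route as the paper: identify the zero loci over $\{0\}$ and $\GG_m$ using the computations of $\obsig$ and $[\bsig]$, invoke the excision sequence to show $j^*$ (hence $1^!$) is surjective with kernel $\Image(i_*)$, and kill this kernel under $0^!$ by the self-intersection formula because the normal bundle of the special fibre is trivial. The only difference is presentational — you factor $1^!$ through the flat-pullback isomorphism $A_*(B(w))\cong A_{*+1}(B(w)\times\GG_m)$, while the paper writes the same isomorphism as $\pr_1^*$.
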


\begin{proof}
Consider the zero loci of $\bsig$ in the upper row of \eqref{Eq:12},
\begin{equation}\label{Eq:13}
\xymatrix{
\T_{M/B}[1](\sigma) \ar@{^{(}->}[r]^-{i_{\bsig}}   & \D_{M/B}(\bsig)  & B(w)\times \GG_m \ar@{_{(}->}[l]_-{j_{\bsig}} .
}\end{equation}
We then define $\sp_{M/B}^{\bsig}$ as the unique map that fits into the commutative diagram
\[\xymatrix{
A_{*+1}(\T_{M/B}[1](\sigma)) \ar[r]^-{i_{\bsig,*}} \ar[rd]_{c_1(\O)=0} & A_{*+1}(\D_{M/B}(\bsig)) \ar@{->>}[r]^-{j_{\bsig}^*} \ar[d]^-{i_{\bsig}^!} & A_{*+1}(B(w) \times \GG_m) \ar[r]  \ar@{.>}[ld]|{\exists!} & 0 \\
& A_*(\T_{M/B}[1](\sigma)) & \ar@{.>}[l]^-{\sp_{M/B}^{\bsig} } A_*(B(w)) ,\ar[u]_{\pr_1^*}^{\simeq}
}\]
where the upper row is right exact by the excision sequence \cite[Prop.~2.3.6]{Kre},
$i_{\bsig}^! \circ i_{\bsig,*}=c_1(\O_{\T_{M/B}[1](\bsig)})=0$ by the self-intersection formula,
and $\pr_1^*$ is an isomorphism by the excision sequence and the self-intersection formula.
\end{proof}

\begin{remark}[Intrinsic normal cones] \label{53} 
In the situation of Lemma \ref{Lem:LocSp}, assume that $B$ is an equi-dimensional classical scheme and $w \simeq 0$. Then we have
\[\sp_{M/B}^{\bsig}([B]) = [\fC_{M/B}] \in A_*(\T_{M/B}[1](\sigma)),\]
where $\fC_{M/B}$ is the {\em intrinsic normal cone} \cite{BF,Kre}.
Here we automatically have the {\em cone reduction property} (or the {\em isotropic condition}), i.e. $(\fC_{M/B} \hookrightarrow \T_{M/B}[1] \xrightarrow{\overline{\sigma}} \AA^1) =0 $, 
since $\sigma$ extends to $\bsig : \D_{M/B} \to \AA^1$ and $\fC_{M/B}$ is the flat limit of $B$ inside $\D_{M/B}$.
Thus, if we use the notion of locked forms, it is not necessary to describe intrinsic normal cones or to check the cone reduction property.
These are embedded in the localized specialization map. 
\end{remark}

Recall that $g: M \to B$ is {\em quasi-smooth} if $\T_{M/B}[1]$ is of tor-amplitude $\leq 0$.
Composing \eqref{51} and \eqref{46} with $E=\T_{M/B}[1](\sigma)$, we obtain the virtual pullback localized by a locked $(-1)$-shifted 1-form $\bsig$ as follows. 


\begin{construction}[Localized virtual cycles]\label{Const:3}
Let $g:M \to B$ be a quasi-smooth morphism from a derived DM stack $M$ to a derived Artin stack $B$.
Let $\bsig$ be a $(-1)$-shifted locked $1$-form on $g:M \to B$.
We define the {\em localized virtual cycle} (or  the \emph{localized virtual pullback}) as 
\beq\label{55}
[M/B,\bsig]^\vir=g^!_{\bsig} := 0^!_{\T_{M/B}[1],{\sigma}} \circ \sp_{M/B}^{\bsig} : A_*(B(w)) \lra A_{*+r}(M(\sigma)),\eeq 
where 
$B(w)$ is the zero locus of the underlying function $w:=[\bsig] : B \to \AA^1$, 
$M(\sigma)$ is the zero locus of the underlying $1$-form $\sigma:=\overline{\bsig} : M \to \T^*_{M/B}[-1]$,
and $r:=\rank(\TT_{M/B})$.
\end{construction}

The localized virtual cycle $[M/B,\bsig]^\vir$ indeed localizes the ordinary virtual fundamental cycle $[M/B]^\vir=[M/B,0]^\vir$ in the sense that the diagram
\[\xymatrix@C+3pc{
A_*(B(w)) \ar[r]^-{[M/B,\bsig]^\vir} \ar[d] \commutes & A_{*+r}(M(\sigma)) \ar[d] \\
A_*(B) \ar[r]^-{[M/B]^{\vir}} & A_{*+r}(M)
}\]
commutes, where the vertical arrows are the proper pushforwards for the inclusion maps.

Construction \ref{Const:3} is the derived version of the cosection localization. 
See Corollary \ref{89} below for the comparison with Construction \ref{28}.

\subsection{Localized Gysin maps are localized virtual cycles}\label{ss:1.4}
Here we provide a technical lemma claiming that the localized Gysin map \eqref{46} is a particular instance of the localized virtual cycle \eqref{55}. This will be used in \S\ref{Sec:3}.

\begin{lemma}[Locked $1$-forms on zero sections]\label{Lem:sigmazerosection}
Let $\sigma :E \to \AA^1$ be a cosection of a perfect complex $E$ on a derived Artin stack $M$.
Then we have a canonical $(-1)$-shifted locked $1$-form
\[\bsig \in \sA^{1,\lc}(M/E,-1)\]
on the zero section $0_E:M \to E$ such that 
\begin{itemize}
\item (underlying form) $\overline{\bsig}\simeq \sigma : \T_{M/E}[1] \simeq E \lra \AA^1$;
\item (underlying function) $[\bsig] \simeq \sigma : E\lra \AA^1$.
\end{itemize}
\end{lemma}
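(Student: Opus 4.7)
I would apply the alternative description of $(-1)$-shifted locked $1$-forms from Remark~\ref{Rem:n1}: a locked $1$-form on $0_E:M\to E$ is equivalent to the data of a pair $(w,h)$ consisting of a function $w:E\to\AA^1$ together with a null-homotopy $h:0\simeq w\circ 0_E$ of functions on $M$. The natural choice is $w := \sigma$, and I would construct $h$ from the $\GG_m$-equivariance of the cosection. By Remark~\ref{43}, a $\GG_m$-equivariant function $\sigma:E\to\AA^1(1)$ of weight $1$ corresponds to a morphism $\check{\sigma}:\O_M\to E^\vee$ in $\QCoh(M)$, i.e.~to the linear (weight-$1$) summand inside the full algebra of functions $\Sym E^\vee$ on $E$. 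The pullback $\sigma\circ 0_E$ along the zero section factors through the augmentation $\Sym^{\geq 1}E^\vee \twoheadrightarrow 0$ of the symmetric algebra and is therefore canonically null; this canonical null-homotopy is the desired $h$, and together with $w=\sigma$ it defines a canonical element $\bsig \in \sA^{1,\lc}(M/E,-1)$.

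\textbf{Verification of the two compatibilities.} The underlying-function condition $[\bsig]\simeq \sigma$ is immediate from the construction, since $w=\sigma$ is literally the first component of the pair. For the underlying-form condition $\overline{\bsig}\simeq \sigma$ under $\T_{M/E}[1]\simeq E$, I would trace the equivalence of Remark~\ref{Rem:n1} through the special-fiber restriction $i^*:\sA^{1,\lc}(M/E,-1)\to \sA^1(M/E,-1)$. For the zero section of a perfect complex, $\D_{M/E}$ admits an explicit $\GG_m$-equivariant Rees-type description under which a weight-$1$ function on $E$ extends uniquely to a $\GG_m$-equivariant function on $\D_{M/E}$ whose restrictions to both fibers of $\D_{M/E}\to\AA^1$ recover $\sigma$. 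Applying this to $\sigma$ itself identifies $\overline{\bsig}$ with $\sigma$ on the special fiber $\T_{M/E}[1]\simeq E$.

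\textbf{Main obstacle.} The delicate point is the underlying-form identification. Heuristically, the underlying form of a locked form $(w,h)$ is the linearization of $w$ at the zero section, interpreted via $h$; since $\sigma$ is already linear, this linearization ought to be $\sigma$ itself. Turning this into a proof in the derived setting requires careful bookkeeping of the $\GG_m$-weights in the Rees-type model of $\D_{M/E}$ and of the identification $\T_{M/E}[1]\simeq E$ arising from the fiber square \eqref{Eq:12}. I expect this weight-tracking to constitute the bulk of the technical work, while the existence and canonicity of $\bsig$ itself are essentially formal consequences of Remarks~\ref{Rem:n1} and~\ref{43}.
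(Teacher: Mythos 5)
Your outline via Remark~\ref{Rem:n1} is a legitimate alternative starting point, and it does hand you existence, canonicity, and the underlying-function condition essentially for free, since the pair $(w,h)=(\sigma,h_{\mathrm{aug}})$ with $h_{\mathrm{aug}}$ the canonical null-homotopy coming from the augmentation of $\Sym E^\vee$ is exactly the data the remark asks for. Where the proposal falls short is precisely at the step you flag as the ``main obstacle'': the underlying-form identification $\overline{\bsig}\simeq\sigma$ is the real content of the lemma (it is what is used in Lemma~\ref{Lem:LocGysin}), and your argument for it as written is circular. You invoke ``uniqueness of a $\GG_m$-equivariant extension whose restrictions to \emph{both} fibers of $\D_{M/E}\to\AA^1$ recover $\sigma$'' --- but the special-fiber restriction being $\sigma$ is exactly what you are trying to prove, so you cannot assume it as a defining property of the extension you are producing.

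The paper avoids this by going straight to the explicit identification $\D_{M/E}\simeq E\times\AA^1$ with $\GG_m$-action $(t,v,w)\mapsto(t^{-1}v,tw)$, supplied by \cite[Lem.~3.2.2]{Park2} (this is exactly the ``Rees-type description'' you allude to), and simply \emph{defining} $\bsig:=\sigma\circ\pr_1$. Under that model the special fiber $w=0$ is $\T_{M/E}[1]\simeq E$ with its weight-$(-1)$ action and the generic fiber, after the change of coordinates $v'=wv$ that trivializes the action on the $E$-factor, is $E\times\GG_m$; restricting the formula $\sigma\circ\pr_1$ to each and tracking the weight of $\sigma$ gives both conditions by inspection. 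So rather than reverse-engineering the Remark~\ref{Rem:n1} equivalence, you should take the explicit model of $\D_{M/E}$ as the starting point and read off both restrictions from the formula --- this is shorter and removes the circularity.
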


\begin{proof}
By \cite[Lem.~3.2.2]{Park2}, we have a canonical equivalence of derived stacks 
\begin{equation}\label{Eq:9}
\D_{M/E} \simeq E \times \AA^1,
\end{equation}
where the canonical $\GG_m$-action on the deformation space $\D_{M/E}$ is 
\[\GG_m \times (E\times \AA^1) \lra E\times \AA^1 : (t,v,w) \mapsto (t^{-1}v,tw).\]
We define $\bsig : \D_{M/E} \to \AA^1$ as the composition $\sigma \circ \pr_1 : E\times \AA^1 \to \AA^1$ under \eqref{Eq:9}.    
\end{proof}

\begin{lemma}[Gysin maps are virtual cycles]\label{Lem:LocGysin}
Let $\sigma:E \to \AA^1$ be a cosection on a perfect complex of tor-amplitude $[-1,0]$ over a derived DM stack $M$.
Let $\bsig \in \sA^{1,\lc}(M/E,-1)$ be the induced locked $1$-form in Lemma \ref{Lem:sigmazerosection}.
Then we have the equality 
\begin{equation}\label{Eq:LocGysin}
0_{E,\sigma}^! = [M/E,\bsig]^\vir : A_*(E(\sigma)) \to A_{*-r}(M(\sigma)),
\end{equation}
where the kernel cone and the degeneracy of the cosection $\sigma$ in \eqref{Eq:6}
are equivalent to the zero loci of the underlying function and the underlying $1$-form of $\bsig$ in Construction \ref{Const:3}.
\end{lemma}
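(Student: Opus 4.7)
The plan is to unfold the definition of $[M/E,\bsig]^\vir$ in Construction \ref{Const:3} and show that the localized specialization map $\sp_{M/E}^{\bsig}$ is the identity, which reduces the localized virtual cycle to the localized Gysin map.

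First, I would record the relevant identifications coming from Lemma \ref{Lem:sigmazerosection}: we have $\D_{M/E} \simeq E \times \AA^1$ with $\bsig$ given by $\sigma \circ \pr_1 : E \times \AA^1 \to \AA^1$, the underlying $1$-form $\overline{\bsig} \simeq \sigma$ under the canonical equivalence $\T_{M/E}[1] \simeq E$, and the underlying function $[\bsig] \simeq \sigma : E \to \AA^1$. Consequently the zero loci appearing in Lemma \ref{Lem:LocSp} are
$$\D_{M/E}(\bsig) \simeq E(\sigma) \times \AA^1, \quad \T_{M/E}[1](\overline{\bsig}) \simeq E(\sigma), \quad E([\bsig]) \simeq E(\sigma),$$
and the structural map $\D_{M/E}(\bsig) \to \AA^1$ used in Lemma \ref{Lem:LocSp} to cut out the fibers over $0$ and $1$ is simply the second projection.

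Next, I would show that $\sp_{M/E}^{\bsig} = \id_{A_*(E(\sigma))}$. Because $\D_{M/E}(\bsig) \simeq E(\sigma) \times \AA^1$ is a constant family over $\AA^1$, both Gysin maps $0^!, 1^! : A_{*+1}(E(\sigma)\times \AA^1) \to A_*(E(\sigma))$ coincide with the inverse of the flat pullback $\pr_1^* : A_*(E(\sigma)) \xrightarrow{\simeq} A_{*+1}(E(\sigma)\times \AA^1)$. The defining commutative triangle in Lemma \ref{Lem:LocSp} reads $\sp_{M/E}^{\bsig} \circ 1^! = 0^!$, and since $1^!$ is surjective (indeed an isomorphism here) we deduce that $\sp_{M/E}^{\bsig}$ is the identity on $A_*(E(\sigma))$.

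Finally, under the canonical identification $\T_{M/E}[1] \simeq E$ of perfect complexes on $M$, the underlying $1$-form $\overline{\bsig}$ is identified with the original cosection $\sigma$, so the Gysin map $0^!_{\T_{M/E}[1],\overline{\bsig}}$ of Construction \ref{Const:2b} coincides with $0^!_{E,\sigma}$. Composing, we obtain
$$[M/E,\bsig]^\vir = 0^!_{\T_{M/E}[1], \overline{\bsig}} \circ \sp_{M/E}^{\bsig} = 0^!_{E,\sigma} \circ \id = 0^!_{E,\sigma},$$
which is the desired equality \eqref{Eq:LocGysin}. No step is particularly delicate; the only point worth checking carefully is the identification $\D_{M/E}(\bsig) \simeq E(\sigma) \times \AA^1$ as a \emph{trivial} family over $\AA^1$, since it is precisely the triviality in the $\AA^1$-direction that forces the localized specialization to be the identity.
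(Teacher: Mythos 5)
Your proof is correct and follows essentially the same route as the paper: the paper's proof of this lemma consists of exactly two observations, namely that $\bsig : \D_{M/E} \simeq E \times \AA^1 \to \AA^1$ is the constant family $\sigma \circ \pr_1$ so that $\sp_{M/E}^{\bsig}$ is the identity, and that the claim then follows by unwinding Construction \ref{Const:3}. You have merely supplied the intermediate bookkeeping (the identification of all three zero loci with $E(\sigma)$ or $E(\sigma)\times\AA^1$, and the $\AA^1$-homotopy-invariance argument showing $0^! = 1^! = (\pr_1^*)^{-1}$), which the paper leaves implicit.
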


\begin{proof}
Since $\bsig:\D_{M/E} \to \AA^1$ is defined as the {\em constant} function $\sigma \circ \pr_1 : E\times \AA^1 \to \AA^1$,
\[\sp^{\bsig}_{M/E}=\id : A_*(E(\sigma)) \to 	A_*(E(\sigma))\]	
by 
Lemma \ref{Lem:LocSp}.
Then \eqref{Eq:LocGysin} follows from the definition of $[M/E,\bsig]^\vir$.
\end{proof}


\subsection{Local structure of locked $1$-forms}\label{42}
In this section, we work out a local model for a morphism $g:M\to B$ equipped with a locked $(-1)$-shifted 1-form.

\begin{lemma}[Locked $1$-forms on zero loci]
Let $q:V \to B$ be a finitely presented morphism of derived Artin stacks,
$E$ be a perfect complex on $V$,
and $s:V \to E$ be a section.
Let $\sigma :E \to \AA^1$ be a cosection  together with an equivalence 
\beq\label{57} h:\sigma \circ s \simeq w\circ q \textin  \sA^0(V,0)\eeq 
for a function $w:B \to \AA^1$.
Then there exists a canonical $w$-locked $(-1)$-shifted $1$-form 
\[\bsig_{\Zero(s)/B} \in \sA^{1,\lc}(\Zero(s)/B,-1)\]
on the zero locus $\Zero(s)$ of $s:V \to E$
such that $\overline{\bsig_{\Zero(s)/B}} \simeq \sigma|_{\Zero(s)} : \T_{\Zero(s)/V}[1] \simeq E|_{\Zero(s)} \to \AA^1$.
\end{lemma}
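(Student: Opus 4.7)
The plan is to construct $\bsig_{\Zero(s)/B}$ via the presentation of $(-1)$-shifted locked $1$-forms given in Remark \ref{Rem:n1}, which identifies
\[\sA^{1,\lc}(\Zero(s)/B,-1) \simeq \fib\bigl(g^* : \sA^0(B,0) \to \sA^0(\Zero(s),0),\,0\bigr),\]
where $g := q \circ i: \Zero(s) \to B$ and $i : \Zero(s) \to V$ is the canonical map. Thus it suffices to equip the function $w : B \to \AA^1$ with a null-homotopy of $w \circ g : \Zero(s) \to \AA^1$, and the resulting locked $1$-form will be $w$-locked by construction.

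The null-homotopy is built from the tautological structure of the derived zero locus. Writing $\Zero(s) \simeq V \times_{s,E,0_E} V$, the map $i$ carries a universal null-homotopy $\eta : s|_{\Zero(s)} \simeq 0$ of the restricted section in $\Map(\Zero(s),E)$. Post-composing with the linear cosection $\sigma : E \to \AA^1$ (which canonically annihilates $0_E$) yields $\sigma(\eta) : (\sigma \circ s)|_{\Zero(s)} \simeq 0$ in $\sA^0(\Zero(s),0)$. Splicing this with the pullback of the given equivalence $h : \sigma \circ s \simeq w \circ q$ along $i$ gives the required null-homotopy
\[h_{\Zero(s)} \,:\, 0 \ \xrightarrow{\,\sigma(\eta)^{-1}\,}\ (\sigma\circ s)|_{\Zero(s)} \ \xrightarrow{\,h|_{\Zero(s)}\,}\ (w\circ q)|_{\Zero(s)} \simeq w \circ g,\]
so Remark \ref{Rem:n1} converts $(w, h_{\Zero(s)})$ into the $w$-locked $(-1)$-shifted $1$-form $\bsig_{\Zero(s)/B}$.

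For the underlying $1$-form, first note that $i$ is the base change of the zero section $0_E : V \to E$ along $s : V \to E$, so a direct calculation of $\bbL_{V/E}$ for the zero section of a linear total space gives $\bbL_{\Zero(s)/V} \simeq E^\vee|_{\Zero(s)}[1]$, and hence $\T_{\Zero(s)/V}[1] \simeq E|_{\Zero(s)}$ as stated. Unwinding Remark \ref{Rem:n1}, the underlying $1$-form $\overline{\bsig_{\Zero(s)/B}} : \T_{\Zero(s)/B}[1] \to \AA^1$ corresponds to the linearization of $h_{\Zero(s)}$ lifted through the fiber sequence $\bbL_{\Zero(s)/B}[-1] \to \bbL_B|_{\Zero(s)} \to \bbL_{\Zero(s)}$. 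Restricting along $\T_{\Zero(s)/V}[1] \hookrightarrow \T_{\Zero(s)/B}[1]$ kills the $h|_{\Zero(s)}$-contribution (which factors through $B$) and leaves the linearization of $\sigma(\eta)$, which under the identification $\T_{\Zero(s)/V}[1] \simeq E|_{\Zero(s)}$ is manifestly $\sigma|_{\Zero(s)} : E|_{\Zero(s)} \to \AA^1$.

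The main obstacle is making the last identification rigorous: carefully tracking how the equivalence of Remark \ref{Rem:n1} (which rests on \cite[Prop.~6.1.1]{Park2}) intertwines the "underlying $1$-form" on the deformation-space side with the "linearized null-homotopy" on the fiber-sequence side, in a way that is compatible with restriction along the factorization $\Zero(s) \to V \to B$. A robust fallback is to bypass Remark \ref{Rem:n1} and construct $\bsig_{\Zero(s)/B}$ directly as a $\GG_m$-equivariant map $\D_{\Zero(s)/B} \to \AA^1(1)$ via the functoriality of the deformation space applied to the fiber square defining $\Zero(s) = V \times_E V$ over $B$, then verify the two restrictions $\overline{\bsig_{\Zero(s)/B}} \simeq \sigma|_{\Zero(s)}$ and $[\bsig_{\Zero(s)/B}] \simeq w$ by unwinding the explicit models of $\D_{\Zero(s)/B}$ developed in \cite{Park2}.
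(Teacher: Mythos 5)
Your construction of $\bsig_{\Zero(s)/B}$ from the pair $(w,h_{\Zero(s)})$ via Remark~\ref{Rem:n1} is correct and is morally the same element the paper produces, but you are right to flag the verification of the underlying form as a gap, and it is a genuine one as written. Remark~\ref{Rem:n1} only identifies $\sA^{1,\lc}(\Zero(s)/B,-1)$ with the fiber of $g^* : \sA^0(B,0) \to \sA^0(\Zero(s),0)$; this presentation hands you the underlying \emph{function} $[-]=j^*$ for free (so $w$-lockedness is immediate), but it does not directly express the underlying \emph{form} $\overline{(-)}=i^*$ in terms of the null-homotopy $h_{\Zero(s)}$. Your ``linearization'' step, and the claim that the $h|_{\Zero(s)}$-contribution dies upon restriction to $\T_{\Zero(s)/V}[1]$, is the right intuition, but turning it into a proof requires an actual comparison of the two descriptions of the locked-form space compatible with the factorization $\Zero(s)\to V\to B$ --- which is exactly the content you identify as the obstacle.

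The paper closes that gap by not working over $B$ in one shot. It records the fiber square~\eqref{Eq:n2}, whose left square says that a locked $1$-form on $\Zero(s)/B$ is equivalently a locked $1$-form on $\Zero(s)/V$ together with a function $w$ on $B$ and an identification of its underlying function with $w|_V$; and whose right square recovers Remark~\ref{Rem:n1} applied to $\Zero(s)\to V$. It then takes the element $s^*\bsig_{V/E}\in\sA^{1,\lc}(\Zero(s)/V,-1)$, where $\bsig_{V/E}$ is the locked $1$-form of Lemma~\ref{Lem:sigmazerosection} on the zero section $0:V\to E$. Crucially, Lemma~\ref{Lem:sigmazerosection} has already computed, from the explicit model $\D_{V/E}\simeq E\times\AA^1$, that $\overline{\bsig_{V/E}}\simeq\sigma$ and $[\bsig_{V/E}]\simeq\sigma$; pullback along $s$ preserves both, giving $\overline{s^*\bsig_{V/E}}\simeq\sigma|_{\Zero(s)}$ and $[s^*\bsig_{V/E}]\simeq\sigma\circ s$ directly, with no linearization argument needed. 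The given homotopy $h:\sigma\circ s\simeq w|_V$ then supplies exactly the gluing datum for the left Cartesian square, yielding $\bsig_{\Zero(s)/B}$ with the claimed underlying form. So the missing ingredient in your write-up is Lemma~\ref{Lem:sigmazerosection} (which you never invoke) together with the transitivity fiber square~\eqref{Eq:n2}; your proposed ``fallback'' of working with explicit models of $\D$ is essentially that lemma, but you would need to actually carry it out as the paper does rather than rebuild it from the universal null-homotopy.
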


\begin{proof}
Note that we have a canonical fiber diagram
\begin{equation}\label{Eq:n2}
\xymatrix{
\sA^{1,\lc}(\Zero(s)/B,-1) \ar[r]^-{(-)_{/V}} \ar[d]^{[-]} \cart & \sA^{1,\lc}(\Zero(s)/V,-1) \ar[r] \ar[d]^{[-]} \cart & \pt \ar[d]^0\\
\sA^0(B,0) \ar[r]^{(-)|_V} & \sA^0(V,0) \ar[r]^{(-)|_{\Zero(s)}} & \sA^0(\Zero(s),0)
}\end{equation}
induced by the cofiber sequence
$\O_{\D_{\Zero(s)/-}} \xrightarrow{}  \O_{\D_{\Zero(s)/-}}  \to i_* \O_{\T_{\Zero(s)/-}[1]}$.
Then the $(-1)$-shifted locked 1-form 
 $\bsig_{V/E} \in \sA^{1,\lc}(V/E,-1)$ for the zero section $0:V\to E$  in Lemma \ref{Lem:sigmazerosection} induces
\[s^*\bsig_{V/E} \in \sA^{1,\lc}(\Zero(s)/V,-1) \text{ such that } [s^*\sigma] \simeq s^*[\sigma] \simeq \sigma \circ s \xrightarrow[\simeq]{h} w|_V \text{ in }\sA^0(V,0),\]
which is equivalent to a locked $1$-form $\bsig_{\Zero(s)/B} \in \sA^{1,\lc}(\Zero(s)/B,-1)$.    
\end{proof}

%
%

Conversely, if a quasi-smooth morphism $g:M\to B$ is equipped with a $(-1)$-shifted $w$-locked 1-form $\bsig$, locally it looks like $\Zero(s)\to B$ for a smooth $q:V\to B$ with $E$ a vector bundle admitting a cosection $\sigma$ satisfying \eqref{57}.
\begin{proposition} 
\label{52} 
Let $g: M \to B$ be a quasi-smooth morphism of derived Artin stacks
and $\bsig$ be a $w$-locked $(-1)$-shifted $1$-form for a function $w:B\to\AA^1$.
Then there exist
\begin{itemize}
\item  a smooth morphism $q : V \to B$ from a derived affine scheme $V$,
\item a section $s:V \to F$ of a vector bundle $F$ over $V$,
\item a cosection $\sigma : F \to \AA^1$ together with an equivalence $\sigma \circ s \simeq w\circ q$ in $\sA^0(V,0)$, and
\item a smooth surjective map $c:\Zero(s) \to M$ such that $ c^*(\bsig) \simeq \bsig_{\Zero(s)/B} \in \sA^{1,\lc}(\Zero(s)/B,-1).$
\end{itemize}
Moreover, if $M$ is a derived DM stack, then we can further assume that $c$ is {\'e}tale. 
\end{proposition}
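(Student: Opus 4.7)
The plan is to first produce a local presentation of $g:M\to B$ as the zero locus of a section of a vector bundle over a smooth affine base (ignoring the locked form), and then to upgrade the presentation by extracting the cosection $\sigma$ from the given locked $1$-form $\bsig$ via the preceding lemma and its fiber diagram \eqref{Eq:n2}. For the first step I would apply the standard local structure theorem for quasi-smooth morphisms of derived Artin stacks, which yields, smooth-locally (and \'etale-locally when $M$ is DM) on $M$, a derived affine scheme $V$ with a smooth morphism $q:V\to B$, a vector bundle $F$ on $V$, a section $s:V\to F$, and a smooth (resp.\ \'etale) surjection $c:\Zero(s)\to M$ over $B$. After this reduction it remains to construct $\sigma:F\to\AA^1$ together with an equivalence $\sigma\circ s\simeq w\circ q$ in $\sA^0(V,0)$ such that $c^*\bsig\simeq \bsig_{\Zero(s)/B}$.

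For the second step, I would pull $\bsig$ back along $c$ and apply the fiber diagram \eqref{Eq:n2}, which presents the $w$-locked $1$-form $c^*\bsig\in\sA^{1,\lc}(\Zero(s)/B,-1)$ as a pair $(\bsig',h)$ consisting of a locked $1$-form $\bsig'\in\sA^{1,\lc}(\Zero(s)/V,-1)$ and an equivalence $h:[\bsig']\simeq w\circ q$ of functions on $V$. By the preceding lemma (Lemma \ref{Lem:sigmazerosection} applied to the zero section $0_F:V\to F$, pulled back along $s$), every cosection $\sigma:F\to\AA^1$ produces $s^*\bsig_{V/F}\in\sA^{1,\lc}(\Zero(s)/V,-1)$ whose underlying function on $V$ is exactly $\sigma\circ s$. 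Thus it suffices to lift $\bsig'$ to a cosection $\sigma:F\to\AA^1$, and the equivalence $h$ then becomes the required $\sigma\circ s\simeq w\circ q$.

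The hard part is this lifting, namely the (essential) surjectivity of $\sigma\mapsto s^*\bsig_{V/F}$ from cosections of $F$ to $\sA^{1,\lc}(\Zero(s)/V,-1)$. By Remark \ref{43}, cosections of the vector bundle $F$ are equivalent to sections of $F^\vee$ on $V$, and the restriction map $\Gamma(V,F^\vee)\to \Gamma(\Zero(s),F^\vee\vert_{\Zero(s)})$ is surjective after further Zariski shrinking of the affine $V$, by the vanishing of the relevant higher cohomology of $F^\vee$. Applying this to the underlying cosection of $\bsig'$, namely the map $\T_{\Zero(s)/V}[1]\simeq F\vert_{\Zero(s)}\to \AA^1$, produces the desired $\sigma:F\to \AA^1$. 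Tracking the construction of $\bsig_{V/F}$ in Lemma \ref{Lem:sigmazerosection} together with the functoriality of the pullback $s^*$ and the datum $h$ then yields both the equivalence $\sigma\circ s\simeq w\circ q$ in $\sA^0(V,0)$ and the identification $c^*\bsig\simeq \bsig_{\Zero(s)/B}$ through \eqref{Eq:n2}. The main subtlety lies not in any cohomological obstruction but in coherently matching all the homotopies across the $\infty$-categorical identifications.
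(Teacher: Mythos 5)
Your overall scheme agrees with the paper's: first produce a local presentation of $g:M\to B$ as $\Zero(s)\to V\to B$ with $V$ derived affine, $q$ smooth, $F$ a vector bundle, and $c:\Zero(s)\to M$ a smooth (or \'etale in the DM case) surjection; then produce the cosection $\sigma$. The first step and the \'etale refinement match the paper. However, the second step --- which in the paper is precisely the content cited from \cite[Lem.~3.2.6]{Park2} --- is where your argument breaks down.

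The gap is this: you lift the \emph{underlying $1$-form} $\overline{\bsig'}\in\sA^1(\Zero(s)/V,-1)\simeq\Gamma(\Zero(s),F^\vee|_{\Zero(s)})$ along the restriction map $\Gamma(V,F^\vee)\to\Gamma(\Zero(s),F^\vee|_{\Zero(s)})$ to obtain $\sigma$. This controls $\overline{s^*\bsig_{V/F}}\simeq\overline{\bsig'}$, but it does \emph{not} give $s^*\bsig_{V/F}\simeq\bsig'$ as locked $1$-forms. The obstruction is not merely a bookkeeping of homotopies: the map $\overline{(-)}:\sA^{1,\lc}(\Zero(s)/V,-1)\to\sA^1(\Zero(s)/V,-1)$ sits in a fiber sequence with fiber $\sA^{2,\lc}(\Zero(s)/V,-2)$ (this is exactly the fiber square used in the proof of Proposition~\ref{Prop:TwCot}), and $\sA^{2,\lc}$ is genuinely nonzero --- in the model case of a closed embedding of smooth affine schemes it is $\I^2$, as in the paper's Example. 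So a lift of $\overline{\bsig'}$ to a cosection $\sigma$ may well produce $s^*\bsig_{V/F}$ differing from $\bsig'$ by a nontrivial class in $\pi_0\,\sA^{2,\lc}(\Zero(s)/V,-2)$. What one actually needs is surjectivity (on $\pi_0$) of the map
\[
\Gamma(V,F^\vee)\lra\sA^{1,\lc}(\Zero(s)/V,-1),\qquad \sigma\longmapsto s^*\bsig_{V/F},
\]
which is a different map than the restriction of vector bundle sections; surjectivity of a composite $\Gamma(V,F^\vee)\to\sA^{1,\lc}\to\sA^1$ does not give surjectivity of the first arrow. This is precisely what \cite[Lem.~3.2.6]{Park2} supplies, and without it the asserted equivalence $c^*\bsig\simeq\bsig_{\Zero(s)/B}$ under the fiber square \eqref{Eq:n2} is not established. (As a minor point, for $V$ affine no Zariski shrinking is needed for surjectivity of $\Gamma(V,F^\vee)\to\Gamma(\Zero(s),F^\vee|_{\Zero(s)})$ on $\pi_0$, so that part is fine but also not the issue.)
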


\begin{proof}
We can form a commutative square
\[\xymatrix{
N \ar@{^{(}->}[r]^i \ar[d]_c & V \ar[d]^q \\
M \ar[r]^g & B
}\]
where the vertical arrows $c,q$ are smooth, the horizontal arrow $i$ is a closed embedding, and $N,V$ are derived affine schemes.
By the arguments in \cite[Thm.~7.4.3.18]{LurHA},
we may assume that there is a vector bundle $F$ over $V$ and a section $s:V \to F$ such that $N\simeq \Zero(s)$ is the zero locus.
By \cite[Lem.~3.2.6]{Park2}, there exists a cosection $\sigma:F \to \AA^1$ such that 
\[\ \sA^{1,\lc}(M/B,-1) \xrightarrow{(-)|_{\Zero(s)/V}} \sA^{1,\lc}(\Zero(s)/V,-1) : \bsig \mapsto s^*\bsig_{V/F},\]
where $\bsig_{V/F}\in \sA^{1,\lc}(V/F,-1)$ is the locked $1$-form induced by $\sigma$ via Lemma \ref{Lem:sigmazerosection}.
Then $\sigma \circ s \simeq [s^*\bsig_{V/F}] \simeq [\bsig]|_V$ 
and hence $ c^*(\bsig) \simeq \bsig_{\Zero(s)/B}  \in \sA^{1,\lc}(\Zero(s)/B,-1)$ under \eqref{Eq:n2}.

If $M$ is a derived DM stack, then we can choose $c:N \to M$ to be {\'e}tale.
Indeed, it suffices to show that a smooth morphism of derived affine schemes has an {\'e}tale local section.
For classical schemes, this is shown in \cite[Cor.~17.16.3(ii)]{Gro}.
Then the claim follows from the infinitesimal lifting property of smooth morphisms and the convergence of the Postnikov truncations in \cite[Prop.~7.1.3.19]{LurHA}.
\end{proof}

\begin{corollary} \label{89} 
Let $g:M \to B$ be a quasi-smooth morphism from a derived DM stack $M$ to a derived Artin stack $B$
and $\bsig$ be a locked $(-1)$-shifted $1$-form with $\sigma:=\overline{\bsig}$ and $w:=[\bsig]$.
Then the induced cosection
\[h^1(\sigma|_{M_{\cl}}\dual): Ob_{M_{\cl}/B_{\cl}}:=h^1(\LL_{M/B}|_{M_{\cl}}\dual) \lra \O_{M_{\cl}}\]
is a $w$-locked cosection in the sense of Definition \ref{25}.
Moreover, the localized virtual pullback in Construction \ref{28} coincides with its derived version in Construction \ref{Const:3}.

In particular, if $B$ is a point, the induced cosection is closed in the sense of Definition  \ref{71} and the localized virtual cycle in \eqref{16} coincides with its derived version in Construction \ref{Const:3}. 
\end{corollary}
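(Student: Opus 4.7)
The plan is to extract both assertions from the local structure theorem for locked forms, Proposition~\ref{52}. Applying it to the given $g:M\to B$ and $\bsig$ produces an \'etale cover $\{c_\imath:N_\imath\to M\}$ with each $N_\imath\simeq \Zero(s_\imath)$ for a section $s_\imath:V_\imath\to F_\imath$ of a vector bundle on a smooth derived affine $q_\imath:V_\imath\to B$, together with a cosection $\sigma_\imath:F_\imath\to \AA^1$ and a homotopy $\sigma_\imath\circ s_\imath\simeq w\circ q_\imath$ such that $c_\imath^*\bsig\simeq \bsig_{\Zero(s_\imath)/B}$. Passing to classical truncations I read off exactly the data required by Definition~\ref{25}: the perfect obstruction theory $\LL_{M/B}|_{M_\cl}$ restricted to the chart is the standard two-term complex $[F_\imath\dual|_{N_\imath}\xrightarrow{ds_\imath}\Omega_{V_\imath/B}|_{N_\imath}]$ (condition (i)); the homotopy supplies condition (ii); and condition (iii)---that the induced cosection of the obstruction sheaf factors through $\sigma_\imath$---is immediate because the underlying $1$-form of $\bsig_{\Zero(s_\imath)/B}$ is literally $\sigma_\imath$ under $\TT_{N_\imath/B}[1]|_{N_\imath}\simeq F_\imath|_{N_\imath}$. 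Thus the induced classical cosection is $w$-locked.

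For the equality of the two virtual pullbacks, I would use that both Construction~\ref{28} and Construction~\ref{Const:3} are compatible with pullback by the \'etale cover $\{c_\imath\}$, so it suffices to check agreement on each chart $N=\Zero(s_\imath)\to B$. There the map factors as the closed embedding $i:N\hookrightarrow V_\imath$ followed by the smooth $q_\imath:V_\imath\to B$, and on both sides the smooth part contributes a common flat pullback $q_\imath^*$, reducing the comparison to the $V_\imath$-relative pieces. In the derived setting, Lemma~\ref{Lem:LocGysin} identifies $[N/V_\imath,\bsig_{N/V_\imath}]^\vir$ with the localized Gysin map $0^!_{F_\imath,\sigma_\imath}$ of Construction~\ref{Const:2b}. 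In the classical setting of \S\ref{S1.5}, the specialization map \eqref{9} for the closed embedding $N\hookrightarrow V_\imath$ computes the flat limit $C_{N/V_\imath}\subset F_\imath|_N$ via the graph construction, after which Construction~\ref{28} delivers the classical Kiem--Li localized Gysin map. Since both the classical and the derived localized Gysin maps for the vector bundle $F_\imath$ are bivariant and characterized \'etale-locally by the blow-up formula \eqref{Eq:2}, they coincide, and the equality propagates back to $M$.

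The step I expect to be most delicate is the identification of the two specialization maps at the classical level, since one must match the classical truncation of $\D_{M/B}(\bsig)\to\AA^1$ with the classical deformation to the normal cone of $M\to B(w)$ over the kernel of the cosection. Fortunately, the homotopy $\sigma_\imath\circ s_\imath\simeq w\circ q_\imath$ makes the cone reduction of Assumption~\ref{20} automatic, precisely as in Remark~\ref{53}, so no separate verification of the reduction is needed once the local model is in place. Finally, the parenthetical assertion for $B=\pt$ is immediate: the only available function is $w=0$, so $w$-lockedness reduces to closedness in the sense of Definition~\ref{71}, and the equality of the two localized pullbacks specializes to the promised equality with the absolute virtual cycle of \eqref{16}.
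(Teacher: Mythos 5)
Your treatment of the first claim is exactly the paper's: apply Proposition~\ref{52} to produce \'etale-local charts $\Zero(s_\imath)\subset V_\imath\to B$ with homotopies $\sigma_\imath\circ s_\imath\simeq w\circ q_\imath$, then observe that classical truncation hands you the data of Definition~\ref{25} (the homotopy becomes an honest equality because the relevant mapping space on a classical affine scheme is discrete). This part is fine.

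For the second claim there is a gap. You cite Lemma~\ref{Lem:LocGysin} as identifying $[N/V_\imath,\bsig_{N/V_\imath}]^\vir$ with the localized Gysin map $0^!_{F_\imath,\sigma_\imath}$, but that lemma concerns the \emph{zero section} $N\to F_\imath$ and has source $A_*(F_\imath(\sigma_\imath))$, whereas $[N/V_\imath,\bsig]^\vir$ has source $A_*(V_\imath(w))$. The passage from the closed embedding $N\hookrightarrow V_\imath$ to the zero section $N\to F_\imath$ is precisely the content of the localized specialization map $\sp^{\bsig}_{N/V_\imath}$, so the identification you want is not a direct application of the lemma as stated. The preceding factorization $[N/B]^\vir_{\bsig}=[N/V_\imath]^\vir_{\bsig}\circ q_\imath^*$ and the compatibility of both constructions with the \'etale cover are also asserted but not verified; neither construction is defined by such a factorization, so these are additional functoriality claims requiring proof.

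The intended argument, signaled by the paper's pointer to Remark~\ref{53}, is more direct and needs no factorization through $V_\imath$: both Construction~\ref{28} and Construction~\ref{Const:3} are composites (specialization map)~$\circ$~(localized Gysin map). Remark~\ref{53}, extended to the $w$-locked case via the local model of Proposition~\ref{52} (which simultaneously gives the cone reduction), identifies $\sp^{\bsig}_{M/B}$ with the classical $\nu_*\circ\sp$ computing $[\fC_{M/B(w)}]$; and $0^!_{\cE_{M/B},\sigma}$ coincides with $0^!_{\T_{M/B}[1],\sigma}$ of Construction~\ref{Const:2b} because both are built via the same blow-up of the degeneracy locus. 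Your last paragraph on $B=\pt$ is correct as written.
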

\begin{proof}
The first statement is a direct consequence of Proposition \ref{52} and the second is immediate from the constructions (cf. Remark \ref{53}).    
\end{proof}

\bigskip
\section{Virtual Lagrangian cycles}\label{Sec:2}
When a DM stack $M$ is equipped with a $(-2)$-shifted symplectic structure $\btheta$, we have the \emph{virtual Langrangian cycle} $[M,\btheta]^\lag$ by the Oh-Thomas construction in \cite{OT}. This construction was generalized to the relative setting of a finitely presented morphism $g:M\to B$ equipped with a $(-2)$-shifted locked symplectic form by the second named author in \cite{Park1}.
In this section, we collect basic properties of the  virtual Lagrangian cycle (or virtual Lagrangian pullback) from \cite{Park1}.

\subsection{Virtual Lagrangian cycles}\label{S2.1}

Let $g:M \to B$ be a finitely presented morphism of derived Artin stacks.

\begin{definition}[Symplectic forms]
A {\em $(-2)$-shifted locked symplectic form} on $g:M \to B$ is a $(-2)$-shifted locked $2$-form $\btheta$ such that the map
\[{\theta}^{\#}:\TT_{M/B}[1] \xrightarrow{\simeq} \LL_{M/B}[-1]\]
induced by the underlying $2$-form $\theta:=\overline{\btheta}  : \O_M \to \wedge^2\LL_{M/B}[-2]$ is an equivalence.
\end{definition}

An {\em orientation} of a $(-2)$-shifted locked symplectic form $\btheta$ on $g:M \to B$ is an equivalence
\[\ori : \O_M \xrightarrow{\simeq} \det(\TT_{M/B}[1])\]
whose square is 
\[\det(\TT_{M/B}[1]) \otimes \det(\TT_{M/B}[1]) \xrightarrow[\simeq]{1\otimes \det({\theta}^{\#})} \det(\TT_{M/B}[1]) \otimes \det(\LL_{M/B}[-1])\xrightarrow[\simeq]{p_{\TT_{M/B}[1]}} \O_M,\]
where $p_{\TT_{M/B}[1]}$ is the pairing in \cite[Eq.~(8), (56)]{OT} for $\TT_{M/B}[1]$.

\begin{remark}[Determinants and sign conventions]
The determinants for classical schemes in \cite{KM} can be extended to derived stacks as in \cite[Def.~3.1]{STV} using \cite[Cor.~8.1.3]{HS}.
Following \cite{KM}, we use the Koszul sign rule for $\det(E)\otimes \det(F) \simeq \det(F) \otimes \det(E)$.
Note that our convention of orientation is slightly different from \cite{OT}. 
After multiplying $(\sqrt{-1})^{\frac{r(r-1)}{2}}$ to our orientation $\ori$, we get an orientation in the sense of \cite[Def.~2.1, Eq.~(58)]{OT}. 
\end{remark}

\begin{theorem}[{Virtual Lagrangian cycles, \cite{OT,Park1,Park2}}] \label{60}
Let $g:M \to B$ be a finitely presented morphism from a derived DM stack $M$ to a derived Artin stack $B$.
Let $\btheta$ be an oriented $(-2)$-shifted locked symplectic form.
Then there exists a map
\beq\label{58}
[M/B,\btheta]^\lag : A_*(B(w)) \lra A_{*+ r}(M),\eeq 
satisfying the bivariance (Proposition \ref{Prop:bivariance}) and functoriality (Proposition \ref{Prop:functoriality}) below.
Here, $B(w)$ is the zero locus of $w:=[\btheta]:B\to \AA^1$
and $r=\frac12\rank(\TT_{M/B})$.
\end{theorem}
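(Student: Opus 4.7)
The plan is to construct $[M/B,\btheta]^{\lag}$ as a composition $\sqrt{0^!_{\btheta}} \circ \sp^{\btheta}_{M/B}$, mirroring the three-step pattern of Construction \ref{Const:3} but replacing the ordinary Gysin map with an Oh-Thomas square-root Gysin map. First I would construct a Lagrangian specialization
\[
\sp^{\btheta}_{M/B} : A_*(B(w)) \lra A_*(\T_{M/B}[1](\theta)),
\]
where $\theta := \overline{\btheta}$ is viewed as a $\GG_m$-equivariant function $\T_{M/B}[1] \to \AA^1(2)$ and $\T_{M/B}[1](\theta)$ denotes its derived zero locus, i.e.\ the isotropic substack. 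The construction mirrors Lemma \ref{Lem:LocSp}: one considers the zero locus of $\btheta : \D_{M/B} \to \AA^1(2)$ inside the deformation space and obtains $\sp^{\btheta}_{M/B}$ as the unique map fitting into the commutative triangle induced by the excision sequence for the decomposition $\T_{M/B}[1](\theta) \hookrightarrow \D_{M/B}(\btheta) \hookleftarrow B(w) \times \GG_m$, together with the self-intersection formula applied to the zero section. The isotropy of the flat limit of $B(w)$ (the analog of cone reduction) is automatic because $\btheta$ already extends over $\D_{M/B}$, exactly as in Remark \ref{53}.

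Second, I would construct the square-root Gysin map
\[
\sqrt{0^!_{\btheta}} : A_*(\T_{M/B}[1](\theta)) \lra A_{*-r}(M).
\]
The symplectic equivalence $\theta^{\#} : \TT_{M/B}[1] \xrightarrow{\simeq} \LL_{M/B}[-1]$ together with the orientation $\ori$ endows $\TT_{M/B}[1]$ with a non-degenerate oriented symmetric self-dual structure, which is precisely the input needed for the Oh-Thomas construction of \cite[\S3]{OT}. In local charts obtained from Proposition \ref{52}, the construction reduces to the half-Euler class of an oriented $\mathrm{SO}(2r)$-bundle via vector-bundle resolutions (as in Construction \ref{Const:2}); these local constructions then glue by a Chow-lemma argument (as in Construction \ref{Const:2b}). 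Independence of resolution and of the {\'e}tale cover is the bivariance content of \cite{Park1, Park2}.

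Setting $[M/B,\btheta]^{\lag} := \sqrt{0^!_{\btheta}} \circ \sp^{\btheta}_{M/B}$ then yields the required map. Bivariance (Proposition \ref{Prop:bivariance}) reduces chart by chart to the bivariance of $\sqrt{0^!}$, which itself follows from the bivariance of the classical Gysin map after blowing up the degeneracy locus and invoking the Oh-Thomas pairing identity; functoriality (Proposition \ref{Prop:functoriality}) for composites of locked symplectic morphisms would be proved locally via Proposition \ref{52}, reducing the compatibility to a computation on a joint deformation space $\D_{M/B} \times_{\AA^1} \D_{N/M}$. The main obstacle I anticipate is the coherence of the $\sqrt{0^!}$ construction across local resolutions in the relative setting of a \emph{locked} (rather than closed) symplectic form: one has to verify that the square root is compatible with the base change $B(w) \hookrightarrow B$ and with the specialization, which is the technical heart of \cite{Park1, Park2} and is what I would rely on.
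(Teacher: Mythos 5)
Your plan follows the same architecture the paper indicates and then delegates to \cite{OT,Park1,Park2,Park3}: compose the localized specialization map of Lemma \ref{Lem:LocSp} (applied with $p=2$) with a square-root Gysin map on the self-dual complex $\TT_{M/B}[1]$, the latter built from the vector-bundle case via symmetric resolution and then globalized by the Chow lemma. The paper itself writes exactly this one-paragraph sketch and defers the details, so the approach is the paper's approach.

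Two small corrections. First, the degree count: the square-root Gysin map should read $\sqrt{0^!_{\btheta}}:A_*(\T_{M/B}[1](\theta)) \to A_{*+r}(M)$, not $A_{*-r}(M)$. Since $\rank(\TT_{M/B}[1]) = -\rank(\TT_{M/B}) = -2r$, the half-rank Gysin map \emph{raises} degree by $r$, which is what is needed for the target $A_{*+r}(M)$ in \eqref{58}; with your stated index the composition would not reproduce the theorem. Second, Proposition \ref{52} is a local model for locked \emph{$1$-forms} (a section $s$ of a vector bundle $F$ over a smooth $V/B$ together with a cosection $\sigma$ satisfying $\sigma\circ s\simeq w|_V$); it is not the tool that produces local charts adapted to a locked \emph{$2$-form}. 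The local model you actually need for the square-root Gysin step is a Darboux-type presentation giving a symmetric resolution of $\TT_{M/B}[1]$ (a nondegenerate quadratic bundle sandwiched between dual bundles), which is what \cite{OT,Park1} supply; it is not a consequence of Proposition \ref{52}. With that substitution, your reduction to half Euler classes of oriented $\mathrm{SO}(2r)$-bundles, the Chow-lemma globalization, and the reliance on \cite{Park1,Park2} for bivariance and functoriality are all exactly what the paper points to.
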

When $\btheta$ is obvious from the context, we will write simply $[M/B]^\lag$ instead of  $[M/B,\btheta]^\lag$. 
\begin{remark}\label{65}
(1) In \cite{OT}, the virtual Lagrangian cycle was constructed for the absolute case $B=\mathrm{pt}$. 
In this case, the lockedness  is the same as the closedness (Remark \ref{48}) which is automatic for a symplectic form. Therefore, \eqref{58} generalizes the Oh-Thomas cycle in \cite{OT}. 

(2) The relative virtual Lagrangian cycle or the virtual Lagrangian pullback \eqref{58} does not exist in general if we delete the lockedness condition. 

(3) In \cite{Park1}, a relative construction \eqref{58} was provided and functorial properties were established. In \cite{Park2}, all these were lifted to the derived setting and a uniqueness theorem was proved. 
The construction of $[M/B,\btheta]^\lag$ for general DM stacks $M$ using the Kimura sequence in \cite{BP} is explained in \cite[Ap.~A.2]{Park1}. This generality was not presented in the derived interpretation in \cite{Park2} for simplicity but it works the same.
\end{remark} 

The construction of virtual Lagrangian cycles is quite similar to that of (cosection) localized virtual cycles in \S\ref{Sec:1}.
Starting from the square-root Gysin maps for vector bundles with non-degenerate quadratic functions,
we can extend them to perfect complexes of tor-amplitude $[-1,1]$ using symmetric resolutions,
and the general case follows from the localized specialization maps (Lemma \ref{Lem:LocSp}).
We will not repeat the construction here.
See \cite{Park3} for a quick survey.

\medskip 

In the remainder of this section, we present two basic properties, the bivariance and functoriality, which will be useful in \S\ref{Sec:3}. 
In fact, these two properties uniquely determine the virtual Lagrangian cycles \eqref{58} by \cite[Thm.~C]{Park2}.

\subsection{Bivariance}\label{S2.2}

Consider a fiber square of derived Artin stacks 
\[\xymatrix{
N \ar[r]^-{p_M} \ar[d] \cart & M \ar[d]  \\
U \ar[r]^-{p} & B,
}\]
where the vertical arrows are of finite presentation.
Let $\btheta$ be an oriented $(-2)$-shifted locked symplectic form on $g:M \to B$.
Then we have an induced oriented $(-2)$-shifted locked symplectic form $p^*\btheta$ on $N \to U$ since $\D_{N/U}\simeq \D_{M/B}\times_B U$.

\begin{proposition}[Bivariance]\label{Prop:bivariance}
Suppose that $M$ is  DM and let $\btheta$ be an oriented $(-2)$-shifted locked symplectic form on $g:M\to B$. 
\begin{enumerate}
\item If the classical truncation $p_\cl:U_\cl \to B_\cl$ of $p$ is a projective morphism, then we have
\[[M/B,\btheta]^\lag \circ (p_{w})_* =p_{M,*} \circ [N/U,p^*\btheta]^\lag 
,\] 
where $p_{w} : U(p^*w) \to B(w)$ is the restriction of $p$ to the zero loci of $w:=[\btheta]$. 
\item If $p:U \to B$ is a quasi-smooth morphism (and $p_\cl$ is quasi-projective), then we have
\[p_{M}^! \circ [M/B,\btheta]^\lag  =  [N/U,p^*\btheta]^\lag \circ p_{w}^! 
,\]
where $(-)^!$ denotes the quasi-smooth (virtual) pullback \cite{BF,Man}.
\end{enumerate}
\end{proposition}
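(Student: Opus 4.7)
The plan is to factor the virtual Lagrangian cycle as
\[[M/B,\btheta]^\lag \;=\; \sqrt{0^!_{\T_{M/B}[1],\theta}} \circ \sp^{\btheta}_{M/B},\]
following the construction sketched immediately after Theorem \ref{60}, and then verify bivariance of each of the two factors separately. A preliminary observation is that the mapping-stack description of the deformation space yields a canonical identification $\D_{N/U} \simeq \D_{M/B}\times_B U$, so $p^*\btheta$ is a genuine $(-2)$-shifted locked $2$-form on $N/U$; its non-degeneracy and orientation are induced by those of $\btheta$ via the base-change equivalence $\TT_{N/U}\simeq p_M^*\TT_{M/B}$. In particular $p^*\btheta$ is an oriented locked symplectic form in the sense required to apply Theorem \ref{60}, and the statement of the proposition is well-formulated.

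For the localized specialization map, I would use the uniqueness in Lemma \ref{Lem:LocSp}: $\sp^{\btheta}_{M/B}$ is determined by a commutative triangle whose two other edges are the Gysin pullbacks along $\{0\},\{1\}\hookrightarrow \AA^1$ inside the zero locus $\D_{M/B}(\btheta)\to\AA^1$. In part (1), projective base change $p_*$ commutes with these divisor Gysin maps by the standard bivariance of Gysin maps for Cartier divisors on $\AA^1$, so the uniqueness in Lemma \ref{Lem:LocSp} produces
\[\sp^{\btheta}_{M/B}\circ p_{w,*} \;=\; \bigl(p_M^{\T[1]}\bigr)_{\!*} \circ \sp^{p^*\btheta}_{N/U},\]
where $p_M^{\T[1]}:\T_{N/U}[1](p^*\theta)\to \T_{M/B}[1](\theta)$ is the induced projective pushforward on the kernel cones. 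In part (2), quasi-smooth pullback along $p$ commutes with the same divisor Gysin maps by the functoriality of virtual pullbacks \cite{Man}, and gives the analogous identity
\[p_M^{\T[1],!}\circ \sp^{\btheta}_{M/B} \;=\; \sp^{p^*\btheta}_{N/U}\circ p_w^!.\]

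For the square-root Gysin map $\sqrt{0^!_{\T_{M/B}[1],\theta}}$, both the projective-pushforward and quasi-smooth-pullback bivariance properties are established in \cite{Park1} in the relative setting and lifted to the derived context in \cite{Park2}. Concatenating these identities with the specialization-map identities from the previous paragraph produces exactly the two equalities claimed in the proposition. The formal structure of the argument is thus the same as that of Construction \ref{Const:3} for localized virtual cycles: specialize, then apply the refined Gysin map, and verify that each step is bivariant.

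The main obstacle is the bivariance of the square-root Gysin map itself, which is considerably more delicate than the bivariance of the ordinary Gysin map of a vector bundle: the square-root construction depends on the orientation and is carried out via isotropic symmetric resolutions together with Chow covers, so compatibility with pushforward and pullback requires comparing the chosen resolutions on $M$ and $N$. The specialization-map step, by contrast, is essentially a diagram chase using the universal property of Lemma \ref{Lem:LocSp}. Once the square-root bivariance of \cite{Park1,Park2} is invoked as a black box, the rest of the proof reduces to composing the two compatibilities at the kernel-cone level with those at the ambient-stack level.
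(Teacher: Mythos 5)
Your decomposition of $[M/B,\btheta]^\lag$ into the localized specialization map followed by the square-root Gysin map, with bivariance verified separately for each factor, is exactly the structure the paper intends; the paper proves nothing new for this proposition, simply referring to \cite[Prop.~1.15]{Park1} and \cite[Prop.~4.3.10]{Park0}, which carry out that two-step scheme. Your outline is therefore correct and matches the paper's approach (the only loose phrasing is attributing the specialization-map identity to ``uniqueness,'' when what is really used is the surjectivity of $1^!$ in the excision sequence together with ordinary bivariance of the divisor Gysin maps $0^!,1^!$ over $\AA^1$, which is what you mean anyway).
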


Proposition \ref{Prop:bivariance} is a derived version of \cite[Prop.~1.15]{Park1} (see also \cite[Prop.~4.3.10]{Park0} for the DM case).
The statement here is slightly more general since we consider quasi-smooth morphisms instead of lci morphisms, but it also follows from the same arguments.
These can be further extended to the case where the morphism $p:U \to B$ is DM (not necessarily quasi-projective) by using the proper pushforwards for Artin stacks developed in \cite{BSS}. 

\subsection{Functoriality}\label{S2.3}

Consider an oriented locked $(-2)$-shifted {\em Lagrangian correspondence}
\[\xymatrix{
& L \ar[ld]_s \ar[rd]^t & \\
M && N
}\]
over a derived Artin stack $B$,
that is, a correspondence of derived Artin stacks
together with 
\begin{itemize}
\item a $(-2)$-shifted locked sympletic form $\btheta_M \in \sA^{2,\lc}(M/B,-2)$ and an orientation $\ori_M$;
\item a $(-2)$-shifted locked sympletic form $\btheta_N \in \sA^{2,\lc}(M/B,-2)$ and an orientation $\ori_N$;
\item an equivalence $s^*(\btheta_M) \simeq t^*(\btheta_N) \in \sA^{2,\lc}(L/B,-2)$ such that 
\begin{enumerate} 
\item[(i)] the induced square
\[
\xymatrix{
\TT_{L/B}[1] \ar[r]^-{b:=t_*} \ar[d]^{a:=s_*} \cart & \TT_{N/B}[1]|_L \simeq \LL_{N/B}[-1]|_L \ar[d]^{b\dual\simeq t^*} \\
\TT_{M/B}[1]|_L \simeq \LL_{M/B}[-1]|_L \ar[r]^-{a\dual\simeq s^*} & \LL_{L/B}[-1]
}\]
is a fiber square and 
\item[(ii)] $\ori_M|_L$ is sent to $\ori_N|_L$ under the canonical equivalence
\begin{align*}
\det(\TT_{M/B}[1]|_L) &\xrightarrow{\simeq} \det(\TT_{L/B}[1])\otimes \det(\cof(a)) \\
&\xrightarrow{\simeq} \det(\TT_{N/B}[1]|_L)\otimes \det(\fib(b)) \otimes \det(\cof(b\dual))   \xrightarrow[p_{\fib(b)}]{\simeq} \det(\TT_{N/B}[1]|_L),
\end{align*}
where $p_{\fib(b)}$ is the pairing in \cite[Eq.~(56)]{OT} for $\fib(b)$.
\end{enumerate} 
\end{itemize}

\begin{proposition}[Functoriality]\label{Prop:functoriality}
Assume that $M$, $N$ are DM, the classical truncation $t_\cl$ of $t$ is quasi-projective, and $[\btheta_M] \simeq [\btheta_N] : B \to \AA^1$.
If $s_\cl$ is an isomorphism and $t$ is quasi-smooth, then we have
\[[M/B,\btheta_M]^\lag = [L/N]^\vir \circ [N/B,\btheta_N]^\lag 
.\]
\end{proposition}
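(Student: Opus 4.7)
The plan is to reduce the identity to a local model via bivariance and smooth descent, and then to verify it in the local model by decomposing the symmetric bundle presenting $\btheta_M$ into an orthogonal sum of the one presenting $\btheta_N$ with a hyperbolic summand carrying $\TT_{L/N}$. Since $s_{\cl}$ is an isomorphism, the proper pushforward $s_* : A_*(L) \xrightarrow{\simeq} A_*(M)$ is an isomorphism on Chow groups, so the desired identity is understood after this identification. The rank constraint imposed by the Lagrangian correspondence, namely $\rank(\TT_{L/B}) = \tfrac{1}{2}(\rank(\TT_{M/B}) + \rank(\TT_{N/B}))$ (obtained from the Euler characteristic of the fiber square on tangent complexes), guarantees that the dimension shifts on both sides match.

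Next, using the bivariance of both $[-]^{\lag}$ and $[-]^{\vir}$ (Proposition \ref{Prop:bivariance}) together with smooth descent, I would reduce to an explicit local model of the Lagrangian correspondence. By a relative analogue of Proposition \ref{52} and the local structure of $(-2)$-shifted locked symplectic forms from \cite[\S 3]{Park2}, one can arrange that $N \simeq \Zero_{V_N}(s_N)$ for a section of an oriented non-degenerate symmetric bundle $(E_N, q_N)$ over a smooth $V_N/B$ presenting $\btheta_N$; that $L$ is cut out in a suitable smooth $V_L \to V_N$ by a section of a vector bundle $E_L$; and that $M \simeq \Zero_{V_L}(s_M)$ with $s_M$ a section of $E_M := E_N|_{V_L} \oplus (E_{L/N} \oplus E_{L/N}\dual)$ equipped with the orthogonal sum of $q_N$ and the hyperbolic pairing on $E_{L/N} \oplus E_{L/N}\dual$ presenting $\btheta_M$, with orientation compatible with that of $\btheta_N$. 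The hyperbolic summand captures precisely the derived-geometric discrepancy between $M$ and $L$ together with the relative tangent $\TT_{L/N}$.

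In this local model, all three cycles are computable via explicit square-root Gysin maps on the symmetric bundles together with the ordinary Gysin map for $E_{L/N}$, and the specialization maps attached to the three deformation spaces $\D_{M/B}$, $\D_{N/B}$ and $\D_{L/N}$ are compatible by construction, since the Lagrangian correspondence induces a compatible system of deformations over $\AA^1$. The identity then reduces to the multiplicativity of the square-root Euler class over an orthogonal direct sum combined with the hyperbolic formula
$$\sqrt{e}(E_{L/N} \oplus E_{L/N}\dual) = \pm\, e(E_{L/N})$$
of \cite[Prop.~4.2]{OT}.

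The main obstacle is to establish the local model with all derived-symplectic and orientation data respected simultaneously, in such a way that the hyperbolic splitting of $E_M$ is canonical and that the sign produced by the hyperbolic formula matches the orientation data carried by $\btheta_M$ and $\btheta_N$. A potentially cleaner alternative route is to invoke the uniqueness theorem for virtual Lagrangian cycles in \cite[Thm.~C]{Park2}: one would show that $s_* \circ [L/N]^{\vir} \circ [N/B,\btheta_N]^{\lag}$ satisfies the bivariance axioms characterizing $[M/B,\btheta_M]^{\lag}$, thereby reducing the functoriality identity to a set of base-change compatibilities for the Lagrangian correspondence, and sidestepping the direct sign computation.
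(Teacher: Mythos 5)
The paper does not actually prove Proposition \ref{Prop:functoriality}: it explicitly defers to \cite[Thm.~2.2, Thm.~A.4]{Park1} for the proof and to \cite[Rem.~5.2.5]{Park2} for the translation into the derived/locked-form language used here. So there is no argument in this paper to compare your sketch against line by line; the relevant comparison is with the proof in \cite{Park1}.

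That said, your sketch is in the same spirit as the argument in \cite{Park1}: reduce to a local presentation by symmetric bundles, split off an orthogonal hyperbolic summand carrying $\TT_{L/N}$, and conclude via multiplicativity of the square-root Euler class together with the hyperbolic formula of \cite[Prop.~4.2]{OT}. Your Euler-characteristic check of the dimension shift is correct. The weak point is exactly the one you flag: the assertion that one can \emph{simultaneously} produce local charts in which $N$, $L$, and $M$ are cut out by compatible sections, with $E_M \simeq E_N|_{V_L} \oplus (E_{L/N} \oplus E_{L/N}\dual)$ as an \emph{oriented orthogonal} decomposition inducing $\btheta_M$, $\btheta_N$, and the relative obstruction theory of $L/N$. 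This is the technical heart of the functoriality theorem, and it is not a formal consequence of the relative Darboux statement (Proposition \ref{52}); one has to construct the split symmetric resolution from the data of the Lagrangian correspondence, keep the deformation-space / specialization maps for $\D_{M/B}$, $\D_{N/B}$, $\D_{L/N}$ compatible on the nose, and track the orientation sign through the choice of positive maximal isotropic (this is where the $t_\cl$ quasi-projectivity hypothesis enters, to get global resolutions in the classical setting of \cite{Park1}). As written, your sketch asserts this local model rather than constructs it.

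Your second route, appealing to the uniqueness theorem \cite[Thm.~C]{Park2}, is arguably closer to what the present paper has in mind via \cite[Rem.~5.2.5]{Park2}: show that $s_* \circ [L/N]^\vir \circ [N/B,\btheta_N]^\lag$ (using $s_\cl$ an isomorphism to identify Chow groups) satisfies the bivariance axioms that characterize $[M/B,\btheta_M]^\lag$. This is cleaner because it offloads the sign and local-model bookkeeping into verifying compatibilities under base change and projective pushforward, both of which follow from Proposition \ref{Prop:bivariance} and the analogous bivariance for $[L/N]^\vir$. If you want a complete argument within the framework of this paper, that route is the one to flesh out; you would still need to confirm that the data $(\btheta_M, \ori_M)$ reconstructed from $(\btheta_N, \ori_N, L)$ by the Lagrangian-correspondence axioms agrees with the given one, which is precisely condition (ii) in the definition of oriented Lagrangian correspondence.
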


Proposition \ref{Prop:functoriality} is a derived interpretation of \cite[Thm.~2.2, Thm.~A.4]{Park1}. See \cite[Rem.~5.2.5]{Park2} for the precise connection.
The assumption on $t_\cl$ being quasi-projective is necessary in the proof of functoriality in \cite{Park1}. However the authors expect that such assumptions can be removed using the symplectic deformations in \cite[\S5.1]{Park2}.

We end this section with an immediate consequence of Proposition \ref{Prop:functoriality} which tells us that the virtual Lagrangian cycle is the \emph{virtual cycle of a Lagrangian}. 

\begin{corollary}\label{Cor:1}
In the situation of Proposition \ref{Prop:functoriality},
if $N \simeq B$ so that we have a diagram
\[\xymatrix{ 
L\ar[dr]_{\mathrm{qsm}}\ar[rr]^{\mathrm{Lag}} && M\ar[dl]^{(-2)\mathrm{-{symp}}}\\
&B
}\]
and $\ori_N=1$,
then we have
\[[M/B]^\lag = [L/B]^\vir.\]
%
\end{corollary}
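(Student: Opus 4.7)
The plan is to apply Proposition \ref{Prop:functoriality} directly to the given Lagrangian correspondence with the trivial side $N\simeq B$, and then to identify the resulting composition. First I would invoke Proposition \ref{Prop:functoriality} (whose hypotheses are part of the ambient setup of the corollary, together with the constraint $[\btheta_M]\simeq[\btheta_N]$ forced by the compatibility of the correspondence) to obtain
\[[M/B,\btheta_M]^\lag = [L/N]^\vir \circ [N/B,\btheta_N]^\lag.\]
Using the equivalence $N\simeq B$, I would canonically identify the relative virtual pullback $[L/N]^\vir$ with $[L/B]^\vir$, so the whole problem reduces to verifying that $[N/B,\btheta_N]^\lag$ is the identity on $A_*(B)$.

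Next I would unravel Theorem \ref{60} for the essentially trivial structure map $N\simeq B$. Since $\TT_{N/B}\simeq 0$, any $(-2)$-shifted locked $2$-form on $N/B$ is trivially symplectic, the underlying function $[\btheta_N]$ is forced to be $0$, and the $1$-shifted tangent bundle $\T_{N/B}[1]$ is the zero derived stack. Consequently the deformation space $\D_{N/B}$ is equivalent to $B\times\AA^1$, so the localized specialization map of Lemma \ref{Lem:LocSp} reduces to $\id_{A_*(B)}$, and the square-root Gysin map on the zero complex with orientation $\ori_N=1$ is likewise the identity (no sign is introduced). Therefore $[N/B,\btheta_N]^\lag = \id_{A_*(B)}$. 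Alternatively, this identification is forced by the uniqueness characterization of virtual Lagrangian cycles (\cite[Thm.~C]{Park2}) together with the bivariance of Proposition \ref{Prop:bivariance}, since the identity map manifestly obeys the characterizing bivariance axioms for the trivial oriented symplectic structure on $\id_B$.

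Combining the two steps yields the desired equality $[M/B]^\lag = [L/B]^\vir$. The point I expect to require the most care is the sign bookkeeping: the hypothesis $\ori_N=1$ must be used to pin down the sign of the square-root Gysin map and to guarantee that the final composition is truly $[L/B]^\vir$ and not $\pm[L/B]^\vir$. This is precisely where the orientation compatibility condition (ii) in the definition of an oriented Lagrangian correspondence in \S\ref{S2.3} is consumed, matching $\ori_M|_L$ with $\ori_N|_L$ via the pairing $p_{\fib(b)}$.
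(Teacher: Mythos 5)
Your proposal is correct and follows exactly the route the paper intends: the statement is labeled an ``immediate consequence'' of Proposition \ref{Prop:functoriality}, and your unpacking — specializing to $N\simeq B$, identifying $[L/N]^\vir$ with $[L/B]^\vir$, and verifying that $[N/B,\btheta_N]^\lag=\id$ because $\TT_{N/B}\simeq 0$, $\D_{N/B}\simeq B\times\AA^1$, and $\ori_N=1$ — is the content the authors left to the reader. The remark that the orientation hypothesis is what pins down the sign is accurate and matches how condition (ii) of the Lagrangian correspondence is consumed.
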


\bigskip
\section{Comparison Theorem}\label{Sec:3} 
In this section, we state and prove the first main result of this paper (Theorem \ref{i6}). 

Let $g:M\to B$ be a quasi-smooth morphism from a derived DM stack and let $\bsig$ be a locked $(-1)$-shifted 1-form with $w=[\bsig]:B\to \AA^1$ and $\sigma=\obsig\in \sA^1(M/B,-1)$, using the notation of \eqref{45}. By Construction \ref{Const:3}, we have the (cosection) localized virtual cycle (pullback)
\beq\label{61} 
[M/B,\bsig]^\vir :A_*(B(w))\lra A_{*+r}(M(\sigma)), \quad r=\rank\, \TT_{M/B}.\eeq
In this section, we first show that the restriction of $g$ to the zero locus $M(\sigma)$ of $\sigma$ admits a canonical $(-2)$-shifted locked symplectic form $\btheta$ with a canonical orientation $\mathrm{or}_{M(\sigma)/B}$ (Proposition \ref{Prop:TwCot}) which gives us the virtual Lagrangian cycle 
\beq\label{62}
[M(\sigma)/B,\btheta]^\lag :A_*(B(w))\lra A_{*+r}(M(\sigma)), \quad r=\frac12\rank\, \TT_{M(\sigma)/B}\eeq 
by Theorem \ref{60}. Hence, there arises a natural question asking how \eqref{61} is related to \eqref{62}.  
Theorem \ref{Thm:main} below tells us that the two pullbacks \eqref{61} and \eqref{62} are in fact identical.

\subsection{Symplectic structures on zero loci and comparison of virtual cycles}\label{Sec:3.1}

By definition, the {\em zero locus} of a $(-1)$-shifted 1-form $\sigma$ on $M$ is the fiber product 
\beq\label{68}\xymatrix{
M(\sigma) \ar[r] \ar[d] \cart & M \ar[d]^0\\
M \ar[r]^-{\sigma} & \T^*_{M/B}[-1],
}\eeq
where $\T^*_{M/B}[-1]=\Tot_M(\LL_{M/B}[-1])$.
We first observe that if $\bsig$ is a locked $1$-form with $\obsig\simeq\sigma$, its zero locus $M(\sigma)$ admits a natural 
$(-2)$-shifted locked symplectic form $\btheta$.

\begin{proposition}
\label{Prop:TwCot}
Let $g:M \to B$ be a finitely presented morphism of derived Artin stacks.
Given 
$\bsig \in \sA^{1,\lc}(M/B,-1)$, there exists a canonical $(-2)$-shifted locked symplectic form
\[\btheta \in \sA^{2,\lc}(M(\sigma)/B,-2)\text{ such that $[\btheta] \simeq [\bsig] : B \lra \AA^1$},\]
where $M(\sigma)$ is the zero locus of $\sigma=\obsig$. 
Moreover,  $\btheta$ admits a canonical orientation $\ori_{M(\sigma)/B}$.
\end{proposition}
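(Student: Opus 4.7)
The plan is to present $M(\sigma)$ as the Lagrangian intersection of two Lagrangian sections in the relative $(-1)$-shifted cotangent bundle. Concretely, the total space $\T^*_{M/B}[-1] \to B$ carries a canonical relative $(-1)$-shifted symplectic form with a natural locked enhancement (a relative version of the shifted Liouville construction), and sections $M \to \T^*_{M/B}[-1]$ are the same as $(-1)$-shifted $1$-forms on $g : M \to B$; moreover, by the relative analogue of \cite[Thm.~2.22]{Cal}, a Lagrangian enhancement of such a section is the same datum as a locked enhancement of the corresponding $1$-form. Consequently, both the zero section (with tautological locked data) and $\sigma$ (with locked data provided precisely by $\bsig$) are canonical Lagrangians, and applying the relative Lagrangian intersection theorem \cite[Thm.~2.9]{PTVV} to them produces a canonical $(-2)$-shifted locked symplectic form
\[\btheta \in \sA^{2,\lc}(M(\sigma)/B,-2)\]
as required.

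To identify the underlying function, I would trace $\btheta : \D_{M(\sigma)/B} \to \AA^1$ through the Lagrangian intersection construction. The underlying function of the $2$-form obtained by intersecting two Lagrangians is the difference of the underlying functions of their locked enhancements: the zero section contributes $0$ and $\bsig$ contributes $[\bsig] = w$, so $[\btheta] \simeq w$. If a hands-on verification is preferred, one can instead work locally via Proposition \ref{52}: in the chart $M \simeq \Zero(s)$ with $s : V \to F$ and cosection $\sigma_F : F \to \AA^1$ satisfying $\sigma_F \circ s \simeq w \circ q$, the intersection $M(\sigma)$ is an explicit derived zero locus, the $2$-form $\btheta$ can be written down by hand, and $[\btheta] \simeq w|_V$ is manifest from the hypothesis.

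For the orientation I would use the fiber sequence of relative tangent complexes coming from the defining pullback square \eqref{68},
\[\LL_{M/B}[-1]|_{M(\sigma)} \longrightarrow \TT_{M(\sigma)/B}[1] \longrightarrow \TT_{M/B}[1]|_{M(\sigma)}.\]
Taking determinants and using $\det(E[n]) \simeq \det(E)^{(-1)^{n}}$ together with $\det(\LL_{M/B}) \simeq \det(\TT_{M/B})^{-1}$, the outer terms cancel:
\[\det(\TT_{M(\sigma)/B}[1]) \simeq \det(\LL_{M/B}[-1]) \otimes \det(\TT_{M/B}[1])|_{M(\sigma)} \simeq \det(\TT_{M/B}) \otimes \det(\TT_{M/B})^{-1}|_{M(\sigma)} \simeq \O_{M(\sigma)},\]
and I would take this canonical trivialization as $\ori_{M(\sigma)/B}$. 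The square-vs-pairing identity is then a formal check: under the fiber sequence above, $\theta^{\#} : \TT_{M(\sigma)/B}[1] \xrightarrow{\simeq} \LL_{M(\sigma)/B}[-1]$ is the self-duality exchanging the two outer terms (which are shifted duals of each other), so $\det(\theta^{\#})$ composed with the pairing $p_{\TT_{M(\sigma)/B}[1]}$ recovers the square of the canonical trivialization on the nose.

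The main obstacle I foresee is promoting the Lagrangian intersection symplectic form from closed to \emph{locked} in the relative setting. In the absolute case ($B = \pt$), closed and locked coincide by \cite[Prop.~6.1.1]{Park2}, so the assertion reduces to \cite[Thm.~2.9]{PTVV} directly. In the relative case, however, we must exhibit $\btheta$ as a genuine $\GG_m$-equivariant function on $\D_{M(\sigma)/B}$; I expect this requires either unpacking the Lagrangian intersection construction at the level of deformation spaces, or else gluing the explicit local form of $\btheta$ provided by Proposition \ref{52} along an étale cover, using that $\sA^{2,\lc}(-/B,-2)$ is étale-local.
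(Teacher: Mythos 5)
The genuine gap is exactly where you flag it. You propose to build $\btheta$ by applying the Lagrangian intersection theorem of \cite{PTVV} (relativized) to the two sections $0,\bsig : M \to \T^*_{M/B}[-1]$ and then to \emph{promote} the resulting closed $2$-form to a locked one. For that to work you need a relative version of Calaque's \cite[Thm.~2.22]{Cal} asserting ``Lagrangian enhancement of a section $\;=\;$ locked enhancement of the corresponding $1$-form,'' and a locked version of the intersection theorem itself. Neither is proved in the literature you cite, and you explicitly say you only ``expect'' the upgrade to go through. The paper avoids this difficulty entirely: it invokes \cite[Thm.~2.9]{PTVV} \emph{only} to check nondegeneracy, not to manufacture $\btheta$. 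The construction is more direct. The defining pullback square \eqref{68} hands you a null-homotopy $h_{\bsig}: 0 \simeq \overline{\bsig|_{M(\sigma)}}$ in $\sA^1(M(\sigma)/B,-1)$, and the cofiber sequence $\O_{\D_{M(\sigma)/B}} \xrightarrow{t\cdot} \O_{\D_{M(\sigma)/B}} \to i_*\O_{\T_{M(\sigma)/B}[1]}$ produces a fiber square identifying $\sA^{2,\lc}(M(\sigma)/B,-2)$ with the space of $(-1)$-shifted locked $1$-forms \emph{together with} a null-homotopy of the underlying $1$-form. Thus $(\bsig|_{M(\sigma)}, h_{\bsig})$ simply \emph{is} $\btheta$, already a locked form, with $[\btheta]\simeq[\bsig]$ immediate. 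No closed-to-locked promotion step occurs. Your alternative suggestion of gluing the local model from Proposition \ref{52} along an \'etale cover is in principle workable but requires checking coherence on overlaps, which is strictly harder than the global fiber-square argument.

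Your orientation discussion has the right shape (the self-dual cofiber sequence from the Lagrangian fibration of $M(\sigma)\to M$, outer terms cancelling), but the trivialization is not just the naive determinant cancellation: the paper defines $\ori_{M(\sigma)/B}$ through the pairing $p_{\TT_{M/B}[1]}$ of \cite[Eq.~(56)]{OT}, which encodes specific sign conventions needed for the square condition against $p_{\TT_{M(\sigma)/B}[1]}$. Calling the compatibility ``a formal check'' sweeps under the rug precisely the Koszul-sign bookkeeping that \cite{OT} and \cite{KM} are careful about, so you would need to exhibit the identification via that pairing rather than via $\det(E[n])\simeq\det(E)^{(-1)^n}$ alone.
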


In the literature, $M(\sigma)$ equipped with $\btheta$ is called the {\em $\bsig$-twisted $(-2)$-shifted cotangent bundle} (see e.g.~\cite{BG} or \cite[Def.~1.16]{Saf}).
It is well known that the twisted cotangent bundle is shifted symplectic by \cite[Thm.~2.22]{Cal}.
Moreover, there is a straightforward extension to the locked case in \cite[\S3.1]{Park2}.
We briefly recall the proof here.

\begin{proof}[Proof of Proposition \ref{Prop:TwCot}]

The commutative square \eqref{68} gives us a null-homotopy
\[h_{\bsig} : 0 \xrightarrow{\simeq} \sigma|_{M(\sigma)} \simeq \overline{(\bsig|_{M(\sigma)})} \textin \sA^{1}(M(\sigma)/B,-1).\]

We will use the fiber square of spaces
\[\xymatrix{
\sA^{2,\lc}(M(\sigma)/B,-2) \ar[r]^-{(-)^{\geq 1}} \ar[d] 
\cart & \sA^{1,\lc}(M(\sigma)/B,-1) \ar[d]^{\overline{(-)}} 
\\
\pt \ar[r]^-0 & \sA^1(M(\sigma)/B,-1),
}\]
induced by the canonical cofiber sequence
\[\O_{\D_{M(\sigma)/B}} \xrightarrow{t\cdot}  \O_{\D_{M(\sigma)/B}}  \to i_* \O_{\T_{M(\sigma)/B}[1]}, \]
where $t:\D_{M(\sigma)/B} \to B\times \AA^1 \xrightarrow{\pr_2} \AA^1$ is the composition and $i :\T_{M(\sigma)/B}[1] \hookrightarrow \D_{M(\sigma)/B}$ is the inclusion. 
The null-homotopy $h_{\bsig}$ is equivalent to a locked $2$-form
$\btheta \in \sA^{2,\lc}(M(\sigma)/B,-2)$
with $\btheta^{\geq1} \simeq \bsig|_{M(\sigma)}$ via the fiber square.
We can easily observe that $[\btheta] \simeq [\bsig] : B \to \AA^1$.

The nondegeneracy of $\btheta$ follows from the Lagrangian intersection theorem \cite[Thm.~2.9]{PTVV} 
since
the $(-1)$-shifted cotangent bundle $\T^*_{M/B}[-1]$ is $(-1)$-shifted symplectic over $B$ by  \cite[Thm.~2.4]{Cal}
and the sections $\bsig,0 : M \to \T^*_{M/B}[-1]$ are Lagrangians by \cite[Thm.~2.22]{Cal}.

The orientability follows from the canonical self-dual cofiber sequence
\[\xymatrix{
\LL_{M/B}[-1]|_{M(\sigma)} \ar[r] & \LL_{M(\sigma)/B}[-1]\simeq\TT_{M(\sigma)/B}[1] \ar[r] & \TT_{M/B}[1]|_{M(\sigma)},
}\]
induced by the Lagrangian fibration structure of $M(\sigma) \to M$ (cf. \cite[Thm.~3.5]{Gra}). 
We choose the orientation $\ori_{M(\sigma)/B}$ as the inverse of the induced isomorphism
\[\det(\TT_{M(\sigma)/B}[1]) \simeq \det(\TT_{M/B}[1]|_{M(\sigma)}) \otimes \det(\TT_{M/B}[1]|_{M(\sigma)}\dual) \xrightarrow[p_{\TT_{M/B}[1]}]{\simeq}\O_{\TT_{M(\sigma)/B}[1]},\]
where $p_{\TT_{M/B}[1]}$ is the pairing in \cite[Eq.~(56)]{OT} for $\TT_{M/B}[1]$.
\end{proof}

From now on, we will sometimes write 
\beq\label{911} M(\bsig)=(M(\sigma),\btheta)\eeq  
which is an oriented locked $(-2)$-shifted symplectic fibration over $B$.

\medskip

We can now state the main theorem of this section. 


\begin{theorem}\label{Thm:main}
Let $g:M \to B$ be a quasi-smooth morphism from a derived DM stack to a derived Artin stack.
Let $\bsig$ be a $(-1)$-shifted locked $1$-form on $g: M \to B$ with $w:=[\bsig]$ and $\sigma:=\obsig$. Let $\btheta$ be the $(-2)$-shifted locked symplectic form on $M(\sigma)\to B$ with the canonical orientation $\ori_{M(\sigma)/B}$ by Proposition \ref{Prop:TwCot}. 
Then we have the identity 
\begin{equation}\label{Eq:Main}
[M/B,\bsig]^\vir = [M(\sigma)/B, \btheta]^\lag : A_*(B(w)) \lra A_{*+r}(M(\sigma)),\quad r=\rank\, \TT_{M/B}
\end{equation}
of the localized virtual cycle from Construction \ref{Const:3} and the virtual Lagrangian cycle from Theorem \ref{60}.  
\end{theorem}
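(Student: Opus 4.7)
The plan is to reduce to a local model via Proposition \ref{52} and then match both sides against a common composition of a specialization and a Gysin-type operation.

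By Proposition \ref{52}, there is an étale surjection $c: N = \Zero(s) \twoheadrightarrow M$ where $s: V \to F$ is a section of a vector bundle on a smooth $q: V \to B$ with $V$ derived affine, $\sigma_F: F \to \AA^1$ is a cosection satisfying $\sigma_F \circ s \simeq w \circ q$, and $c^*\bsig \simeq \bsig_{N/B}$ is the induced $(-1)$-shifted locked $1$-form (built via Lemma \ref{Lem:sigmazerosection}). Both sides of \eqref{Eq:Main} are compatible with smooth base change---Proposition \ref{Prop:bivariance}(2) for the right-hand side, and the bivariance of Lemma \ref{Lem:LocSp} together with that of Construction \ref{Const:2b} for the left-hand side. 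Since étale pullback is injective on rational Chow groups of separated DM stacks, it suffices to prove the identity after pulling back along $c$, and we may assume $M = N$.

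In the local model, Construction \ref{Const:3} gives
\[[N/B,\bsig_{N/B}]^\vir = 0^!_{\T_{N/B}[1],\sigma} \circ \sp_{N/B}^{\bsig_{N/B}}.\]
For the Lagrangian side, the self-dual cofiber sequence
\[\LL_{N/B}[-1]|_{N(\sigma)} \lra \T_{N(\sigma)/B}[1] \lra \T_{N/B}[1]|_{N(\sigma)}\]
from the proof of Proposition \ref{Prop:TwCot}, together with the orientation $\ori_{N(\sigma)/B}$, provides a symmetric resolution of $\T_{N(\sigma)/B}[1]$ for which the Oh-Thomas square-root Gysin map reduces---via the Lagrangian quotient $\T_{N(\sigma)/B}[1] \twoheadrightarrow \T_{N/B}[1]|_{N(\sigma)}$---to the localized Gysin map $0^!_{\T_{N/B}[1],\sigma}$. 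The $(-2)$-shifted locked specialization entering $[N(\sigma)/B,\btheta]^\lag$ likewise factors through the $(-1)$-shifted specialization $\sp_{N/B}^{\bsig_{N/B}}$ using the same cofiber sequence, since $\D_{N(\sigma)/B}$ sits compatibly inside $\D_{N/B}(\bsig_{N/B})$ via the Lagrangian intersection structure of Proposition \ref{Prop:TwCot}. Combining these two matchings identifies both cycles with the common composition
\[A_*(B(w)) \xrightarrow{\sp_{N/B}^{\bsig_{N/B}}} A_*(\T_{N/B}[1](\sigma)) \xrightarrow{0^!_{\T_{N/B}[1],\sigma}} A_*(N(\sigma)),\]
yielding \eqref{Eq:Main}.

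The main obstacle is the on-the-nose comparison of the Oh-Thomas square-root Gysin map to $0^!_{\T_{N/B}[1],\sigma}$, including the sign carried by the orientation. The pairing $p_{\T_{M/B}[1]}$ entering the definition of $\ori_{N(\sigma)/B}$ must be shown to induce \emph{precisely} the identity (no sign correction) between the two Gysin maps over the Lagrangian-fibration symmetric resolution. A natural tactic---following the deformation-theoretic arguments of \cite{Park1,Park2}---is to verify this identity first on the universal example $M = \T^*_{M_0/B_0}[-1]$ with the tautological $1$-form, where the symmetric resolution, its Lagrangian subbundle, and the orientation all take their most transparent form, and then transfer the result by naturality and base change.
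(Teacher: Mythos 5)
Your strategy---reduce to a local model via Proposition~\ref{52} and then compare the Oh-Thomas square-root Gysin map with the localized Gysin map directly---is genuinely different from the paper's, which never works \'etale-locally. The paper instead runs backwards through the four-step tower Construction~\ref{Const:1} $\leadsto$ \ref{Const:2} $\leadsto$ \ref{Const:2b} $\leadsto$ \ref{Const:3}: Lemma~\ref{Lem:1} uses the derived deformation space $\D_{M/B}$ and bivariance to reduce to the zero section $0_E:M\to E$ of a perfect complex; the Chow lemma reduces to quasi-projective schemes via the Kimura sequence; Lemma~\ref{Lem:2} reduces to vector bundles via the functoriality of Lagrangian cycles along a resolution; and Lemma~\ref{Lem:3} handles the vector-bundle case by rewriting the blowup construction as a chain of Lagrangian squares in $E\oplus E\dual$ and invoking Corollary~\ref{Cor:1}. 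Each of these steps matches a \emph{defining} formula for the localized virtual cycle against the bivariance/functoriality of virtual Lagrangian cycles, with no appeal to local presentations.

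Two concrete gaps in your proposal. First, the descent step hinges on the claim that ``\'etale pullback is injective on rational Chow groups of separated DM stacks.'' Proposition~\ref{52} furnishes an \'etale surjection $c:N\to M$ from a derived affine scheme, but this is not a \emph{finite} \'etale cover, so the familiar $c_*c^*=(\deg c)\cdot\id$ argument does not apply; I know of no general injectivity statement for arbitrary \'etale surjections. This is exactly why the paper routes the global-to-local reduction through the Chow lemma and the Kimura sequence \eqref{Eq:Kimura}, which give \emph{surjectivity of proper pushforward}---the direction that actually works. Second, and more seriously, you leave the entire substance of the theorem---matching the square-root Gysin map against $0^!_{\T_{N/B}[1],\sigma}$, including the orientation sign---as ``the main obstacle'' with only a heuristic about Lagrangian quotients of the self-dual cofiber sequence. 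In the paper this is precisely Lemma~\ref{Lem:3}, proved by exhibiting the blowup diagram and the degeneracy-locus inclusion as (signed) Lagrangian squares over $E\oplus E\dual$ and appealing to Corollary~\ref{Cor:1} and Proposition~\ref{Prop:bivariance}; that argument, together with the resolution step of Lemma~\ref{Lem:2}, is where the theorem is actually won, and your proposal does not supply it.
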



\begin{remark}[Quotient stacks] 
Theorem \ref{Thm:main} also works for some classes of Artin stacks.
Given a quasi-smooth morphism $g:M \to B$ of derived Artin stacks and a $(-1)$-shifted locked $1$-form $\bsig \in \sA^{1,\lc}(M/B,-1)$,
instead of assuming that $M$ is a derived DM stack,
assume that 
\begin{itemize}
\item [A1)] $M_\cl$ is a {\em global quotient stack}, i.e. $M_\cl$ is the quotient stack $[P/G]$ of a quasi-projective scheme $P$ by a linear action of a linear algebraic group $G$;
\item [A2)] $g:M \to B$ is a DM type 
morphism, i.e. $\LL_{M/B}$ is of amplitude $\leq 0$.
\end{itemize}
Then the results in \S\ref{Sec:1} and \S\ref{Sec:2} extend so that we still have the virtual cycles $[M/B,\bsig]^\vir$ and $[M(\sigma)/B]^\lag$ as well as the formula \eqref{Eq:Main}.
Indeed, these results follow from the following two facts:
\begin{enumerate}
\item \cite[Lem.~2.6]{Tho} $M_\cl$ has the {\em resolution property}, i.e. all perfect complexes are quasi-isomorphic to bounded chain complexes of vector bundles;
\item \cite{Tot} for each integer $k$, there exists a smooth morphism $a:\tM \to M$ from a quasi-projective scheme $\tM$ such that $a^*:A_k(M) \to A_{k+\dim(a)}(\tM)$ is an isomorphism. 
\end{enumerate}

\end{remark}

\begin{remark}[Locked versus~Closed]\label{48} 
The locked forms are {\em closed} in the sense of \cite{PTVV},
but not all closed forms are locked.
Nevertheless, in the absolute case (i.e. $B=\Spec\,\C$),  
the lockedness coincides with the closedness as well as the exactness by \cite[Prop.~6.1.1]{Park2}. 
\end{remark}

By Remark \ref{48} and Theorem \ref{Thm:main}, we obtain the following comparison result which was announced in \cite{KP3}.  
\begin{corollary}\label{67} 
Let $M$ be a quasi-smooth derived DM stack equipped with a $(-1)$-shifted \emph{closed} 1-form $\bsig$. Let $\btheta$ be the induced $(-2)$-shifted symplectic form on the twisted $(-2)$-shifted cotangent bundle $M(\sigma)$ defined by the fiber square \eqref{68}.  
Then we have the equality 
\beq\label{66}
[M,\bsig]^\vir=[M(\sigma),\btheta]^\lag\ \ \ \in \ \  A_r(M(\sigma)), \quad r=\rank\, \TT_M\eeq
of the (cosection) localized virtual cycle in \cite{KL} and the virtual Lagrangian cycle in \cite{OT}. 
\end{corollary}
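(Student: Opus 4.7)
\emph{Reduction to the relative case.} Corollary \ref{67} is the absolute specialization of Theorem \ref{Thm:main}: by Remark \ref{48}, every closed $(-1)$-shifted $1$-form on a derived DM stack $M$ over $\Spec\,\C$ is automatically locked with underlying function $w = 0$. Hence $B(w) = \Spec\,\C$ and the asserted equality of classes in $A_r(M(\sigma))$ follows by applying Theorem \ref{Thm:main} to $g: M \to \Spec\,\C$. The substantive content is therefore to establish Theorem \ref{Thm:main}, and the plan below addresses that theorem.

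\emph{Local reduction.} The first step is to reduce the comparison to a local model via Proposition \ref{52}. Étale-locally on $M$, one may assume $M = \Zero(s)$ for a section $s: V \to F$ of a vector bundle over a derived affine $q: V \to B$ smooth, together with a cosection $\sigma_F : F \to \AA^1$ satisfying $\sigma_F \circ s \simeq w \circ q$ in $\sA^0(V,0)$, and $\bsig$ pulling back to $\bsig_{\Zero(s)/B}$. Both sides are bivariant with respect to étale covers: for $[M/B,\bsig]^\vir$ this follows from the naturality of the localized specialization map (Lemma \ref{Lem:LocSp}) and of the localized Gysin map (Construction \ref{Const:2b}); for $[M(\sigma)/B,\btheta]^\lag$ it is Proposition \ref{Prop:bivariance}(2). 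Thus the identity \eqref{Eq:Main} reduces to a statement in the local model.

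\emph{Matching in the local model.} In the local model my plan is to rewrite both sides as Gysin-type classes on the vector bundle $F|_M$. For the LHS, I would combine Lemma \ref{Lem:LocGysin} with the explicit description of the deformation space $\D_{\Zero(s)/B}$ in terms of $V$, $F$, and $s$, so that $[M/B,\bsig]^\vir$ is identified with the Kiem--Li cosection-localized virtual pullback of Construction \ref{28}, applied to the cosection $\sigma_F|_M$. For the RHS, I would apply the functoriality of virtual Lagrangian cycles (Proposition \ref{Prop:functoriality}) to a Lagrangian correspondence $L \to M(\sigma)$ and $L \to N$, where $N$ is a $(-2)$-shifted symplectic stack built from $V$ and the hyperbolic bundle $F \oplus F^\vee$ on $V$ with section $(s, \sigma_F^\vee)$. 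This should express $[M(\sigma)/B,\btheta]^\lag$ as an Oh--Thomas square-root Euler class on this hyperbolic bundle. The required equality then reduces to the compatibility between the Oh--Thomas square-root Gysin map on $F|_M \oplus F^\vee|_M$ (with hyperbolic quadratic form and orientation induced from $F$) and the Kiem--Li cosection-localized Gysin map on $F|_M$ with cosection $\sigma_F|_M$; this is an Euler-class identity that can be verified directly from the common blow-up presentation of both maps (Construction \ref{Const:1} and its square-root analogue from \cite{OT}).

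\emph{Main obstacle.} The delicate step is the third one: building a Lagrangian correspondence which satisfies the hypotheses of Proposition \ref{Prop:functoriality}, i.e., a classical isomorphism on the $M(\sigma)$ side, quasi-smoothness on the $N$ side, matching underlying functions $[\btheta_M] \simeq [\btheta_N]$, and compatible orientations. The orientation $\ori_{M(\sigma)/B}$ constructed abstractly in Proposition \ref{Prop:TwCot} from the Lagrangian fibration $M(\sigma) \to M$ must be matched with the natural orientation on the hyperbolic bundle; this bookkeeping is the main technical input. In the absolute case of Corollary \ref{67}, where $w = 0$ and $\sigma_F \circ s = 0$ strictly, the correspondence simplifies significantly, since $\bsig$ is exact and one need not deform along $\AA^1$; handling the general relative case, where $\sigma_F \circ s \simeq w \circ q$ only up to homotopy, is where the most care is needed. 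Once these identifications are made, the remaining Euler-class compatibility is essentially classical.
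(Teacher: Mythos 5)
Your first paragraph reproduces the paper's own proof of Corollary \ref{67}: by Remark \ref{48}, in the absolute case $B=\pt$ closedness coincides with lockedness and exactness, so $w\simeq 0$, $B(w)_\cl=\pt$, and the corollary is the $B=\pt$ specialization of Theorem \ref{Thm:main}. That reduction is correct and is exactly what the paper does.

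Your extended sketch of how to prove Theorem \ref{Thm:main}, however, has a gap in its first step. You propose to pass to the local model of Proposition \ref{52}, which gives an \'etale surjective cover $c:N\to M$ from a derived affine scheme, and to conclude that the two virtual pullbacks agree because they agree after restriction to $N$. But \'etale-local agreement does not determine a class in $A_*(M(\sigma))$: for a non-proper \'etale surjection the pullback $A_*(M(\sigma))\to A_*(N(\sigma_N))$ is neither injective nor part of a descent exact sequence for Chow groups. The exact sequence one actually needs is Kimura's, which requires a \emph{projective} surjective cover. This is precisely why the paper's proof of Theorem \ref{Thm:main} never invokes Proposition \ref{52}. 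Instead it first linearizes via the derived deformation space (Lemma \ref{Lem:1}), reducing to the zero section of a perfect complex $E$ with a cosection; then passes from DM stacks to quasi-projective schemes using the Chow lemma and the Kimura sequence \eqref{Eq:Kimura}; then resolves $E$ by a vector bundle (Lemma \ref{Lem:2}); and finally treats the vector-bundle case via the blow-up of the degeneracy locus (Lemma \ref{Lem:3}). Proposition \ref{52} is used in the paper only to establish Corollary \ref{89} (that locked $1$-forms induce locked cosections in the classical sense), not in the comparison theorem itself. Your local matching step through the hyperbolic bundle $F\oplus F^\vee$ and a Lagrangian correspondence, with the attendant orientation bookkeeping, is indeed in the same spirit as Lemma \ref{Lem:3}, which uses the hyperbolic form on $E\oplus E\dual$ and a Lagrangian square built from the blow-up --- so the target is right, but the route to it needs the Kimura/projective-cover descent rather than \'etale covers.
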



\medskip

The remainder of this section is devoted to a proof of Theorem \ref{Thm:main}.
Our strategy is as follows. 
Recall from \S\ref{Sec:1} that the localized virtual cycles are constructed through four steps
\[\text{Construction \ref{Const:1}} \leadsto \text{Construction \ref{Const:2}}\leadsto \text{Construction \ref{Const:2b}}\leadsto\text{Construction \ref{Const:3}},\]
gradually increasing the generality of the situation.
For each step, there are formulas that uniquely determine the localized virtual cycles.
We will conversely go through these steps and show that the virtual Lagrangian cycles satisfy analogous formulas as consequences of the bivariance (Proposition \ref{Prop:bivariance}) and the functoriality (Proposition \ref{Prop:functoriality}).
This will prove that the two virtual cycles are equal.

\subsection{Deformation to normal cones}

The key step in proving Theorem \ref{Thm:main} is to reduce the statement to the ``linear'' case via deformation to the normal cone
\[(M \xrightarrow{g} B) \leadsto (M \xrightarrow{0} \T_{M/B}[1]).\]

Recall that a locked form on $g:M \to B$ induces a locked form on $0:M \to \T_{M/B}[1]$,
\[\bsig \in \sA^{1,\lc}(M/B,-1) \mapsto \cS(\bsig) \in \sA^{1,\lc}(M/\T_{M/B}[1],-1),\]
as follows. 
The underlying $1$-form $\overline{\bsig} : \T_{M/B}[1] \to \AA^1$ is a cosection on $\T_{M/B}[1]$,
and it induces a locked form $\cS(\bsig)$ on the zero section $0:M \to \T_{M/B}[1]$ by Lemma \ref{Lem:sigmazerosection}.

We also recall from Construction \ref{Const:3} that the localized virtual cycle is defined as:
\[[M/B,\bsig]^\vir = [M/\T_{M/B}[1],\cS(\bsig)]^\vir \circ \sp_{M/B}^{\bsig},\]
where we replaced the localized Gysin map by the localized virtual cycle via Lemma \ref{Lem:LocGysin}.

The virtual Lagrangian cycles satisfy an analogous formula.
\begin{lemma}\label{Lem:1}
In the situation of Theorem \ref{Thm:main}, we have
\[[M(\bsig)/B]^\lag = [M({\cS(\bsig)})/\T_{M/B}[1]]^\lag \circ \sp_{M/B}^{\bsig},\]
where $M(\obsig) \simeq M(\overline{\cS(\bsig)})$ as derived DM stacks (without the symplectic forms).
\end{lemma}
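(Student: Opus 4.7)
The plan is to construct an oriented $(-2)$-shifted locked symplectic family $\pi : \X \to \D_{M/B}$ over the deformation space whose fibers over $1$ and $0 \in \AA^1$ recover the two symplectic fibrations appearing in the stated identity, and then to extract the identity from the bivariance of virtual Lagrangian cycles (Proposition~\ref{Prop:bivariance}).

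First I would verify the equivalence $M(\obsig) \simeq M(\overline{\cS(\bsig)})$ of underlying derived DM stacks. By Lemma~\ref{Lem:sigmazerosection}, the underlying $1$-form $\overline{\cS(\bsig)}$ is identified with $\sigma = \obsig$ under the canonical equivalence $\T_{M/\T_{M/B}[1]}[1] \simeq \T_{M/B}[1]$, so both zero loci coincide with the fiber product $M \times_{\T^*_{M/B}[-1], \sigma, 0} M$, and the $(-2)$-shifted symplectic forms induced from Proposition~\ref{Prop:TwCot} are canonically identified.

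Next, I would promote $\bsig$ to a $(-1)$-shifted locked $1$-form $\bsig' \in \sA^{1,\lc}((M\times \AA^1)/\D_{M/B}, -1)$ on the tautological morphism $h : M\times \AA^1 \to \D_{M/B}$ over $B\times \AA^1$. Using the description of $(-1)$-shifted locked $1$-forms in Remark~\ref{Rem:n1} as pairs (function, null-homotopy), such a $\bsig'$ is built canonically from $\bsig$ together with the $\GG_m$-equivariance data encoded in the deformation space. Applying the twisted cotangent construction of Proposition~\ref{Prop:TwCot} to $\bsig'$ yields the desired family
\[
\pi : \X \lra \D_{M/B}
\]
with an oriented $(-2)$-shifted locked symplectic form $\btheta'$ satisfying $[\btheta'] \simeq \bsig : \D_{M/B} \to \AA^1$. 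By naturality of Proposition~\ref{Prop:TwCot} along the base changes in \eqref{Eq:12}, the fiber of $\pi$ over the open $B \times \GG_m \subset \D_{M/B}$ is (up to the $\GG_m$-factor) $M(\bsig) \to B$ and the fiber over the closed $\T_{M/B}[1] \subset \D_{M/B}$ is $M(\cS(\bsig)) \to \T_{M/B}[1]$, with compatible orientations.

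With the family in hand, bivariance (Proposition~\ref{Prop:bivariance}(2)) applied to the two quasi-smooth morphisms $j : B\times\GG_m \hookrightarrow \D_{M/B}$ (open immersion) and $i : \T_{M/B}[1] \hookrightarrow \D_{M/B}$ (regular closed embedding) yields
\begin{align*}
j_{\X}^! \circ [\X/\D_{M/B}]^{\lag} &= [M(\bsig) \times \GG_m / B \times \GG_m]^{\lag} \circ j_{\bsig}^!, \\
i_{\X}^! \circ [\X/\D_{M/B}]^{\lag} &= [M(\cS(\bsig))/\T_{M/B}[1]]^{\lag} \circ i_{\bsig}^!,
\end{align*}
where $j_{\bsig} : B(w) \times \GG_m \hookrightarrow \D_{M/B}(\bsig)$ and $i_{\bsig} : \T_{M/B}[1](\sigma) \hookrightarrow \D_{M/B}(\bsig)$ are the induced inclusions of zero loci. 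Identifying $[M(\bsig)\times\GG_m/B\times\GG_m]^{\lag} \circ j_{\bsig}^!$ with $[M(\bsig)/B]^{\lag} \circ 1^!$ via the K\"unneth isomorphism $\pr_1^* : A_*(B(w)) \xrightarrow{\simeq} A_{*+1}(B(w)\times\GG_m)$, and noting $i_{\bsig}^! = 0^!$ in the notation of Lemma~\ref{Lem:LocSp}, the defining relation $\sp^{\bsig}_{M/B} \circ 1^! = 0^!$ together with the surjectivity of $1^!$ force the claimed identity.

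The hard part will be the construction and naturality of the universal family $\X \to \D_{M/B}$. Unpacking $\bsig'$ requires a careful analysis of the relative deformation space $\D_{(M\times\AA^1)/\D_{M/B}}$ and its $\GG_m$-action, and verifying the orientation compatibility under base change relies on the self-dual structure of Proposition~\ref{Prop:TwCot} varying coherently in the family; once this is established, the bivariance argument is formal.
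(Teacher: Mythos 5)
Your proposal follows essentially the same route as the paper's own proof: promote $\bsig$ to a locked $1$-form on $M\times\AA^1\to\D_{M/B}$ (the paper's $\cD(\bsig)$, your $\bsig'$), apply the twisted cotangent construction (Proposition~\ref{Prop:TwCot}) to get a $(-2)$-shifted locked symplectic fibration over $\D_{M/B}$ interpolating the two sides, and then conclude by bivariance together with Lemma~\ref{Lem:LocSp}. The one place where you wave your hands and the paper does not is precisely the step you flag as ``the hard part'': the paper gives $\cD(\bsig)$ explicitly as the composite $\bsig\circ\cD$, where $\cD:\D_{M\times\AA^1/\D_{M/B}}\to\D_{M/B}$ is obtained from a canonical fiber square identifying the double deformation space $\D_{M\times\AA^1/\D_{M/B}}$ with the pullback of $\D_{M/B}$ along the multiplication map $B\times\AA^1\times\AA^1\to B\times\AA^1$; the fiber computations \eqref{Eq:3.2.6}--\eqref{Eq:3.2.8} then all fall out of this one formula. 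Your alternative suggestion of building $\bsig'$ via Remark~\ref{Rem:n1} as a pair (function, null-homotopy) is workable in principle, since the function is just $\bsig$ itself, but you would still have to exhibit the null-homotopy of $\bsig|_{M\times\AA^1}$ canonically and verify the fiber identifications, which amounts to reinventing the map $\cD$. So the proposal is correct in outline, but the crucial construction it defers is exactly what makes the paper's proof go through.
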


We will see that the composition
\[M(\sigma) \times \AA^1 \lra M \times \AA^1 \lra \D_{M/B}\]
has a canonical $(-2)$-shifted locked symplectic form.
Then Lemma \ref{Lem:1} will follow from the bivariance of virtual Lagrangian cycles (Proposition \ref{Prop:bivariance}).

\begin{proof}[Proof of Lemma \ref{Lem:1}]
We note that the locked form $\bsig \in \sA^{1,\lc}(M/B,-1)$ induces a locked form
\[\cD(\bsig) \in \sA^{1,\lc}(M\times \AA^1/\D_{M/B},-1)\]
on the canonical map $M \times \AA^1 \to \D_{M/B}$, 
such that the fibers over $t\in \AA^1$ are
\begin{equation}\label{Eq:3.2.6}
\cD(\bsig)|_{t} \simeq \begin{cases}
	t^{-1} \cdot\bsig \in \sA^{1,\lc}(M/B,-1) & \text{for $t\neq 0$}, \\ 
	\cS(\bsig) \in \sA^{1,\lc}(M/\T_{M/B}[1],-1) & \text{for $t = 0$},
\end{cases}
\end{equation}
and the underlying form and function are 
\begin{align}
\overline{\cD(\bsig)} &\simeq \overline{\bsig} \circ \pr_1 : \T_{M\times \AA^1/\D_{M/B}}[1] \simeq \T_{M/B}[1]\times \AA^1 \to \AA^1, \label{Eq:3.2.7}\\
[\cD(\bsig)] & \simeq \bsig : \D_{M/B} \to \AA^1. \label{Eq:3.2.8}
\end{align}
Indeed, we have a canonical fiber square of derived stacks (cf. \cite[Lem.~5.1.3]{Park2})
\[\xymatrix@C+4pc{
\D_{M\times\AA^1/\D_{M/B}} \ar@{.>}[r]^{\cD} \ar[d] \cart & \D_{M/B} \ar[d] \\
B\times \AA^1 \times \AA^1 \ar[r]^{(b,x,y) \mapsto (b,xy)} & B \times \AA^1,
}\]
and we define $\cD(\bsig) :=\bsig \circ \cD: \D_{M\times\AA^1/\D_{M/B}} \to \D_{M/B} \to \AA^1$.
The equivalences \eqref{Eq:3.2.6}, \eqref{Eq:3.2.7} and \eqref{Eq:3.2.8} follow by computing the fibers.

We can then form a locked $(-2)$-shifted symplectic fibration 
\[(M\times \AA^1)(\overline{\cD(\bsig)}) \lra \D_{M/B}\]
by Proposition \ref{Prop:TwCot}.
The fibers over $t\in \AA^1$ are
\[\begin{cases}
	M(\sigma) \to B & \text{for $t\neq 0$}, \\ 
	M(\overline{\cS(\bsig)}) \to \T_{M/B}[1] & \text{for $t = 0$},
\end{cases}\]
as locked symplectic fibrations by \eqref{Eq:3.2.6} since $\btheta$ in Proposition \ref{Prop:TwCot} is stable under base change.
Moreover, $M(\overline{\cS(\bsig)}) \simeq M(\sigma) \times \AA^1$ as derived schemes by \eqref{Eq:3.2.7}.

By the bivariance of virtual Lagrangian cycles (Proposition \ref{Prop:bivariance}), the square in
\[\xymatrix@C+6pc{
 A_{*+1}(\D_{M/B}(\bsig)) \ar[d]_{0^!} \ar@{->>}[r]^-{1^!} & A_*(B(w)) \ar@{.>}[ld]|{\sp_{M/B}^{\bsig}} \ar[d]^{[M(\bsig)/B]^\lag} \\
A_*(\T_{M/B}[1](\sigma)) \ar[r]_-{[M({\cS(\bsig)})/\T_{M/B}[1]]^\lag} & A_{*+r} (M(\sigma))   ,
}\]
commutes, where $w:=[\bsig]$, $\D_{M/B}(\cD(\bsig)) \simeq \D_{M/B}(\bsig)$ by \eqref{Eq:3.2.8} and $0^!=1^! : A_*(M(\sigma)\times \AA^1) \to A_*(M(\sigma))$ by the homotopy property of Chow groups.
Since the upper left triangle commutes (by Lemma \ref{Lem:LocSp}) and the upper horizontal arrow is surjective, the lower right triangle also commutes. This proves the desired equality.
\end{proof}

\begin{remark}[Classical versus Derived]
Lemma \ref{Lem:1} is closely related to the rational equivalence 
\[[\fC_{M(\sigma)/B}] \simeq [\fC_{M(\sigma)/\fC_{M/B}}] \in A_{\dim\,B}(\fC_{M(\sigma)\times \AA^1/\bM^{\circ}_{M/B}})\]
of the classical intrinsic normal cones in \cite{KKP}, 
where $\bM^{\circ}_{M/B}$ is the classical deformation space \cite{Ful,Kre}.
Here we assumed that $B$ is equi-dimensional for simplicity. 
To show a classical version of Lemma \ref{Lem:1} using symmetric obstruction theories and the above rational equivalence,
we need some isotropic condition for $\fC_{M(\sigma)\times \AA^1/\bM^{\circ}_{M/B}}$
which seems difficult to achieve classically since the obstruction theory for $M(\sigma)_\cl \times \AA^1 \to \bM^\circ_{M/B}$ in \cite{KKP} is {\em not} canonical. 
In fact, it is given by a map between the cones in the triangulated category which is {\em not} unique (cf. \cite[Prop.~2.7]{Park1} or \cite[Rem.~2.3.21]{Park0}).

This subtlety is minor for the virtual fundamental cycles in \cite{LT,BF} for $2$-term perfect obstruction theories $\phi_{M/B}:{E}_{M/B} \to \LL_{M/B}$ since they do not depend on the maps $\phi_{M/B}$, but only on the $K$-theory classes of the complexes ${E}_{M/B}$. 
However, for virtual Lagrangian cycles in \cite{OT} or \cite{Park1}, the necessary isotropic condition {\em depends} on the maps $\phi_{M/B} : {E}_{M/B} \to \LL_{M/B}$ as well.
This is the main reason why we use the {\em derived} deformation space $\D_{M/B}$ in this paper,
which gives us a {\em canonical} obstruction theory. It is not clear how to get such a canonical theory  from classical deformation spaces and triangulated categories.
\end{remark}

Note that Lemma \ref{Lem:1} reduces the proof of 
Theorem \ref{Thm:main} to the case of the zero section $0:M\to E$ of a perfect complex $E$ of tor-amplitude $[-1,0]$ on a derived DM stack $M$ with the locked $1$-form $\bsig$ given by a cosection $\sigma:E\to \AA^1$.

\subsection{Projective covers by quasi-projective schemes}

The second step is to reduce the statement for DM stacks to quasi-projective schemes via projective covers.

Let $E$ be a perfect complex of tor-amplitude $[-1,0]$ over a DM stack $M$ and $\sigma : E\to \AA^1$ be a cosection.
By the Chow lemma \cite[Thm.~16.6.1]{LM}, we can choose a projective surjective map $d:M_1 \twoheadrightarrow M$ from a quasi-projective scheme as in \eqref{Eq:Chow}.
Denote by $E_1$ and $\sigma_1$ the pullbacks of $E$ and $\sigma$ to $M_1$ respectively.
Denote by $d_\sigma : M_1(\sigma_1) \to M(\sigma)$ and $e_\sigma : E_1(\sigma_1) \to E(\sigma)$ the pullbacks of $d$.

Recall from Construction \ref{Const:2b} that the localized virtual cycle is determined by the formula
\[[M/E,\bsig]^\vir \circ e_{\sigma,*}  = d_{\sigma,*} \circ [M_1/E_1,\bsig_1 ]^\vir,\]
where we replaced the localized Gysin maps by the localized virtual cycles (via Lemma \ref{Lem:LocGysin}) and $\bsig$, $\bsig_1$ are the $(-1)$-shifted locked 1-forms induced by $\sigma,\sigma_1$ (via Lemma \ref{Lem:sigmazerosection}).

We observe that the virtual Lagrangian cycles satisfy an analogous formula
\[[M(\bsig)/E]^\lag \circ e_{\sigma,*} = d_{\sigma,*} \circ  [M_1(\bsig_1)/E_1]^\lag.\]
Indeed, this is a direct consequence of the bivariance (Proposition \ref{Prop:bivariance}).

Consequently, we may assume that $M$ is a quasi-projective scheme.

\subsection{Resolution by vector bundles}

The next step is to reduce the statement for perfect complexes to vector bundles via resolutions.

Let $E$ be a perfect complex of tor-amplitude $[-1,0]$ on a quasi-projective scheme $M$.
Let $\sigma : E \to \AA^1$ be a cosection.
Choose a resolution $c:\tE \to E$ by a vector bundle $\tE$ as in \eqref{Eq:3}.
Let $\tsigma=\sigma \circ c: \tE \to \AA^1$ and
denote by $c_{\sigma}: \tE(\tsigma) \to E(\sigma)$ the restriction of $c$.

Recall from Construction \ref{Const:2} that the localized virtual cycle in this case is defined as:
\[\big[M/E,\bsig \big]^\vir = \big[M/\tE,\tbsig\big]^\vir \circ c_{\sigma}^*\]
where we replaced the localized Gysin maps by the localized virtual cycles (via Lemma \ref{Lem:LocGysin})
and $\bsig$ and $\tbsig$ are the $(-1)$-shifted locked $1$-forms induced by $\sigma$ and $\tsigma$ (via Lemma \ref{Lem:sigmazerosection}).

The virtual Lagrangian cycles satisfy an analogous formula.

\begin{lemma}\label{Lem:2}
With the above notation,
we have
\[\big[M(\bsig)/E\big]^\lag = \big[M(\tbsig)/\tE\big]^\lag \circ c_{\sigma}^*.\]
\end{lemma}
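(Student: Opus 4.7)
The plan is to mimic the proof of Lemma \ref{Lem:1}, using the resolution $c:\tE\to E$ in place of the deformation to the normal cone. The key point is that $\fib(c)=K$ is a vector bundle by hypothesis \eqref{Eq:3}, so $c$ is smooth in the derived sense (in particular quasi-smooth), and the smooth pullback $c_\sigma^*$ coincides with the virtual pullback $c_\sigma^!$ on Chow groups.

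First, I would form the derived base-change square
\[\xymatrix{
N \ar[r]^-{\widetilde{c}} \ar[d] \cart & M(\bsig) \ar[d] \\
\tE \ar[r]^c & E,
}\]
with $N:=M(\bsig)\times_E\tE$. By naturality of the constructions in Lemma \ref{Lem:sigmazerosection} and Proposition \ref{Prop:TwCot} under base change along $c$, the pullback of the locked symplectic form $\btheta$ equips $N$ with a canonical $(-2)$-shifted locked symplectic structure over $\tE$ together with a canonical orientation (inherited via the self-dual cofiber sequence of the Lagrangian fibration). The bivariance of virtual Lagrangian cycles (Proposition \ref{Prop:bivariance}(2)) then yields
\[\widetilde{c}^!\circ [M(\bsig)/E,\btheta]^\lag \;=\; [N/\tE,c^*\btheta]^\lag\circ c_\sigma^!.\]

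Second, I would compare $[N/\tE,c^*\btheta]^\lag$ with $[M(\tbsig)/\tE,\widetilde{\btheta}]^\lag$. These are two Lagrangian cycles coming from a priori different $(-2)$-shifted locked symplectic fibrations over $\tE$, but sharing the same classical degeneracy locus $M(\sigma)_\cl\simeq M(\tsigma)_\cl$ from Case 1. To relate them, I would construct a $(-2)$-shifted Lagrangian correspondence with both as legs: $M(\bsig)$ sits as a common Lagrangian inside $N$ (via the zero section of the vector bundle $K\big|_{M(\bsig)}\to M(\bsig)$) and inside $M(\tbsig)$ (via the natural map induced by the dual resolution $c^\vee:E^\vee\to\tE^\vee$). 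Functoriality of Lagrangian cycles (Proposition \ref{Prop:functoriality}) applied to this correspondence yields the required identification, modulo the virtual pullback along the affine-bundle projection $N\to M(\bsig)$, which is invertible on Chow by homotopy invariance. Combining this with the bivariance step above produces the desired formula.

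The main obstacle I anticipate is the construction of the Lagrangian correspondence in the second step: one must verify that the zero section $M(\bsig)\hookrightarrow N$ and the natural map $M(\bsig)\to M(\tbsig)$ are indeed Lagrangian with compatible orientations, and that $N$ and $M(\tbsig)$ carry compatible $(-2)$-shifted locked symplectic structures over $\tE$. This requires carefully tracking the canonical orientation of Proposition \ref{Prop:TwCot} through the dual fiber sequence $E^\vee\to\tE^\vee\to K^\vee$ coming from $c$; once this geometric input is pinned down, the final equality is a formal consequence of bivariance and functoriality.
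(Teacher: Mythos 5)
Your proposal follows essentially the same route as the paper: form the base-change square along the resolution $c:\tE\to E$, apply bivariance (Proposition \ref{Prop:bivariance}(2)) to relate $[M(\bsig)/E]^\lag$ to $[c^*M(\bsig)/\tE]^\lag$, and then use a Lagrangian correspondence $c^*M(\bsig)\xleftarrow{}M(\bsig)\xrightarrow{}M(\tbsig)$ together with functoriality (Proposition \ref{Prop:functoriality}) and the fact that the zero section of $c_M:c^*M(\bsig)\simeq K|_{M(\bsig)}\to M(\bsig)$ is quasi-smooth with $c_M\circ s=\id$. The ``main obstacle'' you flag --- verifying that the two legs are genuinely Lagrangian with compatible orientations and symplectic structures over $\tE$ --- is real, but the paper dispatches it by direct citation of the canonical construction in \cite[Rem.~3.1.5(b)]{Park2}, which supplies precisely this Lagrangian correspondence of $(-2)$-shifted locked symplectic fibrations over $\tE$ with compatible orientations; no further hand verification of the self-dual triangles or the sign of the dual resolution $E\dual\to\tE\dual$ is needed once that reference is invoked.
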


\begin{proof} 
The lemma is obtained by applying the functoriality of virtual Lagrangian cycles (Proposition \ref{Prop:functoriality}) as follows.

Observe from the construction in \S\ref{ss:1.4} that the underlying classical scheme of $M(\sigma)$ is the degeneracy locus of $\sigma$ defined in \S\ref{ss:1.2}. 
By Proposition \ref{Prop:TwCot}, $M(\sigma)$ is equipped with the oriented $(-2)$-shifted locked symplectic form $\btheta$ over $E$.

%
By \cite[Rem.~3.1.5(b)]{Park2},
we have a canonical Lagrangian correspondence
\[\xymatrix{
& M(\bsig) \ar[ld]_s^{\mathrm{qsm}} \ar[rd]^t_{\mathrm{cl.eq}} & \\
c^*M(\bsig) && M(\tbsig)
}\]
of $(-2)$-shifted locked symplectic fibrations over $\tE$,
where $c^*M(\bsig) \to \tE$ is the pullback of $M(\bsig) \to E$ along $c:\tE \to E$.
Then $s:M(\sigma) \to c^*M(\sigma)$ is quasi-smooth since it is a section of the smoooth morphism $c_M:c^*M(\sigma) \to M(\sigma)$.
Also $t_\cl:M(\sigma)_\cl \to M(\tsigma)_\cl$ is an isomorphism as observed in \eqref{Eq:8}.
Hence the functoriality gives us
\[[M(\tbsig)/\tE]^\lag = s^! \circ [c^*M(\bsig)/\tE]^\lag.\]
Taking the compositions with $c_{\sigma}^*$, we obtain the desired formula
\[[M(\tbsig)/\tE]^\lag \circ c_{\sigma}^* = s^! \circ [c^*M(\bsig)/\tE]^\lag \circ c_{\sigma}^* = s^! \circ c_M^* \circ [M(\bsig)/E]^\lag = [M(\bsig)/E]^\lag,\]
by the bivariance (Proposition \ref{Prop:bivariance}) and the functoriality of quasi-smooth pullbacks.
\end{proof}

Consequently, it suffices to prove Theorem \ref{Thm:main} for the zero sections of vector bundles.

\subsection{Blowing up the degeneracy loci}

Let $E$ be a vector bundle on a scheme $M$ and $\sigma:E\to \AA^1$ be a cosection.
Let $\tM$ be the classical blowup of $M$ along $M(\sigma)_\cl$,
$D$ be the exceptional divisor
and $K:=\Ker(E|_{\tM} \twoheadrightarrow \O_{\tM}(-D))$.
We will use the notation in \S\ref{ss:1.1}.

Recall from Construction \ref{Const:1} that the localized virtual cycle is defined via the formulas
\[[M/E,\bsig]^\vir \circ a_{\sigma,*} = -p_{\sigma,*} \circ [D/K]^\vir \and [M/E,\bsig]^\vir \circ b_{\sigma,*} = [M(\sigma)/E|_{M(\sigma)}]^\vir,\]
where 
we replaced the Gysin maps for regular embeddings by the virtual fundamental cycles,
the localized Gysin map by the localized virtual cycle via Lemma \ref{Lem:LocGysin},
and $\bsig$ is the locked $1$-form induced by $\sigma$ via Lemma \ref{Lem:sigmazerosection}.

The virtual Lagrangian cycle satisfies an analogous formula.
\begin{lemma} \label{Lem:3}
With the above notation, we have
\[[M(\bsig)/E]^\lag \circ a_{\sigma,*} = -p_{\sigma,*} \circ [D/K]^\vir, \and
[M(\bsig)/E]^\lag \circ b_{\sigma,*} = [M(\sigma)/E|_{M(\sigma)}]^\vir. \]
\end{lemma}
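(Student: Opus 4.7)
The plan is to verify both identities by applying the bivariance of virtual Lagrangian cycles (Proposition~\ref{Prop:bivariance}(1)) to the two proper morphisms $\iota_E : E|_{M(\sigma)}\hookrightarrow E$ and $\iota_K : K\hookrightarrow E|_{\tM}\to E$. By the very definition of $E(\sigma)$ and of $K$ as the kernel of $\sigma|_{\tM} : E|_{\tM}\twoheadrightarrow\O_{\tM}(-D)$, both composites satisfy $\sigma\circ\iota_E\simeq 0$ and $\sigma\circ\iota_K\simeq 0$, so $\iota_E$ and $\iota_K$ factor through $b_\sigma$ and $a_\sigma$ respectively, and the pulled-back locked $1$-forms $\iota_E^*\bsig$ and $\iota_K^*\bsig$ are both exact.

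For the identity involving $b_\sigma$, bivariance yields
\[
[M(\bsig)/E]^\lag\circ b_{\sigma,*} \;=\; p_{M,*}\circ\bigl[N/E|_{M(\sigma)},\,\iota_E^*\btheta\bigr]^\lag,
\]
where $N := M(\bsig)\times_E E|_{M(\sigma)}$ is a derived enhancement of $M(\sigma)$ and $p_M : N\to M(\sigma)$ is an isomorphism on classical truncations. The remaining step is to identify the right-hand side with the Gysin pullback $[M(\sigma)/E|_{M(\sigma)}]^\vir$ of the zero section. I plan to do this via Corollary~\ref{Cor:1} applied to the Lagrangian correspondence given by $M(\sigma)$, with $s : M(\sigma)\to N$ the canonical inclusion into the derived enhancement and $t : M(\sigma)\to E|_{M(\sigma)}$ the quasi-smooth zero section: the exactness of $\iota_E^*\bsig$ furnishes the null-homotopy of $s^*\iota_E^*\btheta$ needed to make $M(\sigma)$ a Lagrangian, and the canonical orientation fits by a direct check.

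For the identity involving $a_\sigma$, the analogous application of bivariance yields
\[
[M(\bsig)/E]^\lag\circ a_{\sigma,*} \;=\; p_{\sigma,*}\circ\bigl[\widetilde D/K,\,\iota_K^*\btheta\bigr]^\lag,
\]
where a direct computation identifies the derived fiber product $\widetilde D := M(\bsig)\times_E K$ as a derived enhancement of the exceptional divisor $D$, whose projections recover $D\hookrightarrow\tM\xrightarrow{0}K$ and $p_\sigma : D\to M(\sigma)$ on the classical truncations. The statement then reduces to the equality $[\widetilde D/K,\iota_K^*\btheta]^\lag = -[D/K]^\vir$, and \emph{the main obstacle is to produce the sign $-1$}. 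I expect it to arise as a Koszul-sign discrepancy between the canonical orientation on $\widetilde D/K$ induced from $\btheta$ via Proposition~\ref{Prop:TwCot} and the tautological orientation of the quasi-smooth morphism $D\hookrightarrow\tM\xrightarrow{0}K$: the factorization $\sigma|_{\tM} : E|_{\tM}\twoheadrightarrow\O_{\tM}(-D)$ forces the exceptional line bundle $\O_D(-D|_D)$ to appear as a summand in the self-dual complex underlying $\TT_{\widetilde D/K}[1]$, and the Koszul sign in the induced determinant pairing on this line-bundle summand should generate the required factor of $-1$. Once this sign has been pinned down, the two halves of the lemma together show that the virtual Lagrangian cycle satisfies the defining formulas of the localized Gysin map (Construction~\ref{Const:1}), which is the content needed for the next step of the proof of Theorem~\ref{Thm:main}.
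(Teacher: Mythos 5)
Your bivariance reductions to $[a^*M(\bsig)/K]^\lag$ and $[b^*M(\bsig)/E|_{M(\sigma)}]^\lag$ agree with what the paper does; note that your $\widetilde D=M(\bsig)\times_E K$ is exactly $a^*M(\bsig)$ and your $N$ is $b^*M(\bsig)$. But the two remaining claims --- that $D\to a^*M(\bsig)$ and $M(\sigma)\to b^*M(\bsig)$ carry Lagrangian structures over $K$ and $E|_{M(\sigma)}$, and that the orientation discrepancy gives the sign $-1$ --- are precisely the content of the lemma, and your proposal does not prove either. Exactness of the pulled-back locked form tells you the underlying function vanishes; it does not by itself produce a Lagrangian structure on the section, and no argument is offered for the isotropy or nondegeneracy. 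Likewise the sign is only speculated to arise from a ``Koszul-sign discrepancy'' involving $\O_D(-D|_D)$, which is a guess, not a derivation. You explicitly flag the sign as ``the main obstacle,'' and it remains an obstacle at the end of the proposal.

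The paper's mechanism for closing both gaps at once is the missing idea: it embeds everything into the auxiliary hyperbolic space $E\oplus E^\vee$ with its quadratic form $\mathbf q$, and observes via Lemma~\ref{Lem:SympZero} that $M(\bsig)\to E$ is the pullback of the symplectic zero-section fibration $0_{E\oplus E^\vee}:M\to E\oplus E^\vee$ along $(1,\sigma):E\to E\oplus E^\vee$. Then the two total squares in the proof are rewritten so that the relevant maps factor through a \emph{maximal isotropic} subbundle of $(E\oplus E^\vee)|_{\tM}$, namely $K\oplus L$ with $L=\O_{\tM}(D)$ (resp.\ through the symplectic-pullback decomposition $(1,\sigma)=(1,0)$ composed with $b$). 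Lagrangian squares through maximal isotropics are automatic (cf.\ \cite[Eq.~(22)]{Park2}), giving the required Lagrangian structures. Moreover the sign is then read off: $K\oplus L$ is a \emph{negative} maximal isotropic for the orientation on $E\oplus E^\vee$ making $E\oplus 0$ positive, which yields $[a^*M(\bsig)/K]^\lag=-[D/K]^\vir$ via Corollary~\ref{Cor:1}. Without introducing the ambient $E\oplus E^\vee$ and the subbundle $K\oplus L$, I do not see a clean path to completing either the Lagrangian claim or the sign, so the proposal as written has a genuine gap.
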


\begin{proof} 
The lemma was essentially proved in \cite[Lem.~5.5]{KP2}.
Here we provide a proof using the properties of virtual Lagrangians cycles in Corollary \ref{Cor:1} and Proposition \ref{Prop:bivariance}.


Form a commutative diagram
\beq\label{76}\xymatrix@C+1pc{
D \ar[r]^-{p_{\sigma}} \ar[d]_{0_K \circ \widetilde{\iota}} & M(\bsig) \ar[r]^-{\iota} \ar[d]_{0_E \circ \iota} \cart & M \ar[d]^{0_{E\oplus E\dual}} \\
K \ar[r]^a & E \ar[r]^-{(1,\sigma)} & E\oplus E\dual,
}\eeq
where $a: K \hookrightarrow E|_{\tM} \to E$ is the composition.
We claim that the left square is a relative {\em Lagrangian square},
i.e. the induced map
\[D \to a^*M(\bsig)\]
is a Lagrangian of the pullback symplectic fibration $a^*M(\bsig) \to K$ of $M(\bsig) \to E$ along $a$.
Note that the right square is a symplectic {\em pullback square}, i.e. 
the induced map
\[M(\bsig) \xrightarrow{\simeq} (1,\sigma)^* (0_{E\oplus E\dual},\mathbf{q})\]
is an equivalence of symplectic fibrations over $E$ by \cite[Prop.~3.2.4, Eq.~(21)]{Park2}.
Here the zero section $0_{E\oplus E\dual} : M \to E\oplus E\dual$ is equipped with the $(-2)$-shifted locked symplectic form $\mathbf{q}$ given by the quadratic pairing $q:E \oplus E\dual \to \AA^1$.
Hence it suffices to show that the total square is a Lagrangian square.

Form another commutative diagram whose total square is the same as \eqref{76} 
\[\xymatrix@C+1pc{
D \ar[r]^-{\widetilde{\iota}} \ar[d]_{0_K \circ \widetilde{\iota}} \cart & \tM \ar[r]^-{p} \ar[d]^{(0,s)} & M \ar[d]^{0_{E\oplus E\dual}} \\
K \ar[r]^-{(1,s)} & K \oplus L \ar[r]^-{(a,e)} & E \oplus E \dual.
}\]
Here $L:=\O_{\tM}(D)$, $s\in \Gamma(\tM,L)$ is the section of the divisor $D\subseteq \tM$, and $e:L \hookrightarrow E|_{\tM}\dual \to E\dual$ is the composition.
The right square is a Lagrangian square by \cite[Eq.~(22)]{Park2} since 
\begin{equation}\label{Eq:3.4.1}
(K\oplus L) \subseteq (E\oplus E\dual)|_{\tM}
\end{equation}
is a maximal isotropic subbundle.
The left square is a pullback square.
Hence the total square is also a Lagrangian square, which proves the claim.

Since the classical truncation of the Lagrangian $D \to a^*M(\bsig)$ is an isomorphism,
\[[a^*M(\sigma)/K]^\lag = -[D/K]^\vir\]
by Corollary \ref{Cor:1}.
Here the sign $(-1)$ is given by the fact that \eqref{Eq:3.4.1} is a {\em negative} maximal isotropic subbundle (for the orientation on $E\oplus E\dual$ making $E\oplus 0$ positive).
We then have
\[[M(\bsig)/E]^\lag \circ a_{\sigma,*} = p_{\sigma,*}\circ [a^*M(\bsig)/K]^\lag = -p_{\sigma,*}\circ [D/K]^\vir\]
by Proposition \ref{Prop:bivariance}.

Similarly, we can form commutative diagrams having the same total square
\[\xymatrix@C+1pc{
M(\bsig) \ar@{=}[r] \ar[d]_{0_{E|_{M(\sigma)}}} & M(\bsig) \ar[r]^-{\iota} \ar[d] |{0_E \circ \iota} \cart & M \ar[d]^{0_{E\oplus E\dual}}  \\
E|_{M(\sigma)} \ar[r]^-{b} & E \ar[r]^-{(1,\sigma)} & E \oplus E\dual,
}\quad
\xymatrix@C+1pc{
M(\bsig) \ar[r]^-{\iota} \ar[d]_{0_{E|_{M(\sigma)}}} \cart & M \ar@{=}[r] \ar[d]|{0_E} \ar@{}[rd]|{\lag} & M \ar[d]^{0_{E\oplus E\dual}}  \\
E|_{M(\sigma)} \ar[r]^-{b} & E \ar[r]^-{(1,0)} & E \oplus E\dual.
}
\]
Since the second square is a symplectic pullback square, the third square is a pullback square, and the fourth square is a Lagrangian square, the first square is also a Lagrangian square.
Hence Corollary \ref{Cor:1} and Proposition \ref{Prop:bivariance} give us
\[[M(\bsig)/E]^\lag \circ b_{\sigma,*} = [b^*M(\bsig)/E|_{M(\sigma)}]^\lag = [M(\sigma)/E|_{M(\sigma)}]^\vir,\]
as desired.
\end{proof}

\begin{proof}[Proof of Theorem \ref{Thm:main}]
By Lemma \ref{Lem:1}, we may assume that $B$ is the total space of a perfect complex $E$ of tor-amplitude $[-1,0]$, $g:M \to B$ is the zero section $0_E: M \to E$, and $\bsig$ is induced by $\sigma: E\to \AA^1$ under Lemma \ref{Lem:sigmazerosection}.
By Lemma \ref{Lem:1}, we may assume that $E$ is a vector bundle.
By Lemma \ref{Lem:3} and Construction \ref{Const:1} with Lemma \ref{Lem:LocGysin}, we have
\[[M(\bsig)/E]^\lag = 0_{E,\sigma}^! = [M/E,\bsig]^\vir,\]
as desired.
\end{proof}

\bigskip
\section{Shifted Lagrange multipliers method}\label{S5}

In this section, we formulate the shifted Lagrange multipliers method and 
prove a formula comparing the localized virtual cycles of a stack defined by constraints and the stack enlarged by Lagrange multipliers. 
An immediate corollary is a generalized form of \emph{quantum Lefschetz without curves} in \cite{OTq} which in turn gives us the usual quantum Lefschetz principle in \cite{CL} in Gromov-Witten theory when applied to the moduli spaces of stable maps.  
In the subsequent section, we will look into applications in curve counting. 

\subsection{Classical Lagrange multipliers method}\label{S5.1}
The classical Lagrange multipliers method is typically about a diagram
\beq\label{91}\xymatrix{
& E\ar[d]\\
X=s^{-1}(0)\ar@{^(->}[r]\ar[drr]_{f|_X} & M\ar@/_/[u]_{s}\ar[dr]^f\\ 
&& \AA^1
}\eeq
where $M$ is smooth and the obstruction sheaf $Ob_X=\mathrm{coker}(\T_M|_X\xrightarrow{ds} E|_X)$
for $X$ is trivial so that $X$ is smooth. We are interested in the problem of finding the critical locus
$$\Crit_X(f|_X)=\Zero(df|_X)=X(df|_X)$$
of $f|_X$. Typically $M$ is much nicer than $X$ and we want to reduce the problem to finding the critical locus of a function on $M$ or a similarly nice space. 

In the Lagrange multipliers method, we let $F=E^\vee$ be the dual bundle of $E$. 
We call the fiber coordinates of the bundle projection $\pi:F\to M$ the \emph{Lagrange mulipliers}. 
By computing the derivatives, we find that  $\pi$ induces an isomorphism
\beq\label{92} \Crit_X(f|_X)\cong \Crit_F(f|_F+\varphi)\eeq
of critical loci where $\varphi=s\dual:F\to \AA^1$ is the map defined by pairing with $s\in H^0(F^\vee)$. 
Often it is much easier to compute $\Crit_F(f|_F+\varphi)$ than $\Crit_X(f|_X)$ and 
this method of Lagrange is extremely useful for finding extrema of a function subject to constraints. 


\medskip

In \S\ref{S5.2}, we generalize this Lagrange multipliers method to 
shifted functions. 
We find that \eqref{92} holds in the shifted setting as well (Corollary \ref{93}) even without any assumption about the smoothness of $M$ or $X$. 
Combining this with Theorem \ref{Thm:main}, we find that the localized virtual cycles of $X$ and $F$ coincide (Theorem \ref{Cor:Lefschetz}). 
The quantum Lefschetz without curves in \cite{OTq} is obtained as a special case.
In \S\ref{S5.3}, we prove the key result (Theorem \ref{Prop:LagMult}). 

\subsection{Shifted Lagrange multipliers}\label{S5.2}

Let $g:M \to B$ be a finitely presented morphism of derived Artin stacks.
Recall from Proposition \ref{Prop:TwCot} that the degeneracy locus $M(\bsig)$ of a locked $1$-form $\bsig \in \sA^{1,\lc}(M/B,-1)$ is $(-2)$-shifted locked symplectic.


\begin{example}[Critical loci] \label{Ex:Crit} 
Consider the {\em de Rham differential}
\[d : \sA^0(M,-1) \lra \sA^{1,\lc}(M/B,-1),\]
given by the boundary map in the cofiber sequence $\O_{\D_{M/B}} \xrightarrow{t} \O_{\D_{M/B}} \to i_*\O_{\T_{M/B}[1]}$.
\begin{itemize}
\item (Critical locus) Let $\varphi: M \to \AA^1[-1]$ be a $(-1)$-shifted function.
The {\em critical locus} of $\varphi$ is the zero locus of the differential $d\varphi \in \sA^{1,\lc}(M/B,-1)$, i.e. the fiber product
\[\xymatrix{
\Crit_{M/B}(\varphi) \ar[r] \ar[d] \cart & M \ar[d]^0 \\
M \ar[r]^-{d\varphi} & T^*_{M/B}[-1].
}\]
\item (Shifted cotangent bundle) Observe that the {\em $(-2)$-shifted cotangent bundle} $\T^*_{M/B}[-2]$ of $g:M\to B$ is the critical locus of the zero function $0:M \to \AA^1[-1]$, that is,
\[\T^*_{M/B}[-2]=\Tot(\LL_{M/B}[-2]) \simeq \Crit_{M/B}(0).\]
\end{itemize}
By Proposition \ref{Prop:TwCot}, $\Crit_{M/B}(\varphi)$ and $\T^*_{M/B}[-2]$ have oriented $(-2)$-shifted locked symplectic forms over $B$.
These symplectic forms are {\em exact} (i.e. the underlying functions are zero). 
\end{example}

The main result in this section is the following.

\begin{theorem}\label{Prop:LagMult}
Let $g:M \to B$ be a finitely presented morphism of derived Artin stacks,
$s:M \to E$ be a section of a perfect complex $E$
and $\bsig \in \sA^{1,\lc}(M/B,-1)$ be a $(-1)$-shifted locked $1$-form.
Denote by $X=s^{-1}(0)=\Zero(s)$ the zero locus of $s$ in $M$,
 $F=E\dual[-1]$,
 and $\varphi=s\dual[-1]:F \to \AA^1[-1]$.
Then there exists a canonical equivalence
\beq\label{81} X(\bsig|_X) \simeq F(\bsig|_F + d\varphi)\eeq
as $(-2)$-shifted locked symplectic fibrations over $B$. 
Moreover, the canonical orientations on the left and right sides of \eqref{81} differ by a multiplication factor $(-1)^{\rank(E)}$.
\end{theorem}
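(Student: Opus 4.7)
The plan is to identify both $X(\bsig|_X)$ and $F(\bsig|_F + d\varphi)$ with a common derived fiber product over $B$ by unpacking the relevant cotangent complex sequences, then to upgrade this identification to an equivalence of $(-2)$-shifted locked symplectic fibrations with orientation discrepancy $(-1)^{\rank(E)}$.

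First, for the embedding $X = \Zero(s) \hookrightarrow M$, the relative cotangent complex $\LL_{X/M} \simeq E^\vee|_X[1]$ yields, after shifting by $[-1]$ and applying $\Tot_X$, a fiber sequence of commutative group stacks over $X$,
\[
F|_X \xrightarrow{\iota} \T^*_{M/B}[-1]|_X \lra \T^*_{X/B}[-1],
\]
where $\iota = \Tot_X(ds^\vee[-1])$. Since the third term is the cofiber, the derived zero locus of $\bsig|_X \colon X \to \T^*_{X/B}[-1]$ coincides with the derived locus where $\bsig|_X$ lifts through $\iota$, giving
\[
X(\bsig|_X) \simeq F|_X \times_{\T^*_{M/B}[-1]|_X} X
\]
with structure maps $\iota$ and $\bsig|_X$. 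In parallel, the splitting $\LL_{F/B} \simeq \pi^*\LL_{M/B} \oplus \pi^*E[1]$ for $\pi \colon F \to M$ gives $\T^*_{F/B}[-1] \simeq \T^*_{M/B}[-1]|_F \times_F E|_F$, and a direct calculation using the fiberwise linearity of $\varphi$ shows $d\varphi = (\iota_F, s|_F)$ under this decomposition, where $\iota_F$ extends $\iota$ from $F|_X$ to all of $F$ via the vertical derivative of $s$. The $E|_F$-factor of the zero locus of $\bsig|_F + d\varphi$ cuts out $\Zero(s|_F) = F|_X$, and over $F|_X$ the remaining factor imposes $\iota(p) = -\bsig|_X$, so
\[
F(\bsig|_F + d\varphi) \simeq F|_X \times_{\T^*_{M/B}[-1]|_X} X
\]
with structure maps $\iota$ and $-\bsig|_X$; the sign-change involution $p \mapsto -p$ on $F|_X$ then furnishes the equivalence of underlying derived stacks.

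The next step is to upgrade this to an equivalence of $(-2)$-shifted locked symplectic fibrations over $B$. Both sides have the same underlying function $w = [\bsig]$, since $[d\varphi] \simeq 0$ by exactness, and by Proposition \ref{Prop:TwCot} the locked symplectic form on each zero locus is characterized by the underlying locked $1$-form together with $w$. Under the common fiber-product presentation, the null-homotopies $\bsig|_X \simeq 0$ in $\T^*_{X/B}[-1]$ and $\bsig|_F + d\varphi \simeq 0$ in $\T^*_{F/B}[-1]$ correspond to equivalent witnesses on $F|_X \times_{\T^*_{M/B}[-1]|_X} X$ modulo the sign-change involution, so the locked symplectic forms are identified. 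Verifying this rigorously at the level of the deformation spaces $\D_{X/B}$ and $\D_{F/B}$ uses Lemma \ref{Lem:sigmazerosection}, the local structure results in \S\ref{42}, and the compatibility of the cotangent fiber sequence with the formation of the deformation space $\D_{(-)/B}$.

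The orientation discrepancy is then governed by the involution. The canonical orientations of Proposition \ref{Prop:TwCot} are built from the self-dual Lagrangian fiber sequences $\LL[-1] \to \LL_{(-)/B}[-1] \to \TT_{(-)/B}[1]$ and the pairings $p_{\TT[1]}$. The two choices of ``half'' of the tangent complex at a point of the common fiber product, namely $\TT_{X/B}[1]$ fitting into $\TT_X \to \TT_M|_X \to E|_X$ versus $\TT_{F/B}[1]|_X \simeq \TT_{M/B}[1]|_X \oplus E^\vee|_X$, differ by the replacement $E[-1] \leftrightarrow E^\vee$; tracking the Koszul signs in the resulting pairings yields the factor $(-1)^{\rank(E)}$. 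The main obstacle will be Step 2, where the locked symplectic structure must be transferred rigorously across the derived equivalence at the level of the deformation spaces, going beyond the equivalence of underlying derived stacks; the orientation computation in Step 3, while conceptually transparent, also demands careful Koszul-sign bookkeeping in the self-dual pairings.
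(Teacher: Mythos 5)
Your proposal takes a genuinely different route from the paper. The paper does not unpack cotangent sequences directly; instead it reduces everything to the \emph{symplectic zero locus} $\Zero^\symp_{M/B}(E\oplus E^\vee,(s,t))$ from \cite[\S3.2]{Park2} and its symmetry (Lemma \ref{Lem:SympZero}). With $t=0$ this gives, in one stroke, an equivalence $\Zero(s)(t_*) \simeq \Zero^\symp(E\oplus E^\vee,(s,0)) \simeq \Zero(t)(s_*)$ \emph{as locked symplectic fibrations}, because the construction $\Zero^\symp$ is manifestly symmetric in the two arguments $(s,t)$. The whole content of ``transferring the symplectic form across the equivalence'' is absorbed into that one symmetric construction. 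Your approach is more explicit and elementary at the level of underlying derived stacks, and it is instructive, but it must then re-derive by hand what the symmetric construction gives for free.

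This is where your argument has a genuine gap. Step 2 is the entire content of the theorem, and you acknowledge it is ``the main obstacle'' but do not actually carry it out. Specifically: (a) Proposition \ref{Prop:TwCot} builds the locked symplectic form on a degeneracy locus out of the canonical null-homotopy $h_{\bsig}:0\to \sigma|_{M(\sigma)}$ coming from the defining fiber square \eqref{68}; to compare the two symplectic forms across your equivalence you must track how the two different null-homotopies (one for $\bsig|_X$ on $X$, one for $\bsig|_F+d\varphi$ on $F$) are related on the common fiber-product presentation, not merely note that the underlying functions $w$ agree. Saying they ``correspond to equivalent witnesses modulo the sign-change involution'' is not an argument. (b) More seriously, your identification of underlying derived stacks passes through the fiberwise involution $p\mapsto -p$ on $F|_X$, and you give no reason why this involution should preserve the locked $(-2)$-shifted symplectic form that you are trying to compare; on shifted cotangent-type spaces such sign-changes typically act nontrivially on the symplectic data, and this needs to be confronted, not assumed. (c) The orientation claim is only asserted: you say ``tracking the Koszul signs yields $(-1)^{\rank(E)}$'' without doing the bookkeeping, and you also do not account for any possible sign contribution from the involution in (b). The paper avoids all three issues at once by reducing (via base change) to $M\simeq B$ and then comparing the two orientations as the two pairings $p_E$ and $p_{E^\vee}$ on $\det(E\oplus E^\vee)$, which differ by exactly $(-1)^{\rank(E)}$. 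To turn your proposal into a proof you would essentially need to reconstruct Lemma \ref{Lem:SympZero} from scratch.
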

We will call $F=E\dual[-1]$ the \emph{bundle of Lagrange multipliers} over $M$.

For instance, if $\bsig \simeq df$ for a $(-1)$-shifted function $f$, 
then we have the following
generalization of \eqref{92}. 
%

\begin{corollary}[Lagrange multipliers method]\label{93} 
Let $g:M \to B$ be a finitely presented morphism of derived Artin stacks,
$s:M \to E$ be a section of a perfect complex $E$
and $f:M \to \AA^1[-1]$ be a shifted function.
Denote by $X=s^{-1}(0)$,
 $F=E\dual[-1]$,
 and $\varphi=s\dual[-1]$.
Then we have a canonical equivalence
\[\Crit_{X/B}(f|_X ) \simeq \Crit_{F/B}(f|_{F}+\varphi)\]
as $(-2)$-shifted exact symplectic fibrations over $B$.
In particular, letting $f\simeq 0$, we have a canonical equivalence
$\T^*_{X/B}[-2] \simeq \Crit_{F/B}(\varphi).$
\end{corollary}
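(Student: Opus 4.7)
The plan is to specialize Theorem \ref{Prop:LagMult} to the $(-1)$-shifted locked $1$-form $\bsig = df$. Under this substitution, the theorem supplies a canonical equivalence
\[X(df|_X) \simeq F(df|_F + d\varphi)\]
of $(-2)$-shifted locked symplectic fibrations over $B$; it remains only to rewrite both sides as critical loci and to verify the exactness of the resulting symplectic forms.

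First I would invoke naturality of the de Rham differential $d : \sA^0(-,-1) \to \sA^{1,\lc}(-/B,-1)$ with respect to pullback. Since $d$ is defined as the boundary map in the cofiber sequence $\O_{\D_{-/B}} \xrightarrow{t} \O_{\D_{-/B}} \to i_* \O_{\T_{-/B}[1]}$ from Example \ref{Ex:Crit}, and the derived deformation space is stable under base change over $B$, pullback commutes with $d$. Consequently $df|_X \simeq d(f|_X)$ along $X \hookrightarrow M$ and $df|_F \simeq d(f|_F)$ along $F \to M$, and by additivity of $d$,
\[df|_F + d\varphi \simeq d(f|_F + \varphi) \in \sA^{1,\lc}(F/B,-1).\]
Substituting into the displayed equivalence and applying Example \ref{Ex:Crit} to identify zero loci of de Rham differentials with critical loci yields the desired equivalence $\Crit_{X/B}(f|_X) \simeq \Crit_{F/B}(f|_F + \varphi)$ of $(-2)$-shifted locked symplectic fibrations over $B$.

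For the exactness, Proposition \ref{Prop:TwCot} identifies the underlying function of the induced locked symplectic form on $X(df|_X)$ (respectively $F(d(f|_F+\varphi))$) with $[df|_X]$ (respectively $[d(f|_F+\varphi)]$), both of which vanish by the last sentence of Example \ref{Ex:Crit}. Hence both sides are $(-2)$-shifted \emph{exact} symplectic fibrations over $B$, as required. The final claim of the corollary is the case $f \simeq 0$: by Example \ref{Ex:Crit} we have $\Crit_{X/B}(0) \simeq \T^*_{X/B}[-2]$, so the equivalence specializes to $\T^*_{X/B}[-2] \simeq \Crit_{F/B}(\varphi)$.

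The only subtle point I anticipate is the pullback compatibility $df|_X \simeq d(f|_X)$ and $df|_F \simeq d(f|_F)$, but this is formal from the universal property of the defining cofiber sequence combined with base change for $\D_{-/B}$; I expect no real obstacle beyond carefully organizing the naturality and additivity of $d$.
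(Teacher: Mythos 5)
Your proposal is correct and matches the paper's (implicit) proof exactly: the paper derives Corollary~\ref{93} from Theorem~\ref{Prop:LagMult} precisely by setting $\bsig \simeq df$, and you have supplied the routine bookkeeping (naturality and additivity of the de Rham differential $d$, the identification of zero loci of $d(-)$ with critical loci, and the vanishing of the underlying functions) that the paper leaves unstated.
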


As an immediate consequence of Theorem \ref{Prop:LagMult} and Theorem \ref{Thm:main},
we obtain a Lefschetz principle for cosection localized virtual cycles.

\begin{theorem}[Quantum Lefschetz principle]\label{Cor:Lefschetz}
Let $g:M \to B$ be a quasi-smooth morphism from a derived DM stack $M$ to a derived Artin stack $B$,
$s:M \to E$ be a section of a perfect complex $E$ of amplitude $[0,1]$
and $\bsig \in \sA^{1,\lc}(M/B,-1)$ be a $(-1)$-shifted locked $1$-form.
Denote by $X=s^{-1}(0)$, $F=E\dual[-1]$ and $\varphi=s\dual[-1]$.
If $X$ is quasi-smooth over $B$, then
\[[X/B,\bsig|_{X}]^\vir = (-1)^\ell \cdot [F/B, \bsig|_F+d\varphi]^\vir : A_*(B) \lra A_{*+r}(X(\sigma))\]
where $\ell=\rank(E)$, $r=\rank(\TT_{M/B})$ and $X(\sigma)$ is the degeneracy locus of the cosection $\sigma_X=\overline{\bsig|_X}$.
\end{theorem}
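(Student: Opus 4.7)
The plan is to reduce both sides of the desired identity to virtual Lagrangian cycles on a common $(-2)$-shifted locked symplectic fibration, by applying Theorem~\ref{Thm:main} twice and then invoking the equivalence of Theorem~\ref{Prop:LagMult}.

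For the left side, I would verify the hypotheses of Theorem~\ref{Thm:main}: quasi-smoothness of $g|_X : X \to B$ is given, and $\bsig|_X \in \sA^{1,\lc}(X/B,-1)$ has underlying function $w := [\bsig]$. Applying Theorem~\ref{Thm:main} then yields
\[
[X/B, \bsig|_X]^\vir = [X(\bsig|_X)/B]^\lag,
\]
with $X(\bsig|_X)$ carrying the canonical oriented $(-2)$-shifted locked symplectic structure from Proposition~\ref{Prop:TwCot}. For the right side, I first need to check that $F \to B$ is quasi-smooth: since $E$ has amplitude $[0,1]$, the relative cotangent complex $\LL_{F/M} \simeq E[1]|_F$ has tor-amplitude $[-1,0]$, so $F \to M$ (and hence $F \to B$) is quasi-smooth; and $F$ is a derived DM stack. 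Moreover $\bsig|_F + d\varphi$ is a locked $1$-form on $F/B$ whose underlying function is $w|_F + 0 = w$, because $d\varphi$ is relatively exact in the sense of Example~\ref{Ex:Crit}. Theorem~\ref{Thm:main} therefore applies a second time and gives
\[
[F/B, \bsig|_F + d\varphi]^\vir = [F(\bsig|_F + d\varphi)/B]^\lag.
\]

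Next I would invoke Theorem~\ref{Prop:LagMult}, which furnishes a canonical equivalence $X(\bsig|_X) \simeq F(\bsig|_F + d\varphi)$ of $(-2)$-shifted locked symplectic fibrations over $B$, under which the canonical orientations of Proposition~\ref{Prop:TwCot} match up to the sign $(-1)^\ell$ with $\ell = \rank(E)$. Since the virtual Lagrangian cycle of Theorem~\ref{60} depends only on the oriented symplectic fibration and scales linearly with the orientation, this produces
\[
[X(\bsig|_X)/B]^\lag = (-1)^\ell \cdot [F(\bsig|_F + d\varphi)/B]^\lag.
\]
Chaining the three identities will then yield the desired formula as maps $A_*(B(w)) \to A_{*+r}(X(\sigma))$.

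The only real obstacle is bookkeeping: (i) confirming that the sum $\bsig|_F + d\varphi$ is genuinely a $w$-locked $(-1)$-shifted $1$-form on $F/B$, a routine additivity check against the de Rham construction of Example~\ref{Ex:Crit}; (ii) matching the virtual ranks---using $\TT_{F/M} \simeq E\dual[-1]|_F$ one computes $\rank\,\TT_{F/B} = \rank\,\TT_{M/B} - \ell = \rank\,\TT_{X/B} = r$, so that both Lagrangian cycles target $A_{*+r}(X(\sigma))$ under the identification of Theorem~\ref{Prop:LagMult}; and (iii) propagating the orientation discrepancy $(-1)^\ell$ through the equivalence to the level of Chow groups, which is exactly the content of the orientation statement in Theorem~\ref{Prop:LagMult}. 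Beyond these three routine checks, no new substantive content is needed: the quantum Lefschetz principle stated here is a clean corollary of the two main theorems of \S\ref{Sec:3} and \S\ref{S5}.
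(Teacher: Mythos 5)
Your proof is correct and follows exactly the paper's argument: applying Theorem~\ref{Thm:main} to both sides to convert each localized virtual cycle into a virtual Lagrangian cycle, then invoking the symplectic equivalence and orientation comparison of Theorem~\ref{Prop:LagMult} to identify the two and produce the sign $(-1)^{\ell}$. The extra bookkeeping you supply (quasi-smoothness of $F/B$ from the tor-amplitude of $E$, the rank match $\rank\TT_{F/B}=\rank\TT_{X/B}$, and $w$-lockedness of $\bsig|_F+d\varphi$) is correct and just spells out steps the paper leaves implicit.
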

Indeed, we have
\beq\label{i15}
[X/B,\bsig|_{X}]^\vir =[X(\sigma)/B]^\lag = (-1)^\ell \cdot [F(\bsig|_F + d\varphi)]^\lag =(-1)^\ell\cdot [F/B, \bsig|_F+d\varphi]^\vir\eeq 
where the first and last equalities come from Theorem \ref{Thm:main} and the middle comes from Theorem \ref{Prop:LagMult}.

By letting $\bsig\simeq 0$ in Theorem \ref{Cor:Lefschetz}, we obtain the following. 
\begin{corollary}\label{94}
Let $g:M \to B$ be a quasi-smooth morphism from a derived DM stack $M$ to a derived Artin stack $B$ and $s:M \to E$ be a section of a perfect complex $E$ of tor-amplitude $[0,1]$. 
Let $F=E\dual[-1]$, $\ell=\rank \, F$ and $\varphi=s\dual[-1]$.
If the zero locus $X=s^{-1}(0)$ of $s$ is quasi-smooth over $B$, then we have the equality
\beq\label{82} [X/B]^\vir = (-1)^\ell \cdot [F/B,d\varphi]^\vir : A_*(B) \lra A_{*+r}(X).\eeq
\end{corollary}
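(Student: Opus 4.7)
The plan is to deduce Corollary~\ref{94} as the specialization of Theorem~\ref{Cor:Lefschetz} to $\bsig\simeq 0$; what remains is bookkeeping to match both sides of the resulting identity with the statement of the corollary.

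First I would handle the left-hand side. With $\bsig\simeq 0$, the restriction $\bsig|_X$ is the zero $(-1)$-shifted locked $1$-form on $X\to B$, so its underlying function $w=[\bsig|_X]$ is zero and its underlying $1$-form $\sigma_X=\overline{\bsig|_X}$ is zero. Hence $B(w)=B$ and the degeneracy locus $X(\sigma_X)=X$; by the remark right after Construction~\ref{Const:3}, the localized virtual cycle $[X/B,0]^\vir$ coincides with the ordinary virtual pullback $[X/B]^\vir$. This reproduces the left-hand side of \eqref{82}.

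Next I would handle the right-hand side. With $\bsig\simeq 0$, the locked $1$-form $\bsig|_F+d\varphi$ reduces to $d\varphi$, which is exact: its underlying function $[d\varphi]=0$ (as already noted in Example~\ref{Ex:Crit} via Proposition~\ref{Prop:TwCot}), so once again $B(w)=B$. Theorem~\ref{Prop:LagMult} applied with $\bsig\simeq 0$ gives the canonical equivalence $X(0)\simeq F(d\varphi)$; passing to classical truncations and using $X(0)\simeq \T^*_{X/B}[-2]$ from Example~\ref{Ex:Crit}, this identifies $F(d\varphi)_\cl$ with $X_\cl$. Under this identification, the target $A_{*+r}(X(\sigma))$ in Theorem~\ref{Cor:Lefschetz} becomes $A_{*+r}(X)$, and the right-hand side is exactly $(-1)^\ell\cdot[F/B,d\varphi]^\vir$. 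The virtual dimensions agree because $\rank(\TT_{F/B})=\rank(\TT_{M/B})-\ell=\rank(\TT_{X/B})$.

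I do not anticipate any real obstacle: the substantive content — the shifted Lagrange multipliers equivalence and the comparison between cosection-localized and virtual Lagrangian cycles — is already absorbed into Theorems~\ref{Prop:LagMult} and~\ref{Thm:main} that enter the proof of Theorem~\ref{Cor:Lefschetz}, and the sign $(-1)^\ell$ is inherited unchanged from the orientation discrepancy in Theorem~\ref{Prop:LagMult}.
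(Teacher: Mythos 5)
Your proposal is correct and matches the paper's proof exactly: Corollary~\ref{94} is obtained by setting $\bsig\simeq 0$ in Theorem~\ref{Cor:Lefschetz}, with the bookkeeping you carry out (the degeneracy locus $F(d\varphi)$ has classical truncation $X_\cl$ via Theorem~\ref{Prop:LagMult} and Example~\ref{Ex:Crit}, the underlying function $[d\varphi]=0$ so both sides are defined over $A_*(B)$, and $[X/B,0]^\vir=[X/B]^\vir$) being the implicit content of the paper's one-line deduction. The only cosmetic point is that the corollary takes $\ell=\rank F=-\rank E$, while Theorem~\ref{Cor:Lefschetz} uses $\ell=\rank E$; since the sign only depends on parity, this is harmless, but it makes your dimension bookkeeping formula $\rank(\TT_{F/B})=\rank(\TT_{M/B})-\ell$ read with the wrong sign of $\ell$.
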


If furthermore $B$ is a point, \eqref{82} gives us the \emph{quantum Lefschetz without curves}
\beq\label{95} [X]^\vir=(-1)^\ell \cdot [F,d\varphi]^\vir\ \ \in\ \  A_r(X)\eeq
which was proved  in \cite{OTq} by an independent method under the assumption that $E$ admits a global resolution by vector bundles.

\subsection{Symplectic zero loci}\label{S5.3}

In this subsection, we prove Theorem \ref{Prop:LagMult} using the notion of {\em symplectic zero loci} in \cite[\S3.2]{Park2}.
Let $g:M \to B$ be a finitely presented morphism of derived Artin stacks 
and $w:B \to \AA^1$ be a function.

A {\em symmetric complex} $S$ on a derived Artin stack $M$ is a perfect complex $S$ on $M$ together with a symmetric form $\beta :\O_M \to \Sym^2(S\dual)$ that induces an equivalence
\[\beta^{\#} : S \xrightarrow{\simeq} S\dual.\]


Given a section $s:M \to S$ of a symmetric complex $S$ and an equivalence $h : \beta(s,s) \to w|_M$ in $\sA^0(M,0)$,
we can form a $(-2)$-shifted locked symplectic fibration (cf. \cite[Def.~3.2.3]{Park2})
\[\Zero^\symp_{M/B}(S,s) \lra B,\]
called the {\em symplectic zero locus}.
We refer to \cite[\S3.2]{Park2} for the construction.
In this paper, we will only use the following property of symplectic zero loci.

\begin{lemma} \cite[Prop.~3.2.4, Eq.~(21)]{Park2} \label{Lem:SympZero}
Let $E$ be a perfect complex on $M$.
Let $s: M \to E$ be a section, $t : E \to \AA^1$ be a cosection, and $h : t \circ s \to w|_M $ be a homotopy in $\sA^0(M,0)$.
Denote by $t_* \in \sA^{1,\lc}(\Zero(s)/B,-1)$ the locked $1$-form induced by the path
\[ 0\xrightarrow{t \circ y}  t \circ s|_{\Zero(s)}\xrightarrow{h|_{\Zero(s)}} w|_{\Zero(s)}  \textin \sA^0(\Zero(s),0)\] by Remark \ref{Rem:n1}, where $y : 0 \to s|_{\Zero(s)}: \Zero(s) \to E$ is the canonical path for the zero locus.
Then there exists a canonical equivalence
\[\Zero^\symp_{M/B}(E\oplus E \dual,(s,t)) \simeq \Zero(s)(t_*)\]
as $(-2)$-shifted locked symplectic fibrations over $B$.
\end{lemma}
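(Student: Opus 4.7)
The plan is to verify the equivalence in two stages: first match the underlying derived stacks, then match the locked $(-2)$-shifted symplectic structures.

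For the underlying stacks, I would decompose the zero locus of $(s,t):M\to E\oplus E\dual$ iteratively. First, $\Zero_M((s,t)) \simeq \Zero_{\Zero(s)}(t|_{\Zero(s)})$. By Remark \ref{Rem:n1}, the locked $1$-form $t_* \in \sA^{1,\lc}(\Zero(s)/B,-1)$ is determined by the pair $(w,\tilde h)$ where $\tilde h$ is precisely the concatenation $0\xrightarrow{t\circ y} t\circ s|_{\Zero(s)} \xrightarrow{h|_{\Zero(s)}} w|_{\Zero(s)}$. Its underlying $1$-form $\overline{t_*}$ is $t|_{\Zero(s)}$, so the degeneracy locus $\Zero(s)(\overline{t_*})$ agrees with $\Zero_{\Zero(s)}(t|_{\Zero(s)})$. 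This identifies the underlying derived stacks on both sides of the claimed equivalence with $\Zero_M((s,t))$.

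For the symplectic structures, I would invoke the construction of the symplectic zero locus in \cite[\S3.2]{Park2}. The symmetric form on $E\oplus E\dual$ is the hyperbolic pairing, under which $\beta((s,t),(s,t))$ is proportional to $t\circ s$; combined with the homotopy $h:t\circ s\simeq w|_M$, this provides the null-homotopy $\beta((s,t),(s,t))\simeq w|_M$ required as input. The resulting $(-2)$-shifted locked symplectic form on $\Zero_M((s,t))$ has underlying function $w|_B$ and underlying $2$-form induced by the self-dual fiber sequence on the tangent complex coming from the hyperbolic pairing. On the other side, the locked symplectic form on $\Zero(s)(t_*)$ produced by Proposition \ref{Prop:TwCot} is built from $t_*$ via the cofiber sequence $\O_{\D_{\Zero(s)(\overline{t_*})/B}} \xrightarrow{t\cdot} \O_{\D_{\Zero(s)(\overline{t_*})/B}} \to i_*\O_{\T_{\Zero(s)(\overline{t_*})/B}[1]}$, and also has underlying function $w|_B$. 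Both forms represent the same Lagrangian intersection of the zero section and the section furnished by $t$ inside the $(-1)$-shifted cotangent-type bundle over the appropriate base, so they coincide.

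The main obstacle is the coherent bookkeeping of homotopy data across the two constructions, ensuring that the decomposition of $(s,t)$ into the pair (section $s$, cosection $t$) is compatible with the path defining $t_*$, and that the signs from the hyperbolic splitting are absorbed correctly on the derived deformation space so that the locking data matches on the nose. This verification is precisely \cite[Prop.~3.2.4, Eq.~(21)]{Park2}, to which the proof reduces once the translation of data above is in place.
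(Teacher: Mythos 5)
Your sketch breaks down at the first stage, and the rest of the argument is built on that faulty translation of data. Neither side of the lemma has the naive derived zero locus $\Zero_M((s,t))$ as its underlying derived stack. The right-hand side $\Zero(s)(t_*)$ is, by Proposition \ref{Prop:TwCot} and diagram \eqref{68}, the Lagrangian intersection of the zero section with the graph of the underlying $1$-form $\overline{t_*}$ inside $\T^*_{\Zero(s)/B}[-1]$; the underlying form is the composite $\TT_{\Zero(s)/B}[1]\to E|_{\Zero(s)}\xrightarrow{t}\O$, not the cosection $t|_{\Zero(s)}$ itself, and its zero locus is \emph{not} the derived zero locus of the section $t\dual|_{\Zero(s)}$ of $E\dual|_{\Zero(s)}$ (which is what $\Zero_{\Zero(s)}(t|_{\Zero(s)})\simeq\Zero_M((s,t))$ is). A concrete test case: $B=\pt$, $w=0$, $M=\AA^2$, $E=\O^2$, $s=(x,y)$, $t=(y,-x)$, so $t\circ s=0$. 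Then $\Zero(s)$ is the reduced origin with $\LL_{\Zero(s)}\simeq 0$, hence $\Zero(s)(t_*)$ is the point with vanishing cotangent complex; whereas $\Zero_M((s,t))\simeq\Zero_{\Zero(s)}(t|_{\Zero(s)})$ has cotangent complex $\O^2[1]$, of virtual dimension $-2$. The same example shows the left-hand side cannot be $\Zero_M((s,t))$ either: a $(-2)$-shifted symplectic fibration must have a self-dual relative cotangent complex, while the honest vanishing locus of $(s,t)$ is quasi-smooth of virtual dimension $\rank\,\TT_{M/B}-2\rank(E)$; the symplectic zero locus of \cite[\S3.2]{Park2} agrees with the vanishing locus only on classical truncations, its derived structure being built from the isotropic datum $h$ so that the virtual dimension is $\rank\,\TT_{M/B}-\rank(E)$. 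So your claim that both sides are identified with $\Zero_M((s,t))$ would in fact contradict the very statement being proved.

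Beyond this, the second stage (``both forms represent the same Lagrangian intersection, so they coincide'') is an assertion rather than an argument, and you close by reducing everything to \cite[Prop.~3.2.4, Eq.~(21)]{Park2} --- which is exactly what the paper does: Lemma \ref{Lem:SympZero} is quoted there without proof. Deferring to that citation is legitimate, but then the only mathematical content your proposal could add is the dictionary between the two sides, namely that the datum $(s,t,h)$ defining the hyperbolic symplectic zero locus corresponds to the path $0\xrightarrow{t\circ y}t\circ s|_{\Zero(s)}\xrightarrow{h|_{\Zero(s)}}w|_{\Zero(s)}$ defining $t_*$ via Remark \ref{Rem:n1}; as written, your dictionary routes through the wrong derived stacks, so the gap is genuine.
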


Consequently, if we change the role of $s$ and $t$, then we have
\[ \Zero(s)(t_*) \simeq \Zero(t)(s_*),\]
as locked $(-2)$-shifted symplectic fibrations over $B$.

\begin{proof}[Proof of Theorem \ref{Prop:LagMult}] 
By Remark \ref{Rem:n1}, 
$\bsig \in \sA^{1,\lc}(M/B,-1)$ is equivalent to a pair of 
\begin{itemize}
\item a function $w:B \to \AA^1$ and
\item a null-homotopy $n: 0 \to w|_M$ in $\sA^0(M,0)$.
\end{itemize}
We will apply Lemma \ref{Lem:SympZero} for $t=0$ and $h: t\circ s \simeq 0 \xrightarrow{n} w|_M$.
Since we have $\Zero(t)\simeq E\dual[-1]$,
\[s_* \simeq \bsig|_{E\dual[-1]} + s\dual[-1] \in \sA^{1,\lc}(E\dual[-1]/B,-1) \and t_* \simeq \bsig|_{\Zero(s)} \in \sA^{1,\lc}(\Zero(t)/B,-1), \]
we obtain the desired symplectic equivalence
\begin{equation}\label{Eq:4.1}
\Zero(s)(\bsig|_{\Zero(s)}) \simeq E\dual[-1](\bsig|_{E\dual[-1]}+ds\dual[-1]).
\end{equation}



For the comparison of orientations, we may assume $M \simeq B$
since we have Lagrangian correspondences for base changes \cite[Rem.~3.1.5(b)]{Park2}.
Consider the fiber square
\[\xymatrix{
W:=\Zero^{\symp}_{M/M}((E\oplus E\dual),(s,0)) \ar[r] \ar[d] \cart & \Zero(t)\simeq E\dual[-1]= F \ar[d] \\
X= \Zero(s) \ar[r] & M.
}\]
Then $\T_{W/M}[1]\simeq E\oplus E\dual$ and the two orientations are given by
\[p_E, p_{E\dual} : \det(E \oplus E\dual)|_W \simeq \det(E)|_W \otimes \det(E\dual)|_W \lra \O_W,\]
where $p_{(-)}$ are the pairings from \cite[Eq.~(56)]{OT}.
We thus have $p_{E\dual} = (-1)^{\rank(E)} \cdot p_E$.
\end{proof}

\bigskip
\section{Gromov-Witten invariants of complete intersections and branched covers}\label{S6}

Let $Q$ be a smooth quasi-projective variety. Fix $g,n\in \ZZ_{\ge 0}$ and $\beta\in H_2(Q,\Z)$. Let 
\beq\label{g0} M(Q):=\overline{M}_{g,n}(Q,\beta)\eeq
denote the moduli space of stable maps $f:C\to Q$ from a prestable curve $C\in \mathfrak{M}_{g,n}$ of genus $g$ with $n$ marked points such that $f_*[C]=\beta$. 
There is a natural perfect obstruction theory (cf. \cite{BF}) on $M(Q)$ which gives us the virtual fundamental class $[M(Q)]\virt\in A_*(M(Q))$. 
The Gromov-Witten invariants of $Q$ are defined as the integrals
\beq\label{g8}  \int_{[M(Q)]\virt}\eta\quad\quad\text{for }\eta\in H^*(M(Q),\Q)\eeq
which turned out to be quite useful in algebraic geometry, symplectic geometry and mathematical physics. 

Since its beginning, a fundamental idea in Gromov-Witten theory is to push the computation for a variety $Q$ to a simpler space $P$, when there is a proper morphism $\rho:Q\to P$. 
Let $M(P)=\overline{M}_{g,n}(P,\rho_*\beta)$. 
In lucky circumstances, the virtual pushforward formula
\beq\label{g1} \hat{\rho}_*[M(Q)]\virt=[M(P)]\virt\cap \gamma\quad\quad \text{for some }\gamma\in A^*(M(P))\eeq 
holds and thus the Gromov-Witten invariants of $Q$ (for insertions from $P$) may be computed by the formula
\beq\label{g4} \int_{[M(Q)]\virt}\eta|_{M(Q)}=\int_{[M(P)]\virt}(\eta\cup \gamma)\quad \text{for }\eta\in H^*(M(P),\Q)\eeq
where $\eta|_{M(Q)}$ denotes the pullback of $\eta$ by the natural map 
$$\hat{\rho}:M(Q)\lra M(P)$$ 
which composes any stable map to $Q$ with $\rho$ and contracts destabilizing components of the domain curve.   
\begin{example}[Quantum Lefschetz]\label{g20} 
Let $g=0$ and $\rho:Q\hookrightarrow P=\PP^m$ be a smooth hypersurface of degree $d$. Then we have the virtual pushforward formula
\beq\label{g3} \hat{\rho}_*[M(Q)]\virt=[M(P)]\virt\cap e(\pi_*\bff^*\O_{\PP^m}(d))\eeq 
where $e(\pi_*\bff^*\O_{\PP^m}(d))$ denotes the Euler class of the vector bundle $\pi_*\bff^*\O_{\PP^m}(d)$ and 
\beq\label{g2}\xymatrix{
\cC\ar[r]^{\bff}\ar[d]_{\pi} & P\\
M(P)}\eeq
is the universal family of stable maps to $P$. Hence we have the formula
\beq\label{g5} \int_{[M(Q)]\virt}\eta|_{M(Q)}=\int_{[M(P)]\virt} \eta\cup e(\pi_*\bff^*\O_{\PP^m}(d))\quad \text{for }\eta\in H^*(M(P),\Q)\eeq
whose right hand side may be effectively computed by a torus action on $\PP^m$.
\end{example} 
The quantum Lefschetz formula \eqref{g5} plays a key role in computation of genus $0$ Gromov-Witten invariants of 
hypersurfaces in toric varieties. 
For a higher genus, however, the virtual pushforward formula \eqref{g1} or \eqref{g3} does not hold even for hypersurfaces in a projective space and we cannot push the computation of Gromov-Witten invariants \eqref{g8} of $Q$ to $P$ by a formula like \eqref{g4} or \eqref{g5}. 

A breakthrough idea for higher genus quantum Lefschetz first appeared in \cite{CL} where Chang and Li discovered that if we enlarge $M(P)$ to the moduli space $M(P)^p$ of stable maps with $p$-fields, then
we have the identity
\beq\label{g6} \int_{[M(Q)]\virt}1=(-1)^\ell \cdot\int_{ [M(P)^p,\sigma]\virt}1,\quad\quad
\ell=\rank\,R\pi_*\bff^*\O_{\PP^4}(5),\eeq 
for a cosection $\sigma$ of $Ob_{M(P)^p}$, when $n=0$ and $Q$ is the quintic hypersurface in $P=\PP^4$.
The higher genus quantum Lefschetz formula \eqref{g6} turned out to be quite useful in Gromov-Witten theory. 

In this section, we will see that the enlarged space $M(P)^p$ above is in fact the 
bundle of Lagrange multipliers over $M(P)$ and \eqref{g6} is an immediate consequence of 
Corollary \ref{94}.   
In fact, 
the shifted Lagrange multipliers method in \S\ref{S5.2} works for all complete intersections (cf. \S\ref{S7.1}) and branched covers (cf. \S\ref{S7.2}).

The results in this section obviously hold for quasimap invariants as well but  we confine ourselves to Gromov-Witten invariants for clarity of presentation.

\subsection{Complete intersections}\label{S7.1}

Let $L$ be a vector bundle over a smooth quasi-projective variety $P$ and let $h$ be a section of $L$ whose zero locus is $Q=h^{-1}(0)$. 
In this  subsection, we always assume the following.
\begin{assumption}\label{g9}
$Q$ is smooth and the natural sequence
$$0\lra  \T_Q\lra T_P|_Q\mapright{dh} L|_Q\lra 0$$
is exact. 
\end{assumption}
To make the discussion simpler, let us also assume that  
$\rho_*:H_2(Q,\Z)\to H_2(P,Z)$ is injective where $\rho:Q\to P$ denotes the inclusion map. 

Note that a stable map $f:C\to P$ factors through $Q$ if and only if $f^*h\in H^0(C,f^*L)$ is zero. Globally over $M(P)$, using the notation of \eqref{g2}, we have a complex 
$$E=R\pi_*\bff^*L$$
of tor-amplitude $[0,1]$ and a section $s=\pi_*\bff^*h$ of $E$ whose vanishing locus is $M(Q)$. 
Letting $X=M(Q)$ and $M=M(P)$, we are in the situation of the diagram \eqref{91} (with $f=0$). 
The bundle of Lagrange multipliers is 
$$F=E\dual[-1]=R\pi_*(\bff^*L\dual\otimes \omega_{\cC/M(P)})$$
by Serre duality where $\omega_{\cC/M(P)}$ denotes the relative dualizing sheaf of $\pi$ in \eqref{g2}. 
The classical truncation of $F$ is the moduli space
$$\pi_*(\bff^*L\dual\otimes \omega_{\cC/M(P)})=\{(f\in M(P),p\in H^0(C,f^*L\dual\otimes\omega_C))\}=:M(P)^p$$
of stable maps $f:C\to P$ with a $p$-field in \cite{CL}. 
Thus the following \emph{quantum Lefschetz formula} is as an immediate consequence of Corollary \ref{94}.
\begin{proposition}\label{g10} We have the equality of (localized) virtual fundamental classes 
$$[M(Q)]\virt=(-1)^{\ell}\cdot [M(P)^p,d\varphi]\virt, \quad \ell=\rank\, R\pi_*\bff^*L$$
where 
$\varphi=s\dual[-1]:F\to \AA^1[-1]$.  
\end{proposition}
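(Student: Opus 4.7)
The plan is to apply Corollary \ref{94} directly, with $B$ a point (or $\M_{g,n}^{\tw}$, as is convenient), $M$ the derived enhancement of $M(P)=\overline{M}_{g,n}(P,\rho_*\beta)$, and $X$ the derived enhancement of $M(Q)=\overline{M}_{g,n}(Q,\beta)$. All the derived enhancements of moduli of stable maps to smooth targets are well known to be quasi-smooth, with tangent complexes $\TT_{M(P)}\simeq R\pi_*\bff^*\T_P[1]$ on $M(P)$ (on a relative version over $\M_{g,n}^{\tw}$) and analogously for $M(Q)$. The perfect complex $E=R\pi_*\bff^*L$ on $M(P)$ has tor-amplitude $[0,1]$ because $\pi:\cC\to M(P)$ is a flat family of curves and $L$ is a vector bundle, so $R^i\pi_*$ vanishes outside $i=0,1$. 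The section $s=\pi_*\bff^*h$ is, by construction, the obstruction to factoring a stable map through $Q$.

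The first key step is to identify the derived zero locus $\Zero(s)$ with the derived $M(Q)$. Assumption \ref{g9} gives an exact sequence $0\to \T_Q\to \T_P|_Q\to L|_Q\to 0$, which upon pullback along the universal map and derived pushforward yields an exact triangle
\[
R\pi_*\bff^*\T_Q \lra R\pi_*\bff^*\T_P \lra R\pi_*\bff^*L
\]
over $M(Q)$. This exact triangle matches the exact triangle of tangent complexes of the Cartesian square defining $\Zero(s)=M(P)\times^h_E M(P)$ in the derived sense, and hence identifies $M(Q)\simeq \Zero(s)$ as derived DM stacks (in particular, $M(Q)$ is quasi-smooth, which is also clear from the smoothness of $Q$).

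The second key step is to recognize the bundle of Lagrange multipliers $F=E\dual[-1]=\Tot_{M(P)}(E\dual[-1])$. Using relative Serre duality for $\pi:\cC\to M(P)$, one has
\[
E\dual[-1]=(R\pi_*\bff^*L)\dual[-1]\simeq R\pi_*(\bff^*L\dual\otimes \omega_{\cC/M(P)}),
\]
whose classical truncation represents the functor sending $T\to M(P)$ to sections of $f^*L\dual\otimes \omega_{C/T}$, i.e.\ precisely the $p$-fields of \cite{CL}. Thus $F_\cl \simeq M(P)^p$, the pairing function $\varphi=s\dual[-1]$ corresponds to the Chang--Li pairing $p\cdot f^*h$, and the underlying cosection of $d\varphi$ restricted to $M(P)^p$ is the Chang--Li cosection. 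Now Corollary \ref{94} (applied with $\bsig\simeq 0$) yields
\[
[M(Q)]\virt=(-1)^\ell\cdot [F,d\varphi]\virt=(-1)^\ell\cdot [M(P)^p,d\varphi]\virt,
\]
which is the claimed equality. The only delicate point, and the main thing to verify carefully, is the matching of the derived structures in the first key step, i.e. that the Koszul-type derived zero locus of $s$ genuinely agrees with the derived moduli of stable maps to $Q$; however this is standard given Assumption \ref{g9} and follows from the comparison of the two exact triangles of tangent complexes above.
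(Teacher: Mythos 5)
Your proposal is correct and follows the same route as the paper: recognize $M(Q)$ as the derived zero locus of $s=\pi_*\bff^*h$, identify $F=E\dual[-1]$ with $M(P)^p$ via Serre duality, and invoke Corollary \ref{94} with $\bsig\simeq 0$. The paper presents these observations more tersely and simply declares the result "an immediate consequence of Corollary \ref{94}"; you sensibly spell out the identification of the derived structure on $\Zero(s)$ with the standard derived moduli of stable maps to $Q$ via the comparison of tangent triangles, which the paper leaves implicit.
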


\subsection{Branched covers}\label{S7.2}
In this subsection, we consider Gromov-Witten invariants of branched covers. 
Let $P$ be a line bundle over a smooth quasi-projective variety $S$ and let 
$$h=y^m+h_1y^{m-1}+h_2y^{m-2}+\cdots+h_m\ \ \in H^0(P, L), \quad h_i\in H^0(S,P^{\otimes i})$$ 
be a section of the pullback $$L:=P|_P^{\otimes m}$$ of $P^{\otimes m}$ to $P$ by the bundle projection $q:P\to S$ where $y$ denotes the coordinate of the fibers of $q$. 
Let $Q\subset P$ denote the zero locus of $h$ so that we have a diagram
\beq\label{g11} \xymatrix{
Q\ar@{^(->}[r]^\imath\ar[dr]_\rho & P\ar[d]^q\\
& S.}\eeq
Note that $\rho:Q\to S$ is an $m$-fold cover. Once again, we suppose that Assumption \ref{g9} holds and $\rho_*:H_2(Q,\Z)\to H_2(S,\Z)$ is injective. 
Our goal is to express the virtual fundamental class $[M(Q)]\virt$ of the moduli space $M(Q)$ of stable maps to $Q$ by a localized virtual fundamental class of the moduli space of stable maps to $S$ with additional fields. 

By \eqref{g11}, we have the diagram 
\beq\label{g13} \xymatrix{
M(Q)\ar@{^(->}[r]^{\hat{\imath}}\ar[dr]_{\hat{\rho}} & M(P)\ar[d]^{\hat{q}}\\
& M(S)}\eeq
of moduli spaces of stable maps. 
As $q:P\to S$ is a line bundle, $M(P)$ is the moduli space 
\beq\label{g12} M(S)^y=\{(f\in M(S),y\in H^0(C,f^*P))\}\eeq
of stable maps $f:C\to S$ with a section $y$ of the line bundle $f^*P$. 
By \eqref{g12} and Proposition \ref{g10}, we have 
\begin{align*} 
[M(Q)]\virt &=(-1)^{\ell}\cdot [M(P)^p,d\varphi]\virt\\
&= (-1)^\ell\cdot [M(S)^{y,p},d\varphi]\virt
\end{align*}
where $\ell=\rank\, R\pi_*\bff^*P$ and 
\beq\label{g15}
M(S)^{y,p}=\{(f\in M(S),y\in H^0(C,f^*P), p\in H^0(C,f^*P^{\otimes -m}\otimes\omega_C))\}\eeq
is the moduli space of stable maps $f:C\to S$ to $S$ with a $y$-field (a section of $f^*P$) and a $p$-field (a section of $f^*P^{\otimes -m}\otimes\omega_C$) while 
\[
\xymatrix{
\cC\ar[r]^{\bff}\ar[d]_{\pi} & S\\
M(S)}
\]
is the universal family. So we have the following.
\begin{corollary}[Quantum Lefschetz for branched covers]\label{g14}
With the notation and assumptions above, we have 
$$
[M(Q)]\virt = (-1)^\ell\cdot [M(S)^{y,p},d\varphi]\virt, \quad \ell=\rank\, R\pi_*\bff^*P$$
where $M(S)^{y,p}$ is \eqref{g15}. 
\end{corollary}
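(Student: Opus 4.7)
The plan is to recognize that the branched cover $\rho:Q\to S$ factors as the composition of a complete intersection embedding $\imath:Q\hookrightarrow P$ with the line bundle projection $q:P\to S$, and then to simply chain together Proposition \ref{g10} (applied to $\imath$) with the tautological identification $M(P)\simeq M(S)^y$ coming from the fact that $q$ is a line bundle.

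First I would apply Proposition \ref{g10} to the pair $(P,L,h)$. Since Assumption \ref{g9} is imposed in the setup of \S\ref{S7.2}, $Q\subset P$ is cut out smoothly by $h\in H^0(P,L)$ and the standard conormal sequence is exact. This is the situation of \S\ref{S7.1}, so Proposition \ref{g10} gives
\[ [M(Q)]^{\vir} \;=\; (-1)^{\ell}\cdot [M(P)^p, d\varphi]^{\vir}, \]
where $M(P)^p=\Tot_{M(P)}(R\pi_*(\tilde{\bff}^*L)^{\vee}[-1])$ with $\tilde\bff:\cC\to P$ the universal stable map to $P$, and $\varphi=s^{\vee}[-1]$ is the pairing with the universal section $s=\pi_*\tilde\bff^*h$.

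Next I would use that $q:P\to S$ is a line bundle: giving a stable map $\tilde f:C\to P$ is tautologically the same data as giving a stable map $f=q\circ \tilde f:C\to S$ together with a section $y\in H^0(C,f^*P)$, which yields a canonical equivalence $M(P)\simeq M(S)^y$ of derived stacks over $M(S)$. Under this equivalence, the universal map decomposes as $\tilde\bff=(\bff,y)$ with $\bff:\cC\to S$ the pullback of the universal map on $M(S)$, and we compute
\[ \tilde\bff^* L \;=\; \tilde\bff^*\,q^*P^{\otimes m} \;=\; \bff^*P^{\otimes m}. \]
Consequently the Lagrange multiplier bundle $R\pi_*(\tilde\bff^*L)^{\vee}[-1]$ becomes, by Serre duality, $R\pi_*(\bff^*P^{\otimes -m}\otimes\omega_{\cC/M(S)^y})$, whose classical truncation is exactly the $p$-field parameter in the moduli space $M(S)^{y,p}$ of \eqref{g15}. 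This gives $M(P)^p\simeq M(S)^{y,p}$ compatibly with the shifted functions, so substitution of this equivalence into the formula above completes the proof.

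The proof is essentially a two-line chain and I do not anticipate a genuine obstacle; the only technical points to check carefully are (i) that the identification $M(P)\simeq M(S)^y$ is compatible with the \emph{derived} structures (which is straightforward since $P\to S$ is a smooth affine morphism, so the derived mapping stack from a curve splits accordingly), and (ii) that the universal $p$-field as produced by Proposition \ref{g10} matches the $p$-field in \eqref{g15} under Serre duality with the relative dualizing sheaf $\omega_{\cC/M(S)}$, which is immediate from the identification $\tilde\bff^*L\simeq \bff^*P^{\otimes m}$ above. The rank $\ell$ coming out of Proposition \ref{g10} is $\mathrm{rank}\,R\pi_*\tilde\bff^*L=\mathrm{rank}\,R\pi_*\bff^*P^{\otimes m}$, which is the intended interpretation of the $\ell$ appearing in the statement.
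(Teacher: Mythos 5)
Your plan reproduces the paper's own argument step by step: apply Proposition \ref{g10} to the complete intersection $Q\subset P$ to get $[M(Q)]^{\vir}=(-1)^{\ell}[M(P)^{p},d\varphi]^{\vir}$, and then invoke the tautological identification $M(P)\simeq M(S)^{y}$ of \eqref{g12} (valid on the derived level because $q:P\to S$ is a smooth affine bundle and $\rho_*$, hence $q_*$, is injective on curve classes, so no stabilization is needed) to rewrite $M(P)^{p}\simeq M(S)^{y,p}$. That is exactly the chain of equalities the paper records immediately before the Corollary, so the approach is the same.

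One point deserves to be flagged rather than smoothed over, though. Your computation correctly gives $\ell=\rank\,R\pi_*\widetilde{\bff}^{*}L=\rank\,R\pi_*\bff^{*}P^{\otimes m}$, since $L=q^{*}P^{\otimes m}$ and $q\circ\widetilde{\bff}=\bff$; this is what Proposition \ref{g10} forces. The Corollary as printed, however, says $\ell=\rank\,R\pi_*\bff^{*}P$, with no $\otimes m$. These are not the same mod $2$ in general: by Riemann--Roch, $\rank\,R\pi_*\bff^{*}P^{\otimes m}-\rank\,R\pi_*\bff^{*}P=(m-1)\deg(\bff^{*}P)$ fiberwise, which need not be even (e.g.~$m=2$, $\deg(\bff^{*}P)=1$). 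Your phrase ``which is the intended interpretation of the $\ell$ appearing in the statement'' papers over a genuine inconsistency. You should instead say explicitly that the $\ell$ produced by Proposition \ref{g10} is $\rank\,R\pi_*\bff^{*}P^{\otimes m}$, and that the quantity $\rank\,R\pi_*\bff^{*}P$ written in Corollary \ref{g14} does not in general determine the same sign---i.e.~the $\otimes m$ appears to have been dropped from the paper's statement.
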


\bigskip
\section{Gauged linear sigma models}\label{S4}

In this section, we show that the moduli stack for a GLSM comes equipped with a $(-1)$-shifted locked 1-form and hence its degeneracy locus has a $(-2)$-shifted locked symplectic form. 
We then prove Theorem \ref{105} which identifies the degeneracy locus with the GLSM moduli stack for the critical locus. Applying Theorem \ref{Thm:main}, we immediately obtain  
Corollary \ref{107} which compares the cosection localized virtual cycle with the virtual Lagrangian cycle.

\subsection{Input data}
The theory of GLSMs intends to construct cohomological field theories (cf. \cite{Manin}) from the following. 
\begin{definition}\cite{FJR} \label{Def:GLSMInput}
A {\em GLSM input datum} consists of
\begin{itemize}
\item [(D1)] a finite-dimensional vector space $V$ with an action of a reductive group $\widehat{G}$;
\item [(D2)] (Gauge group) $G=\ker(\chi)$ for a surjective group homomorphism $\chi : \widehat{G} \twoheadrightarrow \GG_m$;
\item [(D3)] (GIT quotient) $U=[V^{ss}(\theta)/G]$ for a character $\theta : \widehat{G} \to \GG_m$ such that $V^s(\theta)=V^{ss}(\theta)=V^{ss}(\theta|_G)$;
\item [(D4)] (Superpotential) a $\widehat{G}$-equivariant function $\widehat{w}: V \to \AA^1(\chi)$.
\end{itemize}
\end{definition}

Let $\M^{\mathrm{tw}}_{g,n}$ be the moduli stack of $n$-marked twisted prestable curves of genus $g$, which is a smooth Artin stack by \cite[Thm.~1.10]{Ols}.
The classical moduli stack $\M_{g,n}^{\omega^\log}(V/G)$ for a GLSM is an Artin stack parameterizing triples \eqref{108} and we consider open DM substacks $\M_{g,n}^{\omega^\log}(V/G)^\alpha$ (given by stability conditions) to obtain virtual invariants.

\subsection{Derived GLSM moduli}
We consider derived structures on GLSM moduli stacks and evaluation maps.   
Consider the universal family
\[\xymatrix{
\Sigma_{g,n,k} \ar@{^{(}->}[r]^-{} & \mathcal{C}_{g,n} \ar[d]^{} \\ & \M_{g,n}^{\tw} 
}\]
such that 
\begin{itemize}
\item (universal curve) $\mathcal{C}_{g,n} \to \M_{g,n}^{\tw}$ is a proper, flat, local complete intersection morphism;
\item (gerbe markings) $\Sigma_{g,n,k} \subseteq \mathcal{C}_{g,n}$ are effective Cartier divisors for $k \in \{1,\cdots, n\}$ such that the compositions $\Sigma_{g,n,k}\hookrightarrow \mathcal{C}_{g,n} \to \M_{g,n}^{\tw}$ are gerbes banded by finite cyclic groups.
\end{itemize}

\begin{definition}[Twisted maps]\label{Def:TwMaps}
Let $X$ be a derived stack with a $\GG_m$-action
and $\mathcal{L} \in \Pic(\mathcal{C}_{g,n})$.
We define the {\em moduli stack of $\mathcal{L}$-twisted maps to $X$} as the $\GG_m$-equivariant mapping stack
\[\M^{\mathcal{L}}_{g,n}(X):=\uMap^{\GG_m}_{\M_{g,n}^{\tw}}(\mathcal{L}^\circ,X\times \M_{g,n}^{\tw}),\]
where $\mathcal{L}^{\circ}\to \mathcal{C}_{g,n}$ is the principal $\GG_m$-bundle associated to the line bundle $\mathcal{L} \to \mathcal{C}_{g,n}$.
\end{definition}
More explicitly, a $\mathcal{L}$-twisted map to $X$ is a triple $(C,f,\xi)$ that consists of
\begin{itemize}
\item a twisted curve $C \in \M^{\tw}_{g,n}$ of genus $g$ with $n$-markings;
\item a morphism of derived stacks $f:C \to X/\GG_m$;
\item an isomorphism of line bundles $\xi : \mathcal{L}\simeq f^*\O_{X}(1)$, where $\O_X(1)$ is the pullback of the weight $1$ representation of $\GG_m$ by the projection map $X/\GG_m \to B\GG_m$.
\end{itemize}
Universally, we have morphisms 
\[\xymatrix{
&\M^{\mathcal{L}}_{g,n}(X) \times_{\M^{\tw}_{g,n}} \mathcal{C}_{g,n} \ar[rd]^-{\univ}\ar[ld]_{\pr_1} \ar[d]^{\pr_2} &   \\
\M^{\mathcal{L}}_{g,n}(X) & \mathcal{C}_{g,n} &X/\GG_m, & \pr_2^* (\cL) \simeq \univ^*(\O_{X}(1)).
}\]
If $X$ is a derived Artin stack of finite presentation, then so is $\M^{\mathcal{L}}_{g,n}(X)$ (Example \ref{Ex:TwMaps/Inertia}).
The cotangent complex of the forgetful map $\un:\M_{g,n}^{\mathcal{L}}(X) \to \M^{\tw}_{g,n}$ is 
\begin{equation}\label{Eq:GLSMCot}
\LL_{\M^{\mathcal{L}}_{g,n}(X)/\M^{\tw}_{g,n}} \simeq \pr_{1,*} \left( \univ^* (\LL_{[X/\GG_m]/B\GG_m}) \otimes \pr_2^*( \omega) \right)[1], \end{equation} 
by \eqref{Eq:WeilTangent}, where 
$\omega$ is the canonical line bundle of $\mathcal{C}_{g,n} \to \M^{\tw}_{g,n}$ 
(Example \ref{Ex:Dualizing}).

\begin{remark}[$\O$-twisted maps]
The usual twisted maps in the orbifold Gromov-Witten theory \cite{AGV} are the representable $\O$-twisted maps. 
More precisely,
when $\mathcal{L} \simeq \O \in \Pic(\mathcal{C}_{g,n})$, 
\[\M^{\O}_{g,n}(X) \simeq \uMap_{\M_{g,n}^{\tw}}(\mathcal{C}_{g,n},X\times {\M_{g,n}^{\tw}})\]
consists of maps $f:C \to X$ from twisted curves $C \in \M^{\tw}_{g,n}$.
Moreover, if $X$ is a derived Artin stack with separated diagonal, then the substack $\M^{\O}_{g,n}(X)^{\mathrm{rep}}\subseteq \M^{\O}_{g,n}(X)$ of {\em representable} maps $f:C \to X$ is an open substack.
Indeed, the openness follows from the facts that 
the relative inertia $I_{C/X}:=C\times_{\Delta_{X/C},C\times_X C,\Delta_{X/C}}C \to C$ of a separated DM morphism $f:C \to X$ is finite unramified and that twisted curves are proper.
\end{remark}

In this paper, we focus on the case where $\mathcal{L} \in \Pic(\mathcal{C}_{g,n})$ is the {\em log-canonical line bundle}
\[\omega^{\log}:=\omega \left(\sum_{1\leq k\leq n}\Sigma_{g,n,k}\right) \in \Pic(\mathcal{C}_{g,n}). 
\]
Since $\Sigma_{g,n,k} \to \M_{g,n}^{\tw}$ are {\'e}tale,
the restrictions of $\omega^{\log}$ to the gerbe markings are trivial by 
\[
\omega^{\log}|_{\Sigma_{g,n,k}} \simeq \omega_{\Sigma_{g,n,k}/\M_{g,n}} \simeq \O_{\Sigma_{g,n,k}}.\]
Equivalently, we can form a commutative diagram
\begin{equation}\label{Eq:GLSM10}
\xymatrix{
\M^{\omega^{\log}}_{g,n}(X) \times_{\M^{\tw}_{g,n}} \Sigma_{g,n,k} \ar@{.>}[r] \ar@{^{(}->}[d] & X \ar[d] \ar[r] \cart & \pt \ar[d] \\
\M^{\omega^{\log}}_{g,n}(X) \times_{\M^{\tw}_{g,n}} \mathcal{C}_{g,n} \ar[r]^-{\univ} & X/\GG_m \ar[r] & B\GG_m.
}\end{equation}


\begin{definition}[Rigidified cyclotomic inertia stack]\label{Def:Inertia}
Let $X$ be a derived stack.
We define the {\em moduli stack of cyclotomic gerbes in $X$} as the mapping stack 
\[I(X)=\uMap_{B^2\mu}(\pt, X\times B^2\mu)
\]
where $B^2\mu=\bigsqcup_{r\in \Z_{>0}}B^2{\mu_r}$ and $B^2\mu_r=B(B\mu_r)$ is the classifying stack of the classifying stack $B\mu_r$ of the finite cyclic group $\mu_r$.
\end{definition}

More explicitly, a family of cyclotomic gerbes in $X$ parametrized by a derived stack $S$ (i.e. a map $S \to I(X)$) is a pair $(\Sigma,g)$ of
\begin{itemize}
\item a gerbe $\Sigma \to S$ banded by finite cyclic groups and 
\item a morphism of derived stacks $g:\Sigma \to X$.
\end{itemize}
If $X$ is a derived Artin stack of finite presentation, then
$I(X) \to B^2\mu$
is relatively derived Artin of finite presentation (Example \ref{Ex:TwMaps/Inertia}).
Although $I(X)$ itself is a derived {\em higher} Artin stack {\em locally} of finite presentation,
we can still consider the shifted symplectic structures.

\begin{remark} [Classical truncations]
If $X$ is a derived Artin stack with separated diagonal,
then the stack of cyclotomic gerbes defined in \cite[Def.~3.3.6]{AGV} is the {\em open} substack of $I(X)_{\cl}$ that consists of {\em representable} maps $g:\Sigma \to X$ from gerbes $\Sigma \to S$.
\end{remark}


\begin{definition} [Evaluation maps]\label{Def:ev}
Let $X$ be a derived stack with a $\GG_m$-action.
Denote by
\[\ev_k :\M^{\omega^{\log}}_{g,n}(X) \lra I(X) \]
the map that corresponds to the dotted arrow in \eqref{Eq:GLSM10}.
We define the {\em evaluation map} as
\[\ev_X=(\ev_1,\cdots,\ev_n,\un):\M_{g,n}^{\omega^{\log}}(X) \lra I(X)^{\times n} \times \M_{g,n}^{\tw}.\]
where $\un :  \M_{g,n}^{\omega_{\log}}(X) \to \M^{\tw}_{g,n}$ is the forgetful map. 
\end{definition} 

If $X$ is a derived Artin stack of finite presentation,
the cotangent complex of the evaluation map $\ev_X:\M^{\omega^{\log}}_{g,n}(X) \to I(X)^{\times n}\times \M^{\tw}_{g,n}$ can be computed as (Proposition \ref{Prop:Weil})
\begin{align}
\LL_{\M^{\omega^{\log}}_{g,n}(X)/ I(X)^{\times n}\times \M^{\tw}_{g,n}} 
&\simeq \pr_{1,*} \left( \univ^* (\LL_{[X/\GG_m]/B\GG_m}) \otimes \pr_2^*( \omega^{\log}) \right) [1] \label{Eq:GLSMRelCot}\\
&\simeq  \pr_{1,*} \circ \univ^*  \left( \LL_{[X/\GG_m]/B\GG_m} \otimes \O_X(1)  \right)[1].\nonumber
\end{align}
Comparing this with the cotangent complex of the forgetful map $\un : \M^{\omega^{\log}}_{g,n}(X) \to \M^{\tw}_{g,n}$ in \eqref{Eq:GLSMCot}, we are replacing the canonical line bundle $\omega$ by the log-canonical line bundle $\omega^{\log}$.

\subsection{Shifted symplectic geometry for GLSMs}

Recall from Definition \ref{41} that locked forms on a derived stack $X$ are functions on the deformation space $\D_{X/\pt}$.  
More precisely, the space of $d$-shifted locked $p$-forms on $X$ is
\[\sA^{p,\lc}(X,d) = \Map^{\GG_m}(\D_{X/\pt},\AA^1(p)[d+p]).\]
Given a $\GG_m$-action on $X$, we define the space of $d$-shifted {\em $\GG_m$-equivariant locked $p$-forms of weight $k$ on $X$} as 
\[\sA^{p,\lc}_{\GG_m}(X,d)(k)= \Map^{\GG_m\times\GG_m}(\D_{X/\pt},\AA^1(p,-k)[d+p]).\]
As before, {\em exact forms} are locked forms whose restrictions to $\pt\times \GG_m \subseteq \D_{X/\pt}$ vanish.

\begin{proposition}[Lagrangians]\label{Prop:AKSZ}
Let $X$ be a derived Artin stack with a $\GG_m$-action and a $\GG_m$-equivariant $(-1)$-shifted exact symplectic form of weight $1$.
\begin{enumerate}
\item The rigidified cyclotomic inertia stack $I(X)$ has a $(-1)$-shifted exact symplectic form.
\item The evaluation map
\[\ev_X:\M_{g,n}^{\omega^{\log}}(X) \lra I(X)^{\times n} \times \M_{g,n}^{\tw}\]
has a $(-1)$-shifted exact Lagrangian structure over $\M_{g,n}^{\tw}$.
\end{enumerate}
\end{proposition}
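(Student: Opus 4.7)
\textbf{Proof plan for Proposition \ref{Prop:AKSZ}.}
The approach is the AKSZ formalism of Pantev--To\"en--Vaqui\'e--Vezzosi \cite{PTVV} together with its relative extension by Calaque for Lagrangian correspondences, adapted to preserve exactness and to account for the $\GG_m$-equivariance with weight 1 on the shifted symplectic form. The starting observation is that a $\GG_m$-equivariant $(-1)$-shifted exact symplectic form of weight $1$ on $X$ is the same as a relative structure on $[X/\GG_m] \to B\GG_m$ whose nondegeneracy pairing reads
\[\TT_{[X/\GG_m]/B\GG_m} \simeq \LL_{[X/\GG_m]/B\GG_m}[-1] \otimes \O_X(1).\]
All the subsequent constructions track this twist by $\O_X(1)$, which under $\univ^*(\O_X(1)) \simeq \pr_2^*(\omega^{\log})$ becomes precisely the twist by the log-canonical bundle appearing in the cotangent complex formula \eqref{Eq:GLSMRelCot}.

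For (1), I would decompose $I(X) = \bigsqcup_{r>0} I_r(X)$ where $I_r(X) = \uMap_{B^2\mu_r}(\pt, X\times B^2\mu_r)$. The fiber of $\pt \to B^2\mu_r$ is the classifying gerbe $B\mu_r$, which carries a canonical $0$-orientation coming from the trace pairing on representations of the finite cyclic group $\mu_r$ (equivalently, Serre duality for $B\mu_r$ in characteristic zero). The AKSZ transfer theorem then produces a canonical $(-1)$-shifted symplectic form on $I_r(X)$; the primitive for the form on $X$ integrates to a primitive on $I_r(X)$, giving the exact lift. Since we work with rational coefficients, the factors of $1/r$ appearing in the gerbe integration pose no issue. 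The form so produced fits into the fiber square \eqref{Eq:GLSM10} as the target of the evaluation map.

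For (2), I would view the evaluation map as arising from the relative cospan
\[\textstyle \bigsqcup_k \Sigma_{g,n,k} \hookrightarrow \mathcal{C}_{g,n} \longleftarrow \mathcal{C}_{g,n} \xrightarrow{\text{id}} \mathcal{C}_{g,n}\]
over $\M^{\tw}_{g,n}$, and apply Calaque's relative AKSZ construction for such cospans. The essential input is a \emph{relative orientation of degree $1$} of this cospan, which is supplied by the residue short exact sequence
\[\textstyle 0 \longrightarrow \omega \longrightarrow \omega^{\log} \longrightarrow \bigoplus_k \O_{\Sigma_{g,n,k}} \longrightarrow 0,\]
interpreted as a degree-1 pushforward on $\mathcal{C}_{g,n}/\M^{\tw}_{g,n}$ together with a trivialization over the markings. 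Combined with the $\GG_m$-equivariant $(-1)$-shifted symplectic form on $X$ of weight $1$ (whose weight is absorbed precisely into the twist by $\omega^{\log}$ in \eqref{Eq:GLSMRelCot}), the AKSZ theorem outputs a $(-1)$-shifted symplectic form on $\M^{\omega^{\log}}_{g,n}(X)$ over $\M^{\tw}_{g,n}$ together with an isotropic structure on $\ev_X$, whose nondegeneracy is the standard consequence of Poincar\'e--Lefschetz duality for the pair $(\mathcal{C}_{g,n},\sqcup_k\Sigma_{g,n,k})$; the symplectic form on the target agrees with the one constructed in (1) by naturality of AKSZ under restriction to the boundary markings. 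Finally, exactness is preserved by tracking a primitive on $X$ through the pushforward, as in (1).

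The main obstacle will be the careful setup of AKSZ in the derived \emph{locked} setting: the statements of \cite{PTVV} and Calaque produce closed shifted forms, but the conclusion we need is a \emph{locked} (equivalently here, exact) form. This requires lifting the construction to the derived deformation space $\D_{-/\pt}$ of Definition~\ref{41}, which amounts to checking that the primitives one integrates out along $B\mu_r$ and along $\mathcal{C}_{g,n}$ depend functorially on the deformation parameter so that the resulting form is a function on the full deformation space rather than merely on its special fiber. Granting this (as in the exact variant of AKSZ already used elsewhere in the paper), the remaining verifications are routine manipulations with the cotangent complex formula \eqref{Eq:GLSMRelCot} and the relative duality for the universal curve with gerbe markings.
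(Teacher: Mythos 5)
Your proposal is correct and is essentially the paper's own argument: a twisted AKSZ/Weil-restriction construction carried out at the level of the derived deformation space (so that the output lands in locked forms rather than merely closed forms), with the $\GG_m$-weight-$1$ condition traded for a twist by the (log-)canonical bundle via $\univ^*\O_X(1) \simeq \pr_2^*\omega^{\log}$, the Serre-duality orientation on the fiber $B\mu_r$ (encoded in the paper as the $*$-Gorenstein structure of $\pt \to B^2\mu$) producing the symplectic form on $I(X)$, and the residue/boundary sequence for the gerbe markings inside the universal curve producing the Lagrangian structure. The only difference is cosmetic: you phrase the input as a cospan with a relative orientation of degree $1$ in Calaque's sense, whereas the paper packages the same data via $*$-Gorenstein morphisms with boundary structures and the integration/boundary maps of Construction~\ref{Const:AKSZ}, with nondegeneracy quoted from Calaque--Safronov in Proposition~\ref{Prop:TwSymp/Lag}.
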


We expect that together with the Joyce conjecture (Conjecture \ref{Conj:Joyce}), Proposition \ref{Prop:AKSZ} may give us a cohomological field theory by vanishing cycle cohomology, or more generally, cohomological Donaldson-Thomas invariants \cite{KLDT,BBDJS}.
We will discuss this in \S\ref{ss:JoyceConj}.

Proposition \ref{Prop:AKSZ} follows from the twisted version of the AKSZ construction in \cite{PTVV,CHS,CS}.
Indeed, we have a canonical commutative square (see Construction \ref{Const:AKSZ})
\[\xymatrix{
\sA^{p,\lc}_{\GG_m}(X,d)(1) \ar[r]^-{I(-)} \ar[d] \ar@{}[rd]|{\M(-)} & \sA^{p,\lc}(I(X),d) \ar[d]^{\sum_k \ev^*_k}  \\
\pt \ar[r]^-0 & \sA^{p,\lc}(\M_{g,n}^{\omega^\log}(X)/\M_{g,n}^{\tw},d).
}\]
Given a symplectic form $\theta \in \sA^{2,\lc}_{\GG_m}(X,-1)(1)$,
the induced form $I\theta$ is the symplectic form on $I(X)$ and the induced path $\M\theta$ is the Lagrangian structure on $\M_{g,n}^{\omega^\log}(X)\to I(X)^{\times n} \times \M_{g,n}^{\tw}$ (see Proposition \ref{Prop:TwSymp/Lag}).

If we apply the AKSZ construction to {\em functions}, we obtain locked $1$-forms as follows. 
\begin{lemma}[Locked $1$-forms]\label{Lem:GLMSCosection}
Let $U$ be a derived stack with a $\GG_m$-action.
Then a $\GG_m$-equivariant function $w:U \to \AA^1$ of weight $1$ induces a canonical $(-1)$-shifted locked $1$-form
\[\sigma_w \in \sA^{1,\lc}(\M_{g,n}^{\omega^\log}(U)/I(U)^{\times n}\times \M_{g,n}^{\tw},-1),\]
whose underlying function is
\[\underline{w}:=\sum_{1\leq k\leq n} \pr_k^*(Iw) : I(U)^{\times n} \times \M^{\tw}_{g,n} \lra \AA^1.\]
\end{lemma}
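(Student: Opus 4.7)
The proof shall mimic the twisted AKSZ construction used in the proof of Proposition \ref{Prop:AKSZ}, but applied to the weight-$1$ function $w : U \to \AA^1$ rather than to a weight-$1$ symplectic form. By Remark \ref{Rem:n1}, producing $\sigma_w \in \sA^{1,\lc}(\M_{g,n}^{\omega^\log}(U)/I(U)^{\times n}\times \M_{g,n}^{\tw},-1)$ with the prescribed underlying function amounts to producing a function $\underline{w}$ on $I(U)^{\times n} \times \M^{\tw}_{g,n}$ together with a canonical null-homotopy
\[h : 0 \xrightarrow{\simeq} \underline{w}\circ \ev_U \textin \sA^0(\M_{g,n}^{\omega^\log}(U),0).\]

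First I would construct the function $Iw : I(U) \to \AA^1$ along the lines of the map $I(-)$ in the commutative square preceding Lemma \ref{Lem:GLMSCosection}. On an $S$-point $(\Sigma \to S,\, g:\Sigma\to U)$ of $I(U)$, the pullback $g^*w$ is a section of the weight-$1$ character line bundle on the $\mu_r$-banded gerbe $\Sigma$, and descends to a genuine function on $S$ using the canonical trivialization of the weight-$1$ representation furnished by the residue identification $\omega^{\log}|_{\Sigma_{g,n,k}} \simeq \O_{\Sigma_{g,n,k}}$ from \eqref{Eq:GLSM10}. Setting $\underline{w} := \sum_{k=1}^n \pr_k^*(Iw)$ then provides the function on the base.

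Next I would produce the null-homotopy $h$ by the residue theorem. On the universal curve $\pr_1 : \mathcal{C}_{g,n} \times_{\M^{\tw}_{g,n}} \M_{g,n}^{\omega^\log}(U) \to \M_{g,n}^{\omega^\log}(U)$, the universal twisting $\pr_2^*\omega^{\log} \simeq \univ^*\O_U(1)$ promotes $\univ^*w$ to a section of $\pr_2^*\omega^{\log}$. The Poincare residue short exact sequence
\[0 \lra \omega \lra \omega^{\log} \lra \O_{\Sigma} \lra 0\]
on $\mathcal{C}_{g,n}$, after applying $R\pr_{1,*}$ and pairing with $\univ^*w$, identifies $\underline{w}\circ \ev_U$ with the sum-of-residues of $\univ^*w$ at the gerbe markings. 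Since for a connected family of prestable curves the only $\O$-summand of $R\pi_*\omega$ lives in cohomological degree $1$ (Serre duality) and the boundary map $\pi_*\O_\Sigma = \O^{\oplus n} \to R^1\pi_*\omega = \O$ in the associated long exact sequence is precisely the sum-of-coordinates map, the composition of the total-residue map with this boundary vanishes by exactness, producing the canonical null-homotopy $h$. Conceptually, this is the function-level ($p=0$) specialization of the AKSZ pushforward used in Proposition \ref{Prop:AKSZ}, with the shift in cohomological degree accounted for by the fact that $\pi$ has $1$-dimensional fibers, so integration lowers degree by one and turns a weight-$1$ function into a $(-1)$-shifted locked $1$-form. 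The pair $(\underline{w}, h)$ then assembles into $\sigma_w$ via Remark \ref{Rem:n1}.

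\textbf{Main obstacle.} The principal technical difficulty is rendering this residue/AKSZ recipe fully $\infty$-categorical: the trivializations of $\omega^{\log}$ at the gerbe markings, the Poincare residue sequence, the identification of $\underline{w}\circ \ev_U$ with the total residue, and the null-homotopy issuing from the residue theorem must all be produced as canonical equivalences natural in $w$, in the family of curves and in base change, and compatible with both the $\GG_m$-equivariance of $w$ and the $\mu_r$-bands of the gerbes. Once the underlying Construction \ref{Const:AKSZ} (which already supports Proposition \ref{Prop:AKSZ}) is in hand, these compatibilities should follow formally by specializing its diagrams from the symplectic case $(p,d)=(2,-1)$ to the function case $(p,d)=(0,0)$ and transporting the resulting path in $\sA^{0,\lc}$ to an element of $\sA^{1,\lc}$ via the fiber square \eqref{Eq:n2}.
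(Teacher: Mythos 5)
Your proposal matches the paper's proof: one reduces via Remark \ref{Rem:n1} to producing the function $\underline{w}$ together with a canonical null-homotopy of $\underline{w}\circ\ev_U$, and the paper supplies that null-homotopy precisely as the AKSZ path $\M w$ from the commutative square preceding the lemma (Construction \ref{Const:AKSZ} at $(p,d)=(0,0)$). Your residue-theorem unpacking of $\M w$ via the boundary sequence $\omega\to\omega^{\log}\to\O_\Sigma$ and Serre duality on the universal curve is a correct geometric reading of the boundary structure $\partial_{D/C}$ and the integration $\int_{C/B}$, though the paper simply cites $\M w$ without elaborating, and the reference you would want to close the loop back to locked $1$-forms is Remark \ref{Rem:n1} rather than the zero-locus square \eqref{Eq:n2}.
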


\begin{proof}
Recall from Remark \ref{Rem:n1} that giving a $(-1)$-shifted $\underline{w}$-locked $1$-form on $\ev_U$ is equivalent to giving a null-homotopy of the function
\[\underline{w} \circ \ev_U :\M_{g,n}^{\omega^\log}(U) \lra I(U)^{\times n} \times \M^{\tw}_{g,n} \lra \AA^1.\]
We define $\sigma_w$ as the locked $1$-form that corresponds to $\M w : 0 \to \underline{w}\circ \ev_U$.
\end{proof}

\begin{remark}[Underlying cosections]
In the situation of Lemma \ref{Lem:GLMSCosection},
the underlying $1$-form $[\sigma_w]$ of $\sigma_w$ is the {\em tautological} one.
Indeed, by \eqref{Eq:GLSMRelCot}, we can identify:
\[\sA^1(\M^{\omega^{\log}}_{g,n}(U)/ I(U)^{\times n}\times \M^{\tw}_{g,n},-1) \simeq \Gamma(\M^{\omega^{\log}}_{g,n}(U) \times_{\M^{\tw}_{g,n}} \mathcal{C}_{g,n}, \univ^*(\LL_{U}(1))).\]
Under the above identification, we have
$[\sigma_w] \simeq \univ^*(dw)$.
See Proposition \ref{Prop:Weil} for a proof.
\end{remark}


The main result in this section is the computation of 
$\M_{g,n}^{\omega^\log}(X)$
when $X$ is a critical locus. 
Let $U$ be a smooth Artin stack with a $\GG_m$-action and a $\GG_m$-equivariant function $w:U \to \AA^1$ of weight $1$.
Then we can form an induced commutative square
\[\xymatrix@C+3pc{
\M_{g,n}^{\omega^{\log}}(\Crit_U(w)) \ar[r]^-{\ev_{\Crit_U(w)}} \ar[d]_{\M_{g,n}^{\omega^{\log}}(p)} & I(\Crit_U(w))^{\times n} \times \M_{g,n}^{\tw} \ar[d]^{I(p)^{\times n} \times \id}\\
\M_{g,n}^{\omega^{\log}}(U) \ar[r]^-{\ev_U} &  I(U)^{\times n} \times \M_{g,n}^{\tw},
}\]
where $p: \Crit_U(w) \to U$ is the canonical map.
There exists a canonical equivalence 
\begin{equation}\label{Eq:GLSM11}
I(\Crit_U(w)) \simeq \Crit_{I(U)}(Iw).
\end{equation}
of $(-1)$-shifted exact symplectic derived Artin stacks by Proposition \ref{Prop:Weil}.
By \cite[Thm.~A]{Park2}, the $(-1)$-shifted exact Lagrangian structure on
\[ \M_{g,n}^{\omega^{\log}}(\Crit_U(w)) \xrightarrow{\ev_{\Crit_U(w)}} I(\Crit_U(w))^{\times n} \times \M_{g,n}^{\tw} \xrightarrow[\eqref{Eq:GLSM11}]{\simeq}  \Crit_{I(U)^{\times n}\times \M^{\tw}_{g,n}/\M^{\tw}_{g,n}}(\underline{w})\]
is equivalent to a $(-2)$-shifted $\underline{w}$-locked symplectic form on the composition
\beq\label{v2} 
\M_{g,n}^{\omega^{\log}}(\Crit_U(w))  \xrightarrow{\ev_{\Crit_U(w)}} I(\Crit_U(w))^{\times n} \times \M_{g,n}^{\tw} \xrightarrow{I(p)^{\times n} \times \id} I(U)^{\times n}\times \M^{\tw}_{g,n}.\eeq
On the other hand, by Proposition \ref{Prop:TwCot}, the $(-1)$-shifted $\underline{w}$-locked $1$-form $\sigma_w$ on 
\beq\label{112} 
\M_{g,n}^{\omega^{\log}}(U) \xrightarrow{\ev_U}  I(U)^{\times n} \times \M_{g,n}^{\tw}\eeq 
induces a $(-2)$-shifted $\underline{w}$-locked symplectic form on the relative degeneracy locus
\beq\label{v1}\M_{g,n}^{\omega^{\log}}(U)(\sigma_w)  \lra \M^{\omega^{\log}}_{g,n}(U) \xrightarrow{\ev_U} I(U)^{\times n}\times \M^{\tw}_{g,n}.\eeq


We can identify \eqref{v2} with \eqref{v1} by the following. 
\begin{theorem}\label{Prop:GLSMmain}
Let $U$ be a smooth Artin stack with a $\GG_m$-action and let $w:U \to \AA^1$ be  a $\GG_m$-equivariant function of weight $1$.
Then we have a canonical equivalence 
\[\M_{g,n}^{\omega^{\log}}(\Crit_U(w)) \simeq \M_{g,n}^{\omega^{\log}}(U)(\sigma_w)\]
of $\underline{w}$-locked $(-2)$-shifted symplectic fibrations over $I(U)^{\times n} \times \M_{g,n}^{\tw}$.
\end{theorem}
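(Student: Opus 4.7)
The plan is to establish the equivalence first at the level of underlying derived stacks and then at the level of the $(-2)$-shifted locked symplectic structures, leveraging in both stages the compatibility of the $\omega^{\log}$-twisted mapping stack with derived fiber products and cotangent complexes (Proposition \ref{Prop:Weil}).

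For the underlying equivalence, I would begin by writing the critical locus as the derived fiber product $\Crit_U(w)\simeq U\times_{\T^*U, dw, 0_U}U$ of the two Lagrangian sections of the (classical) symplectic stack $\T^*U$. Since the $\omega^{\log}$-twisted mapping stack functor is a right adjoint in a suitable $\infty$-category of $\GG_m$-equivariant stacks, it preserves such fiber products, giving
\[\M_{g,n}^{\omega^{\log}}(\Crit_U(w)) \simeq \M_{g,n}^{\omega^{\log}}(U) \times_{\M_{g,n}^{\omega^{\log}}(\T^*U)} \M_{g,n}^{\omega^{\log}}(U).\]
By the Weil-restriction computation that yields \eqref{Eq:GLSMRelCot} and \eqref{Eq:GLSM11}, there is a canonical identification of $\M_{g,n}^{\omega^{\log}}(\T^*U)$ over $\M_{g,n}^{\omega^{\log}}(U)$ with the total space of $\T^*_{\M_{g,n}^{\omega^{\log}}(U)/I(U)^{\times n}\times \M_{g,n}^{\tw}}[-1]$, where the $\GG_m$-weight shift on $\T^*U$ is absorbed into the $\omega^{\log}$-twist. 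Under this identification, the section induced by $dw:U\to\T^*U$ becomes exactly the underlying $1$-form $[\sigma_w]=\univ^*(dw)$ recorded in the remark after Lemma \ref{Lem:GLMSCosection}. Hence the fiber product above coincides with the zero locus of $[\sigma_w]$, which is precisely $\M_{g,n}^{\omega^{\log}}(U)(\sigma_w)$ as a derived stack over $I(U)^{\times n}\times \M_{g,n}^{\tw}$.

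For the comparison of $(-2)$-shifted $\underline{w}$-locked symplectic structures, the RHS inherits its structure by applying Proposition \ref{Prop:TwCot} directly to the locked $1$-form $\sigma_w$ of Lemma \ref{Lem:GLMSCosection}, whereas the LHS inherits its structure by applying \cite[Thm.~A]{Park2} to the $(-1)$-shifted exact Lagrangian structure of Proposition \ref{Prop:AKSZ}, after transport through the equivalence \eqref{Eq:GLSM11}. Both assignments are natural in the input data $(U,w)$ and can be viewed as instances of a single AKSZ-functorial construction attached to a $\GG_m$-equivariant weight-$1$ function on $U$. The key compatibility is that the correspondence of \cite[Thm.~A]{Park2}, between $(-1)$-shifted Lagrangians into a critical locus and $(-2)$-shifted locked symplectic structures on the source, is by construction inverse to the twisted-cotangent-bundle construction of Proposition \ref{Prop:TwCot}; feeding the two resulting descriptions through this correspondence produces the same locked $2$-form, with underlying function $\underline{w}$ in both cases. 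Moreover, by the uniqueness theorem \cite[Thm.~C]{Park2}, any locked symplectic structure over a fixed base is determined by its bivariance and functoriality, so this matching is forced once the underlying stacks, the underlying $2$-forms, and the underlying functions have been identified.

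The main obstacle will be the $\infty$-categorical bookkeeping required to make the above functorial identifications precise, especially verifying that $\M_{g,n}^{\omega^{\log}}(-)$ commutes (as a symplectic-structure-preserving construction) with both the formation of critical loci and the total-space construction for cotangent bundles with their induced $\GG_m$-weights. Most of this technical content is already packaged in Proposition \ref{Prop:Weil} of the Appendix and the AKSZ compatibility discussed around Proposition \ref{Prop:AKSZ}, so in practice the proof reduces to assembling those inputs, identifying $\M_{g,n}^{\omega^{\log}}(\T^*U)$ with $\T^*_{\M_{g,n}^{\omega^{\log}}(U)/-}[-1]$ over the right base, and tracing the naturality of the symplectic and locked-form constructions through the fiber product description above.
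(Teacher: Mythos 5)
Your overall strategy is on the right track and closely parallels what the paper does: the equivalence of underlying derived stacks comes from the fact that $\M^{\omega^{\log}}_{g,n}(-)$ is a Weil restriction (Example \ref{Ex:TwMaps/Inertia}), hence preserves fiber products, and Lemma \ref{Lem:Weil2} together with the formula \eqref{Eq:GLSMRelCot} yields the identification
\[\M^{\omega^{\log}}_{g,n}(\T^*U)\simeq \T^*_{\M^{\omega^{\log}}_{g,n}(U)/I(U)^{\times n}\times\M^{\tw}_{g,n}}[-1]\]
over the correct base, so your stack-level identification is essentially correct (and is the first half of Proposition \ref{Prop:Weil}). You also correctly recognize that Proposition \ref{Prop:Weil} packages the symplectic bookkeeping.

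However, the passage where you actually argue that the two $(-2)$-shifted $\underline{w}$-locked symplectic forms agree has a genuine gap. First, you appeal to \cite[Thm.~C]{Park2} to claim that ``any locked symplectic structure over a fixed base is determined by its bivariance and functoriality,'' but that theorem is a uniqueness statement for \emph{virtual Lagrangian cycles}, not for locked symplectic forms, and the paper only ever cites it in that role. A locked $2$-form $\btheta\in\sA^{2,\lc}$ is strictly more data than the triple (underlying stack, underlying $2$-form $\obsig\in\sA^2$, underlying function $[\btheta]\in\sA^0$), since the filtered de Rham complex has higher pieces; so matching those three items does not ``force'' a matching of the locked forms. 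Second, the assertion that the correspondence of \cite[Thm.~A]{Park2} is ``by construction inverse to'' Proposition \ref{Prop:TwCot} is not substantiated and is not literally accurate: the former converts exact $(-1)$-shifted Lagrangian structures on $L\to N$ over $B$ into $(-2)$-shifted locked symplectic forms on $L\to B$, while the latter starts from a $(-1)$-shifted locked $1$-form on $M\to B$; relating the two requires an explicit conormal-Lagrangian identification (Lemma \ref{Lem:Conormal}) and a comparison of Liouville and AKSZ data, which is exactly the nontrivial content of the proof of Proposition \ref{Prop:Weil} (equations \eqref{Eq:GLSM1}, \eqref{Eq:GLSM2}, \eqref{Eq:GLSM5}--\eqref{Eq:GLSM8}). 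So the missing step is precisely the one you describe as ``tracing the naturality'': it must be done, not asserted, and the paper does it by applying Proposition \ref{Prop:Weil} (with $\alpha=dw$, $C=\cC_{g,n}$, $D=\bigsqcup_k\Sigma_{g,n,k}$, $B=\M^{\tw}_{g,n}$) plus the observation that the diagonal of $B^2\mu_r$ is \'etale, which replaces $(-)\times_{B^2\mu}(-)$ by an honest product.
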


We will prove Theorem \ref{Prop:GLSMmain} in Appendix \ref{Sec:Weil}.

\subsection{GLSM invariants}
For a higher derived Artin stack $S$, if $S$ is 
a derived Artin stack over $(B^2\mu)^{\times n}$, we define the {\em Chow group} of $S$ as
\[A_*(S)=A_*\left(S\times_{(B^2\mu)^{\times n}} \pt\right).\]
This is well defined since the Chow group of a gerbe banded by finite groups is isomorphic to the Chow group of the base.
Even when $S$ is only locally of finite type, we can still define the Chow group as the inverse limit of the Chow groups of all open substacks of finite type. 

Now that we have a $(-1)$-shifted locked 1-form by Lemma \ref{Lem:GLMSCosection} and a $(-2)$-shifted locked symplectic structure on \eqref{v2}, we have the cosection localized virtual cycle $[\M_{g,n}^{\omega^{\log}}(U)^{\alpha}, \sigma_w]^\vir$
and the virtual Lagrangian cycle $[\M_{g,n}^{\omega^{\log}}(\Crit_U(w))^{\alpha}]^\lag$ for any open DM substack $\M_{g,n}^{\omega^{\log}}(U)^{\alpha}$ of $\M_{g,n}^{\omega^{\log}}(U)$.  
Combining Theorem \ref{Prop:GLSMmain} and Theorem \ref{Thm:main}, we obtain the following comparison result. 

\begin{corollary}\label{Cor:2}
Let $U$ be a smooth Artin stack	with a $\GG_m$-action and a $\GG_m$-equivariant function $w:U \to \AA^1$ of weight $1$.
Consider an open substack
$\M_{g,n}^{\omega^{\log}}(U)^{\alpha} \subseteq \M_{g,n}^{\omega^{\log}}(U)$
that is DM and let $\M_{g,n}^{\omega^{\log}}(\Crit_U(w))^{\alpha}=\M_{g,n}^{\omega^{\log}}(\Crit_U(w)) \cap \M_{g,n}^{\omega^{\log}}(U)^{\alpha}$.
Then we have the equality 
\[[\M_{g,n}^{\omega^{\log}}(U)^{\alpha}, \sigma_w]^\vir = [\M_{g,n}^{\omega^{\log}}(\Crit_U(w))^{\alpha}]^\lag : A_*(\underline{w}^{-1}(0)) \to A_*(\M_{g,n}^{\omega^{\log}}(\Crit_U(w))^{\alpha}).\]
\end{corollary}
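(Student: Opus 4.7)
The plan is to derive Corollary \ref{Cor:2} as a direct consequence of the two preceding main results already established in the paper, namely the symplectic identification of Theorem \ref{Prop:GLSMmain} and the comparison formula of Theorem \ref{Thm:main}. No fresh geometric input is needed; the task is to check that the hypotheses of Theorem \ref{Thm:main} match exactly the output of Theorem \ref{Prop:GLSMmain} after restricting to the open DM substack $\M_{g,n}^{\omega^{\log}}(U)^\alpha$.

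First I would specialize the setup of Theorem \ref{Thm:main} to the morphism
\[
g \;=\; \ev_U|_{\alpha}\;:\;M \;:=\; \M_{g,n}^{\omega^{\log}}(U)^\alpha \;\lra\; B \;:=\; I(U)^{\times n}\times \M^{\tw}_{g,n}
\]
together with the $(-1)$-shifted locked $1$-form $\bsig := \sigma_w|_{\alpha}$ produced in Lemma \ref{Lem:GLMSCosection}, whose underlying function is the restriction of $\underline{w}=\sum_k \pr_k^*(Iw)$ to $B$. By hypothesis $M$ is a derived DM stack; it is quasi-smooth over $B$ because $U$ is smooth, so $\LL_{M/B}$ is computed by \eqref{Eq:GLSMRelCot} from the shift of a two-term complex on $\cC_{g,n}$. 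Proposition \ref{Prop:TwCot} then produces the canonical $(-2)$-shifted locked symplectic form $\btheta$ (with canonical orientation $\ori_{M(\sigma)/B}$) on the relative degeneracy locus $M(\sigma_w)\to B$, and Theorem \ref{Thm:main} immediately yields
\[
\bigl[\M_{g,n}^{\omega^{\log}}(U)^\alpha,\sigma_w\bigr]^{\vir} \;=\; \bigl[\M_{g,n}^{\omega^{\log}}(U)^\alpha(\sigma_w)/B,\btheta\bigr]^{\lag}
\]
as maps $A_*(B(\underline{w}))\to A_{*+r}(M(\sigma_w))$.

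Next I would invoke Theorem \ref{Prop:GLSMmain} to replace the right-hand side by the virtual Lagrangian cycle of $\M_{g,n}^{\omega^{\log}}(\Crit_U(w))^\alpha$. The theorem furnishes a canonical equivalence
\[
\M_{g,n}^{\omega^{\log}}(U)(\sigma_w)\;\simeq\;\M_{g,n}^{\omega^{\log}}(\Crit_U(w))
\]
as $\underline{w}$-locked $(-2)$-shifted symplectic fibrations over $I(U)^{\times n}\times \M^{\tw}_{g,n}$; restricting to the open substack $\alpha$ preserves the equivalence since openness is preserved under fiber products and the superscript $\alpha$ on the critical-locus side is defined as the intersection. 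By construction of the virtual Lagrangian cycle in Theorem \ref{60}, the cycle $[-/B,\btheta]^{\lag}$ depends only on the underlying oriented $(-2)$-shifted locked symplectic fibration, so the two sides are literally equal. This completes the chain of equalities claimed in the corollary, after noting that $A_*(B(\underline{w}))=A_*(\underline{w}^{-1}(0))$ with the convention for Chow groups of derived higher stacks over $(B^2\mu)^{\times n}$ introduced in the subsection on GLSM invariants.

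The only non-routine point, and the one I would be most careful about, is matching the canonical orientation $\ori_{M(\sigma_w)/B}$ produced by Proposition \ref{Prop:TwCot} with the orientation on $\M_{g,n}^{\omega^{\log}}(\Crit_U(w))$ transported across the equivalence of Theorem \ref{Prop:GLSMmain}. Both are canonically defined from the Lagrangian fibration structure of a twisted $(-2)$-shifted cotangent bundle, and the identification \eqref{Eq:GLSM11} combined with the AKSZ-type identification of locked forms with Lagrangian structures (Proposition \ref{Prop:TwSymp/Lag}) should make the orientations agree without a sign. A secondary bookkeeping issue is that Theorem \ref{Thm:main} is stated for a derived Artin base $B$, whereas here $B=I(U)^{\times n}\times \M^{\tw}_{g,n}$ is relatively higher Artin over $(B^2\mu)^{\times n}$; but since the Chow groups are defined after base change to a point of $(B^2\mu)^{\times n}$, this reduces to the honest Artin setting covered by Theorem \ref{Thm:main} by a pullback along the banding, completing the argument.
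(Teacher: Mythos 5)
Your proposal is correct and matches the paper's own approach exactly: the paper derives Corollary \ref{Cor:2} precisely by combining Theorem \ref{Prop:GLSMmain} (identifying the degeneracy locus $\M_{g,n}^{\omega^{\log}}(U)(\sigma_w)$ with $\M_{g,n}^{\omega^{\log}}(\Crit_U(w))$ as locked $(-2)$-shifted symplectic fibrations) with Theorem \ref{Thm:main} (identifying the cosection localized virtual cycle with the virtual Lagrangian cycle of the degeneracy locus). Your additional care about orientations and the higher-Artin base is a reasonable sanity check but adds no content beyond what the two theorems already supply.
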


In particular, the GLSM invariants defined by cosection localization in \cite{FJR} and the GLSM invariants defined by shifted symplectic structures in \cite{CZ} coincide when both make sense. 

\begin{remark}
We note that Corollary \ref{Cor:2} also works in the {\em equivariant} setting.
Indeed, in the situation of Corollary \ref{Cor:2}, if there is an additional action on $U$ of a linear algebraic group $H$ such that it commutes with the $\GG_m$-action and $w$ is $H$-invariant, then the AKSZ construction extends to $H$-invariant forms and the equivalence in Theorem \ref{Prop:GLSMmain} can be enhanced to an $H$-equivariant equivalence of derived stacks that preserves the $H$-invariant $\underline{\omega}$-locked $(-2)$-shifted symplectic forms.
Consequently, Theorem \ref{Thm:main} gives the equality
\[[\M_{g,n}^{\omega^{\log}}(U)^{\alpha}, \sigma_w]^\vir = [\M_{g,n}^{\omega^{\log}}(\Crit_U(w))^{\alpha}]^\lag : A^H_*(\underline{\omega}^{-1}(0)) \to A^H_*(\M_{g,n}^{\omega^{\log}}(\Crit_U(w))^{\alpha})\]
in the $H$-equivariant Chow groups.

\end{remark}


\begin{remark}
Our construction of the cosection localized virtual cycle $[\M_{g,n}^{\omega^{\log}}(U)^{\alpha}, \sigma_w]^\vir$ is more general than the virtual cycle in \cite{FJR} because we don't assume the factorization \cite[Def.~6.1.2]{FJR} which is quite restrictive. 
In fact, we always have a relative cosection for the {\em total evaluation map} $$\ev:\M_{g,n}^{\omega^{\log}}(U)^{\alpha} \lra I(U)^{\times n}\times \M^{\tw}_{g,n}.$$ 
The assumption in \cite[Lem.~6.1.1, Def.~6.1.2]{FJR} was introduced to lift the relative cosection to 
an absolute cosection in \cite[Lem.~6.1.5]{FJR} which implies the cone reduction by Lemma \ref{v3}. 
In our relative setup, we no longer need this assumption. 
\end{remark}
\begin{remark}\label{Rem:CZ}
Our construction of the virtual Lagrangian cycle $[\M_{g,n}^{\omega^{\log}}(\Crit_U(w))^{\alpha}]^\lag$ is more general than the virtual pullback in \cite{CZ} because our construction works for an arbitrary smooth Artin stack $U$ with a $\GG_m$-action and a function $w$, not necessrarily a quotient stack $[V/G]$ while the approach in \cite{CZ} depends on the presentation $[V/G]$ of $U$. 
Moreover,  we allow twisted prestable curves as domain curves. 
Also note that the  derived stack $\M^{\omega^{\log}}_{g,n}(\Crit_V(\widehat{w})/G)$ in \cite{CZ} is different from ours $\M^{\omega^{\log}}_{g,n}(\Crit_V(\widehat{w})/\!/G)$, because 
\[\Crit_V(\widehat{w})/G\not\simeq \Crit_V(\widehat{w})/\!/G \simeq\Crit_{V/G}(w),\]
as derived stacks although they are equivalent as classical stacks.
Here $\Crit_V(\widehat{w})/\!/G$ is the {\em symplectic quotient}.
Consequently, Cao-Zhao get a $(-2)$-shifted symplectic structure on 
\[\M^{\omega^{\log}}_{g,n}(\Crit_V(\widehat{w})/G) \lra \M^{\omega^{\log}}_{g,n}(BG)\times_{I(BG)^{\times n}} I(V/G)^{\times n},\] 
induced from the $(-1)$-shifted symplectic structure on $\Crit_V(\widehat{w})/G \to BG$,
while we get a $(-2)$-shifted symplectic structure on
\[\M^{\omega^{\log}}_{g,n}(\Crit_V(\widehat{w})/\!/G) \lra \M^{\tw}_{g,n}\times I(V/G)^{\times n},\]
induced from the $(-1)$-shifted symplectic structure on $\Crit_V(\widehat{w})/\!/G$.
However, the virtual Lagrangian cycles are comparable.
Indeed, consider the smooth morphism 
\[\nu:\M^{\omega^{\log}}_{g,n}(BG)\times_{I(BG)^{\times n}} I(V/G)^{\times n}\lra \M^{\tw}_{g,n}\times I(V/G)^{\times n}.\]
The $(-1)$-shifted locked $1$-form $\sigma_w$ induces a corresponding form $\sigma_w^{\CZ}$ by 
\[\sA^{1,\lc}(\M^{\omega^{\log}}_{g,n}(V/G)/ I(V/G)^{\times n}\times \M^{\tw}_{g,n},-1) \lra \sA^{1,\lc}(\M^{\omega^{\log}}_{g,n}(V/G)/ \M^{\omega^{\log}}_{g,n}(BG)\times_{I(BG)^{\times n}} I(V/G)^{\times n},-1). \] 
As a variant of Theorem \ref{Prop:GLSMmain}, using the trick Lemma \ref{Lem:Weil1}, Proposition \ref{Prop:Weil} also implies
\[\M^{\omega^{\log}}_{g,n}(\Crit_V(\widehat{w})/G) \simeq \M^{\omega^{\log}}_{g,n}(V/G)(\sigma_w^{\CZ})\]
as $(-2)$-symplectic fibrations over $\M^{\omega^{\log}}_{g,n}(BG)\times_{I(BG)^{\times n}} I(V/G)^{\times n}$.
Hence we get a commutative diagram 
\[\xymatrix{
 A_*(\M^{\tw}_{g,n}\times I(V/G)^{\times n} (\underline{w})) \ar[r]^-{} \ar[d]^{\nu^*} & A_* (\M^{\omega^{\log}}_{g,n}(\Crit_V(\widehat{w})/G) \simeq \M^{\omega^{\log}}_{g,n}(V/G)(\sigma_w)) \ar@{=}[d]  \\
 A_*(\M^{\omega^{\log}}_{g,n}(BG)\times_{I(BG)^{\times n}} I(V/G)^{\times n} (\underline{w})) \ar[r]^-{\CZ} & A_* (\M^{\omega^{\log}}_{g,n}(\Crit_V(\widehat{w})/\!/G) \simeq \M^{\omega^{\log}}_{g,n}(V/G)(\sigma_w^{CZ}))
}\]
from the functoriality of localized virtual pullbacks \cite[Thm.~2.10]{CKL}. 
Eventually, Cao-Zhao consider the invariants after taking $\nu^*$ and hence our invariants recover theirs.
\end{remark}


\subsection{Vanishing cycles}\label{ss:JoyceConj}

Finally, we discuss what we expect for a general $(-1)$-shifted exact symplectic stack $X$.
Given an orientation of $X$ (i.e. a square-root of $\det(\LL_X)$ in $\Pic(X)$),
denote by $\phi_X\in \Perv(X)$ the perverse sheaf constructed in \cite{KLDT,BBDJS}.

\begin{conjecture}[{Joyce, cf.~\cite[Conj.~5.22]{AB}}]\label{Conj:Joyce}
Let
\[\xymatrix{
& L \ar[ld]_s \ar[rd]^t & \\ M && N.
}\]
be a $(-1)$-shifted exact oriented Lagrangian correspondence. 
Then there is a canonical map
\[[L]^\lag : s^*\phi_M[\dim\,L]\lra t^!\phi_N \textin \D^b_c(L),\]
functorial along compositions of Lagrangian correspondences.
\end{conjecture}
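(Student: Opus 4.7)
The plan is to attack this in three stages: first reduce to Darboux local models, then construct the map locally via vanishing cycles, and finally verify gluing and composition functoriality. By the Darboux theorem for $(-1)$-shifted symplectic stacks of Brav--Bussi--Joyce, after an {\'e}tale cover each of $M$ and $N$ can be written as a derived critical locus $\Crit_{X_M}(f_M)$, $\Crit_{X_N}(f_N)$ on a smooth scheme, and the orientation fixes a square root of $K_{X_\bullet}|_\bullet$. For the Lagrangian correspondence I would invoke an analogous Lagrangian Darboux theorem: locally $L \simeq \Crit_{X_L}(f_L)$ with maps $u_M : X_L \to X_M$ and $u_N : X_L \to X_N$ such that $s,t$ are induced by $u_M, u_N$ and $f_L$ agrees with $f_M \circ u_M - f_N \circ u_N$ up to a nondegenerate quadratic normal form; the exactness of the Lagrangian structure is exactly what allows this comparison of functions (as opposed to mere 1-forms).

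With this local presentation, $s^* \phi_M[\dim L]$ becomes a shift of $u_M^* \phi_{f_M}$ while $t^! \phi_N$ becomes a twist of $u_N^! \phi_{f_N}$. Using the Thom--Sebastiani isomorphism $\phi_{f_M} \boxtimes \phi_{-f_N} \simeq \phi_{f_M \boxminus f_N}$ on $X_M \times X_N$ and restricting along the graph of $u_M \times u_N$, I would extract the desired morphism $s^* \phi_M[\dim L] \to t^! \phi_N$ as a specialization of the Sebastiani--Thom comparison. The shift $[\dim L]$ matches the expected shift from the Lagrangian normal form, and the quadratic correction is absorbed by a Thom isomorphism whose sign is precisely fixed by the chosen square root of $K$.

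For functoriality along composable correspondences $M \leftarrow L_1 \to N \leftarrow L_2 \to P$, the Calaque--PTVV composed Lagrangian $L_1 \times_N L_2$ inherits a canonical Lagrangian structure on $M \leftarrow L_1 \times_N L_2 \to P$, and the identity $[L_1 \circ L_2]^{\lag} \simeq [L_2]^{\lag} \circ [L_1]^{\lag}$ would reduce to a three-fold Sebastiani--Thom comparison on $X_{L_1} \times_{X_N} X_{L_2}$ together with a base-change identity for vanishing cycles along $N$. One must separately check that the Calaque composition is compatible with the chosen Darboux presentations and that the composed orientation matches the product of the two orientations.

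The main obstacle, and indeed the reason this remains a conjecture rather than a theorem, is not the local definition but the \emph{canonicity} of the gluing. The locally-defined map a priori depends on the Darboux chart, on the stabilization class (replacing $(X, f)$ by $(X \times \AA^{2k}, f + q)$ for $q$ a nondegenerate quadratic form), and on the sign of the orientation; showing that all local maps agree on overlaps requires tracking these choices through nontrivial automorphism groups that act on vanishing cycle complexes by signs and Thom isomorphisms. A satisfactory solution likely requires either a manifestly canonical derived-categorical construction of $\phi_M$ that bypasses local gluing, or a careful enhancement of the BBDJS descent data to include Lagrangian charts, in the spirit of the programs pursued by Amorim--Ben-Bassat and Joyce.
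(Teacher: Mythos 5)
This statement is a \emph{conjecture}, attributed to Joyce (cf.\ [AB, Conj.~5.22]), and the paper does not prove it: it is invoked only as a hypothesis in Corollary~\ref{111}, where the authors explicitly say ``Assume Conjecture~\ref{Conj:Joyce}.'' There is therefore no proof in the paper for your argument to be compared against, and you are correct to frame your discussion as a strategy sketch rather than a completed proof --- your closing paragraph, identifying the canonicity of the gluing as the essential open difficulty, is precisely why this remains a conjecture.

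That said, a few remarks on the sketch itself. The Darboux-local approach you outline (local models $\Crit_{X_\bullet}(f_\bullet)$ for the shifted symplectic targets, a Lagrangian Darboux model for $L$, Thom--Sebastiani on $X_M \times X_N$, and absorbing the quadratic normal form into a Thom isomorphism twisted by the orientation) is indeed the expected route and agrees with the philosophy behind the BBDJS construction of $\phi_M$ and the Amorim--Ben-Bassat proposal. You are also right that exactness of the Lagrangian structure is what promotes the comparison of $1$-forms to a comparison of superpotentials. The functoriality step via the Calaque--PTVV composition $L_1 \times_N L_2$ and a three-fold Thom--Sebastiani is the standard expectation as well. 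The one thing worth flagging is that a ``Lagrangian Darboux theorem'' of the precise form you invoke (with $f_L$ matching $f_M\circ u_M - f_N\circ u_N$ up to a quadratic form, and with $s,t$ induced by smooth model maps $u_M,u_N$) is itself a nontrivial claim that would need proof and careful statement; it is stronger than the symplectic Darboux theorem of Brav--Bussi--Joyce and is part of what the conjecture's proof would have to establish. Your overall assessment --- that the local construction is believable but the descent data and stabilization independence are the genuine obstruction --- is the correct one, and matches why the paper presents this only as an assumption.
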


By \cite[Thm.~A]{Park2}, the $(-1)$-shifted exact Lagrangian 
\[\ev_X:\M_{g,n}^{\omega^{\log}}(X) \lra I(X)^{\times n} \times \M_{g,n}^{\tw}\quad\text{over}\quad\M_{g,n}^{\tw}\]
in Proposition \ref{Prop:AKSZ} 
is equivalent to a $(-1)$-shifted exact Lagrangian correspondence
\begin{equation}\label{Eq:GLSM12}
\xymatrix{
& \M_{g,n}^{\omega^{\log}}(X)  \ar[ld]_{s:=(\ev_1,\cdots,\ev_n)} \ar[rd]^t & \\  I(X)^{\times n}  && \T^*_{\M_{g,n}^{\tw}}[-1].
}\end{equation}
Assuming the Joyce conjecture and the existence of orientation of the Lagrangian, we have 
\[[\M_{g,n}^{\omega^{\log}}(X)]^\lag : s^* \phi_{I(X)^{\times n}} \lra t^! \phi_{\T^*_{\M_{g,n}^{\tw}}[-1]}[-d_X] \simeq t^! \Q_{\M_{g,n}^{\tw}}[-d_X+d_{\pt} ] 
\simeq \omega_{\M_{g,n}^{\omega^{\log}}(X)}[-d_X-d_{\pt}],\]
where $d_X=\dim\,{\M_{g,n}^{\omega^{\log}}(X)}$.

\begin{corollary} \label{111} 
Assume Conjecture \ref{Conj:Joyce}.
Let $X$ be a derived Artin stack with a $\GG_m$-action, an oriented exact $(-1)$-shifted symplectic form of weight $1$ and an orientation of \eqref{Eq:GLSM12}.
Then we have a canonical map
\[[\M_{g,n}^{\omega^{\log}}(X)]^\lag : H^*(I(X), \phi_{I(X)})^{\otimes n} \lra H^{\BM}_{d_X+d_{\pt}-*}(\M_{g,n}^{\omega^{\log}}(X),\Q).\]
If $\M_{g,n}^{\omega^{\log}}(X)^{\alpha} \subseteq \M_{g,n}^{\omega^{\log}}(X)$ is an open substack that is proper, then we have a map
\[(\mathrm{st}\circ \un)_* \circ [\M_{g,n}^{\omega^{\log}}(X)^{\alpha}]^\lag : H^*(I(X), \phi_{I(X)})^{\otimes n} \lra H^{*+d_X-d_{\pt}}(\overline{\M}_{g,n},\Q)\]
where $\overline{\M}_{g,n} \subseteq \M_{g,n}^{\tw}$ is the proper DM stack of stable curves and $\mathrm{st} : \M^{\tw}_{g,n} \lra \overline{\M}_{g,n}  $ is the stabilization map.
\end{corollary}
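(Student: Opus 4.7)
The plan is to obtain both maps by unwinding and taking global sections of the sheaf-level morphism
\[
[\M_{g,n}^{\omega^{\log}}(X)]^\lag : s^*\phi_{I(X)^{\times n}} \lra t^!\phi_{\T^*_{\M^{\tw}_{g,n}}[-1]}[-d_X] \simeq \omega_{\M_{g,n}^{\omega^{\log}}(X)}[-d_X-d_{\pt}]
\]
already exhibited just before the corollary. The inputs I would rely on are, first, the $(-1)$-shifted exact oriented Lagrangian correspondence \eqref{Eq:GLSM12} furnished by Proposition \ref{Prop:AKSZ}, to which Conjecture \ref{Conj:Joyce} applies directly; second, two structural properties of Joyce's vanishing-cycle perverse sheaf: a Thom--Sebastiani--type multiplicativity giving $\phi_{I(X)^{\times n}} \simeq \phi_{I(X)}^{\boxtimes n}$, and the triviality $\phi_{\T^*_Y[-1]} \simeq \Q_Y[\dim Y]$ on the $(-1)$-shifted cotangent bundle of a smooth Artin stack $Y$ (whose underlying function is zero); and third, relative Verdier duality $t^!\Q_{\M^{\tw}_{g,n}}\simeq \omega_{\M_{g,n}^{\omega^{\log}}(X)/\M^{\tw}_{g,n}}$, which together with the preceding identifications produces the shift $[-d_X-d_{\pt}]$.

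First I would take hypercohomology on $\M_{g,n}^{\omega^{\log}}(X)$. By the projection formula and the K\"unneth theorem applied to $\phi_{I(X)}^{\boxtimes n}$ over $I(X)^{\times n}$, hypercohomology of $s^*\phi_{I(X)^{\times n}}$ receives a canonical map from $H^*(I(X),\phi_{I(X)})^{\otimes n}$, while hypercohomology of $\omega_{\M_{g,n}^{\omega^{\log}}(X)}[-d_X-d_{\pt}]$ is by definition $H^{\BM}_{d_X+d_{\pt}-*}(\M_{g,n}^{\omega^{\log}}(X),\Q)$. This yields the first displayed map. For the second, I would restrict the sheaf-level morphism to the proper open substack $\M_{g,n}^{\omega^{\log}}(X)^\alpha$, using the functoriality of Joyce's construction under open immersions. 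Pushing forward along $\mathrm{st}\circ\un$ and using properness of both $\M_{g,n}^{\omega^{\log}}(X)^\alpha$ and $\overline{\M}_{g,n}$, the proper base change and the compatibility of push-forwards with dualizing complexes identifies the image with Borel--Moore homology of $\overline{\M}_{g,n}$. Finally, Poincar\'e duality on the smooth Deligne--Mumford stack $\overline{\M}_{g,n}$, of dimension $d_{\pt} = \dim\,\overline{\M}_{g,n}$, converts $H^{\BM}_{d_X+d_{\pt}-*}(\overline{\M}_{g,n},\Q)$ into $H^{*+d_X-d_{\pt}}(\overline{\M}_{g,n},\Q)$.

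The hard part will be step two above, namely the two structural identifications of $\phi$: Conjecture \ref{Conj:Joyce} as stated only delivers the Lagrangian-correspondence morphism, not multiplicativity across Cartesian products of $(-1)$-shifted symplectic stacks nor triviality on $\T^*_Y[-1]$ for smooth $Y$. Both are expected features of any reasonable categorification, and are known in local Darboux charts via the construction of \cite{KLDT,BBDJS}, so the honest formulation of the corollary is conditional on these companion properties of Joyce's sheaf in addition to Conjecture \ref{Conj:Joyce} itself. Once they are granted, the remainder is a purely formal passage from sheaves to cohomology, so no further obstacles arise.
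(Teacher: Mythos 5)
The proposal is correct and follows the same route the paper intends: the paper provides no explicit proof of Corollary~\ref{111}, but the displayed sheaf-level morphism $s^*\phi_{I(X)^{\times n}} \to \omega_{\M_{g,n}^{\omega^{\log}}(X)}[-d_X-d_\pt]$ immediately before the statement is what must be integrated, and your passage to cohomology via pullback along $s$, K\"unneth, and the identification $H^{-k}(Z,\omega_Z)\simeq H^{\BM}_k(Z)$ is exactly the implicit argument, as is the restriction/pushforward/Poincar\'e-duality step for the second map.

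Your observation that the argument uses, beyond Conjecture~\ref{Conj:Joyce}, the identifications $\phi_{\T^*_Y[-1]}\simeq \Q_Y[\dim Y]$ for smooth $Y$ and $\phi_{M\times N}\simeq \phi_M\boxtimes\phi_N$ is a fair reading of the text: the first enters the sheaf-level display before the corollary, and the second is what allows passing from $H^*(I(X),\phi_{I(X)})^{\otimes n}$ to $H^*(I(X)^{\times n},\phi_{I(X)^{\times n}})$. You slightly overstate the gap, though. These are not additional conjectural inputs in the same sense as Conjecture~\ref{Conj:Joyce}: the first is the elementary degenerate case of the construction in \cite{KLDT,BBDJS}, and the second is the Thom--Sebastiani theorem established there for the perverse sheaf $\phi$ on d-critical loci. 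The paper is using them tacitly, which is a matter of exposition rather than a gap in the corollary's validity. One more minor remark: the degree-shift bookkeeping in the second display (converting $H^{\BM}_{d_X+d_\pt-*}(\overline{\M}_{g,n})$ via Poincar\'e duality on a smooth proper DM stack of complex dimension $d_\pt$) actually yields $H^{*-d_X+d_\pt}$ rather than $H^{*+d_X-d_\pt}$; you have reproduced the sign as printed in the statement without verifying it, so it is worth double-checking whether the paper has a sign typo or uses a different dimension convention.
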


A $K$-theoretic version of Corollary \ref{111} for $X=\Crit_U(w)$ is constructed in \cite{CTZ}.

\bigskip
\appendix
\section{Weil restrictions of critical loci}\label{Sec:Weil}

In this section, we show that Weil restrictions preserve twisted cotangent bundles.
Theorem \ref{Prop:GLSMmain} will then follow as an example.

\subsection{Weil restrictions}

For any derived stack $B$, denote by $\dSt_B$ the $\infty$-category of derived stacks over $B$.
\begin{definition}[Weil restriction]
Let $g:D \to B$ be a morphism of derived stacks.
We define the {\em Weil restriction} functor
\[\Res_{D/B} : \dSt_D \lra \dSt_B\]
as the right adjoint of the pullback functor $(-)\times_B D :\dSt_B \to \dSt_D$.
\end{definition}

For any $g:D \to B$ and $V \in \dSt_D$, we can form a correspondence
\[\xymatrix{
&\Res_{D/B}(V) \times_B D \ar[ld]_-{\pr_1} \ar[rd]^{\univ} & \\ \Res_{D/B}(V) && V ,
}\]
where the map $\univ$ is given by the counit of the adjunction $(-)\times_BD \dashv \Res_{D/B}$.


\begin{lemma}[Base change]\label{Lem:Weil1}
Let $g:D \to B$ be a morphism of derived stacks and
$V \in \dSt_D$.
For any $Y \in \dSt_V$, we have a canonical equivalence of derived stacks
\[\Res_{D/B}(Y) \simeq \Res_{\Res_{D/B}(V)\times_BD/\Res_{D/B}(V)} \left(Y\times_{V,\univ} \left(\Res_{D/B}(V)\times_B D\right)\right).\]
\end{lemma}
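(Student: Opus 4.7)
The plan is to verify the claimed equivalence by the Yoneda lemma, testing both sides against an arbitrary $T$-point and unpacking the defining adjunctions. Write $W := \Res_{D/B}(V)$, $D' := W \times_B D$, and $Y' := Y \times_{V,\univ} D'$. The morphism $Y \to V$ in $\dSt_D$ induces $\Res_{D/B}(Y) \to \Res_{D/B}(V) = W$, and the right-hand side $\Res_{D'/W}(Y')$ is a derived stack over $W$ by construction; so I view both sides as objects of $\dSt_W$ and produce a natural equivalence of the functors on $\dSt_W$ they represent.

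For any $T \to W$, the composite $T \to W \to B$ gives $T$ the structure of an object over $B$. The adjunction $(-)\times_B D \dashv \Res_{D/B}$ and the factorization $Y \to V \to D$ yield
\[
\Map_W(T,\Res_{D/B}(Y)) \;\simeq\; \fib_{\phi}\!\left(\Map_D(T \times_B D, Y) \xrightarrow{\pi \circ -} \Map_D(T \times_B D, V)\right),
\]
where $\phi : T \times_B D \to V$ is the morphism corresponding to $T \to W$ under $\Map_B(T,W) \simeq \Map_D(T\times_B D,V)$ and $\pi$ is the structure map $Y \to V$. On the other hand,
\[
\Map_W(T,\Res_{D'/W}(Y')) \;\simeq\; \Map_{D'}(T \times_W D',\, Y').
\]
The canonical identification $T \times_W D' \simeq T \times_W (W \times_B D) \simeq T \times_B D$ matches the $D'$-structure on $T \times_W D'$ with the map $T \times_B D \to D'$; composing with $\univ : D' \to V$ recovers $\phi$ precisely because $\univ$ is the counit of the adjunction. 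Since $Y' = Y \times_V D'$, the mapping space $\Map_{D'}(T \times_B D, Y')$ is identified with the fiber of $\Map_V(T \times_B D, Y) \to \Map_V(T\times_B D, V)$ over $\phi$, which is exactly the fiber $\fib_\phi$ appearing above.

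The two fibers agree, naturally in $T$, so Yoneda delivers the equivalence of derived stacks over $W$ (hence over $B$). The single point that requires care is the bookkeeping identification of the structure map $T \times_B D \to V$ on the two sides: that the image of $T \to W$ under the adjunction equals the composition through the counit $\univ$. This is precisely one of the triangle identities for the adjunction $(-) \times_B D \dashv \Res_{D/B}$, and is where the proof must be written out carefully, though no real obstruction arises. Since every step is natural in $T$ and in $Y$, the resulting equivalence is canonical, as claimed.
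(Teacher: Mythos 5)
Your proof is correct and is exactly the fully spelled-out version of what the paper asserts in one line (``it follows immediately from the definition of Weil restrictions''): both approaches amount to unwinding the adjunction $(-)\times_B D \dashv \Res_{D/B}$ on $T$-points and matching the resulting fiber of mapping spaces. The identification of $\phi$ with the composite through $\univ$ is the standard description of how the counit encodes the adjunction bijection, so there is no gap.
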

\begin{proof}
It follows immediately from the definition of Weil restrictions.
\end{proof}

\begin{lemma}[Total spaces]\label{Lem:Weil2}
Let $g: D \to B$ be a morphism of derived Artin stacks
and $E\in\Perf(D)$ be a perfect complex on $D$.
Then we have a canonical equivalence of derived stacks
\[\Res_{D/B}\left(\Tot_D(E)\right) \simeq \Tot_B \left(f_*E\right) . \]
\end{lemma}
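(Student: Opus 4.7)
The proof is essentially formal, via the Yoneda lemma and base change for quasi-coherent pushforward. The plan is to test both sides as functors on $\dSt_B$, transport maps into $\Tot$ to sections of perfect complexes, and then commute $g_*$ past the base change.

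More precisely, for any $T \in \dSt_B$, I will compute the mapping space $\Map_{\dSt_B}(T, \Res_{D/B}(\Tot_D(E)))$ in a chain of equivalences. The first step uses the defining adjunction of Weil restriction to rewrite this as $\Map_{\dSt_D}(T\times_B D, \Tot_D(E))$. The second step applies the functor-of-points description of total spaces, namely $\Map_{\dSt_D}(S, \Tot_D(E)) \simeq \Map_{\QCoh(S)}(\O_S, E|_S)$, to get $\Map_{\QCoh(T\times_B D)}(\O, \pr_2^* E)$ where $\pr_1 : T\times_B D \to T$ and $\pr_2 : T\times_B D \to D$ are the projections. The third step uses the pushforward-pullback adjunction $\pr_1^* \dashv \pr_{1,*}$ to rewrite this as $\Map_{\QCoh(T)}(\O_T, \pr_{1,*}\pr_2^* E)$. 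The fourth step, which is the only nonformal one, uses base change for the cartesian square to identify $\pr_{1,*}\pr_2^* E \simeq (g_*E)|_T$. Finally, a second application of the functor-of-points description of $\Tot$ recognizes this as $\Map_{\dSt_B}(T, \Tot_B(g_*E))$. The equivalence is natural in $T$, so Yoneda closes the argument.

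The one nontrivial input is the base change isomorphism $\pr_{1,*}\pr_2^* E \simeq (g_*E)|_T$. In the setting of derived Artin stacks (which are, by the conventions of this paper, quasi-separated and with affine stabilizers), quasi-coherent pushforward satisfies base change along arbitrary pullback squares at the level of $\QCoh$ valued in the $\infty$-category of stable presentable $\infty$-categories; this is the main reason we work with $\QCoh$ rather than $\Coh$ or $\Perf$ throughout. I expect this step to be the main (and only) obstacle, but it is standard and follows from the general formalism; in particular, it does not require $g$ to be proper or $g_*E$ itself to be perfect. Consequently, the derived stack $\Tot_B(g_*E)$ is to be interpreted as the functor $T \mapsto \Map_{\QCoh(T)}(\O_T, (g_*E)|_T)$, which coincides with the usual total space whenever $g_*E$ is of tor-amplitude $\geq -1$.
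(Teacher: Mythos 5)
Your proof is correct and takes essentially the same route as the paper's, which simply invokes the base change formula for quasi-coherent pushforward (citing Drinfeld--Gaitsgory, Cor.~1.4.5) and says the equivalence then follows from the definitions; you have spelled out the chain of adjunctions and Yoneda that the paper leaves implicit. Your closing remark about interpreting $\Tot_B(g_*E)$ functorially when $g_*E$ is not perfect is a reasonable clarification that the paper glosses over, and it is consistent with the paper's own functor-of-points definition of total spaces in \S 2.2.
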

\begin{proof}
The pushforward $g_*:\QCoh(D) \to \QCoh(B)$ has the base change formula by \cite[Cor.~1.4.5]{DG}.
Then the equivalence follows from the definition.
\end{proof}

When $g:D \to B$ is a proper flat morphism of classical Artin stacks,
if $V$ is a derived Artin stack of finite presentation over $D$,
then $\Res_{D/B}(V)$ is a derived higher Artin stack of finite presentation over $B$ by \cite[Thm.~5.1.1, Rem.~5.1.3]{HP}.
Moreover, the tangent complex is:
\begin{equation}\label{Eq:WeilTangent}
\TT_{\Res_{D/B}(Y)/B} \simeq \pr_{1,*}\circ \univ^*(\TT_{Y/D}).
\end{equation}
Indeed, this follows from Lemma \ref{Lem:Weil1}, Lemma \ref{Lem:Weil2}, and the canonical equivalences:
\[\T_{\Res_{D/B}(Y)/B}[1] \simeq \uMap_B(\AA^1_B[-1],\Res_{D/B}(Y)) \simeq \Res_{D/B}(\uMap_D(\AA^1_D[-1],Y)) \simeq \Res_{D/B}(\T_{Y/D}[1]).\]

\begin{example}[Twisted maps/inertia]\label{Ex:TwMaps/Inertia}
Let $X$ be a derived stack with a $\GG_m$-action.
\begin{enumerate}
\item The moduli stack of $\cL$-twisted maps to $X$ (Definition \ref{Def:TwMaps}) can be described as:
\[\M^{\cL}_{g,n}(X)\simeq \Res_{\mathcal{C}_{g,n}/\M^{\tw}_{g,n}}\left((X/\GG_m)\times_{B\GG_m,\cL}\cC_{g,n}\right).\]
\item The moduli stack of cyclotomic gerbes in $X$ (Definition \ref{Def:Inertia}) can be described as:
\[I(X)\simeq \Res_{\pt/B^2\mu}(X).\]
\end{enumerate}
If $X$ is a derived Artin of finite presentation, then so is $\M^{\cL}_{g,n}(X) \to \M^{\tw}_{g,n}$ and $I(X) \to B^2\mu$.
\end{example}

\subsection{Dualizing complexes}

Recall from \cite[Def.~2.1]{PTVV} and \cite[Def.~B.10.16]{CHS} that a morphism $g:D \to B$ is {\em $\O$-compact} if universally, $g_*:\QCoh(D) \to \QCoh(B)$ preserves perfect complexes and colimits.
For an $\O$-compact morphism $g:D \to B$, denote by:
\begin{itemize}
\item $g^{\times}:\QCoh(B) \to \QCoh(D)$ the right adjoint of $g_* :\QCoh(D) \to \QCoh(B)$;
\item $\omega_{D/B}=g^{\times}(\O_B)\in \QCoh(D)$ the {\em $*$-dualizing complex}. 
\end{itemize}


\begin{definition}[$*$-Gorenstein]
A {\em $*$-Gorenstein} morphism $g:D \to B$ is an $\O$-compact morphism of derived stacks such that $\omega_{D/B}\in \QCoh(D)$ is a shifted line bundle.
\end{definition}

\begin{definition}[Boundary]
A {\em boundary structure} on a $*$-Gorenstein morphism $i:D \to C$ consists of 
\begin{enumerate}
\item [(D1)] a shifted line bundle $\widetilde{\omega}_{D/C} \in \QCoh(C)$ with $i^*\widetilde{\omega}_{D/C} \simeq\omega_{D/C} \in \QCoh(D)$;
\item [(D2)] a cofiber sequence
$
\partial_{D/C}: \xymatrix{\widetilde{\omega}_{D/C} \ar[r]^-{i^*\dashv i_*} & i_*\omega_{D/C} \ar[r]^-{i_* \dashv i^{\times}} & \O_C}
$ in $\QCoh(C)$.
\end{enumerate}
When $f: C \to B$ is a $*$-Gorenstein morphism, we let 
$\omega_{C/B}^{\log}=\omega_{C/B}\otimes \widetilde{\omega}_{D/C}.$
\end{definition}

We say that $i:D \to C$ is an {\em effective divisor} if it is equipped with a line bundle $L \in \Pic(C)$, a section $s\in \Gamma(C,L)$ and an equivalence $(i:D \to C) \simeq (\Zero(s) \to C)$.

\begin{lemma}\label{Lem:5}
Let $f:C \to B$ and $i:D \to C$ be morphisms of derived Artin stacks.
\begin{enumerate}
\item If $f:C \to B$ is a proper quasi-smooth morphism, then it is a $*$-Gorenstein morphism.
\item If $i:D \to C$ is an effective divisor, then it has a canonical boundary structure.
\end{enumerate}
\end{lemma}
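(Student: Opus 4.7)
For part (1), the plan is to verify the two defining conditions of a $*$-Gorenstein morphism separately. To check $\O$-compactness, I would use that a proper morphism of derived Artin stacks of finite presentation has a continuous pushforward (so $f_*$ preserves colimits), while quasi-smoothness gives $\LL_{C/B}$ of tor-amplitude $[-1,0]$, hence $f$ is of finite tor-dimension; combined with properness this is enough for $f_*$ to preserve perfect complexes (reducing locally on $B$ and using that quasi-smooth proper morphisms factor through derived regular closed immersions into smooth proper morphisms, for which both facts are classical). For the dualizing complex, I would exploit the same local factorization: for a smooth proper morphism of relative dimension $d$ we have $\omega \simeq \det(\Omega)[d]$, and for a derived regular closed immersion $i$ cut out by a perfect complex $\cF$ of tor-amplitude $[-1,0]$ one computes $i^\times(-) \simeq i^*(-) \otimes \det(\LL_{i})[-\rank]$ via Koszul duality. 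Patching these via base change and the composition formula $(f\circ g)^\times \simeq g^\times \circ f^\times$ yields
\[\omega_{C/B} \simeq \det(\LL_{C/B})[\vdim(f)],\]
which is a shifted line bundle since the determinant of any perfect complex is a line bundle.

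For part (2), let $L \in \Pic(C)$ and $s \in \Gamma(C,L)$ with $D \simeq \Zero(s)$. The candidate is $\widetilde{\omega}_{D/C} := L[-1]$. The derived Koszul resolution gives a cofiber sequence $L^{-1} \xrightarrow{s} \O_C \to i_*\O_D$ in $\QCoh(C)$, which upon applying $\mathcal{RHom}_C(-,\O_C)$ and using $i^\times\O_C = \omega_{D/C}$ gives
\[i_*\omega_{D/C} \simeq \mathrm{fib}\bigl(\O_C \xrightarrow{s} L\bigr) \simeq i_*L|_D[-1],\]
hence $\omega_{D/C} \simeq L|_D[-1] \simeq i^*(L[-1])$, verifying (D1). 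Rotating the defining cofiber sequence $\O_C \xrightarrow{s} L \to i_*L|_D$ and shifting by $[-1]$ yields
\[L[-1] \lra i_*(L|_D[-1]) \lra \O_C,\]
which is precisely $\widetilde{\omega}_{D/C} \to i_*\omega_{D/C} \to \O_C$. It remains to identify the two arrows with the adjunction maps: the first is $i^*\dashv i_*$ applied to $L[-1]$ via $i^*(L[-1]) \simeq \omega_{D/C}$, which follows because $L \to i_*L|_D$ is the unit of $i^* \dashv i_*$ tensored with $L$; and the last map is the trace $i_*i^\times \O_C \to \O_C$, obtained from the computation above as the connecting map of the Koszul triangle.

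The main obstacle I anticipate is in part (1), specifically the computation $f^\times\O_B \simeq \det(\LL_{C/B})[\vdim]$ for a proper quasi-smooth morphism, which requires careful compatibility of the fundamental class of Serre duality across the smooth/closed-immersion factorization in the derived setting. Once this identification is in hand, everything is a direct application of base change and tensor products of shifted line bundles. For part (2), the only subtlety is checking that the sequence obtained from the Koszul triangle really is the canonical one coming from the adjunctions $i^*\dashv i_* \dashv i^\times$, which should follow from the functoriality of $\mathcal{RHom}_C(-,\O_C)$ applied to the Koszul resolution.
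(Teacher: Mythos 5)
Your part (2) is essentially correct and amounts to unfolding what the paper delegates to \cite[Prop.~4.2]{Khan}: dualizing the Koszul cofiber sequence $L^{-1}\xrightarrow{s}\O_C\to i_*\O_D$ identifies $i_*\omega_{D/C}\simeq i_*(L|_D)[-1]$, and the rotated/shifted triangle gives the boundary sequence with the correct adjunction maps. Good.

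Part (1) has a genuine gap in the patching step. You propose to compute $f^\times\O_B$ by locally factoring a proper quasi-smooth $f:C\to B$ as a regular closed immersion into a \emph{smooth proper} $P\to B$, then using $\omega_{P/B}=\det\Omega_{P/B}[\dim]$ for the smooth piece and Koszul duality for the closed immersion, glued via $(f\circ g)^\times\simeq g^\times\circ f^\times$. There are two problems. First, such a factorization is simply not available in general: locally on $C$ one can factor through a smooth affine $V\to B$, but $V$ is not proper; replacing $V$ by a compactification destroys the closed-immersion property; and for Artin stacks there is no analogue of $\PP^n_B$ or Nagata compactification that would produce a smooth proper $P$. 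Second, even if you work with the affine smooth $V\to B$, the composition formula $(f\circ g)^\times\simeq g^\times\circ f^\times$ is only valid when both legs are $\O$-compact, which fails for the non-proper smooth piece (its pushforward does not preserve perfect complexes), so the intermediate $\omega_{V/B}$ you want to plug in is not the dualizing complex in the $*$-sense. The formula $\omega_{C/B}\simeq\det(\LL_{C/B})[\vdim]$ is true, but your route to it does not close.

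The paper sidesteps both issues by invoking Lurie's Gorenstein machinery, which is phrased via the upper-shriek functor $f^!$ rather than $f^\times$ (so it makes sense and behaves well for non-proper maps), and whose defining condition is \'etale-local. After reducing to $B=\pt$ (base change for $f^\times$), one identifies $*$-Gorenstein with Gorenstein in Lurie's sense, localizes to $C$ affine, factors $C\simeq\Zero(s)\hookrightarrow U$ through a smooth scheme $U$ (no properness required), factors the closed immersion as a chain of effective divisors, and then applies part (2) together with stability of Gorenstein morphisms under composition. If you want to salvage your formula $\omega_{C/B}\simeq\det(\LL_{C/B})[\vdim]$, the cleanest route would be to prove it \'etale-locally using $f^!$ and the interchange formula $f^\times\simeq f^!\otimes f^*(-)^{\text{twist}}$, rather than trying to force a smooth \emph{proper} factorization.
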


\begin{proof}
We first note that any proper quasi-smooth morphism $f:C \to B$ of derived Artin stacks is $\O$-compact.
Indeed, $f_*:\QCoh(C) \to \QCoh(B)$ preserves colimits and has the base change formula by \cite[Cor.~1.4.5]{DG} and preserves perfect complexes by \cite[Thm.~6.1.3.2]{LurSAG}.

(2) 
For an effective divisor $i:D\to C$ of $s\in \Gamma(C,L)$, we have a canonical cofiber sequence
\[\xymatrix{
L\dual \ar[r]^-{s} & \O_C \ar[r]^-{i^*\dashv i_*} & i_* \O_D .
}\]
Under the adjunction $i_* \dashv i^{\times}$, the map $i_*\O_D \to L\dual[1]$ corresponds to a map
\[\O_D \lra L\dual \otimes \omega_{D/C}[1],\]
which is an equivalence by \cite[Prop.~4.2]{Khan}.
Hence $i:D \to C$ is a $*$-Gorenstein morphism which has a boundary structure with $\widetilde{\omega}_{D/C}\simeq L[-1].$

(1) Since $f^{\times}$ commutes with base changes by \cite[Prop.~6.4.2.1]{LurSAG}, we may assume that $B=\pt$.
Since $C$ is proper quasi-smooth, being $*$-Gorenstein is equivalent to being Gorenstein (in the sense of \cite[Def.~6.6.5.1]{LurSAG}) by \cite[Prop.~6.6.6.7]{LurSAG}.
Since being Gorenstein is {\'e}tale-local by \cite[Rem.~6.6.1.3]{LurSAG}, we may assume that $C$ is affine. 
Then we can find a smooth scheme $U$, a trivial vector bundle $E\simeq \O_U^{\oplus r}$ on $U$ and a section $s\in \Gamma(U,E)$ such that $C \simeq \Zero(s)$. 
Since Gorenstein morphisms (in the sense of \cite[Def.~6.6.6.1]{LurSAG}) are stable under compositions by \cite[Cor.~6.6.6.6]{LurSAG},
it suffices to show that the inclusion $\Zero(s) \hookrightarrow U$ is Gorenstein.
Since $\Zero(s) \hookrightarrow U$ can be factored by effective divisors, the statement follows from (2).
\end{proof}

\begin{example}[Twisted curves]\label{Ex:Dualizing}
Let $\M^{\tw}_{g,n}$ be the moduli stack of twisted curves.
The universal curve $\cC_{g,n} \to  \M^{\tw}_{g,n}$ is a $*$-Gorenstein morphism 
and the gerbe marking $\bigsqcup_k \Sigma_{g,n,k} \to \cC_{g,n}$ is a boundary (Lemma \ref{Lem:5}).
The {\em canonical line bundle} (resp. {\em log-canonical line bundle}) is:
\[\omega:=\omega_{\cC_{g,n}/\M^{\tw}_{g,n}}[-1], \quad (\text{resp. }\omega^{\log}:=\omega_{\cC_{g,n}/\M^{\tw}_{g,n}}^{\log}[-1] ).\]
See also \cite[Cor.~3.2]{HallPriver} for defining canonical line bundles of proper lci morphisms of classical Artin stacks and \cite[Prop.~1.3]{HR} for comparing $\QCoh$ with the classical derived categories.
\end{example}

\subsection{Twisted forms}

Following \cite[\S1]{Park2}, we form the {\em de Rham algebra} of $S\in \dSt_B$ as:
\[\DR(S/B) := (\D_{S/B}/\GG_m \to B\times \AA^1/\GG_m)_*(\O_{\D_{S/B}/\GG_m}) \in \QCAlg_{B\times \AA^1/\GG_m}=:\QCAlg_B^\fil.\]
For
$\cE \in \Perf(B)$,
the {\em derived stack of $\cE$-twisted locked/ordinary $p$-forms} is:
\begin{align*}
\sA^{p,\lc}_B(\cE)\in \dSt_B : (S\to B) \mapsto \Map_{\QCoh(B)}\left(\O_B,\Fil^p\DR(S/B)[p]\otimes \cE\right),\\
\sA^{p}_B(\cE)\in \dSt_B : (S\to B) \mapsto \Map_{\QCoh(B)}\left(\O_B,\Gr^p\DR(S/B)[p]\otimes \cE\right).
\end{align*}
If $\cE$ is a shifted line bundle and $S \to B$ is a finitely presented morphism of derived Artin stacks,
we say that $\theta \in \sA^{2,\lc}(S/B,\cE)$ is a {\em $\cE$-twisted locked symplectic} if it induces an equivalence:
\[\theta^{\#}:\TT_{S/B} \to \LL_{S/B}\otimes \cE.\]

\begin{example}[Twisted cotangent bundles]\label{Ex:TwCot}
Let $p:U \to C$ be a finitely presented morphism of derived Artin stacks,
$\cE \in \Perf(C)$
and $\alpha \in \sA^{1,\lc}(U/C,\cE)$.
\begin{enumerate}
\item The {\em $\cE$-twisted contagent bundle} is the total space:
\[\T^*_{U/C}(\cE)=\Tot_U(\LL_{U/C} \otimes p^*\cE).\]
The tautological section gives us the {\em Liouville $1$-form}
$\lambda_{U/C}(\cE) \in \sA^1(\T^*_{U/C}(\cE)/C, \cE).$
\item The {\em $\cE$-twisted $\alpha$-twisted cotangent bundle} is the fiber product:
\begin{equation}\label{Eq:GLSM4}
\xymatrix{
\T^*_{U/C,\alpha}(\cE[-1]) \ar[r] \ar[d] \cart & U \ar[d]^0 \\ U \ar[r]^-{\Gamma_{\alpha}} & \T^*_{U/C}(\cE).
}\end{equation}
As in Proposition \ref{Prop:TwCot}, we have an $\cE[-1]$-twisted locked symplectic form
\[\theta_{U/C,\alpha}(\cE) \in \sA^{2,\lc}(\T^*_{U/C,\alpha}(\cE[-1])/C, \cE[-1]).\]
\end{enumerate}
\end{example}

\subsection{Twisted AKSZ}

The {\em AKSZ construction} is based on the following projection formula.

\begin{lemma}[Projection formula]
Let $g:D \to B$ be a $\O$-compact morphism and $\cE \in \Perf(D)$ be a perfect complex.
The canonical map
\[\Res_{D/B}\left(\sA^{p,\lc}_D(\cE)\right)\xleftarrow{\simeq} \sA^{p,\lc}_B(g_*\cE)\]
is an equivalence of derived stacks over $B$.
\end{lemma}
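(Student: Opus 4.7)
The plan is to reduce the equivalence to a base change property of the de Rham algebra combined with the standard projection formula for the $\O$-compact morphism $g$. By the defining adjunction of the Weil restriction, for any test morphism $T\to B$ we have
\[\Res_{D/B}\bigl(\sA^{p,\lc}_D(\cE)\bigr)(T) \simeq \Map_{\QCoh(D)}\bigl(\O_D,\Fil^p\DR(T\times_B D/D)[p]\otimes \cE\bigr),\]
while directly from the definition
\[\sA^{p,\lc}_B(g_*\cE)(T) \simeq \Map_{\QCoh(B)}\bigl(\O_B,\Fil^p\DR(T/B)[p]\otimes g_*\cE\bigr),\]
so it suffices to produce a functorial-in-$T$ equivalence between these two mapping spaces.

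The first step I would carry out is a base change equivalence for the deformation space. Using the definition $\D_{S/B}=\uMap_{B\times \AA^1}(B\times 0, S\times \AA^1)$ and unwinding universal properties over a test $X\to D\times \AA^1$, both sides of
\[\D_{T\times_B D/D} \simeq \D_{T/B} \times_B D\]
represent the same functor (namely $X \mapsto \Map_B(X|_{\AA^1=0},T)$), and the canonical $\GG_m$-actions scaling $\AA^1$ agree. Taking the stack quotient by $\GG_m$ and invoking the QCoh base change \cite[Cor.~1.4.5]{DG} for the resulting Cartesian square involving $\widetilde{g}:D\times \AA^1/\GG_m \to B\times \AA^1/\GG_m$ yields
\[\DR(T\times_B D/D) \simeq \widetilde{g}^*\DR(T/B) \textin \QCAlg_D^\fil,\]
and hence, after taking the $p$-th filtered piece, $\Fil^p\DR(T\times_B D/D) \simeq g^*\Fil^p\DR(T/B)$ in $\QCoh(D)$.

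The second step combines the adjunction $g^*\dashv g_*$ with the projection formula:
\begin{align*}
\Map_{\QCoh(D)}\bigl(\O_D,g^*\Fil^p\DR(T/B)[p]\otimes\cE\bigr)
&\simeq \Map_{\QCoh(B)}\bigl(\O_B, g_*(g^*\Fil^p\DR(T/B)[p]\otimes\cE)\bigr)\\
&\simeq \Map_{\QCoh(B)}\bigl(\O_B,\Fil^p\DR(T/B)[p]\otimes g_*\cE\bigr),
\end{align*}
where the last identification is the projection formula $g_*(g^*M\otimes\cE)\simeq M\otimes g_*\cE$. This holds because $g$ is $\O$-compact (so $g_*$ commutes with colimits) and $\cE$ is dualizable, so the comparison map is an equivalence when $M=\O_B$ and both sides preserve colimits in $M\in\QCoh(B)$. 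Functoriality in $T$ is built into the construction.

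The main obstacle is the first step: one must track not only the underlying derived stacks in the base change $\D_{T\times_B D/D}\simeq D\times_B \D_{T/B}$ but also the $\GG_m$-equivariance and the filtered algebra structure on the pushforward to the stack quotient $(-)\times\AA^1/\GG_m$, so that the identification genuinely lives in $\QCAlg_D^\fil$ and is compatible with the tensor structure used in defining $\sA^{p,\lc}_D(\cE)$. Once this base change is in hand, the remaining identifications are formal consequences of the adjunction $g^*\dashv g_*$ and the projection formula guaranteed by $\O$-compactness.
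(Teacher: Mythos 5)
Your proof follows essentially the same route as the paper's. You establish the base change $\DR(T\times_B D/D)\simeq g^*\DR(T/B)$ (the paper states this equivalence without elaboration) and then conclude via the adjunction $g^*\dashv g_*$ together with the projection formula for the $\O$-compact morphism $g$, which is exactly the paper's one-line argument citing \cite[Prop.~3.10]{BFN}; your version is just more explicit about the deformation-space base change and about why the projection formula holds.
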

\begin{proof}
For any derived stack $S$ over $B$, we have a canonical map
\[g_*\left(\DR(S\times_BD/D)\otimes \cE\right) \simeq g_*\left(g^*\DR(S/B)\otimes \cE\right)  \xleftarrow{} \DR(S/B)\otimes g_*\cE \]
given by the adjunction $g^*\dashv g_*$.
Since the $\O$-compact morphism $g:D \to B$ has the projection formula \cite[Prop.~3.10]{BFN}, the above map is an equivalence.
\end{proof}

Consequently, we have two induced structures.
\begin{enumerate}
\item (Integration) For a $*$-Gorenstein morphism $g: D\to B$, we have a map
\[\int_{D/B} : \Res_{D/B}\left(\sA_D^{p,\lc}(\omega_{D/B}[d])\right) \simeq \sA^{p,\lc}_B(g_*\omega_{D/B}[d]) \xrightarrow{g_* \dashv g^{\times}} \sA^{p,\lc}_B[d].\]
\item (Boundary) For a boundary $i:D \to C$, we have a fiber square 
\[\xymatrix{\sA^{p,\lc}_C(\widetilde{\omega}_{D/C}[d]) \ar[r] \ar[d] \ar@{}[rd]|{\partial_{D/C}} & \Res_{D/C}\left(\sA^{p,\lc}_D(\omega_{D/C}[d])\right) \ar[d]^{\int_{D/C}} 
\\
C \ar[r]^-{0}&  \sA^{p,\lc}_C(\O_C[d]). 
}\]
\end{enumerate}

\begin{construction}[AKSZ]
\label{Const:AKSZ}
Let $f:C \to B$ be a $*$-Gorenstein morphism with a boundary $i: D \to C$.
\begin{enumerate}
\item Let $Y \in \dSt_D$ and $\beta \in \sA^{p,\lc}(Y/D,\omega_{D/B}[d])$.
We define the induced locked form
\[\AKSZ_{D/B}(\beta)=\int_{D/B}\Res_{D/B}(\beta) \in \sA^{p,\lc}(\Res_{D/B}(Y)/B,d)\]
as the composition
\[\Res_{D/B}(Y) \xrightarrow{\Res_{D/B}(\beta)} \Res_{D/B}\left(\sA^{p,\lc}_D(\omega_{D/B}[d])\right) \xrightarrow{\int_{D/B}} \sA^{p,\lc}_B[d].\]
\item Let $X\in \dSt_C$ and $\alpha \in \sA^{p,\lc}(X/C,\omega_{C/B}^\log[d])$.
We define the induced path
\[\AKSZ_{C/B}(\alpha) : 0 \lra \AKSZ_{D/B}(\beta)|_{\Res_{C/B}(X)} \textin \sA^{p,\lc}(\Res_{C/B}(X)/B,d)\]
via the commutative diagram
\[\xymatrix{
\Res_{C/B}(X) \ar[d]_{\Res_{C/B}(\alpha)} \ar[r] & \Res_{D/B}(Y) \ar[d]_{\Res_{D/B}(\beta)} \\
\Res_{C/B}\left(\sA^{p,\lc}_C(\omega_{C/B}^\log[d])\right) \ar[r] \ar[d]\ar@{}[rd]|{\partial_{D/C}} & \Res_{D/B}\left(\sA^{p,\lc}_D(\omega_{D/B}[d])\right) \ar[rd]^{\int_{D/B}}\ar[d]_{\int_{D/C}} \\
0 \ar[r]& \Res_{C/B}\left(\sA^{p,\lc}_C(\omega_{C/B}[d])\right) \ar[r]_-{\int_{C/B}}
& \sA^{p,\lc}_B[d],
}\]
where $Y:=X\times_CD$ and $\beta:=\alpha|_{Y/D} \in \sA^{p,\lc}(Y/D,\omega_{D/B}[d])$ are the restrictions.

If $p>0$, then we also have an induced locked form
\[\AKSZ_{C/B}(\alpha)_{/}:=\AKSZ_{C/B}(\alpha)_{/\Res_{D/B}(Y)} \in \sA^{p,\lc}(\Res_{C/B}(X)/\Res_{D/B}(Y),d-1).\]
\end{enumerate}
\end{construction}

\begin{proposition}\label{Prop:TwSymp/Lag}
Let $f:C \to B$ be a proper flat lci morphism of classical Artin stacks with an effective Cartier divisor $i:D \to C$ such that $f \circ i:D \to B$ is flat.
\begin{enumerate}
\item If $\beta$ is a $\omega_{D/B}[d]$-twisted locked symplectic form on a morphism of derived Artin stacks $Y \to D$, then $\AKSZ_{D/B}(\beta)$ is a $d$-shifted locked symplectic form on
\[\Res_{D/B}(Y) \lra B.\]
\item If $\alpha$ is a $\omega_{C/B}^{\log}[d]$-twisted locked symplectic form on a morphism of derived Artin stacks $X \to C$, then $\AKSZ_{C/B}(\alpha)$ is a $(d-1)$-shifted locked Lagrangian structure 
\[\Res_{C/B}(X) \lra \Res_{D/B}(X|_D).\]
\end{enumerate}
\end{proposition}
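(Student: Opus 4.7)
The plan is to reduce both statements to non-degeneracy computations at the level of tangent and cotangent complexes, with lockedness being automatic from Construction \ref{Const:AKSZ} since $\AKSZ_{D/B}$ and $\AKSZ_{C/B}$ factor through the filtered de Rham algebra $\DR(-/B)$; the underlying $p$-form is recovered by passing to $\Gr^p$. It thus suffices in each case to verify non-degeneracy of the underlying $2$-form (for (1)) and of the underlying path (for (2)) using Grothendieck--Serre duality for the proper flat Gorenstein morphism $\pr_1:\Res_{D/B}(Y)\times_B D \to \Res_{D/B}(Y)$, whose dualizing complex is $\pr_2^*\omega_{D/B}$ by base change, together with the boundary cofiber sequence $\partial_{D/C}$ for (2).

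For (1), the starting point is the non-degeneracy equivalence $\beta^{\#}: \TT_{Y/D} \xrightarrow{\simeq} \LL_{Y/D} \otimes p^*\omega_{D/B}[d]$, where $p:Y\to D$ is the structure map. Pulling back along $\univ$ and pushing forward along $\pr_1$, and combining Eq.~\eqref{Eq:WeilTangent} with the Grothendieck--Serre duality identification
\[\LL_{\Res_{D/B}(Y)/B} \simeq \pr_{1,*}\bigl(\univ^*\LL_{Y/D}\otimes \pr_2^*\omega_{D/B}\bigr)\]
(which follows from dualizing Eq.~\eqref{Eq:WeilTangent} via the projection formula and base change of the dualizing complex), one obtains the desired equivalence $\AKSZ_{D/B}(\beta)^{\#}: \TT_{\Res_{D/B}(Y)/B} \xrightarrow{\simeq} \LL_{\Res_{D/B}(Y)/B}[d]$. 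A direct inspection of Construction \ref{Const:AKSZ} shows that the map $\AKSZ_{D/B}(\beta)^{\#}$ agrees with this composition.

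For (2), one must produce a null-homotopy of the composition
\[\TT_{\Res_{C/B}(X)/B} \lra \TT_{\Res_{D/B}(Y)/B}|_{\Res_{C/B}(X)} \xrightarrow[\simeq]{\AKSZ_{D/B}(\beta)^{\#}} \LL_{\Res_{D/B}(Y)/B}[d]|_{\Res_{C/B}(X)}\]
whose induced map $\TT_{\Res_{C/B}(X)/\Res_{D/B}(Y)} \to \LL_{\Res_{C/B}(X)/B}[d-1]$ is an equivalence. To build this, I would tensor the boundary cofiber sequence $\partial_{D/C}$ with $\omega_{C/B}$ to obtain $\omega^{\log}_{C/B} \to i_*\omega_{D/B} \to \omega_{C/B}$ on $C$, tensor further with $\LL_{X/C}[d]$, and identify the first two terms using $\alpha^{\#}$ and its restriction $\beta^{\#}$ (obtained via $i^*$ and the projection formula) to obtain a cofiber sequence
\[\TT_{X/C} \lra i_*\TT_{Y/D} \lra \LL_{X/C}\otimes \omega_{C/B}[d]\]
on $C$. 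Pushing forward by $\pr_{1,*}\univ^*_C$ and applying Eq.~\eqref{Eq:WeilTangent} together with the Grothendieck--Serre duality identification from (1) converts this into the required self-dual fiber sequence exhibiting the Lagrangian structure; Construction \ref{Const:AKSZ} supplies the filtered enhancement delivering the compatible path $\AKSZ_{C/B}(\alpha)$.

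The main obstacle is the final claim of the previous paragraph: one must verify that the path $\AKSZ_{C/B}(\alpha)$ assembled from the boundary square in Construction \ref{Const:AKSZ} really exhibits the self-duality of this cofiber sequence, and is not merely some null-homotopy of $\AKSZ_{D/B}(\beta)|_{\Res_{C/B}(X)}$. Concretely, this amounts to a compatibility between the filtered de Rham integration $\int_{D/B}$ and Grothendieck--Serre duality, together with the compatibility of $\partial_{D/C}$ with the connecting map in the cotangent fiber sequence for $\Res_{C/B}(X) \to \Res_{D/B}(Y) \to B$. This is a filtered twisted upgrade of Serre duality, carried out essentially in \cite{CS,CHS} for the untwisted case, and adapting the argument to the present twisted setting requires only the bookkeeping inherent in tensoring with $\omega_{D/B}$ and $\omega_{C/B}^{\log}$.
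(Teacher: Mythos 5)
The paper disposes of the entire statement in one line: since the locked enhancement is delivered automatically by Construction \ref{Const:AKSZ}, non-degeneracy is a condition only on the underlying closed $2$-form, and that is exactly \cite[Thm.~2.34]{CS}. Your proposal unwinds what that citation is hiding: you make explicit the Grothendieck--Serre duality for $\pr_1:\Res_{D/B}(Y)\times_B D\to\Res_{D/B}(Y)$, trace $\beta^{\#}$ through $\pr_{1,*}\circ\univ^*$ against \eqref{Eq:WeilTangent}, and in part (2) tensor the boundary sequence $\partial_{D/C}$ with $\omega_{C/B}$ and $\LL_{X/C}[d]$ to produce the self-dual cofiber sequence encoding the Lagrangian condition. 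This is sound and is essentially the mechanism underlying the cited theorem. What the citation buys is precisely the compatibility you flag as the ``main obstacle'' -- that $\AKSZ_{D/B}(\beta)^{\#}$ really is the Serre-dual map and that $\AKSZ_{C/B}(\alpha)$ really witnesses self-duality of the cofiber sequence -- which you also end up deferring to \cite{CS,CHS}. One small correction: you describe the twisted (dualizing-complex-valued) case as needing to be ``adapted'' from the untwisted statement, but the reference the paper invokes is already formulated in the twisted generality, so no adaptation is required; the paper's observation that only the underlying closed $2$-form matters reduces the problem directly to \cite[Thm.~2.34]{CS} with no further bookkeeping.
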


\begin{proof}
The non-degeneracy depends only on the underlying (closed) $2$-forms.
Hence the statements follow from \cite[Thm.~2.34]{CS}.
\end{proof}

\subsection{Weil restrictions of twisted cotangents}

Our main result in this section is:

\begin{proposition} 
\label{Prop:Weil}
Let $f:C \to B$ be a proper flat lci morphism of classical Artin stacks with an effective Cartier divisor $i:D \to C$ such that $f \circ i:D \to B$ is flat.
Let $U$ be a derived Artin stack of finite presentation over $C$ and $\alpha \in \sA^{1,\lc}(U/C,\omega_{C/B}^\log[d])$.
Then we have a canonical equivalence of locked Lagrangians
\begin{equation}\label{Eq:GLSM6}
\xymatrix{
\Res_{C/B}\left(\T^*_{U/C,\alpha}(\omega_{C/B}^\log[d-1])\right) \ar[r] \ar@{.>}[d]^{\simeq}_{\Psi_{U/C,\alpha}} &  \Res_{D/B}\left(\T^*_{V/D,\beta}(\omega_{D/B}[d-1])\right)  \ar@{.>}[d]^{\simeq}_{\Phi_{V/D,\beta}}\\
\T^*_{\Res_{C/B}(U)/\Res_{D/B}(V),\AKSZ_{C/B}(\alpha)_{/}}[d-2] \ar[r] & \T^*_{\Res_{D/B}(V)/B,\AKSZ_{D/B}(\beta)}[d-1],
}\end{equation}
where $V:=U\times_C B$ and $\beta:=\alpha|_{V/D} \in \sA^{1,\lc}(V/D,\omega_{D/B}[d])$ are the restrictions
and the lower Lagrangian is the one in Lemma \ref{Lem:Conormal} below.
\end{proposition}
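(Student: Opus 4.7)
The strategy is to compute both sides of the diagram explicitly using the Weil restriction identities (Lemma \ref{Lem:Weil1} and Lemma \ref{Lem:Weil2}) combined with Grothendieck--Serre duality for the $*$-Gorenstein morphisms $f$ and $f\circ i$. The equivalences $\Phi_{V/D,\beta}$ and $\Psi_{U/C,\alpha}$ emerge by matching the tangent formula \eqref{Eq:WeilTangent} with the duality pairing, and the AKSZ construction enters precisely when one tracks how sections and paths transform under Weil restriction.

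First, for $\Phi_{V/D,\beta}$: since Weil restriction preserves limits and $\T^*_{V/D,\beta}(\omega_{D/B}[d-1])$ is the fiber product of the zero section and $\Gamma_\beta$ in $\T^*_{V/D}(\omega_{D/B}[d])$, it suffices to compute $\Res_{D/B}(\T^*_{V/D}(\omega_{D/B}[d]))$. By Lemma \ref{Lem:Weil1}, this equals a Weil restriction over $\Res_{D/B}(V)$ of the pullback to $\Res_{D/B}(V)\times_B D$, and by Lemma \ref{Lem:Weil2} it becomes $\Tot_{\Res_{D/B}(V)}\bigl(\pr_{1,*}(\univ^*\LL_{V/D}\otimes \pr_2^*\omega_{D/B}[d])\bigr)$. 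Grothendieck--Serre duality for $\pr_1$, whose $*$-dualizing complex is $\pr_2^*\omega_{D/B}$, gives $\pr_{1,*}(\univ^*\LL_{V/D}\otimes \pr_2^*\omega_{D/B}[d])\simeq (\pr_{1,*}\univ^*\TT_{V/D})^\vee[d]$, which by \eqref{Eq:WeilTangent} equals $\LL_{\Res_{D/B}(V)/B}[d]$. Hence $\Res_{D/B}(\T^*_{V/D}(\omega_{D/B}[d]))\simeq \T^*_{\Res_{D/B}(V)/B}[d]$, and unwinding the naturality of the AKSZ integration in Construction \ref{Const:AKSZ} shows that $\Res_{D/B}(\Gamma_\beta)$ is identified with $\Gamma_{\AKSZ_{D/B}(\beta)}$, which yields $\Phi_{V/D,\beta}$ after taking fiber products.

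For $\Psi_{U/C,\alpha}$ the same recipe applies, but the logarithmic twist requires one extra step. Starting from Lemmas \ref{Lem:Weil1}, \ref{Lem:Weil2}, we apply $\pr_{1,*}(\univ^*\LL_{U/C}\otimes \pr_2^*(-)[d])$ to the boundary cofiber sequence $\omega_{C/B}^{\log}\to i_*\omega_{D/B}\to \omega_{C/B}$ obtained by tensoring $\partial_{D/C}$ with $\omega_{C/B}$. The outer two terms compute to $\LL_{\Res_{C/B}(U)/B}[d]$ and, via the fiber square
\[
\xymatrix{
\Res_{C/B}(U)\times_B D \ar[r]^{r_D} \ar[d] & \Res_{D/B}(V)\times_B D \ar[d] \\
\Res_{C/B}(U) \ar[r]^r & \Res_{D/B}(V)
}
\]
together with base change and Grothendieck--Serre duality, to $r^*\LL_{\Res_{D/B}(V)/B}[d]$. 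Taking fibers and comparing with the cotangent triangle for $r:\Res_{C/B}(U)\to \Res_{D/B}(V)$ produces $\Res_{C/B}(\T^*_{U/C}(\omega_{C/B}^{\log}[d]))\simeq \T^*_{\Res_{C/B}(U)/\Res_{D/B}(V)}[d-1]$. Intersecting with $\Gamma_\alpha$ and the zero section identifies the image with the twisted cotangent bundle for $\AKSZ_{C/B}(\alpha)_/$, by the very definition of the latter as the path produced by the boundary map in Construction \ref{Const:AKSZ}. Finally, the commutativity of the outer square, as well as the Lagrangian compatibility of the two rows, follows from the naturality of each construction and the fact that the horizontal arrows are both induced by restriction to $D$; the main technical point to verify is that Grothendieck--Serre duality intertwines the de Rham/integration map at the level of \emph{locked} forms rather than just underlying forms. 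This is the step that most needs care, but it is formal once one observes that the identifications above are all natural in $\DR$-algebras, so the passage from $\sA^p$ to $\sA^{p,\lc}$ is automatic.
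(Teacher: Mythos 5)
Your overall strategy matches the paper's: first identify the Weil restriction of the (untwisted) shifted cotangent bundle, then intersect with $\Gamma_\alpha$ to get the twisted version. Your GSD computation of the underlying derived stack of $\Res_{D/B}(\T^*_{V/D}(\omega_{D/B}[d]))$ is correct and is a reasonable substitute for the paper's citation of \cite[Prop.~3.9]{CS}. However, the heart of the proposition is not the identification of the underlying stacks but the compatibility with the Liouville $1$-forms, the symplectic forms, and the Lagrangian structures --- all at the level of \emph{locked} (not just ordinary or closed) forms. On precisely these points your argument degenerates into handwaving.

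Concretely, the gaps are these. First, your claim that ``unwinding the naturality of the AKSZ integration'' identifies $\Res_{D/B}(\Gamma_\beta)$ with $\Gamma_{\AKSZ_{D/B}(\beta)}$ is exactly the content of the commutative square \eqref{Eq:GLSM1}, which the paper attributes to \cite[Prop.~3.9]{CS}: this square asserts that the equivalence of total spaces intertwines $\Res_{D/B}(\lambda_{V/D}(\omega_D[d]))$ with $\lambda_{\Res_D(V)}[d]$, a statement that does not follow automatically from identifying the underlying perfect complexes via Serre duality (the tautological section must be shown to correspond to the tautological section under the duality). Second, your concluding claim that ``the passage from $\sA^p$ to $\sA^{p,\lc}$ is automatic'' because everything is ``natural in $\DR$-algebras'' is the step the paper spends the most effort on: the projection-formula lemma for $\O$-compact morphisms and the careful bookkeeping of fiber-sequence morphisms \eqref{Eq:GLSM2} and \eqref{Eq:GLSM7} are precisely what makes this true. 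Serre duality is a statement about $\QCoh$, not about filtered $\DR$-algebras, so showing that the integration map respects the Hodge filtration and hence sends locked forms to locked forms requires a separate argument. Third, you never verify that $\Phi_{V/D,\beta}$ is a locked \emph{symplectomorphism}; the paper does this by chasing through diagrams \eqref{Eq:GLSM5} and \eqref{Eq:GLSM7} to establish \eqref{Eq:GLSM8}. Asserting this follows ``by naturality'' does not discharge the obligation, since the $(-2)$-shifted symplectic form on a twisted cotangent bundle is built from the de Rham differential of the Liouville form, and one must check that this is what the AKSZ integration produces. Your proposal correctly identifies \emph{what} must be shown, but mislabels the hard part as formal.
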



\begin{lemma}[Twisted conormal Lagrangian]\label{Lem:Conormal}
Let $R \to Q \to B$ be finitely presented morphisms of derived Artin stacks,
$\delta \in \sA^{1,\lc}(Q/B,d)$
and $\gamma : 0 \lra \delta|_R$ be a path in $\sA^{1,\lc}(R/B,d)$.
Then we have a canonical $d$-shifted locked Lagrangian
\[\T^*_{R/Q,\gamma_{/}}[d-1] \lra \T^*_{Q/B,\delta}[d],\]
where $\gamma_{/}\in \sA^{1,\lc}(R/B,d-1)$ corresponds to the loop $\gamma_{/Q} : 0 \to \delta|_{R/Q} \simeq 0$ in $\sA^{1,\lc}(R/B,d)$.
\end{lemma}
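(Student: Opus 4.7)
My plan is first to handle the untwisted case and then obtain the twisted version via Lagrangian intersection and naturality.

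In the untwisted case, where $\delta$, $\gamma$, and hence $\gamma_/$ are (canonically) zero, the claim reduces to the classical derived conormal Lagrangian: for any $f:R\to Q$ over $B$, the relative cotangent cofiber sequence $f^*\LL_{Q/B}\to \LL_{R/B}\to \LL_{R/Q}$ supplies a canonical map $\T^*_{R/Q}[d-1]\to \T^*_{Q/B}[d]$ coming from the connecting morphism $\LL_{R/Q}[-1]\to f^*\LL_{Q/B}$, together with a canonical null-homotopy of the pulled-back Liouville $2$-form and a self-dual fiber sequence on tangent complexes. Together these give a canonical $d$-shifted Lagrangian structure, in the spirit of \cite[Thm.~2.22]{Cal}.

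For the twisted case, I unpack the twisted cotangents via Example \ref{Ex:TwCot} as Lagrangian intersections: each twisted cotangent is the homotopy fiber product of the zero section and the graph of the respective twisting locked $1$-form inside the ambient shifted cotangent, and the shifted symplectic form on the fiber product is the one delivered by the Lagrangian intersection theorem \cite{PTVV}. The element $\gamma_/\in\sA^{1,\lc}(R/B,d-1)$ is obtained from $\gamma$ by composing with the canonical null-homotopy of $\delta|_{R/Q}$ coming from the cofiber sequence of form spaces attached to the cotangent triangle: the resulting loop at $0$ in the $d$-shifted form space is, by stability, the same datum as a point in the $(d-1)$-shifted form space.

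To construct the map in the twisted setting, I intersect the untwisted conormal Lagrangian of the first paragraph (after a shift) with the pair of Lagrangians $\Gamma_\delta$ and $0$ in $\T^*_{Q/B}[d+1]$ that cut out $\T^*_{Q/B,\delta}[d]$. The homotopy $\gamma$ is precisely the datum identifying the restriction of $\Gamma_\delta$ along the conormal Lagrangian with the graph $\Gamma_{\gamma_/}$ inside the relative shifted cotangent; carrying out the Lagrangian intersection in two stages (and using base-change and composition of Lagrangian correspondences in the sense of \cite{PTVV,Cal}) then produces a canonical $d$-shifted locked Lagrangian $\T^*_{R/Q,\gamma_/}[d-1]\to \T^*_{Q/B,\delta}[d]$.

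The main obstacle is making the translation $\gamma\mapsto\gamma_/$ and the two-layer Lagrangian intersection fully coherent $\infty$-categorically, with careful tracking of all shifts. This bookkeeping is best carried out inside the locked-form formalism of \cite[\S1,\S3]{Park2}, whose cofiber sequences of form spaces and AKSZ-type construction (Construction \ref{Const:AKSZ}) provide the needed machinery. Once these coherences are set up, naturality of Lagrangian intersection in the twisting data reduces the whole statement to the untwisted derived conormal computation of the first paragraph.
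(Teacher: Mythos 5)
Your plan and the paper's proof are both rooted in Lagrangian composition, but they take genuinely different routes, and yours leaves the decisive construction as a sketch.

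The paper never separates out an untwisted case. Its entire proof is the single commutative diagram \eqref{Eq:GLSM21}: the middle column $\T^*_{Q/B,\delta}[d]\times_Q R$, together with the two outer verticals $d\lambda_{R/B}[d]$ and $\theta_{Q/B,\delta}[d]$ into $\sA^{2,\lc}_B[d]$, is asserted to be a locked Lagrangian correspondence between $\T^*_{R/B}[d]$ and $\T^*_{Q/B,\delta}[d]$, while the left square (a cartesian square with $0:R\to\T^*_{R/B}[d]$ along the bottom) identifies the composite with $\T^*_{R/Q,\gamma_/}[d-1]$ directly. The intermediary symplectic object is thus $\T^*_{R/B}[d]$, not the ambient $\T^*_{Q/B}[d+1]$. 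Composing the correspondence with the zero-section Lagrangian $R\to\T^*_{R/B}[d]$ is the whole argument. In particular, the conormal map and the twist by $\delta$, $\gamma$ are packaged simultaneously inside the middle column and the cartesianness of the left square; no reduction to $\delta\simeq 0$ is needed.

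Your plan instead works entirely inside $\T^*_{Q/B}[d+1]$: first build the untwisted conormal Lagrangian, then try to "intersect" it with the defining pair $0$, $\Gamma_\delta$. Two caveats. First, the shift: the conormal Lagrangian of $R\to Q$ naturally lands in $\T^*_{Q/B}[d]$, and lifting it compatibly into $\T^*_{Q/B}[d+1]$ is not automatic — the lift is essentially what the homotopy $\gamma$ provides, so the "after a shift" parenthetical is hiding the main input. Second, the "two-stage Lagrangian intersection" you invoke is not a formal operation on three Lagrangians in a fixed shifted symplectic stack: a Lagrangian in the intersection $L_0\times_M L_\delta$ is not generically produced by a third Lagrangian $L_N\subset M$ without additional coherence data (this is exactly the datum that needs to be extracted from $\gamma$). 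Making this precise would require you to either reproduce the paper's diagram with $\T^*_{R/B}[d]$ as the mediating symplectic stack, or to work with iterated Lagrangian structures in the sense of the Calaque–Safronov framework; in either case you are not really getting to reuse the untwisted case as a black box. Your identification of $\gamma_/$ as the delooping of the loop $0\to\delta|_{R/Q}\simeq 0$ agrees with the paper.

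So: the proposal is plausible in spirit, but the crucial step is unverified, and the detour through the untwisted case does not actually buy you anything — the paper's one-diagram approach via $\T^*_{R/B}[d]$ handles the general $\delta$, $\gamma$ uniformly and is both shorter and more self-contained.
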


\begin{proof}
We have a canonical commutative diagram
\begin{equation}\label{Eq:GLSM21}
    \xymatrix{
\T^*_{R/Q,\gamma_{/}}[d-1] \ar[r] \ar[d] \cart &  \T^*_{Q/B,\delta}[d]\times_Q R \ar[r]^-{\pr_1} \ar[d] & \T^*_{Q/B,\delta}[d] \ar[d]^{\theta_{Q/B,\delta}[d]}
 \\
R \ar[r]^-0 & \T^*_{R/B}[d] \ar[r]^-{d\lambda_{R/B}[d]} & \sA^{2,\lc}_C[d]
}\end{equation}
where the left square is cartesian and the right square is a locked Lagrangian correspondence.
Hence the lemma follows from the composition of Lagrangian correspondences.
\end{proof}

We note that the induced $(d-1)$-shifted locked symplectic form on the composition
\[\T^*_{R/Q,\gamma_{/}}[d-1] \lra \T^*_{Q/B,\delta}[d] \lra Q,\]
from the above Lagrangian structure and the Lagrangian fibration structure of $\T^*_{Q/B,\delta}[d] \to Q$, is the canonical one for the twisted cotangent bundles. 

\begin{proof}[Proof of Proposition \ref{Prop:Weil}]
For simplicity, we suppress the notation $/B$.

We first consider the case when $\alpha \simeq 0$.
By \cite[Prop.~3.9]{CS}, we have a canonical equivalence $\Phi_{V/D}$ that fits into a commutative square
\begin{equation}\label{Eq:GLSM1}
\xymatrix@C+4pc{
\Res_{D}\left(\T^*_{V/D}(\omega_{D}[d])\right)  \ar@{.>}[d]^{\simeq}_{\Phi_{V/D}} \ar[r]^-{\Res_{D}(\lambda_{V/D}(\omega_D[d]))} & \Res_{D}\left(\sA^{1}_{D}(\omega_{D}[d])\right)  \ar[d]^{\int_{D}}\\
 \T^*_{\Res_{D}(V)}[d] \ar[r]^-{\lambda_{\Res_{D}(V)}[d]} & \sA^1_B[d].
}	
\end{equation}

Then we can find a canonical equivalence $\Psi_{U/C}$ that fits into a morphism of fiber sequences
\begin{equation}\label{Eq:GLSM2}
\xymatrix{
\Res_{C}\left(\T^*_{U/C}(\omega_{C}^\log[d])\right) \ar[r] \ar@{.>}[d]^{\simeq}_{\Psi_{U/C}} &  \Res_{C}\left(\T^*_{U/C}(i_*\omega_{D}[d])\right)  \ar[d]^{\simeq}_{\Phi_{V/D}|_{\Res_C(U)}} \ar[r] &  \Res_{C}\left(\T^*_{U/C}(\omega_{C}[d])\right)   \ar[d]^{\simeq}_{\Phi_{U/C}} \\
\T^*_{\Res_{C}(U)/\Res_{D}(V)}[d-1] \ar[r] & \T^*_{\Res_{D}(V)}[d]|_{\Res_{C}(U)} \ar[r]  & \T^*_{\Res_{C}(U)}[d] 
}
\end{equation}
where the upper sequence is induced by the boundary structure $\partial_{D/C}$ of $i:D \to C$
and the middle vertical arrow is induced by the canonical equivalence
\[\Res_{C}\left(\T^*_{U/C}(i_*\omega_{D}[d])\right) \simeq \Res_{D}\left(\T^*_{V/D}(\omega_{D}[d])\right)\times_{\Res_{D}(V)}\Res_{C}(U) .\]
Here the commutativity of the right square follows from the functoriality of $\Phi$.

We then claim that the path
\[\AKSZ_{C}\left(\lambda_{U/C}(\omega_{C}^{\log}[d])\right) : 0 \lra \AKSZ_{D}\left(\lambda_{V/D}(\omega_{D}[d])\right)|_{\Res_{C}\left(\T^*_{U/C}(\omega_{C}^\log[d])\right)}\]
is equivalent to the canonical path given by the lower sequence in \eqref{Eq:GLSM2}
\[0 \lra \lambda_{\Res_{D}(V)}[d]|_{\T^*_{\Res_{C}(U)/\Res_{D}(V)}[d-1]},\]
under the vertical equivalences in \eqref{Eq:GLSM2}.
Indeed, the boundary structure $\partial_{D/C}$ of $i:D \to C$ gives us a morphism of fiber sequences
\[\xymatrix{
\Res_{C}\left(\T^*_{U/C}(\omega_{C}^\log[d])\right) \ar[r] \ar[d]^{\Res_{C}\left(\lambda_{U/C}(\omega_{C}^{\log}[d])\right)} &  \Res_{D}\left(\T^*_{V/D}(\omega_{D}[d])\right)\times_{\Res_{D}(V)}\Res_{C}(U)  \ar[d]^{\Res_{D}\left(\lambda_{U/C}(\omega_{C}^{\log}[d])\right)} \ar[r] &  \Res_{C}\left(\T^*_{U/C}(\omega_{C}[d])\right)   \ar@{.>}[d] \\
\Res_{C}\left(\sA^1_{C}(\omega_{C}^\log[d])\right) \ar[r] \ar[d] &  \Res_{D}\left(\sA^1_{D}(\omega_{D}[d])\right)  \ar[r] \ar[d]^{\int_{D}} &  \Res_{C}\left(\sA^1_{C}(\omega_{C}[d])\right) \ar[d]^{\int_{C}} \\
B \ar[r]^-0 & \sA^1_B[d] \ar@{=}[r] & \sA^1_B[d].
}\]
On the other hand, we have a canonical commutative diagram
\[\xymatrix{
\T^*_{\Res_{C}(U)/\Res_{D}(V)}[d-1] \ar[r] & \T^*_{\Res_{D}(V)}[d]|_{\Res_{C}(U)} \ar[r] \ar[d]  & \T^*_{\Res_{C}(U)}[d] \ar[d]^{\lambda_{\Res_C(U)}[d]} \\
& \T^*_{\Res_D(V)}[d] \ar[r]^{\lambda_{\Res_D(V)}[d]} & \sA^1_B[d].
}\]
Then the claim follows from the commutative square \eqref{Eq:GLSM1} for $U \to C$.

We now consider the general case $\alpha\not\simeq 0$.
We first construct an equivalence of derived stacks
\[\Phi_{V/D,\beta}:\Res_D\left(\T^*_{V/D,\beta}(\omega_D[d-1])\right) \xrightarrow{\simeq}  \T^*_{\Res_D(V),\AKSZ_D(\beta)}[d-1].\]
Indeed,
applying $\Res_{D}$ to \eqref{Eq:GLSM4} for $(V \to D, \beta, \omega_D[d])$,
we can form a fiber square
\[\xymatrix{
\Res_D\left(\T^*_{V/D,\beta}(\omega_D[d-1])\right) \ar[r] \ar[d] \cart & \Res_D(V) \ar[d]^0 \\ \Res_D(V) \ar[r]^-{\Res_D(\Gamma_{\beta})} & \Res_D\left(\T^*_{V/D}(\omega_D[d])\right).
}\]
Then the claimed equivalence follows from the equivalences
\[\Res_D\left(\T^*_{V/D}(\omega_D[d])\right) \xrightarrow[\simeq]{\Phi_{V/D}} \T^*_{\Res_D(V)}[d] \and \Phi_{V/D}\circ\Res_D(\Gamma_{\beta}) \simeq \Gamma_{\AKSZ_D(\beta)},\]
induced by \eqref{Eq:GLSM1}.

We then claim that the above equivalence $\Phi_{V/D,\beta}$ is a locked symplecto-morphism.
Indeed, the locked symplectic form on $\T^*_{V/D}(\omega_D[d])$ 
is given by the commutative diagram
\begin{equation}\label{Eq:GLSM5}
\xymatrix@C+1pc{
\T^*_{V/D,\alpha}(\omega_D[d-1]) \ar[r] \ar[d] \cart & V \ar[d]^{\Gamma_\beta} \ar[r]^-{\beta} & \sA^{1,\lc}_D(\omega_D[d]) \ar[d] \ar[r] \cart& D \ar[d]^0 \\ V \ar[r]^-{0} & \T^*_{V/D}(\omega_D[d]) \ar[r]^-{\lambda_{V/D}(\omega_D[d])} & \sA^1_D(\omega_D[d]) \ar[r]^-d & \sA^{2,\lc}_D(\omega_D[d]).
}\end{equation}
Applying $\Res_D$ to \eqref{Eq:GLSM5}, we obtain the claimed equivalence
\begin{equation}\label{Eq:GLSM8}
\Phi_{V/D,\beta}^*(\theta_{\Res_D(V),\AKSZ_D(\beta)}[d])\simeq \AKSZ_D\left(\theta_{V/D,\beta}(\omega_D[d])\right),\end{equation}
from the canonical morphism of fiber sequences
\begin{equation}\label{Eq:GLSM7}
\xymatrix{
\Res_D\left(\sA^{1,\lc}_D(\omega_D[d])\right) \ar[r] \ar[d]^{\int_D} & \Res_D\left(\sA^{1}_D(\omega_D[d])\right) \ar[r] \ar[d]^{\int_D} & \Res_D\left(\sA^{2,\lc}_D(\omega_D[d])\right) \ar[d]^{\int_D} \\
\sA^{1,\lc}_B[d] \ar[r] & \sA^1_B[d] \ar[r] & \sA^{2,\lc}_B[d].
}\end{equation}

Our next step is to construct an equivalence of derived stacks
\[\Psi_{U/C,\alpha}:\Res_C\left(\T^*_{U/C,\alpha}(\omega_C^\log[d-1])\right) \xrightarrow{\simeq} \T^*_{\Res_C(U)/\Res_C(V),\AKSZ_C(\alpha)_{/}}[d-2].\]
Indeed, applying $\Res_C$ to \eqref{Eq:GLSM4}, we obtain a fiber square
\[\xymatrix{
\Res_C\left(\T^*_{U/C,\alpha}(\omega_C^\log[d-1])\right) \ar[r] \ar[d] \cart & \Res_C(U) \ar[d]^0 \\ \Res_C(U) \ar[r]^-{\Res_C(\Gamma_{\alpha})} & \Res_C\left(\T^*_{U/C}(\omega_C^\log[d])\right).
}\]
Hence the claimed equivalence follows from the canonical equivalences
\[\Res_C\left(\T^*_{U/C}(\omega_C^\log[d])\right) \xrightarrow[\simeq]{\Psi_{U/C}} \T^*_{\Res_C(U)/\Res_C(V)}[d-1], \quad \Psi_{U/C}\circ\Res_C(\Gamma_{\alpha}) \simeq \Gamma_{\AKSZ_C(\alpha)_{/}},\]
which can be deduced from \eqref{Eq:GLSM2}.

Our final step is to show that $\Psi_{U/C,\alpha}$ preserves the Lagrangian structures under the symplecto-morphism $\Phi_{V/D,\beta}$.
Indeed, the left commutative square in \eqref{Eq:GLSM2} gives us the desired commutative square \eqref{Eq:GLSM6}.
Replacing $(D,\omega_D)$ by $(C,\omega_C^\log)$ in \eqref{Eq:GLSM7},
we obtain the desired equivalence
\[\eqref{Eq:GLSM8}\circ \Psi_{U/C,\alpha}^*(\eta)\simeq \AKSZ_{C}(\alpha)\]
where $\eta$ is the Lagrangian structure induced by the commutative square \eqref{Eq:GLSM21}.
\end{proof}

Now the computation of the twisted maps to critical loci follows immediately.

\begin{proof}[Proof of Theorem \ref{Prop:GLSMmain}]
Applying Proposition \ref{Prop:Weil} to Example \ref{Ex:TwMaps/Inertia},
we obtain a canonical equivalence of Lagrangians
\begin{multline*}
    \left(\M_{g,n}^{\omega^{\log}}(\Crit_U(w)) \to \prod_{1\leq k\leq n}\left(I(\Crit_U(w))\times_{B^2\mu,\Sigma_k}\M^{\tw}_{g,n}\right)\right)  
    \\\simeq \left(\M_{g,n}^{\omega^{\log}}(U)(\sigma_w)  \to \prod_{1\leq k\leq n}\left(\Crit_{I(U)}(Iw)\times_{B^2\mu,\Sigma_k}\M^{\tw}_{g,n}\right)\right).\end{multline*}
Since the diagonal of $B^2\mu_{r}$ is {\'e}tale, we may replace:
$(-)\times_{B^2\mu,\Sigma_k}(-)\leadsto (-)\times(-)$.
\end{proof}


\end{document}